 \renewcommand{\rmdefault}{ptm}
 \theoremstyle{plain}
 \newtheorem{thm}{Theorem}
 \newtheorem{cor}[thm]{Corollary}
 \newtheorem{lemma}[thm]{Lemma}
 \newtheorem{prop}[thm]{Proposition}
 \theoremstyle{definition}
 \newtheorem{rem}[thm]{Remark}
 \numberwithin{thm}{section}
 \numberwithin{equation}{section}
 \newcommand{\R}{\mathbf{R}}
 \newcommand{\C}{\mathbf{C}}
 \newcommand{\N}{\mathbf{N}}
 \newcommand{\g}[1][g]{\mathfrak{#1}}
 \newcommand{\LG}[1][{}]{\textbf{#1}}
 \newcommand{\Acomp}{\mathfrak{A}}
 \newcommand{\U}{\mathcal{U}}
 \newcommand{\Sym}{\mathbf{S}}
 \newcommand{\WG}[1][empty]{\def\ArgI{#1}\WRelayI}
    \newcommand{\WRelayI}
    {
        \ifthenelse{\equal{\ArgI}{empty}}
            {W}
            {W_{\ArgI}}
    }
 \newcommand{\WGs}[1][empty]{\def\ArgI{#1}\WsRelayI}
    \newcommand{\WsRelayI}
    {
        \ifthenelse{\equal{\ArgI}{empty}}
            {\mathcal{W}}
            {\mathcal{W}_{\ArgI}}
    }
 \newcommand{\Nor}{\mathcal{N}}
 \newcommand{\Cen}{\mathcal{Z}}
 \newcommand{\Cone}{\mathcal{C}}
 \newcommand{\invCone}[1]{\Gamma_{#1}^{W_{K\cap H}}}
 \newcommand{\Hor}{\textnormal{Hor}}
 \newcommand{\Psg}{\mathcal{P}}
 \newcommand{\diag}{\textnormal{diag}}
 \newcommand{\Ft}{\mathcal{F}}
 \newcommand{\Ht}[1][empty]{\def\ArgI{#1}\HtRelayI}
    \newcommand{\HtRelayI}[1][empty]{\def\ArgII{#1}\HtRelayII}
    \newcommand{\HtRelayII}
    {
        \ifthenelse{\equal{\ArgII}{empty}}
        {
            \ifthenelse{\equal{\ArgI}{empty}}
                {\mathcal{H}}
                {\mathcal{H}_{\ArgI}}
        }
        {\mathcal{H}_{\ArgI}^{\ArgII}}
    }
 \newcommand{\Rt}[1][empty]{\def\ArgI{#1}\RtRelayI}
    \newcommand{\RtRelayI}[1][empty]{\def\ArgII{#1}\RtRelayII}
    \newcommand{\RtRelayII}
    {
        \ifthenelse{\equal{\ArgII}{empty}}
        {
            \ifthenelse{\equal{\ArgI}{empty}}
                {\mathcal{R}}
                {\mathcal{R}_{\ArgI}}
        }
        {\mathcal{R}_{\ArgI}^{\ArgII}}
    }
 \newcommand{\dRt}[1][empty]{\def\ArgI{#1}\dRtRelayI}
    \newcommand{\dRtRelayI}[1][empty]{\def\ArgII{#1}\dRtRelayII}
    \newcommand{\dRtRelayII}
    {
        \ifthenelse{\equal{\ArgII}{empty}}
        {
            \ifthenelse{\equal{\ArgI}{empty}}
                {\mathcal{S}}
                {\mathcal{S}_{\ArgI}}
        }
        {\mathcal{S}_{\ArgI}^{\ArgII}}
    }
 \newcommand{\Tt}[1][empty]{\def\ArgI{#1}\TtRelayI}
    \newcommand{\TtRelayI}[1][empty]{\def\ArgII{#1}\TtRelayII}
    \newcommand{\TtRelayII}
    {
        \ifthenelse{\equal{\ArgII}{empty}}
        {
            \ifthenelse{\equal{\ArgI}{empty}}
                {\mathcal{T}}
                {\mathcal{T}_{\ArgI}}
        }
        {\mathcal{T}_{\ArgI}^{\ArgII}}
    }
 \newcommand{\Ds}{\mathscr{D}}
 \newcommand{\Es}{\mathscr{E}}
 \newcommand{\Ss}{\mathscr{S}}
 \newcommand{\RDs}{\mathscr{S}}
 \newcommand{\Cs}{\mathscr{C}}
 \newcommand{\Vs}{\mathscr{V}}
 \renewcommand{\Re}{\textnormal{Re}}
 \newcommand{\Ad}{\textnormal{Ad}}
 \newcommand{\ad}{\textnormal{ad}}
 \newcommand{\tr}{\textnormal{tr}}
 \newcommand{\supp}{\textnormal{supp}}
 \newcommand{\ch}{\textnormal{conv}}
 \renewcommand{\1}{\mathbf{1}}
 \renewcommand{\H}{\mathcal{H}}
 \newcommand{\Ind}{\textnormal{Ind}}
 \newcommand{\Hom}{\textnormal{Hom}}
 \newcommand{\End}{\textnormal{End}}
 \newcommand{\un}{\textnormal{un}}
 \newcommand{\ltpb}[1]{l^{*}_{#1}}
 \newcommand{\pr}{\textnormal{pr}}
 \newcommand{\leftsuperscript}[2]{{\vphantom{#2}}^{#1}{#2}}
 \title{Radon transformation on reductive symmetric spaces: support theorems}
 \author{Job\,\,J.\,Kuit}
 \date{}
\begin{document}
%%% ----------------------------------------------------------------------
 \maketitle
 \begin{abstract}
We introduce a class of Radon transforms for reductive symmetric spaces, including the horospherical transforms, and derive support theorems for these transforms.

A reductive symmetric space is a homogeneous space $G/H$ for a reductive Lie group $G$ of the Harish-Chandra class, where $H$ is an open subgroup of the fixed-point subgroup for an involution $\sigma$ on $G$. Let $P$ be a parabolic subgroup such that $\sigma(P)$ is opposite to $P$ and let $N_{P}$ be the unipotent radical of $P$. For a compactly supported smooth function $\phi$ on $G/H$, we define $\Rt[P](\phi)(g)$ to be the integral of $N_{P}\ni n\mapsto\phi(gn\cdot H)$ over $N_{P}$. The Radon transform $\Rt[P]$ thus obtained can be extended to a large class of distributions containing the rapidly decreasing smooth functions and the compactly supported distributions.

For these transforms we derive support theorems in which the support of $\phi$ is (partially) characterized in terms of the support of $\Rt_{P}\phi$. The proof is based on the relation between the Radon transform and the Fourier transform on $G/H$, and a Paley-Wiener-shift type argument. Our results generalize the support theorem of Helgason for the Radon transform on a Riemannian symmetric space.
\end{abstract}

 \tableofcontents
%%% ----------------------------------------------------------------------
%%% ----------------------------------------------------------------------
\section*{Introduction}
\addcontentsline{toc}{section}{\protect\numberline{}Introduction}
%%% ----------------------------------------------------------------------
Let $G$ be a noncompact connected semisimple Lie group with finite center,
let $G=KAN$ be an Iwasawa decomposition of $G$ and let $M$ be the
centralizer of $A$ in $K$. A horosphere in $X = G/K$ is a submanifold of the form $gN\cdot
x_{0}$, $x_{0}=e\cdot K$. The set of horospheres is isomorphic (as a $G$-space) to $\Xi=G/MN$ via the map
$E:g\cdot MN\mapsto g\cdot\xi_{0}$, $\xi_{0}=N\cdot x_{0}$.
The Radon transform on $X$ is the $G$-equivariant map
$\Rt:C_{c}^{\infty}(X)\to C^{\infty}(\Xi)$ given by
$$
\Rt\phi(g\cdot \xi_{0})=\int_{N}\phi(gn\cdot x_{0})\,dn.
$$
In \cite[Lemma
8.1]{Helgason_TheSurjectivityOfInvariantDifferentialOperatorsOnSymmetricSpaces} Helgason proved the following support theorem for this
transform.
\medbreak
\medbreak
\noindent
{\em Let $\phi$ be a
compactly supported smooth function on $X$ and let $V$ be a closed ball
in $X$. Assume that $\Rt\phi(\xi)=0$ whenever $E(\xi)\cap V=\emptyset$.
Then
$\phi(x)=0$ for $x\notin V$.
}
\medbreak
\noindent Note that this theorem implies that $\Rt$ is injective on the space of compactly supported smooth
functions.

In this article Helgason's result is generalized to a support theorem
for a class of Radon transforms (including the horospherical
transforms) on a reductive symmetric space $X=G/H$ with $G$ a
real reductive Lie group of  the Harish-Chandra class and $H$ an
essentially connected open subgroup of the fixed-point subgroup
$G^{\sigma}$ of an involution $\sigma$ on $G$.

Let $\theta$ be a Cartan involution of $G$ commuting with $\sigma$. Let $P$ be a $\sigma\circ\theta$-stable parabolic subgroup. We write $N_{P}$ for the unipotent radical of $P$ and $A_{P}$ for the $\theta$-stable split component of $P$. We consider the Radon
transform $\Rt[P]$ mapping a function $\phi$ on $X$ to the
function on the homogeneous space $\Xi_{P}=G/\Cen_{H}(A_{P})N_{P}$ given by
$$
\Rt[P]\phi(g\cdot\xi_{P})
=\int_{N_{P}}\phi(gn\cdot x_{0})\,dn.
$$
Here $\xi_{P}=e\cdot\Cen_{H}(A_{P})N_{P}$ and $x_{0}=e\cdot H$.  This Radon transform, which
is initially defined for compactly supported smooth functions, can
be extended to a large class of distributions on $X$. For a minimal $\sigma\circ\theta$-stable parabolic subgroup $P_{0}$ the transform $\Rt[P_{0}]$ is called the horospherical transform related to $P_{0}$.

If $P_{0}$ is a minimal $\sigma\circ\theta$-stable parabolic
subgroup of $G$ contained in $P$, then $A_{P}\subseteq A_{P_{0}}$. The Lie algebra
$\g[a]_{P_{0}}$ of $A_{P_{0}}$ is $\sigma$-stable and decomposes
as the direct sum of the $+1$ and $-1$ eigenspaces for $\sigma$.
The latter space is denoted by $\g[a]_{\g[q]}$.

The maps
$$K\times \g[a]_{\g[q]}\to X;\qquad(k,Y)\mapsto k\exp(Y)\cdot x_{0}$$
and
$$K\times \g[a]_{\g[q]}\to \Xi_{P};\qquad(k,Y)\mapsto k\exp(Y)\cdot \xi_{P}$$
are surjective.
For a subset $B$ of $\g[a]_{\g[q]}$, we define
$$X(B)=K\exp(B)\cdot x_{0}\quad\text{and}\quad\Xi_{P}(B)=K\exp(B)\cdot\xi_{P}.$$
\medbreak
If $\phi$ is a function on $X$ with compact support, then the support of $\Rt_{P}\phi$ need not be compact in general. In fact, if $\supp(\phi)\subseteq X(B)$ for some compact convex subset $B$ of $\g[a]_{\g[q]}$ that is invariant under the action of the normalizer $\Nor_{K\cap H}(\g[a]_{\g[q]})$ of $\g[a]_{\g[q]}$ in $K\cap H$,
then $\supp(\Rt[P]\phi)\subseteq\Xi_{P}(B+\Gamma_{P})$, where $\Gamma_{P}$ is the cone in
$\g[a]_{\g[q]}$ spanned by the root vectors corresponding to roots that are positive with respect to $P$. The support theorem (Theorem  \ref{SuppThm thm Support theorem for distributions and non-minimal P}) that we prove
in this article is a partial converse to this statement for $\mu$ in a
suitable class of distributions, containing the compactly supported ones.
\medbreak
\medbreak
\noindent
{\bf Support Theorem.}
{\em
    Let $B$ be a convex compact
    subset of $\g[a]_{\g[q]}$ that is invariant under the action of $\Nor_{M_{P}\cap K\cap H}(\g[a]_{\g[q]})$ on  $\g[a]_{\g[q]}$. If
    $$\supp(\Rt[P]\mu)\subseteq \Xi_{P}(B+\Gamma_{P}),$$
    then
    $$\supp(\mu)\subseteq X\Big(\bigcap_{k\in\Nor_{K\cap H}(\g[a]_{\g[q]})}\Ad(k)(B+\Gamma_{P})\Big).$$
    }
\medbreak
\medbreak
If $\sigma=\theta$ and $P$ is a minimal parabolic subgroup of $G$, then $\bigcap_{k\in\Nor_{K\cap H}(\g[a]_{\g[q]})}\Ad(k)(B+\Gamma_{P})=B$. Our theorem reduces then to Helgason's support theorem.
Just as in the Riemannian case, the support theorem implies injectivity of the Radon transform.

After some preliminaries in Section \ref{section Notation and Preliminaries}, we introduce the
transforms under consideration and establish some of their
properties in Section \ref{section Radon transformation on a
reductive symmetric space}.
In Section \ref{section convex geometry} we derive some results in convex geometry that we need in the following  sections. Then, in Section \ref{section Support of a transformed function}, we give a description of the support of a transformed function in terms of the support of that function. The support theorem is a partial converse of this result. To prove the support theorem for the horospherical transform, we first establish a relation between the horospherical transform $\Rt[P_{0}]$ (related to a minimal parabolic subgroup $P_{0}$) and the Fourier transform on $X$. This allows us to derive a Paley-Wiener type estimate for the Fourier transform of a function $\phi$ in terms of $\supp(\Rt[P_{0}]\phi)$. Together with the inversion formula for the Fourier transform this Paley-Wiener estimate then leads to the support theorem (Theorem \ref{SuppThm thm Support Theorem for functions and minimal P_0}) for $\Rt[P_{0}]$. All this is done in Section \ref{section Support theorem for the horospherical transform}.
In Section \ref{section Support theorems} we generalize this (in Theorem
\ref{SuppThm thm Support theorem for distributions and non-minimal
P}) to a support theorem for the Radon transform $\Rt[P]$ on distributions for $P$ an arbitrary
$\sigma\circ\theta$-stable parabolic subgroup.

\subsection*{Acknowledgements}
This article grew out of my PhD-research project supervised by
E.P.\,van den Ban. The project was financially supported by the
Van Beuningen--Peterich Foundation.

I wish to express my deep gratitude to Erik van den Ban. I
benefited tremendously from his many critical questions, the long
discussions and his suggestions for improvements.

%%% ----------------------------------------------------------------------
%%% ----------------------------------------------------------------------
\section{Notation and preliminaries}
\label{section Notation and Preliminaries}

In this section we recall some basic theory of Radon transformation, reductive symmetric spaces and parabolic subgroups, and we fix the notation that we use throughout the article.
The theory discussed in this section can be found in \cite[part 1, Chapter 3 \& 6]{vdBanSchlichtkrullDelorme_LieTheory}, \cite[Chapter II]{Helgason_IntegralGeometryAndRadonTransforms}, \cite{Knapp_LieGroups} and \cite[II.1 \& II.6]{Varadarajan_HarmonicAnalysisOnRealReductiveGroups}.

%%% ----------------------------------------------------------------------
\subsection{Double fibrations}
\label{subsection Introduction to double fibrations}
%%% ----------------------------------------------------------------------
In the theory of Radon transformation, as introduced by Helgason in \cite{Helgason_ADualityInIntegralGeometryOnSymmetricSpaces},
one considers a double fibration of homogeneous spaces
\begin{equation}\label{RT eq double fibration}
\xymatrix@C+1pc@R-1pc{
&\ar[dl]_{\Pi_{X}}Z=G/(S\cap T)\ar[dr]^{\Pi_{\Xi}}&\\
G/S=X && \Xi=G/T}
\end{equation}
where $G$ is a Lie group, $S$ and $T$ are two closed subgroups of
$G$ and $\Pi_{X}$ and $\Pi_{\Xi}$ are the canonical projections.

A double fibration defines an incidence relation: $x\in\ X$ and
$\xi\in\Xi$ are said to be incident if $\Pi_{X}^{-1}(\{x\})\cap\Pi_{\Xi}^{-1}(\{\xi\})\neq\emptyset$. Note
that $x$ and $\xi$ are incident if and only if
$\xi\in\Pi_{\Xi}(\Pi_{X}^{-1}(x))$, or equivalently,
$x\in\Pi_{X}(\Pi_{\Xi}^{-1}(\xi))$. If the set-valued maps $X\ni x\mapsto\Pi_{\Xi}(\Pi_{X}^{-1}(x))$ and $\Xi\ni\xi\mapsto\Pi_{X}(\Pi_{\Xi}^{-1}(\xi))$
are both injective, then following Helgason, we say that $S$ and
$T$ are transversal.

We make the following assumptions.
\begin{itemize}
\item[(A)]
    There exist non-zero Radon measures $d_{S\cap T}s$ and $d_{S\cap T}t$ on
    $S/(S\cap T)$ and $T/(S\cap T)$ invariant under $S$ and $T$,
    respectively.
\item[(B)]
    There exist non-zero $G$-invariant Radon measures $dx$ and $d\xi$ on $X$ and
    $\Xi$.
\item[(C)]
    $ST$ is a closed subset of $G$
\end{itemize}

%%% ----------------------------------------------------------------------
\subsection{Radon transforms}
\label{subsection Introduction to Radon transforms}
%%% ----------------------------------------------------------------------
Following Schwartz, we denote spaces of compactly supported smooth functions and spaces of smooth functions by $\Ds$ and $\Es$, respectively. Spaces of distributions and spaces of compactly supported distributions we denote by $\Ds'$ and $\Es'$, respectively.

The Radon transforms for the double
fibration (\ref{RT eq double fibration}) are defined to be the $G$-equivariant continuous operators
$\Rt:\Ds(X)\to\Es(\Xi)$ and $\dRt:\Ds(\Xi)\to\Es(X)$
given by
$$
\Rt\phi(g\cdot T) =\int_{T/(S\cap T)}\phi(gt\cdot S)\,d_{S\cap T}t
\quad\text{and}\quad
\dRt\psi(g\cdot S) =\int_{S/(S\cap T)}\psi(gs\cdot T)\,d_{S\cap T}s,
$$
respectively. (See \cite[Section II.2]{Helgason_IntegralGeometryAndRadonTransforms}.)

The transforms $\dRt$ and $\Rt$ are dual to each other in the sense that for
all $\phi\in \Ds(X)$ and $\psi\in \Ds(\Xi)$
\begin{equation}\label{RT eq dualiteit R R*}
\int_{\Xi}\Rt \phi(\xi)\,\psi(\xi)\,d\xi
    =\int_{X} \phi(x)\,\dRt\psi(x)\,dx,
\end{equation}
assuming the measures are suitably normalized. This duality allows
to extend the Radon transform to the space of compactly supported
distributions: for $\mu\in\Es'(X)$ and $\nu\in\Es'(\Xi)$, we
define $\Rt\mu\in\Ds'(\Xi)$ and $\dRt\nu\in\Ds'(X)$ to be the
distributions
\begin{equation}\label{RT eq def distributions}
\Rt\mu:\Ds(\Xi)\ni\psi\mapsto\mu(\dRt\psi) \quad\text{and}\quad
\dRt\nu:\Ds(X)\ni\phi\mapsto\nu(\Rt\phi).
\end{equation}

%%% ----------------------------------------------------------------------
\subsection{Reductive Symmetric spaces}
\label{subsection reductive symmetric spaces}
%%% ----------------------------------------------------------------------
Let $G$ be a reductive Lie group of the Harish-Chandra class, let $\sigma$ be an involution of $G$ and let $X$ be the symmetric space $G/H$, where $H$ is an open subgroup of $G^{\sigma}$. We denote the corresponding involution on $\g$
by $\sigma$ as well.

Let $\theta$ be a Cartan involution commuting with $\sigma$ and let $K$ be the fixed point subgroup $G^{\theta}$. The corresponding involution of the Lie algebra $\g$ we denote by $\theta$ too.
The Lie algebra of $\g$ decomposes as a direct sum of vector spaces $\g=\g[k]\oplus\g[p]=\g[h]\oplus\g[q]$
of eigenspaces for $\theta$ and $\sigma$ respectively. Here the first component is the $+1$ and the second
the $-1$ eigenspace. Note that $\g[k]$ is the Lie algebra of $K$ and $\g[h]$ is the Lie algebra of $H$.
Since $\sigma$ and $\theta$ commute we also have
$$\g
=(\g[k]\cap\g[h])\oplus(\g[k]\cap\g[q])\oplus(\g[p]\cap\g[h])\oplus(\g[p]\cap\g[q]).$$
We fix a maximal abelian subspace $\g[a]_{\g[q]}$ of
$\g[p]\cap\g[q]$. The subgroup $H$ is called essentially connected
if $H=\Cen_{K\cap H}(\g[a]_{\g[q]})H^{0}$,
where $H^0$ is the identity component of $H$ and $\Cen_{K\cap
H}(\g[a]_{\g[q]})$ the centralizer of $\g[a]_{\g[q]}$ in $K\cap H$.
(See \cite[p.
24]{vdBan_ConvexityTheoremForSemisimpleSymmetricSpaces}.) If $H$ is essentially connected, then $X=G/H$ is
called a reductive symmetric space of the Harish-Chandra class, or
for short a reductive symmetric space.

Examples of reductive symmetric spaces are spheres, Euclidean
spaces, pseudo-Riemannian hyperbolic spaces and the De Sitter and the anti De
Sitter space. Also a Lie group $G$ of the Harish-Chandra
class may be viewed as a reductive symmetric space. In fact
$$(G\times G)/\diag(G)\to G;\qquad (g_{1},g_{2})\cdot\diag(G)\mapsto g_{1}g_{2}^{-1}$$
is a diffeomorphism and $\diag(G)$ is the fixed point subgroup of
the involution $(g_{1},g_{2})\mapsto(g_{2},g_{1})$ of $G$.
Therefore $(G\times G)/\diag(G)$ is a symmetric space. Since
$\Cen_{K}(\g[a])$ meets every connected component of $G$ (see
\cite[7.33]{Knapp_LieGroups}), the subgroup $\diag(G)$ of $G\times
G$ is essentially connected, hence $(G\times G)/\diag(G)$ is of
the Harish-Chandra class. Another important class of symmetric
spaces is the class of Riemannian symmetric spaces of non-compact type. These symmetric
spaces are obtained by taking $G$ to be a non-compact connected semi-simple Lie group and  $\sigma$
to be a Cartan involution of $G$.

From now on we will always assume that $X=G/H$ is a reductive symmetric space.

%%% ----------------------------------------------------------------------
\subsection{Parabolic subgroups}
%%% ----------------------------------------------------------------------
If $P$ is a parabolic subgroup of $G$, we write
$P=M_{P}A_{P}N_{P}$ for its Langlands decomposition such that $A_{P}$ is $\theta$-stable and we write $L_{P}$
for the $\theta$-stable reductive component $M_{P}A_{P}$ of $P$.
The corresponding decompositions of the  Lie algebra of $P$ are
denoted by $\g[m]_{P}\oplus\g[a]_{P}\oplus\g[n]_{P}$ and
$\g[l]_{P}\oplus\g[n]_{P}$, respectively.

We recall the following well known results.

\begin{lemma}\
\begin{enumerate}[(i)]
 \item If $P$ is a parabolic subgroup of $G$, then $L_{P}$ is a reductive Lie group of the Harish-Chandra class.
 \item Let $P$ and $Q$ be two parabolic subgroups of $G$. If $P\subseteq Q$, then
 $L_{P}\subseteq L_{Q}$, $A_{Q}\subseteq A_{P}$ and $N_{Q}\subseteq N_{P}$.
\end{enumerate}
\end{lemma}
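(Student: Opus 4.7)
For part (i), my plan is to verify the defining conditions of the Harish-Chandra class for $L_{P}$ one at a time, using that they hold for $G$ together with the identification $L_{P}=\Cen_{G}(A_{P})$. The Lie algebra $\g[l]_{P}=\Cen_{\g}(\g[a]_{P})$ is the centralizer in a reductive Lie algebra of a subspace of semisimple elements, and is therefore reductive. The group $L_{P}$ has finitely many connected components since $A_{P}$ is connected and $G$ has finitely many components, so the centralizer $\Cen_{G}(A_{P})$ inherits this property from $G$. The condition that $\Ad$ land in the inner automorphism group of $\g_{\C}$ is inherited directly from $G$ via the inclusion $\Ad(L_{P})\subseteq \Ad(G)$, and the finite-center condition on the analytic subgroup with Lie algebra $[\g[l]_{P},\g[l]_{P}]$ reduces to the corresponding condition for the derived subgroup of $G^{0}$. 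None of these verifications is hard in isolation; the only care needed is the bookkeeping between $L_{P}$, its identity component $L_{P}^{0}$, and the various derived subgroups.

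For part (ii), I would reduce the three inclusions to a single root-theoretic computation. Choose a minimal parabolic subgroup $P_{0}\subseteq P$ with $\theta$-stable split component $\g[a]_{0}\supseteq \g[a]_{P}$. Since $P\subseteq Q$ we also have $P_{0}\subseteq Q$, and both $P$ and $Q$ become standard parabolics with respect to $P_{0}$. Write $\Sigma\subseteq \g[a]_{0}^{*}$ for the restricted root system, $\Sigma^{+}$ for the positive system determined by $P_{0}$, and $\Delta$ for the associated simple roots. Standard Levi theory then produces subsets $F_{P},F_{Q}\subseteq \Delta$ such that $P$ and $Q$ are the standard parabolics attached to $F_{P}$ and $F_{Q}$, and the inclusion $P\subseteq Q$ translates into $F_{P}\subseteq F_{Q}$.

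From this translation the three inclusions are immediate. One has $\g[a]_{P}=\bigcap_{\alpha\in F_{P}}\ker\alpha$ inside $\g[a]_{0}$, so $F_{P}\subseteq F_{Q}$ forces $\g[a]_{Q}\subseteq\g[a]_{P}$ and hence $A_{Q}\subseteq A_{P}$. Next, $\g[n]_{P}$ is the sum of the root spaces $\g_{\alpha}$ for those $\alpha\in\Sigma^{+}$ lying outside the $\Z$-span of $F_{P}$, a collection containing the analogous one for $Q$; this gives $\g[n]_{Q}\subseteq \g[n]_{P}$, i.e. $N_{Q}\subseteq N_{P}$. Finally, from $L_{P}=\Cen_{G}(A_{P})$ and $L_{Q}=\Cen_{G}(A_{Q})$, the containment $A_{Q}\subseteq A_{P}$ yields $L_{P}\subseteq L_{Q}$. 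The real obstacle in the lemma is part (i): tracking the Harish-Chandra class axioms through the passage from $G$ to the centralizer $\Cen_{G}(A_{P})$, in particular the finiteness of the component group of $L_{P}$, requires some care, whereas part (ii) is pure bookkeeping once a common minimal parabolic is chosen.
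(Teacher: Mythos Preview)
The paper does not actually prove this lemma: it is introduced with ``We recall the following well known results'' and is stated without proof, with the general references \cite{Knapp_LieGroups} and \cite{Varadarajan_HarmonicAnalysisOnRealReductiveGroups} given at the start of Section~\ref{section Notation and Preliminaries} as background for all of the preliminary material. So there is no ``paper's own proof'' to compare against; your task is really to supply the argument the paper assumes the reader already knows.

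Your sketch is correct and is essentially the standard textbook argument. For (i), the identification $L_{P}=\Cen_{G}(A_{P})$ (equivalently $\Cen_{G}(\g[a]_{P})$) is indeed the right starting point, and verifying the Harish-Chandra class axioms for a centralizer of a $\theta$-stable vector group is exactly how Knapp proceeds; the only point worth a remark is that the analytic subgroup of $L_{P}$ with Lie algebra $[\g[l]_{P},\g[l]_{P}]$ coincides with the analytic subgroup of $G$ with that Lie algebra (both being connected with the same Lie algebra), so its center is a subgroup of the center of the semisimple analytic subgroup for $[\g,\g]$, hence finite. For (ii), reducing to a common minimal parabolic $P_{0}\subseteq P\subseteq Q$ and translating the inclusion into $F_{P}\subseteq F_{Q}$ for the associated subsets of simple roots is precisely the standard route, and your deductions of $A_{Q}\subseteq A_{P}$, $N_{Q}\subseteq N_{P}$, and $L_{P}=\Cen_{G}(A_{P})\subseteq\Cen_{G}(A_{Q})=L_{Q}$ from that are all correct.
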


Assume that $P$ and $Q$ are parabolic subgroups of $G$ and
$P\subseteq Q$.  We write $N_{P}^{Q}$ for $N_{P}\cap L_{Q}$. The Lie algebra of $N_{P}^{Q}$ is
denoted by $\g[n]_{P}^{Q}$. Note that $N_{Q}$ is a normal subgroup of $N_{P}$. In fact
$N_{P}= N_{Q}\rtimes N_{P}^{Q}$
and hence the multiplication map $N_{Q}\times N_{P}^{Q}\to N_{P}$ is a diffeomorphism.

%%% ----------------------------------------------------------------------
\subsection{The class of $\sigma\circ\theta$-stable parabolic subgroups}
\label{subsection sigma circ theta-stable parabolic subgroups}
%%% ----------------------------------------------------------------------
In this section we describe the $\sigma\circ\theta$-stable parabolic subgroups, i.e., the parabolic subgroups $P$ with the property that $\sigma\circ\theta(P)=P$.

\medbreak

Fix a maximal abelian subspace $\g[a]_{\g[q]}$ of
$\g[p]\cap\g[q]$. Let $\g[a]$ be a maximal abelian subspace of $\g[p]$ containing $\g[a]_{\g[q]}$. Then $\g[a]=(\g[a]\cap\g[h])\oplus\g[a]_{\g[q]}$. Denote by $A_{\g[q]}$ respectively $A$ the corresponding analytic subgroups of $G$.

Let $\Sigma(\g,\g[a]_{\g[q]})$ be the set of roots of $\g$ in $\g[a]_{\g[q]}^{*}$. For each $\alpha\in\Sigma(\g,\g[a]_{\g[q]})$ we define the root space
$$
\g_{\alpha}=\{Y\in\g: \ad(Z)Y=\alpha(Z)Y\text{ for all }Z\in\g[a]_{\g[q]}\}.
$$
Let $\Sigma^{+}(\g,\g[a]_{\g[q]})$ be a choice of a positive system for $\Sigma(\g,\g[a]_{\g[q]})$. Let $\g[n]_{0}=\bigoplus_{\alpha\in\Sigma^{+}(\g,\g[a]_{\g[q]})}\g_{\alpha}$ and let $N_{0}=\exp(\g[n]_{0})$. Then $\Cen_{G}(\g[a]_{\g[q]})$ normalizes $N_{0}$ and $P_{0}=\Cen_{G}(\g[a]_{\g[q]})N_{0}$ is a closed subgroup of $G$. In fact it is a minimal $\sigma\circ\theta$-stable parabolic subgroup. Every minimal $\sigma\circ\theta$-stable parabolic subgroup of $G$ is conjugate to $P_{0}$ via an element of $K$.

If $P_{0}$ is a minimal $\sigma\circ\theta$-stable parabolic subgroup, then $\g[a]_{P_{0}}\cap\g[q]$ is a maximal abelian subspace of $\g[p]\cap \g[q]$.
Let $\Sigma(\g,\g[a]_{P_{0}}\cap\g[q];P_{0})=\{\alpha\in\Sigma(\g,\g[a]_{P_{0}}\cap\g[q]):\g_{\alpha}\in\g[n]_{P_{0}}\}$. Then $\g[n]_{P_{0}}=\bigoplus_{\alpha\in\Sigma(\g,\g[a]_{P_{0}}\cap\g[q];P_{0})}\g_{\alpha}$. Furthermore, $\Sigma(\g,\g[a]_{P_{0}}\cap\g[q];P_{0})$ is a positive system for $\Sigma(\g,\g[a]_{P_{0}}\cap\g[q])$.

The following results are well known. (See for example \cite[first part, Chapter 6]{vdBanSchlichtkrullDelorme_LieTheory}.)
\begin{lemma}%\label{PS lemma sigma theta stable P properties}
Let $P$ be a $\sigma\circ\theta$-stable parabolic subgroup of $G$.
\begin{enumerate}[(i)]
 \item $L_{P}$ equals the centralizer of  $\g[a]_{P}\cap \g[q]$ in $G$.
 \item $A_{\g[q]}\subseteq P$ if and only if $\g[a]_{P}\cap\g[q]\subseteq\g[a]_{\g[q]}$.
 \item Assume $P_{0}$ is a minimal $\sigma\circ\theta$-stable parabolic subgroup contained in $P$ and $A_{\g[q]}\subseteq P_{0}$. Then
 $$
 \g[n]_{P}
 =\bigoplus_{\genfrac{}{}{0pt}{}{\alpha\in\Sigma(\g,\g[a]_{\g[q]};P_{0})}
    {\alpha|_{\g[a]_{P}\cap \g[a]_{\g[q]}}\neq 0}}
    \g_{\alpha}.
 $$
\end{enumerate}
\end{lemma}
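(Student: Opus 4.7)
The strategy is to prove (i) first, since both (ii) and (iii) follow from it by short arguments; the only genuinely non-trivial step is a weight-positivity claim in (i). Setup: since $A_{P}$ is the intrinsically defined $\theta$-stable split component of $P$, the condition $\sigma\circ\theta(P)=P$ forces $\sigma\circ\theta(\g[a]_{P})=\g[a]_{P}$, and combined with $\g[a]_{P}\subseteq\g[p]$ and $\theta=-\mathrm{id}$ on $\g[p]$, this means $\sigma\circ\theta$ acts as $-\mathrm{id}$ on $\g[a]_{P}\cap\g[h]$ and as $+\mathrm{id}$ on $\g[a]_{P}\cap\g[q]$; in particular $\g[a]_{P}$ is $\sigma$-stable and
\[
\g[a]_{P}=(\g[a]_{P}\cap\g[h])\oplus(\g[a]_{P}\cap\g[q]).
\]
The Langlands identity $\g[l]_{P}=\Cen_{\g}(\g[a]_{P})$ makes $L_{P}\subseteq\Cen_{G}(\g[a]_{P}\cap\g[q])$ automatic, so (i) reduces to the reverse inclusion; on the Lie-algebra level this in turn reduces to the claim that every nonzero $\g[a]_{P}$-weight occurring in $\g$ has non-trivial restriction to $\g[a]_{P}\cap\g[q]$.

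This weight claim is the main obstacle, and I would prove it by contradiction. Suppose $\alpha$ is a nonzero $\g[a]_{P}$-weight with $\alpha|_{\g[a]_{P}\cap\g[q]}=0$, and, replacing $\alpha$ by $-\alpha$ if necessary, arrange $\g_{\alpha}\subseteq\g[n]_{P}$. The sign computation above gives $(\sigma\circ\theta)^{\ast}\alpha=-\alpha$ on $\g[a]_{P}\cap\g[h]$ and $(\sigma\circ\theta)^{\ast}\alpha=0=-\alpha$ on $\g[a]_{P}\cap\g[q]$, whence $(\sigma\circ\theta)^{\ast}\alpha=-\alpha$ on all of $\g[a]_{P}$. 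Since $\sigma\circ\theta$ is an automorphism of $\g$ with $\sigma\circ\theta(\g[n]_{P})=\g[n]_{P}$, I conclude $\g_{-\alpha}=\sigma\circ\theta(\g_{\alpha})\subseteq\g[n]_{P}$; but positive and negative roots cannot both occur in the unipotent radical, contradiction. Consequently any $X\in\g$ centralised by $\g[a]_{P}\cap\g[q]$ has vanishing $\g_{\alpha}$-component for every $\alpha\neq 0$ and lies in $\g_{0}=\g[l]_{P}$, yielding $\Cen_{\g}(\g[a]_{P}\cap\g[q])=\g[l]_{P}$. Passage to the group level is routine: $\Cen_{G}(\g[a]_{P}\cap\g[q])$ and $L_{P}$ share the Lie algebra $\g[l]_{P}$ and hence the identity component $L_{P}^{0}$, while the matching of connected components is standard for the Harish-Chandra class, using $L_{P}=P\cap\theta(P)$ together with the fact that any $g$ centralising $\g[a]_{P}\cap\g[q]$ normalises $\g[l]_{P}$ and therefore both $\g[n]_{P}$ and $\theta(\g[n]_{P})$.

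Parts (ii) and (iii) then follow quickly. For (ii), if $A_{\g[q]}\subseteq P$ then $\g[a]_{\g[q]}\subseteq\g[p]\cap\mathrm{Lie}(P)\subseteq\mathrm{Lie}(P)\cap\theta(\mathrm{Lie}(P))=\g[l]_{P}=\Cen_{\g}(\g[a]_{P}\cap\g[q])$ by (i), so $\g[a]_{\g[q]}$ commutes with the abelian subspace $\g[a]_{P}\cap\g[q]$ of $\g[p]\cap\g[q]$; maximality of $\g[a]_{\g[q]}$ then forces $\g[a]_{P}\cap\g[q]\subseteq\g[a]_{\g[q]}$. Conversely, $\g[a]_{P}\cap\g[q]\subseteq\g[a]_{\g[q]}$ implies $\g[a]_{\g[q]}\subseteq\Cen_{\g}(\g[a]_{P}\cap\g[q])=\g[l]_{P}\subseteq\mathrm{Lie}(P)$, whence $A_{\g[q]}\subseteq L_{P}\subseteq P$. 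For (iii), the hypothesis $A_{\g[q]}\subseteq P_{0}$ combined with (ii) and the earlier-stated maximality of $\g[a]_{P_{0}}\cap\g[q]$ in $\g[p]\cap\g[q]$ yields $\g[a]_{P_{0}}\cap\g[q]=\g[a]_{\g[q]}$; hence $\g[n]_{P_{0}}=\bigoplus_{\alpha\in\Sigma(\g,\g[a]_{\g[q]};P_{0})}\g_{\alpha}$ and $\g[a]_{P}\cap\g[q]=\g[a]_{P}\cap\g[a]_{\g[q]}$. Since $\g[l]_{P}$ and $\g[n]_{P}$ are both $\g[a]_{\g[q]}$-stable, a root space $\g_{\alpha}\subseteq\g[n]_{P_{0}}\subseteq\g[l]_{P}\oplus\g[n]_{P}$ lies in $\g[n]_{P}$ precisely when $\g_{\alpha}\not\subseteq\g[l]_{P}=\Cen_{\g}(\g[a]_{P}\cap\g[q])$, i.e.\ when $\alpha|_{\g[a]_{P}\cap\g[a]_{\g[q]}}\neq 0$, which is the displayed formula.
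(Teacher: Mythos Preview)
Your proof is correct. The paper does not actually prove this lemma: it is stated as well known with a reference to \cite[first part, Chapter 6]{vdBanSchlichtkrullDelorme_LieTheory}, so there is no ``paper's approach'' to compare against. Your argument via the sign computation $(\sigma\circ\theta)^{\ast}\alpha=-\alpha$ on $\g[a]_{P}$ for any $\g[a]_{P}$-weight $\alpha$ vanishing on $\g[a]_{P}\cap\g[q]$ is clean and is the standard one.

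One small point: the passage to the group level in (i) is a bit loosely phrased. The claim that $g$ normalises $\g[l]_{P}$ ``and therefore both $\g[n]_{P}$ and $\theta(\g[n]_{P})$'' is not immediate---normalising $\g[l]_{P}$ alone does not obviously pin down $\g[n]_{P}$. A cleaner route is to observe that your weight-nonvanishing claim shows there exists a \emph{regular} element $H\in\g[a]_{P}\cap\g[q]$ (i.e.\ $\alpha(H)\neq 0$ for every $\g[a]_{P}$-root $\alpha$); then $\Cen_{G}(\g[a]_{P}\cap\g[q])\subseteq\Cen_{G}(H)$, and for regular $H$ one has $\Cen_{G}(H)=L_{P}$ by the usual Cartan-decomposition argument (write $g=k\exp X$, deduce $X\in\g[l]_{P}$ and that $k$ normalises $\g[a]_{P}$ and fixes the regular element $H$, hence acts trivially on $\g[a]_{P}$). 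This closes the gap without appealing to normalisation of $\g[n]_{P}$.
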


We write $\Psg(\g[a]_{\g[q]})$ for
the collection of $\sigma\circ \theta$-stable parabolic subgroups
$P$ of $G$ with $\g[a]_{P}\cap\g[a]_{\g[q]}=\g[a]_{\g[q]}$. Note
that $\Psg(\g[a]_{\g[q]})$ consist of the minimal
$\sigma\circ\theta$-stable parabolic subgroups containing
$A_{\g[q]}$.

\begin{lemma}\label{PSubgrp lemma n_MAN^(P_0)=n cap h}
Let $P_{m}$ be a minimal parabolic subgroup containing $A$ and let $P_{0}$ be a minimal
$\sigma\circ\theta$-stable parabolic subgroup containing $P_{m}$.
Then
$$\g[n]_{P_{m}}^{P_{0}}=\g[n]_{P_{m}}\cap \g[h]\quad \textnormal{and}\quad N_{P_{m}}^{P_{0}}=N_{P_{m}}\cap H.$$
\end{lemma}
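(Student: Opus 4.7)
The plan is to work in the root space decomposition of $\g$ with respect to $\g[a]$. Since $P_{0}$ is a minimal $\sigma\circ\theta$-stable parabolic containing $A$, the lemma recalled just above identifies $L_{P_{0}}$ with $\Cen_{G}(\g[a]_{\g[q]})$, so $\g[l]_{P_{0}}=\g[m]\oplus\g[a]\oplus\bigoplus_{\alpha|_{\g[a]_{\g[q]}}=0}\g_{\alpha}$ and hence
$$\g[n]_{P_{m}}^{P_{0}}=\g[n]_{P_{m}}\cap\g[l]_{P_{0}}=\bigoplus_{\alpha\in\Sigma^{+}(\g,\g[a]),\,\alpha|_{\g[a]_{\g[q]}}=0}\g_{\alpha}.$$
The bulk of the proof is the Lie algebra identity $\g[n]_{P_{m}}^{P_{0}}=\g[n]_{P_{m}}\cap\g[h]$; the group identity then follows.

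For $\g[n]_{P_{m}}^{P_{0}}\subseteq\g[h]$, I show that each root space $\g_{\alpha}$ with $\alpha|_{\g[a]_{\g[q]}}=0$ lies in $\g[h]$. Since $\sigma$ acts on $\g[a]$ as $+1$ on $\g[a]\cap\g[h]$ and $-1$ on $\g[a]_{\g[q]}$, the hypothesis yields $\sigma\alpha=\alpha$, so $\sigma$ preserves $\g_{\alpha}$. If $Y\in\g_{\alpha}\cap\g[q]$, the vector $Y-\theta Y$ lies in $\g[p]\cap\g[q]$ (using $\theta\sigma=\sigma\theta$) and centralizes $\g[a]_{\g[q]}$ (since $[Z,Y]=\alpha(Z)Y=0$ for $Z\in\g[a]_{\g[q]}$, and likewise for $\theta Y\in\g_{-\alpha}$). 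Maximality of $\g[a]_{\g[q]}$ in $\g[p]\cap\g[q]$ then places $Y-\theta Y$ in $\g[a]_{\g[q]}\subseteq\g_{0}$; but $Y-\theta Y\in\g_{\alpha}\oplus\g_{-\alpha}$, which meets $\g_{0}$ trivially, so $Y=\theta Y\in\g_{\alpha}\cap\g_{-\alpha}=0$.

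For the reverse inclusion, the compatibility imposed by $P_{m}\subseteq P_{0}$ is decisive. Since $\theta(P_{m})\subseteq\theta(P_{0})$ gives $\theta(\g[n]_{P_{0}})\cap\g[n]_{P_{m}}=0$, a root $\alpha\in\Sigma^{+}(\g,\g[a])$ with $\alpha|_{\g[a]_{\g[q]}}\neq 0$ must have $\alpha|_{\g[a]_{\g[q]}}$ positive for $P_{0}$, whence $\sigma\alpha\in\Sigma^{-}(\g,\g[a])$. For $Y=\sum_{\alpha\in\Sigma^{+}}Y_{\alpha}\in\g[n]_{P_{m}}\cap\g[h]$ with $Y_{\alpha}\in\g_{\alpha}$, matching root-space components in $\sigma Y=Y$ then forces $Y_{\alpha}=0$ whenever $\alpha|_{\g[a]_{\g[q]}}\neq 0$, since $\sigma Y_{\alpha}\in\g_{\sigma\alpha}$ has no partner on the right.

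The group identity now follows from the Lie algebra one. The inclusion $N_{P_{m}}^{P_{0}}\subseteq N_{P_{m}}\cap H$ is immediate via $\exp$. For the reverse, the semidirect product $N_{P_{m}}=N_{P_{0}}\rtimes N_{P_{m}}^{P_{0}}$ writes any $g\in N_{P_{m}}\cap H$ uniquely as $g=n_{0}n^{0}$ with $n_{0}\in N_{P_{0}}$ and $n^{0}\in N_{P_{m}}^{P_{0}}$; by the first part of the proof, $n^{0}\in H$. The identity $\sigma\circ\theta(P_{0})=P_{0}$ gives $\sigma(N_{P_{0}})=\theta(N_{P_{0}})$, which intersects $N_{P_{0}}$ only at the identity, so applying $\sigma$ to $g=n_{0}n^{0}$ yields $\sigma(n_{0})=n_{0}=e$ and therefore $g=n^{0}$. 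The main obstacle is the containment $\g_{\alpha}\subseteq\g[h]$ for $\alpha|_{\g[a]_{\g[q]}}=0$: mere $\sigma$-stability of $\g_{\alpha}$ does not suffice, and one must symmetrize via $\theta$ to invoke the maximality of $\g[a]_{\g[q]}$ in $\g[p]\cap\g[q]$.
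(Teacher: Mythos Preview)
Your proof is correct and follows the same strategy as the paper: both use the maximality of $\g[a]_{\g[q]}$ in $\g[p]\cap\g[q]$ for the inclusion $\g[n]_{P_m}^{P_0}\subseteq\g[h]$ and the identity $\sigma(\g[n]_{P_0})=\theta(\g[n]_{P_0})$ for the reverse inclusion, with the group statement then deduced from the Lie algebra one via the exponential diffeomorphism on $N_{P_m}$. You carry this out root-space by root-space where the paper argues with whole subspaces at once, and your treatment of $N_{P_m}\cap H$ via the semidirect factorization $N_{P_m}=N_{P_0}\rtimes N_{P_m}^{P_0}$ is a more explicit unpacking of the paper's one-line appeal to $N_{P_m}\cap G^{\sigma}=\exp(\g[n]_{P_m}\cap\g[h])$, but the content is identical.
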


\begin{proof}
We first note that
$$
\g[n]_{P_{m}} \cap \g[h] \subseteq
\g[n]_{P_{m}} \cap \sigma(\g[n]_{P_{m}}) =
\g[n]_{P_{m}}\cap \sigma(\g[n]_{P_{0}}\oplus\g[n]_{P_{m}}^{P_{0}}).
$$
As $\sigma(\g[n]_{P_{0}})=\theta \g[n]_{P_{0}}$ and $\sigma(\g[n]_{P_{m}}^{P_{0}}) \subseteq \sigma(\g[l]_{P_{0}}) = \g[l]_{P_{0}}$,
it follows that $\g[n]_{P_{m}}\cap\g[h] \subseteq\g[n]_{P_{m}}^{P_{0}}$.

Since $\g[n]_{P_{m}}^{P_{0}}$ is a direct sum of root spaces for roots in $\g[a]$ that vanish on $\g[a]_{\g[q]}$, $\g[n]_{P_{m}}^{P_{0}}$ is $\sigma$-stable. Moreover, it follows that $\g[n]_{P_{m}}^{P_{0}}\subseteq\g[h]$ if and only if $\big(\g[n]_{P_{m}}^{P_{0}}\oplus\theta\g[n]_{P_{m}}^{P_{0}}\big)\cap \g[p]\subseteq\g[h]$.
From the maximality of $\g[a]_{\g[q]}$ it follows that $\g[m]_{P_{0}}\cap\g[p]\subseteq\g[h]$. Therefore
$\big(\g[n]_{P_{m}}^{P_{0}}\oplus\theta\g[n]_{P_{m}}^{P_{0}}\big)\cap\g[p]\subseteq\g[m]_{P_{0}}\cap\g[p] \subseteq\g[h]$. This proves the first identity.

Since $\exp:\g[n]_{P_{m}}\to N_{P_{m}}$ is a diffeomorphism, it follows that $N_{P_{m}} \cap H \subseteq N_{P_{m}}\cap G^{\sigma}=\exp(\g[n]_{P_{m}}\cap \g[h])$, hence $N_{P_{m}}\cap H=\exp(\g[n]_{P_{m}}\cap\g[h])$. The second identity now follows from the first.
\end{proof}

If $P$ is a $\sigma\circ\theta$-stable parabolic subgroup containing $A_{\g[q]}$, then
$$L_{P}=(M_{P}\cap K)A_{\g[q]}(L_{P}\cap H).$$
Furthermore, if $Y,Y_{0}\in \g[a]_{\g[q]}$ and $\exp(Y)\in(M_{P}\cap K)\exp(Y_{0})(L_{P}\cap H)$, then $Y=\Ad(k)Y_{0}$ for some $k\in\Nor_{M_{P}\cap K\cap H}(\g[a]_{\g[q]})$. This is the polar decomposition of $L_{P}$. Note that in particular $G$ is a $\sigma\circ\theta$-stable parabolic subgroup and the polar decomposition of $G$ is $G=KA_{\g[q]}H$.

To conclude this section, we describe a generalization of the Iwasawa decomposition. Let $P_{0}$ be a minimal $\sigma\circ\theta$-stable parabolic subgroup. Then the double coset space $P_{0}\setminus G/H$ is finite. Furthermore, the sets $P_{0}wH$ with $w\in\Nor_{K}(\g[a]_{\g[q]})$ are open subsets of $G$. In fact, these are all the open $P_{0}\times H$-orbits in $G$. The union of these open orbits is dense in $G$. Finally the map
$$
N_{P_{0}}\times A_{\g[q]}\times(M_{P_{0}}\cap K)\times_{M_{P_{0}}\cap K\cap H} H\to P_{0}H
$$
is a diffeomorphism onto an open subset of $G$.

%%% ----------------------------------------------------------------------
\subsection{Notation}
\label{subsection Notation}
%%% ----------------------------------------------------------------------
We recall that $\Sigma(\g,\g[a]_{\g[q]})$ is a possibly unreduced root system.
We write $\Sigma_{\pm}(\g,\g[a]_{\g[q]};P)$
for the set of positive roots $\alpha\in\Sigma(\g,\g[a]_{\g[q]};P)$
for which the $\pm 1$-eigenspace of $\sigma\circ\theta$ in the
root space $\g_{\alpha}$ is non-trivial.
We denote by $\WG$ the Weyl group of
the root system $\Sigma(\g,\g[a]_{\g[q]})$. Note that there is a
natural isomorphism $\WG \simeq\Nor_{K}(\g[a]_{\g[q]})/\Cen_{K}(\g[a]_{\g[q]})$.
If $S$ is a subgroup of $G$, then we define $\WG[S]$ to be the
subgroup consisting of elements that can be realized as ${\rm
Ad}(s)|_{\g[a]_{\g[q]}}$ for $s \in \Nor_{S}(\g[a]_{\g[q]})$. We write $\WGs$ for a set of representatives
in $\Nor_K(\g[a]_{\g[q]})$ of $\WG/\WG[K\cap H]$.
The Weyl group of
$\Sigma(\g[l]_{P},\g[a]_{\g[q]})$ equals $\WG[M_{P}\cap K]$. We write $\WGs[M_{P}]$
for a set of representatives in $\Nor_{M_{P}\cap K}(\g[a]_{\g[q]})$ for $\WG[M_{P}\cap K]/\WG[M_{P}\cap K\cap H]$.

If $P$ is a $\sigma\circ\theta$-stable parabolic subgroup containing $A_{\g[q]}$, then by $\g[a]^{+}_{\g[q]}(P)$ we denote the set
consisting of elements $H\in\g[a]_{\g[q]}$ such that $\alpha(H)>0$
for all $\alpha\in\Sigma(\g,\g[a]_{\g[q]};P)$. Furthermore, for $R\in \R$, we define
$$\g[a]_{\g[q]}^{*}(P,R)=
\{\lambda\in\g[a]_{\g[q],\C}^{*}:\Re\,\lambda(H_{\alpha})<R\text{ for all }\alpha\in\Sigma(\g,\g[a]_{\g[q]};P)\}.$$

Let $B$ be an $\Ad(G)$-invariant and $\theta$-invariant bilinear form on $\g$ such that $B$ agrees with the Killing form on $[\g,\g]$ and $-B(\cdot,\theta\cdot)$ is an inner product on $\g$. Thus $\g[k]\perp\g[p]$. We further assume that $\Cen_{\g}\perp[\g,\g]$.

For a root $\alpha\in\Sigma(\g,\g[a]_{\g[q]})$, we define
$H_{\alpha}\in\g[a]_{\g[q]}$ to be the element given by
\begin{equation}\label{PSubgrp eq alpha(Y)=B(H_alpha,Y)}
\alpha(Y)=B(H_{\alpha},Y)\qquad(Y\in\g[a]_{\g[q]}).
\end{equation}

For $g\in G$ and $P$ a parabolic subgroup of $G$ we denote the parabolic subgroup $g^{-1}Pg$ by $P^{g}$.

Finally if $V$ is a Fr\'echet space and $(\pi,V)$ a continuous
representation of $G$ on $V$, then we denote the space of smooth
vectors for $\pi$ by $V^{\infty}$.

%%% ----------------------------------------------------------------------
\section{Radon transformation on a reductive symmetric space}
\label{section Radon transformation on a reductive symmetric space}
%%% ----------------------------------------------------------------------
In Sections \ref{subsection Horospheres} -- \ref{subsection Radon transforms} we first introduce for each $\sigma\circ\theta$-stable parabolic subgroup $P$ a homogeneous space $\Xi_{P}$. For a pair of
$\sigma\circ\theta$-stable parabolic subgroups $P$ and $Q$, with
$P\subseteq Q$, we then introduce a class of closed submanifolds (related to $P$) of $\Xi_{Q}$ and
describe the Radon transforms that are obtained by integrating over these submanifolds.
Then in Section \ref{subsection Extensions of the Radon transforms} some basic
estimates are derived. It follows from these estimates that the
Radon transforms, which are initially defined on the space of
compactly supported smooth functions, extend to larger spaces of
functions and distributions that are defined in Section \ref{subsection
Spaces of functions and distributions}. In Section \ref{subsection
Relations between Radon transforms} we describe some relations
between Radon transforms related to different parabolic subgroups.
%%% ----------------------------------------------------------------------
\subsection{Horospheres}
\label{subsection Horospheres}
%%% ----------------------------------------------------------------------
For a $\sigma\circ\theta$-stable parabolic subgroup $P$, we define
$$\Xi_{P}=G/(L_{P}\cap H)N_{P}.$$

Since $(L_{P}\cap H)N_{P}$ is closed in $P$, hence in $G$, it follows that $\Xi_{P}$ is a smooth homogeneous space for $G$.
Note that $G$  is a $\sigma\circ\theta$-stable parabolic subgroup of $G$ and $\Xi_{G}=X$.
The cosets $e\cdot H$ and $e\cdot (L_{P}\cap H)N_{P}$ are denoted
by $x_{0}$ and $\xi_{P}$, respectively.

\medbreak

Let $P_{0}$ be a
minimal $\sigma\circ\theta$-stable parabolic subgroup of $G$.
A horosphere in $X$ is an orbit of a subgroup of $G$ conjugate to $N_{P_{0}}$ in $X$ of maximal dimension, i.e., a submanifold of $X$ (see Proposition \ref{RadTrans prop N_P cdot xi_Q closed submanifold}) of the form
$g_{1}N_{P_{0}}g_{2}\cdot x_{0}$ with dimension equal to the dimension of
$N_{P_{0}}$. The set of all horospheres in $X$ is denoted by
$\Hor(X)$.

According to \cite[Theorem
13]{Rossmann_StructureOfSemisimpleSymmetricSpaces} and
\cite{Matsuki_TheOrbitsOfAffineSymmetricSpacesUnderTheActionOfMinimalParabolicSubgroups}
$G$ equals the union of subsets $P_{0}kH$, where $k$ runs over a
finite subset of $K$. This implies that $\Hor(X)$ is the union of
finitely many $G$-orbits. The dimension of these orbits need not
be all equal. It follows from the same theorem in
\cite{Rossmann_StructureOfSemisimpleSymmetricSpaces} that the set
of orbits of maximal dimension is in bijection with $\WGs$ via the
map $w\mapsto G\cdot(N_{P_{0}}w\cdot
x_{0})=G\cdot(w^{-1}N_{P_{0}^{w}}\cdot x_{0})$. Here the
superscript ${}^{w}$ denotes conjugation with $w^{-1}$.

The stabilizer in $G$ of $\xi_{P_{0}}$ equals $(L_{P_{0}}\cap
H)N_{P_{0}}$. (See Proposition \ref{Transv prop horosphere
stabilizer} in Appendix B.) Therefore the $G$-orbits in $\Hor(X)$ of maximal
dimension, i.e., the sets $G\cdot\xi_{P_{0}^{w}}$ for $w\in\WGs$, are
parametrized by the homogeneous spaces $\Xi_{P_{0}^{w}}$.

The double fibration
\begin{equation}\label{Horospheres eq double fibration}
\xymatrix{
    &G/(L_{P_{0}}\cap H)\ar[ld]_{\Pi_{X}}\ar[rd]^{\Pi_{\Xi_{P_{0}}}}  &\\
**[l]X  &               &**[r]\Xi_{P_{0}}
    }
\end{equation}
describes the incidence relation between points in $X$ and
horospheres in $X$ which are parametrized by $\Xi_{P_{0}}$: a point $x$ is
contained in a horosphere parametrized by $\xi\in\Xi_{P_{0}}$ if
and only if $x\in\Pi_{X}\big(\Pi_{\Xi_{P_{0}}}^{-1}(\xi)\big)$.

If $\sigma=\theta$, then the double fibration (\ref{Horospheres eq double fibration}) and the corresponding Radon transform reduce to the double fibration and the Radon transform considered by Helgason.

%%% ----------------------------------------------------------------------
\subsection{Double fibration}
\label{subsection Double fibration}
%%% ----------------------------------------------------------------------
In this and the following sections we will assume that $P$ and $Q$ are $\sigma\circ\theta$-stable parabolic subgroups
such that $A \subseteq P\subseteq Q$.
Since $L_{P}\cap H$ is a closed subgroup of $L_{Q}$, it follows that $(L_{P} \cap H) N_{Q}$ is a closed subgroup of $G$.  We recall that $\Xi_{P} = G/(L_{P} \cap H)N_{P}$
and $\Xi_{Q} = G/(L_{Q}\cap H)N_{Q}$ and consider the following generalization of (\ref{Horospheres eq double fibration}).

\begin{equation}\label{RadTrans eq double fibration}
\xymatrix{
    &G/(L_{P}\cap H)N_{Q}\ar[ld]_{\Pi_{\Xi_{Q}}}\ar[rd]^{\Pi_{\Xi_{P}}}  &\\
**[l]\Xi_{Q}  &               &**[r]\Xi_{P}
    }
\end{equation}
Note that for $Q=G$ and $P=P_{0}\in\Psg(\g[a]_{\g[q]})$, (\ref{RadTrans eq double fibration}) reduces to (\ref{Horospheres eq double fibration}).

In view of  the following proposition, (\ref{RadTrans eq double fibration}) is a double fibration of the form (\ref{RT eq double fibration}).

\begin{prop}
Let $P, Q$ be $\sigma\circ\theta$-stable parabolic subgroups with $P\subseteq Q$. Then
$$
(L_{Q}\cap H)N_{Q}\cap(L_{P}\cap H)N_{P}=(L_{P}\cap H)N_{Q}.
$$
\end{prop}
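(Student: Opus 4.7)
The plan is to prove the two inclusions separately. The easy containment $(L_P\cap H)N_Q\subseteq(L_Q\cap H)N_Q\cap(L_P\cap H)N_P$ is immediate from $L_P\subseteq L_Q$ and $N_Q\subseteq N_P$, recalled in the lemma preceding this section.

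For the nontrivial inclusion I would take $g$ in the intersection and write $g=\ell_Q n_Q=\ell_P n_P$ with $\ell_Q\in L_Q\cap H$, $\ell_P\in L_P\cap H$, $n_Q\in N_Q$, $n_P\in N_P$. Using the diffeomorphism $N_Q\times N_P^Q\to N_P$ from the preliminaries, decompose $n_P=n'n_P^Q$ with $n'\in N_Q$ and $n_P^Q\in N_P^Q=N_P\cap L_Q$. Since $n_P^Q\in L_Q$ normalizes $N_Q$, one rewrites $g=\ell_P n'n_P^Q=(\ell_P n_P^Q)\cdot\bigl((n_P^Q)^{-1}n'n_P^Q\bigr)$, with first factor in $L_Q$ and second factor in $N_Q$. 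Uniqueness of the decomposition $Q=L_Q N_Q$ then forces $\ell_Q=\ell_P n_P^Q$, so $n_P^Q=\ell_P^{-1}\ell_Q\in H$.

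The key step is to show $N_P^Q\cap H=\{e\}$. I would actually prove the stronger statement $N_P\cap H=\{e\}$ for every $\sigma\circ\theta$-stable parabolic $P$: if $n\in N_P\cap H$ then $n=\sigma(n)\in\sigma(N_P)$, and the $\sigma\circ\theta$-stability of $P$ combined with the $\theta$-stability of the Levi $L_P$ gives $\sigma(N_P)=\theta(N_P)=N_{\theta P}$. Because $\theta$ acts as $-1$ on $\g[a]_P\subseteq\g[p]$, the parabolic $\theta P$ is opposite to $P$ with common Levi $L_P$, so from the direct sum decomposition $\g=\g[n]_P\oplus\g[l]_P\oplus\theta(\g[n]_P)$ and the fact that the corresponding multiplication map $\theta(N_P)\times L_P\times N_P\to G$ is a diffeomorphism onto an open subset of $G$ one deduces $N_P\cap N_{\theta P}=\{e\}$. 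Applied to $n_P^Q\in N_P\cap H$ this yields $n_P^Q=e$, whence $n_P=n'\in N_Q$ and $g=\ell_P n'\in(L_P\cap H)N_Q$.

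The main obstacle is the triviality $N_P\cap H=\{e\}$, which rests on identifying $\sigma(N_P)$ with the unipotent radical of the opposite parabolic $\theta P$; once this is in hand, everything else is formal manipulation with the semidirect decomposition $N_P=N_Q\rtimes N_P^Q$ and the uniqueness of the Langlands decomposition of $Q$.
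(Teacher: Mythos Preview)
Your proof is correct and follows essentially the same route as the paper's: both reduce the nontrivial inclusion, via the Langlands decomposition $Q=L_QN_Q$ and the splitting $N_P=N_QN_P^Q$, to showing that the $N_P^Q$-component lies in $H$ and then invoke $N_P^Q\cap H=\{e\}$; and both prove this triviality by using $\sigma\circ\theta$-stability to identify $\sigma(N_P^Q)$ (or $\sigma(N_P)$) with its $\theta$-image, which intersects the original nilpotent only in $\{e\}$. The only cosmetic difference is that you prove the stronger $N_P\cap H=\{e\}$ and specialize, whereas the paper works directly with $N_P^Q$.
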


\begin{proof}
As $L_{P}\subseteq L_{Q}$ and $N_{Q}\subseteq N_{P}$, it follows that the set on the right-hand side is contained
in the one on the left-hand side. We turn to the converse inclusion. Assume that $g$ belongs to the intersection on the
left-hand side. Using that $N_{P}=N_{P}^{Q}N_{Q}$, that $N_{P}^{Q}\subseteq L_{Q}$ and that $L_{P} \subseteq L_{Q}$ we see that $g=ln$
for certain elements $n \in N_{Q}$ and $l \in (L_{Q} \cap H) \cap (L_{P} \cap H)N_{P}^{Q}$.
Since $P$ and $Q$ are $\sigma\circ \theta$-stable, $\sigma(N_{P}^{Q}) = \theta N_{P}^{Q}$, so that $N_{P}^{Q}\cap H=\{e\}$. Therefore, $l \in L_{P} \cap H$.
\end{proof}

The double fibration (\ref{RadTrans eq double fibration}) describes the incidence relations between points in $\Xi_{Q}$ and
subsets of $\Xi_{Q}$ of the form $g N_{P}\cdot \xi_{Q}$.
For $\xi\in\Xi_{P}$ we define $E_{P}^{Q}(\xi)$ to be the subset of $\Xi_{Q}$ given by
$$
E_{P}^{Q}(\xi)=\Pi_{\Xi_{Q}}\big(\Pi_{\Xi_{P}}^{-1}(\{\xi\})\big).
$$
Moreover, we agree to write $E_{P}(\xi)$ for $E_{P}^{G}(\xi)$.
Note that for $g \in G$, we have $E_{P}^{Q}(g\cdot\xi_{P})=g\cdot E_{P}^{Q}(\xi_{P})=gN_{P}\cdot\xi_{Q}=gN_{P}^{Q}\cdot\xi_{Q}$.

According to Corollary \ref{Transv cor (L_Q cap H)N_Q and (L_P cap
H)N_P transversal} of Appendix B, the groups $(L_{P}\cap
H)N_{P}$ and $(L_{Q}\cap H)N_{Q}$ are transversal. We recall from
Section \ref{subsection Introduction to double fibrations} that this means in
particular that the map $\xi\mapsto E_{P}^{Q}(\xi)$ is injective.

\begin{prop}\label{RadTrans prop N_P cdot xi_Q closed submanifold}
Let $g \in  G$. Then we have the following.
\begin{enumerate}[(i)]
\item
$E_{P}^{Q}(g\cdot\xi_{P})$ is a closed submanifold of $\Xi_{Q}$.
\item
The map $n \mapsto gn \cdot \xi_{Q}$ is a diffeomorphism from $N_{P}^{Q}$ onto $E_{P}^{Q}(g\cdot\xi_{P})$.
\end{enumerate}
\end{prop}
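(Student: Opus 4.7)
\noindent
The plan is to prove both assertions simultaneously by analyzing the map $\phi\colon N_P^Q\to\Xi_Q,\ n\mapsto gn\cdot\xi_Q$. By $G$-equivariance we may left-translate by $g^{-1}$ and reduce to the case $g=e$. The first observation is that the image of $\phi$ is exactly $E_P^Q(\xi_P)$: since $N_P=N_P^Q N_Q$ and $N_Q\subseteq(L_Q\cap H)N_Q$ fixes $\xi_Q$, we have $E_P^Q(\xi_P)=N_P\cdot\xi_Q=N_P^Q\cdot\xi_Q$. So it suffices to show that $\phi$ is a diffeomorphism onto a closed submanifold.

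\smallskip
\noindent
For injectivity of $\phi$: if $n_1\cdot\xi_Q=n_2\cdot\xi_Q$ with $n_1,n_2\in N_P^Q$, then $n:=n_2^{-1}n_1\in N_P^Q\cap(L_Q\cap H)N_Q$. Since $N_P^Q\subseteq L_Q$ and $L_Q\cap N_Q=\{e\}$ (because $L_Q\cap\g[n]_Q=0$), this intersection reduces to $N_P^Q\cap (L_Q\cap H)\subseteq N_P^Q\cap H$. Because $P$ and $Q$ are $\sigma\circ\theta$-stable, $\sigma(N_P^Q)=\theta(N_P^Q)$, and as $\theta(N_P)\cap N_P=\{e\}$, this forces $n=e$; thus $n_1=n_2$. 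The same argument at the Lie-algebra level shows that $\g[n]_P^Q\cap\g[h]=0$, so
$$
d\phi_e\colon \g[n]_P^Q\longrightarrow T_{\xi_Q}\Xi_Q\;\cong\;\g/\bigl((\g[l]_Q\cap\g[h])+\g[n]_Q\bigr)
$$
is injective (the target decomposition is direct because $\g[l]_Q\cap\g[n]_Q=0$). By $N_P^Q$-equivariance $d\phi$ is injective at every point of $N_P^Q$, so $\phi$ is an injective immersion.

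\smallskip
\noindent
It remains to upgrade this bijective immersion to an embedding with closed image. The plan here is to factor $\phi$ through the double fibration: the fiber $\Pi_{\Xi_P}^{-1}(\xi_P)$ is a closed submanifold of $Z=G/(L_P\cap H)N_Q$, and a direct computation (using that $L_P\cap H$ normalizes $N_P^Q$ and that $N_Q\subseteq(L_P\cap H)N_Q$) identifies it with the $N_P^Q$-orbit of the base point $e\cdot(L_P\cap H)N_Q$. Then $\phi$ factors as the diffeomorphism $N_P^Q\to\Pi_{\Xi_P}^{-1}(\xi_P)$ followed by $\Pi_{\Xi_Q}|_{\Pi_{\Xi_P}^{-1}(\xi_P)}$, and the latter is injective by the transversality of $(L_P\cap H)N_P$ and $(L_Q\cap H)N_Q$ established in Appendix B. Closedness of $E_P^Q(\xi_P)$ in $\Xi_Q$ is then obtained from the Iwasawa-type decomposition of $G$ relative to a minimal $P_0\in\Psg(\g[a]_{\g[q]})$ with $P_0\subseteq P$ (recalled in the Notation section): the open dense set $P_0H$ already factors as a diffeomorphic product in which $N_{P_0}$, hence $N_P^Q$, appears as a direct factor, and this makes the orbit $N_P^Q\cdot\xi_Q$ closed.

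\smallskip
\noindent
The routine parts are the algebraic identities for injectivity of $\phi$ and of $d\phi$, which both reduce to the single Lie-algebra fact $\g[n]_P^Q\cap\g[h]=0$ coming from $\sigma\circ\theta$-stability. The main obstacle is the topological step: closedness of the image $E_P^Q(\xi_P)=N_P^Q\cdot\xi_Q$, equivalently closedness of $N_P^Q(L_Q\cap H)N_Q$ in $G$. Once this is dealt with via the Iwasawa-type decomposition (and the reduction of the problem to the $L_Q$-symmetric space $L_Q/(L_Q\cap H)$), the fact that $\phi$ is a bijective immersion between manifolds of the same dimension as its image completes the proof of both~(i) and~(ii).
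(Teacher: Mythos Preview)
Your plan coincides with the paper's: pass to the closed submanifold $L_Q\cdot\xi_Q\cong L_Q/(L_Q\cap H)$ of $\Xi_Q$ via the diffeomorphism $K\times_{K\cap M_Q}L_Q/(L_Q\cap H)\to\Xi_Q$, and then use the generalized Iwasawa decomposition inside $L_Q$ for a minimal $P_0\cap L_Q$. Your direct verification of injectivity and of the immersion property via $\g[n]_P^Q\cap\g[h]=0$ is correct and slightly more elementary than the paper's route, which reads both off from van~den~Ban's Lemma~3.4 at once. (A small slip: what gives injectivity of $\Pi_{\Xi_Q}$ on the fiber $\Pi_{\Xi_P}^{-1}(\xi_P)$ is the intersection formula $(L_Q\cap H)N_Q\cap(L_P\cap H)N_P=(L_P\cap H)N_Q$, proved just before this proposition, rather than the transversality statement of Appendix~B; in any case you had already proved injectivity of $\phi$ directly.)

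The genuine gap is in the closedness step. The Iwasawa factorization only exhibits $N_P^Q\cdot\xi_Q$ as a closed submanifold of the \emph{open} orbit $(P_0\cap L_Q)\cdot\xi_Q$ inside $L_Q\cdot\xi_Q$; your clause ``and this makes the orbit $N_P^Q\cdot\xi_Q$ closed'' does not follow, because a sequence $n_j\cdot\xi_Q$ could a~priori converge to a boundary point of that open orbit. (Incidentally, $P_0H$ is open in $G$ but not dense; the dense set is $\bigcup_{w\in\WGs}P_0wH$.) The paper supplies the missing argument: by van~den~Ban's Lemma~3.4 applied to $L_Q$, any sequence $n_ja_jm_j\cdot\xi_Q$ with $n_j\in N_{P_0}^Q$, $a_j\in A_{\g[q]}$, $m_j\in M_{P_0}$ that converges to a boundary point of $(P_0\cap L_Q)\cdot\xi_Q$ must have $\{a_j\}$ not relatively compact in $A_{\g[q]}$. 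For a sequence in $N_P^Q\cdot\xi_Q$ the $A_{\g[q]}$-coordinate is identically $e$, so its limit cannot lie on the boundary; it therefore lies in $(P_0\cap L_Q)\cdot\xi_Q$, hence in the relatively closed subset $N_P^Q\cdot\xi_Q$. This divergence-of-$A_{\g[q]}$ step is what your sketch is missing.
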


\begin{proof}
Without loss of generality we may assume that $g =e$.
The map
\begin{equation}\label{RadTrans eq K times_(M_Q cap K) L_Q/(L_Q cap H) to Xi_Q}
K\times_{(K\cap M_{Q})}L_{Q}/(L_{Q}\cap H)\to\Xi_{Q};
\quad\big(k,l\cdot(L_{Q}\cap H)\big)\mapsto kl\cdot\xi_{Q}
\end{equation}
is a diffeomorphism. Hence, $l \mapsto l \cdot \xi_{Q}$ is a diffeomorphism from $L_{Q}/(L_{Q}\cap H)$ onto the closed
submanifold $L_{Q}\cdot\xi_{Q}$ of $\Xi_{Q}$.
Let now $P_{0}$ be a minimal $\sigma\circ\theta$-stable parabolic subgroup contained in $P$.
From \cite[Lemma 3.4]{vdBan_ConvexityTheoremForSemisimpleSymmetricSpaces} applied to $L_{Q}$ and the minimal $\sigma\circ\theta$-stable parabolic subgroup $P_{0}\cap L_{Q}$ of $L_{Q}$, it follows that
the multiplication map
\begin{equation}\label{RadTrans eq N_P_0^Q x L_P_0 x (L_Q cap H) to L_Q}
N_{P_{0}}^{Q}\times L_{P_{0}}\times_{(L_{P_{0}}\cap H)}(L_{Q}\cap H)\to L_{Q}
\end{equation}
is a diffeomorphism onto the open subset $(P_{0}\cap L_{Q})(L_{Q}\cap H)$ of $L_{Q}$.
Therefore
$$
N_{P_{0}}^{Q}\times L_{P_{0}}/(L_{P_{0}}\cap H)\to L_{Q}\cdot\xi_{Q}
$$
is a diffeomorphism onto the open subset $(P_{0}\cap L_{Q})\cdot\xi_{Q}$ of the submanifold $L_{Q}\cdot\xi_{Q}$ of $\Xi_{Q}$. As the multiplication map $N_{P_{0}}^{P}\times N_{P}^{Q}\to N_{P_{0}}^{Q}$
is a diffeomorphism, the map $N_{P}^{Q}\ni n\mapsto n\cdot \xi_{Q}$ is a diffeomorphism onto $N_{P}^{Q}\cdot\xi_{Q}$ and this set is a submanifold of $L_{Q}\cdot\xi_{Q}$ and therefore a submanifold of $\Xi_{Q}$. Furthermore, it follows that $N_{P}^{Q}\cdot\xi_{Q}$ is closed in $(P_{0}\cap L_{Q})\cdot\xi_{Q}$ equipped with the subspace topology. It remains to be shown that $N_{P}^{Q}\cdot\xi_{Q}$ is closed in $\Xi_{Q}$.

Let $(n_{j})_{j\in \N}$, $(a_{j})_{j\in\N}$ and $(m_{j})_{j\in\N}$
be sequences in $N_{P_{0}}^{Q}$, $A_{\g[q]}$ and $M_{P_{0}}$,
respectively, such that $(n_{j}a_{j}m_{j}\cdot\xi_{Q})_{j\in\N}$
converges to a point in the boundary of $(P_{0}\cap
L_{Q})\cdot\xi_{Q}$. Then, in view of \cite[Lemma
3.4]{vdBan_ConvexityTheoremForSemisimpleSymmetricSpaces}, the set
$\{a_{j}:j\in\N\}$ is not relatively compact in $A_{\g[q]}$. It
follows that if $(n_{j}\cdot\xi_{Q})_{j\in\N}$ is a sequence in
$N_{P}^{Q}\cdot\xi_{Q}$ converging to $\xi$ in $\Xi_{Q}$, then
$\xi$ cannot be an element of the boundary of $(P_{0}\cap
L_{Q})\cdot\xi_{Q}$, hence $\xi\in (P_{0}\cap L_{Q})\cdot\xi_{Q}$.
Since $N_{P}^{Q}\cdot\xi_{Q}$ is closed in $(P_{0}\cap
L_{Q})\cdot\xi_{Q}$, we conclude that $\xi\in
N_{P}^{Q}\cdot\xi_{Q}$. This proves that $N_{P}^{Q}\cdot\xi_{Q}$
is closed in $\Xi_{Q}$.
\end{proof}

\begin{cor}\label{RadTrans cor N_P(L_Q cap H) closed}
$(L_{Q}\cap H)N_{Q}(L_{P}\cap H)N_{P}=(L_{Q}\cap H)N_{P}$ is a closed submanifold of $G$.
\end{cor}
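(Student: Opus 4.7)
The plan is to first verify the set equality by a direct product computation, and then to identify $(L_{Q}\cap H)N_{P}$, after applying the inversion diffeomorphism, with the preimage of the closed submanifold $E_{P}^{Q}(\xi_{P})\subseteq\Xi_{Q}$ under the canonical projection $\Pi_{\Xi_{Q}}\colon G\to\Xi_{Q}$.

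For the equality of sets, the inclusion $P\subseteq Q$ yields $L_{P}\subseteq L_{Q}$; in particular $L_{P}\cap H\subseteq L_{Q}$, which normalizes $N_{Q}$, so $N_{Q}(L_{P}\cap H)=(L_{P}\cap H)N_{Q}$. Combining this with $L_{P}\cap H\subseteq L_{Q}\cap H$ (giving $(L_{Q}\cap H)(L_{P}\cap H)=L_{Q}\cap H$) and $N_{Q}\subseteq N_{P}$ (giving $N_{Q}N_{P}=N_{P}$), one obtains
$$
(L_{Q}\cap H)N_{Q}(L_{P}\cap H)N_{P}=(L_{Q}\cap H)(L_{P}\cap H)N_{Q}N_{P}=(L_{Q}\cap H)N_{P}.
$$

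For the submanifold claim, let $\iota\colon G\to G$ be the inversion, a diffeomorphism. Since $L_{Q}\cap H$ and $N_{P}$ are subgroups and $N_{P}=N_{P}^{Q}N_{Q}$, with $L_{Q}\cap H$ normalizing $N_{Q}$, a short computation gives
$$
\iota\bigl((L_{Q}\cap H)N_{P}\bigr)=N_{P}(L_{Q}\cap H)=N_{P}^{Q}N_{Q}(L_{Q}\cap H)=N_{P}^{Q}(L_{Q}\cap H)N_{Q}.
$$
Since the stabilizer of $\xi_{Q}$ in $G$ equals $(L_{Q}\cap H)N_{Q}$ by definition of $\Xi_{Q}$, the right-hand side is precisely $\Pi_{\Xi_{Q}}^{-1}(N_{P}^{Q}\cdot\xi_{Q})$. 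By Proposition \ref{RadTrans prop N_P cdot xi_Q closed submanifold}, the set $N_{P}^{Q}\cdot\xi_{Q}=E_{P}^{Q}(\xi_{P})$ is a closed submanifold of $\Xi_{Q}$, and since $\Pi_{\Xi_{Q}}$ is a surjective submersion its preimage is a closed submanifold of $G$. Applying the diffeomorphism $\iota$ (and using $\iota\circ\iota=\textnormal{id}$) then yields that $(L_{Q}\cap H)N_{P}$ is a closed submanifold of $G$.

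The bulk of the work is the bookkeeping of products of subgroups, relying on $L_{Q}$ normalizing $N_{Q}$, the inclusions $L_{P}\cap H\subseteq L_{Q}\cap H$ and $N_{Q}\subseteq N_{P}$, and the semidirect decomposition $N_{P}=N_{Q}\rtimes N_{P}^{Q}$ (which gives both orderings $N_{P}=N_{P}^{Q}N_{Q}=N_{Q}N_{P}^{Q}$). The only genuine geometric input is the previous proposition, which is what supplies the closed submanifold structure on the other side of $\iota$.
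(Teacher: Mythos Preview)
Your proof is correct and follows essentially the same approach as the paper: establish the set equality via the normalization of $N_{Q}$ by $L_{P}\cap H$, then identify $N_{P}(L_{Q}\cap H)$ as the preimage of the closed submanifold $E_{P}^{Q}(\xi_{P})$ under the projection $G\to\Xi_{Q}$, and finally transport closedness back via the inversion diffeomorphism. Your version is slightly more explicit in spelling out the intermediate product manipulations, but the structure is identical.
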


\begin{proof}
Since $L_{P}\cap H$ normalizes $N_{Q}$,
$$
(L_{Q}\cap H)N_{Q}(L_{P}\cap H)N_{P}
=(L_{Q}\cap H)(L_{P}\cap H)N_{Q}N_{P}
=(L_{Q}\cap H)N_{P}.
$$
The set
$N_{P}(L_{Q}\cap H)$ equals the pre-image of $E_{P}^{Q}(\xi_{P})$ under the projection $G\to\Xi_{Q}$. Since $E_{P}^{Q}(\xi_{P})$ is a closed submanifold by Proposition \ref{RadTrans prop N_P cdot xi_Q closed submanifold} and $G$ is a fiber bundle over $\Xi_{Q}$, it follows that $N_{P}(L_{Q}\cap H)$ is a closed submanifold of $G$. The same holds for its image under the map $g \mapsto g^{-1}$, which is
$(L_{Q}\cap H)N_{P}$.
\end{proof}

As a consequence of Corollary \ref{RadTrans cor N_P(L_Q cap H)
closed}, the double fibration (\ref{RadTrans eq double fibration}) satisfies condition (C) of Section \ref{subsection Introduction to Radon transforms}.

%%% ----------------------------------------------------------------------
\subsection{Invariant measures}
\label{subsection Invariant measures}
%%% ----------------------------------------------------------------------
In this section we show that properties (A) and (B) of Section \ref{subsection Introduction to
Radon transforms} are satisfied for the double fibration (\ref{RadTrans eq double fibration}).
We retain the notation of the previous sections. In particular, $P$ and $Q$ are $\sigma\circ\theta$-stable parabolic subgroups $A\subseteq P \subseteq Q$.

\begin{prop}\label{Measures prop H, L_P cap H, (L_P cap H)N_Q unimodular}
The group $(L_{P}\cap H)N_{Q}$ is unimodular.
\end{prop}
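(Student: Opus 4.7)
The plan is to verify unimodularity directly by showing that $\bigl|\det\Ad(g)|_{\mathfrak{m}}\bigr|=1$ for every $g\in(L_{P}\cap H)N_{Q}$, where $\mathfrak{m}:=(\g[l]_{P}\cap\g[h])\oplus\g[n]_{Q}$ is the Lie algebra. Since $L_{P}\subseteq L_{Q}$ normalizes $N_{Q}$ and $L_{P}\cap N_{Q}\subseteq L_{P}\cap N_{P}=\{e\}$, every such $g$ factors uniquely as $g=ln$ with $l\in L_{P}\cap H$ and $n\in N_{Q}$, so that $\Ad(g)=\Ad(l)\Ad(n)$. The factor $\Ad(n)$ is unipotent on $\g$ (as $N_{Q}$ is nilpotent), hence on the invariant subspace $\mathfrak{m}$, so $\bigl|\det\Ad(n)|_{\mathfrak{m}}\bigr|=1$. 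Since $\Ad(l)$ preserves both summands of $\mathfrak{m}$ separately, its determinant on $\mathfrak{m}$ splits as a product. The factor on $\g[l]_{P}\cap\g[h]$ has absolute value $1$ because $L_{P}\cap H$ is unimodular (it is an open subgroup of the $\sigma$-fixed point subgroup of the reductive group $L_{P}$, and hence reductive of the Harish-Chandra class). Everything therefore reduces to proving
$$
\bigl|\det\Ad(l)|_{\g[n]_{Q}}\bigr|=1\qquad(l\in L_{P}\cap H),
$$
which I expect to be the main obstacle.

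For this I plan to use the $\sigma$-symmetry twice. First, since $l\in H$ we have $\sigma(l)=l$, and the identity $\Ad(\sigma l)=\sigma\circ\Ad(l)\circ\sigma^{-1}$ on $\g$ shows that $\Ad(l)$ commutes with $\sigma$. The $\sigma\circ\theta$-stability of $Q$ yields $\sigma(\g[n]_{Q})=\theta(\g[n]_{Q})=\g[n]_{\bar{Q}}$, the nilradical of the parabolic $\bar{Q}$ opposite to $Q$ with respect to $L_{Q}$. Consequently $\Ad(l)|_{\g[n]_{\bar{Q}}}$ is conjugate, via the linear isomorphism $\sigma:\g[n]_{Q}\to\g[n]_{\bar{Q}}$, to $\Ad(l)|_{\g[n]_{Q}}$, giving $\det\Ad(l)|_{\g[n]_{Q}}=\det\Ad(l)|_{\g[n]_{\bar{Q}}}$. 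Second, the $\Ad(l)$-invariant decomposition $\g=\g[n]_{\bar{Q}}\oplus\g[l]_{Q}\oplus\g[n]_{Q}$ (valid since $l\in L_{P}\subseteq L_{Q}$) together with the unimodularity of $G$ and of $L_{Q}$ (both reductive Lie groups of the Harish-Chandra class) forces
$$
\bigl|\det\Ad(l)|_{\g[n]_{\bar{Q}}}\bigr|\cdot\bigl|\det\Ad(l)|_{\g[n]_{Q}}\bigr|=1.
$$
Combining these two identities yields $\bigl|\det\Ad(l)|_{\g[n]_{Q}}\bigr|^{2}=1$, which completes the argument.

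The hard part is the final determinant calculation; the rest is routine bookkeeping with the semidirect product structure. The essential symmetry at work is that $\sigma$ fixes $l$ but swaps $\g[n]_{Q}$ with $\g[n]_{\bar{Q}}$, so the two nilradicals contribute equally to the modular character of $\Ad(l)$; the unimodularity of the ambient reductive groups $G$ and $L_{Q}$ then pins this common contribution down to $1$.
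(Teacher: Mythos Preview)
Your proof is correct and structurally parallel to the paper's: both reduce to showing $\bigl|\det\Ad(l)|_{\g[n]_Q}\bigr|=1$ for $l\in L_P\cap H$ after disposing of the nilpotent factor by unipotency and the $\g[l]_P\cap\g[h]$-factor by reductivity.

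The execution of the key step differs. The paper writes $l=ma$ with $m\in M_Q$ and $a\in A_Q$ satisfying $\log a\in\g[a]_Q\cap\g[h]$, invokes the standard fact $\bigl|\det\Ad(m)|_{\g[n]_Q}\bigr|=1$ for $m\in M_Q$, and then computes $a^{2\rho_Q}=1$ from $\rho_Q|_{\g[a]_Q\cap\g[h]}=0$; the $\sigma\circ\theta$-stability of $Q$ enters only through this last vanishing. Your route bypasses the Langlands decomposition $L_Q=M_Q A_Q$ entirely: the involution $\sigma$ fixes $l$ and swaps $\g[n]_Q$ with $\g[n]_{\bar Q}$, forcing the two nilradical determinants to coincide, while unimodularity of $G$ and $L_Q$ forces their product to be $1$. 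Both arguments encode the same symmetry, but yours is more direct and treats all of $L_P\cap H$ uniformly without singling out the $A_Q$-part. One minor quibble: asserting that $L_P\cap H$ is ``of the Harish-Chandra class'' is slightly more than you need or than the paper states; unimodularity already follows from $L_P\cap H=(L_P\cap K\cap H)(L_P\cap H)^0$ with the first factor compact and the second having reductive Lie algebra, which is exactly how the paper argues it.
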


\begin{proof}
Let $\Delta$ be the modular function of $(L_{P}\cap H)N_{Q}$.
Note that
$$(L_{P}\cap H)N_{Q}=(L_{P}\cap H\cap K)(L_{P}\cap H)^{0}N_{Q},$$
where $(L_{P}\cap H)^{0}$ denotes the identity component of
$(L_{P}\cap H)$. Let $k\in (L_{P}\cap H\cap K)$, $l\in(L_{P}\cap H)^{0}$ and $n\in N_{Q}$. Then
$\Delta(kln)=\Delta(l)$ because $(L_{P}\cap H\cap K)$ is compact and $\ad(\log n)$ acts upper triangular with respect to the usual basis of $(\g[l]_{P}\cap\g[h])\oplus\g[n]_{Q}$.
The group $(L_{P}\cap H)^{0}$ is reductive and normalizes both $\g[l]_{P}\cap \g[h]$ and
$\g[n]_{Q}$.  This implies that
$\Delta(l)=\big|\det\big(\Ad(l)|_{\g[n]_{Q}}\big)\big|$.

Let $m\in M_{Q}\cap H$ and $a\in A_{Q}\cap H$ be such that $l=ma$. Then $\big|\det\big(\Ad(ma)|_{\g[n]_{Q}}\big)\big|=a^{2\rho_{Q}}$, where $\rho_{Q}$ is
the element of $\g[a]_{Q}^{*}$ given by
\begin{equation}\label{Measures eq def rho_Q}
\rho_{Q}(Y)=\frac{1}{2}\tr(\ad(Y)\big|_{\g[n]_{Q}}), \qquad (Y\in\g[a]_{Q}).
\end{equation}
Since $Q$ is $\sigma\circ\theta$-stable, $\rho_{Q}|_{\g[a]_{Q}\cap\g[h]} = 0$, so that $a^{2\rho_{Q}} = 1$.
Therefore, $\Delta(l)=1$ and we conclude that $(L_{P}\cap H)N_{Q}$
is unimodular.
\end{proof}

\begin{cor}\label{Measures prop existence of inv measures}\
 \begin{enumerate}[(i)]
 \item
 There exists a non-zero $G$-invariant Radon measure on $G/(L_{P}\cap H)N_{Q}$. In particular there exists a non-zero $G$-invariant Radon measure on $\Xi_{P}$.
\item
There exists a non-zero $(L_{Q}\cap H)N_{Q}$-invariant Radon measure on $(L_{Q}\cap H)N_{Q}/(L_{P}\cap H)N_{Q}$.
\item
There exists a non-zero $(L_{P}\cap H)N_{P}$-invariant Radon measure on $(L_{P}\cap H)N_{P}/(L_{P}\cap H)N_{Q}$.
\end{enumerate}
\end{cor}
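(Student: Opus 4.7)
The plan is to deduce everything from the standard criterion for the existence of invariant measures on homogeneous spaces: if $G'$ is a Lie group and $H'$ a closed subgroup, then there exists a non-zero $G'$-invariant Radon measure on $G'/H'$ if and only if $\Delta_{G'}|_{H'} = \Delta_{H'}$. In each of the three cases, I will verify this criterion by showing that both the ambient group and the quotient subgroup are unimodular, so that the condition becomes the trivial identity $1 = 1$. All the work has already been done in Proposition \ref{Measures prop H, L_P cap H, (L_P cap H)N_Q unimodular}; one only needs to apply it in the right configuration.

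For (i), the ambient group is $G$, which is of the Harish-Chandra class and therefore unimodular, so $\Delta_G \equiv 1$. The quotient subgroup $(L_P\cap H)N_Q$ is unimodular by Proposition \ref{Measures prop H, L_P cap H, (L_P cap H)N_Q unimodular} as stated. Hence the criterion gives a non-zero $G$-invariant Radon measure on $G/(L_P\cap H)N_Q$. The second assertion of (i) follows by specializing to $Q = P$, which gives $\Xi_P = G/(L_P\cap H)N_P$.

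For (ii), the ambient group $(L_Q\cap H)N_Q$ is obtained from Proposition \ref{Measures prop H, L_P cap H, (L_P cap H)N_Q unimodular} by taking $P = Q$ there, hence it is unimodular; and $(L_P\cap H)N_Q$ is unimodular by Proposition \ref{Measures prop H, L_P cap H, (L_P cap H)N_Q unimodular} as stated. For (iii), the ambient group $(L_P\cap H)N_P$ arises from Proposition \ref{Measures prop H, L_P cap H, (L_P cap H)N_Q unimodular} by taking $Q = P$, hence it is unimodular; again $(L_P\cap H)N_Q$ is unimodular directly. In both cases the criterion applies and produces the desired invariant Radon measure on the quotient.

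There is no serious obstacle here: Proposition \ref{Measures prop H, L_P cap H, (L_P cap H)N_Q unimodular} is uniform enough in its hypotheses ($P$ and $Q$ are arbitrary $\sigma\circ\theta$-stable parabolic subgroups with $A \subseteq P \subseteq Q$, where one allows $P = Q$) that it simultaneously covers all the unimodularity statements needed. The only thing to check is that each of the three inclusions of a closed subgroup into an ambient group in the statement fits into the framework of the proposition, which is immediate.
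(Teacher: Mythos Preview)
Your proof is correct and follows exactly the same approach as the paper: invoke Proposition~\ref{Measures prop H, L_P cap H, (L_P cap H)N_Q unimodular}, also with the specialization $P=Q$, to see that all the groups involved are unimodular, and then apply the standard modular-function criterion for invariant measures on quotients. The paper's own proof is the one-line version of what you wrote.
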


\begin{proof}
By Proposition \ref{Measures prop H, L_P cap H, (L_P cap H)N_Q unimodular}, also applied with $P = Q$, all occurring groups are unimodular.
\end{proof}

As a consequence of Corollary \ref{Measures prop existence of inv measures}, the double fibration (\ref{RadTrans eq double fibration}) satisfies properties (A) and (B) of  Section  \ref{subsection Introduction to Radon transforms}.

The groups $(L_{P}\cap H)N_{Q}$, $N_{P}^{Q}$ and $(L_{P}\cap H)N_{P}$ are unimodular (see
Proposition \ref{Measures prop H, L_P cap H, (L_P cap H)N_Q
unimodular}) and the multiplication map $(L_{P}\cap H)N_{Q}\times N_{P}^{Q}\to(L_{P}\cap H)N_{P}$
is a diffeomorphism. Therefore,
$$
\int_{(L_{P}\cap H)N_{P}/(L_{P}\cap H)N_{Q}}\psi(ln\cdot(L_{P}\cap H)N_{Q})\,d_{(L_{P}\cap H)N_{Q}}(ln)
 =\int_{N_{P}^{Q}}\psi(n\cdot(L_{P}\cap H)N_{Q})\,dn
$$
for every $\psi\in L^{1}\big((L_{P}\cap H)N_{P}/(L_{P}\cap H)N_{Q}\big)$ if the measures are suitably normalized.
Similarly,
\begin{align*}
&\int_{(L_{Q}\cap H)N_{Q}/(L_{P}\cap H)N_{Q}}\phi(ln\cdot(L_{P}\cap H)N_{Q})\,d_{(L_{P}\cap H)N_{Q}}(ln)\\
&\qquad=\int_{(L_{Q}\cap H)/(L_{P}\cap H)}\phi(l\cdot(L_{P}\cap H)N_{Q})\,dl
\end{align*}
for every $\phi\in L^{1}\big((L_{Q}\cap H)N_{Q}/(L_{P}\cap H)N_{Q}\big)$ if  the measures are suitably normalized.

If $N$ is a simply connected subgroup of $G$ with nilpotent Lie algebra $\g[n]$, then the Haar measure on $N$
is related to the Lebesgue measure on $\g[n]$ by
$$
\int_{N}\phi(n)\,dn=c\int_{\g[n]}\phi(\exp(Y))\,dY\qquad\big(\phi\in L^{1}(N)\big)
$$
for some constant $c>0$. Here $dY$ denotes the unit Lebesgue measure of $\g[n]$ relative to the inner product $-B(\cdot,\theta\cdot)$.
We will choose the normalization of measures on the groups $A_{\g[q]}$ and $N_{P}^{Q}$ always such that $c=1$.
If $P$, $Q$ and $S$ are parabolic subgroups with $P\subseteq Q\subseteq S$, then because of this choice for the normalization of the measures
\begin{equation}\label{Measures eq int_(N_P^S)=int_(N_P^Q) int_(N_Q^S)}
\int_{N_{P}^{S}}\phi(n)\,dn=\int_{N_{P}^{Q}}\int_{N_{Q}^{S}}\phi(nn')\,dn'\,dn\qquad\big(\phi\in L^{1}(N_{P}^{S})\big).
\end{equation}
Furthermore, we normalize the Haar measure on any compact group such that it is a probability measure.

%%% ----------------------------------------------------------------------
\subsection{Radon transforms}
\label{subsection Radon transforms}
%%% ----------------------------------------------------------------------
The Radon transforms $\Rt[P]$ and $\dRt[P]$
for the double fibration (\ref{RadTrans eq double fibration})  are  given by
\begin{align}
\label{RadTrans eq def R_P^Q}&\Rt[P][Q]\phi(g\cdot\xi_{P})=\int_{N_{P}^{Q}}\phi(gn\cdot\xi_{Q})\,dn & (\phi\in \Ds(\Xi_{Q}), g\in G),\\
\label{RadTrans eq def S_P^Q}&\dRt[P][Q]\psi(g\cdot\xi_{Q})=\int_{(L_{Q}\cap H)/(L_{P}\cap H)}\psi(gh\cdot\xi_{P})\,d_{L_{P}\cap H}h & (\psi\in \Ds(\Xi_{P}), g\in G).
\end{align}

\noindent
We write $\Rt[P]$ for $\Rt[P][G]$ and $\dRt[P]$ for $\dRt[P][G]$. These Radon transforms are given by
\begin{align*}
&\Rt[P]\phi(g\cdot\xi_{P})=\int_{N_{P}}\phi(gn\cdot x_{0})\,dn & (\phi\in \Ds(X), g\in G),\\
&\dRt[P]\psi(g\cdot x_{0})=\int_{H/(L_{P}\cap H)}\psi(gh\cdot\xi_{P})\,d_{L_{P}\cap H}h & (\psi\in \Ds(\Xi_{P}), g\in G).
\end{align*}
If $P=P_{0}$ is a minimal $\sigma\circ\theta$-stable parabolic subgroup, then $\Rt[P_{0}]$ is called the horospherical transform associated to $P_{0}$.

\begin{rem}\label{RadTrans rem Kroetz}
In \cite[Section 2]{Krotz_HorosphericalTransformOnRealSymmetricVarieties:KernelAndCokernel} it is claimed that the set $\Hor(X)$ of horospheres in $X$ can be given the structure of a connected real analytic manifold. However, the real analytic atlas for $\Hor(X)$ given there is not an atlas in the proper sense.
In fact, each of the finitely many $G$-orbits in $\Hor(X)$ serves as the domain for a chart, but it is easily seen that not all of these orbits need to have maximal dimension.

In \cite[Remark
3.3]{Krotz_HorosphericalTransformOnRealSymmetricVarieties:KernelAndCokernel}
it is furthermore claimed that the horospherical transforms
$\Rt[P^{w}]$ with $w\in\WGs$ induce a transform
from the space of real analytic vectors for the left regular
representation of $G$ on $L^{1}(X)$ to the space of real analytic functions on
$\Hor(X)$. Even if $\Hor(X)$ could be equipped with a canonical structure of a real analytic manifold, then it is not clear to us why this should
be true.
\end{rem}

%%% ----------------------------------------------------------------------
\subsection{Spaces of functions and distributions}
\label{subsection Spaces of functions and distributions}
%%% ----------------------------------------------------------------------
Let $\pi_{\g[q]}$ be the orthogonal projection $\g\to\g[q]$ with respect to the decomposition $\g=\g[h]\oplus\g[q]$. We define the map $\Acomp_{P}:G\to\g[a]_{P}\cap\g[a]_{\g[q]}$ to be given by
\begin{equation}\label{RadTrans eq A_P def}
\Acomp_{P}(kman)=\pi_{\g[q]}(\log a)\qquad(k\in K, m\in M_{P}, a\in A_{P}, n\in N_{P}).
\end{equation}
Note that $\Acomp_{P}$ is real analytic and is left $K$ and right $(L_{P}\cap H)N_{P}$-invariant.

The following lemma describes the relation between $\Acomp_{P}$ and the Iwasawa decomposition.

\begin{lemma}\label{RadTrans lemma A_P=pi_(a_P cap q)A_KAN}
Let $\pi_{\g[a]_{P}\cap \g[q]}$ be the orthogonal projection
(with respect to $-B(\cdot,\theta\cdot)$) onto $\g[a]_{P}\cap
\g[q]$. Let $P_{m}$ be a minimal parabolic subgroup such that $A\subseteq P_{m}\subseteq P$. Define $\Acomp_{KAN_{m}}:G\to\g[a]$ to be the map that for $g\in G$ is given by $g\in K\exp\circ\Acomp_{KAN_{P_{m}}}(g)N_{P_{m}}$. Then
$$
\pi_{\g[a]_{P}\cap \g[q]}\circ\Acomp_{KAN_{P_{m}}}
=\Acomp_{P}.
$$
Moreover, for all $g_{1},g_{2}\in G$ we have $\Acomp_{P}(g_{1}g_{2})\in\Acomp_{P}(g_{1}K)+\Acomp_{P}(g_{2})$.
\end{lemma}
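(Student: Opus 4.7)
\emph{Strategy for the first identity.} Given $g\in G$, write $g=k\exp(H)n$ with $k\in K$, $H=\Acomp_{KAN_{P_{m}}}(g)\in\g[a]$ and $n\in N_{P_{m}}$. The plan is to rearrange this into a $KM_{P}A_{P}N_{P}$-decomposition of $g$ and read off the $A_{P}$-factor. Since $A\subseteq P_{m}\subseteq P$, we have $\g[a]\subseteq\g[l]_{P}$, and the orthogonal decomposition $\g[a]=(\g[a]\cap\g[m]_{P})\oplus\g[a]_{P}$, inherited from the Langlands splitting $\g[l]_{P}=\g[m]_{P}\oplus\g[a]_{P}$ (orthogonal for $-B(\cdot,\theta\cdot)$), lets me write $H=H_{M}+H_{P}$. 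Similarly, $N_{P_{m}}=N_{P}\rtimes N_{P_{m}}^{P}$ yields $n=n_{1}n_{2}$ with $n_{1}\in N_{P}$ and $n_{2}\in N_{P_{m}}^{P}\subseteq L_{P}$; since $n_{2}$ is unipotent and $A_{P}$ consists of semisimple elements, $n_{2}\in M_{P}$.

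\emph{Rearrangement and identification of projections.} Using that $\g[a]$ is abelian, that $A_{P}$ is central in $L_{P}$ (so $\exp(H_{P})$ commutes both with $\exp(H_{M})$ and with $n_{2}$), and that $L_{P}$ normalizes $N_{P}$ (so $n_{2}^{-1}n_{1}n_{2}\in N_{P}$), I rearrange the product as
\[
g=k\cdot\bigl(\exp(H_{M})\,n_{2}\bigr)\cdot\exp(H_{P})\cdot\bigl(n_{2}^{-1}n_{1}n_{2}\bigr),
\]
which is a $KM_{P}A_{P}N_{P}$-decomposition of $g$. Hence $\Acomp_{P}(g)=\pi_{\g[q]}(H_{P})$. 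To match this with $\pi_{\g[a]_{P}\cap\g[q]}(H)$, note first that $\g[a]_{P}$ is $\sigma$-stable: $P$ is $\sigma\circ\theta$-stable and $\theta=-1$ on $\g[a]_{P}\subseteq\g[p]$, so $\sigma$ preserves $\g[a]_{P}$. Consequently $\g[a]_{P}=(\g[a]_{P}\cap\g[h])\oplus(\g[a]_{P}\cap\g[q])$ is an orthogonal decomposition, and therefore $\pi_{\g[q]}|_{\g[a]_{P}}=\pi_{\g[a]_{P}\cap\g[q]}|_{\g[a]_{P}}$. Combined with $H_{M}\perp\g[a]_{P}$, this gives
\[
\Acomp_{P}(g)=\pi_{\g[q]}(H_{P})=\pi_{\g[a]_{P}\cap\g[q]}(H_{P})=\pi_{\g[a]_{P}\cap\g[q]}(H),
\]
which is the first identity.

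\emph{Cocycle property and main obstacle.} For the second assertion I would invoke the classical Iwasawa cocycle $\Acomp_{KAN_{P_{m}}}(g_{1}g_{2})=\Acomp_{KAN_{P_{m}}}(g_{1}k_{2})+\Acomp_{KAN_{P_{m}}}(g_{2})$, where $k_{2}\in K$ is the $K$-factor of $g_{2}$; this follows by writing Iwasawa decompositions for $g_{2}$ and for $g_{1}k_{2}$ and commuting the resulting $A$- and $N_{P_{m}}$-factors past each other. Applying $\pi_{\g[a]_{P}\cap\g[q]}$ to both sides and invoking the first identity yields $\Acomp_{P}(g_{1}g_{2})=\Acomp_{P}(g_{1}k_{2})+\Acomp_{P}(g_{2})\in\Acomp_{P}(g_{1}K)+\Acomp_{P}(g_{2})$. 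The main technical obstacle is the rearrangement step in the middle paragraph: one must first recognize that $N_{P_{m}}^{P}\subseteq M_{P}$ (by unipotency, using that $L_{P}=M_{P}A_{P}$ with $A_{P}$ semisimple), and then exploit the centrality of $A_{P}$ in $L_{P}$ together with the normality of $N_{P}$ in $P$ to isolate a clean $A_{P}$-factor. Once these structural observations are in hand, both claims reduce to direct bookkeeping.
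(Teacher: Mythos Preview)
Your proof is correct and follows essentially the same approach as the paper: both arguments exploit the compatibility between the $KAN_{P_{m}}$-Iwasawa decomposition of $G$ and the $KM_{P}A_{P}N_{P}$-decomposition, which the paper packages into the single observation $L_{P}=(K\cap M_{P})A\,N_{P_{m}}^{P}$ while you spell out the rearrangement explicitly. Your justification that $N_{P_{m}}^{P}\subseteq M_{P}$ via unipotency is slightly informal (the cleaner reason is that $\g[n]_{P_{m}}^{P}$ is a sum of nonzero $\g[a]$-root spaces, hence orthogonal to $\g[a]_{P}$ and thus contained in $\g[m]_{P}$), but the conclusion and the rest of the argument are sound.
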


\begin{proof}
The first part of the lemma follows immediately from the equality $L_{P}=(K\cap M_{P})AN_{P_{m}}^{P}$. The second part is a direct consequence of the first part as
$$
\Acomp_{KAN_{P_{m}}}(gkan)
=\Acomp_{KAN_{P_{m}}}(gk)+\log(a)\qquad(g\in G, k\in K,a\in A,n\in N_{P_{m}}).
$$
\end{proof}

We define the function $J_{P}:\Xi_{P}\to\R_{>0}$ to be given by $J_{P}(g\cdot\xi_{P})=e^{-2\rho_{P}\circ\Acomp_{P}(g)}$,
where
$\rho_{P}$ is defined as in (\ref{Measures eq def rho_Q}) with $P$ in place of $Q$.
Note that $J_{G}$ equals the constant function $\1$ on $X$.
Let
$$
L^{1}(\Xi_{P},J_{P})=\{\phi:\Xi_{P}\to\C: \phi J_{P}\in L^{1}(\Xi_{P})\}.
$$
Endowed with the norm
\begin{equation}\label{FuncSpaces eq def norm L^1(Xi_P,J_P)}
\phi\mapsto\int_{\Xi_{P}}|\phi(\xi)|J_{P}(\xi)\,d\xi,
\end{equation}
$L^{1}(\Xi_{P},J_{P})$ is a Banach space.

\begin{lemma}\label{FuncSpaces lemma c^-1g^*J<J<c g^*J}
For every compact subset $C$ of $G$, there exists a constant $c > 0$ such that for every $g\in C$ and $\xi\in\Xi_{P}$
$$
c^{-1} J_{P}(g\cdot\xi)\leq J_{P}(\xi)\leq c J_{P}(g\cdot\xi).
$$
\end{lemma}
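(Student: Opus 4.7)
The plan is to derive the lemma directly from the additive cocycle property of $\Acomp_{P}$ established in Lemma \ref{RadTrans lemma A_P=pi_(a_P cap q)A_KAN}, combined with a straightforward compactness argument.

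First I would reduce the inequality to bounding the difference $\Acomp_{P}(gg')-\Acomp_{P}(g')$. Write an arbitrary $\xi\in\Xi_{P}$ as $\xi=g'\cdot\xi_{P}$ for some $g'\in G$; this is well-defined because $\Acomp_{P}$ is right $(L_{P}\cap H)N_{P}$-invariant, so $J_{P}(\xi)=e^{-2\rho_{P}\circ\Acomp_{P}(g')}$ does not depend on the chosen representative. Then
$$
\frac{J_{P}(g\cdot\xi)}{J_{P}(\xi)}
=\exp\bigl(-2\rho_{P}\bigl(\Acomp_{P}(gg')-\Acomp_{P}(g')\bigr)\bigr).
$$
By the second part of Lemma \ref{RadTrans lemma A_P=pi_(a_P cap q)A_KAN} we have $\Acomp_{P}(gg')\in\Acomp_{P}(gK)+\Acomp_{P}(g')$, so that the difference $\Acomp_{P}(gg')-\Acomp_{P}(g')$ lies in $\Acomp_{P}(gK)$.

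Next, I would invoke compactness. Since $\Acomp_{P}:G\to\g[a]_{P}\cap\g[a]_{\g[q]}$ is (real analytic, hence) continuous and $C\times K$ is compact in $G\times G$, the image $\Acomp_{P}(CK)$ is a compact subset of $\g[a]_{P}\cap\g[a]_{\g[q]}$. Setting
$$
M:=\sup\{|2\rho_{P}(Y)|:Y\in\Acomp_{P}(CK)\}<\infty
$$
and $c:=e^{M}$, the displayed identity gives
$$
c^{-1}\leq\frac{J_{P}(g\cdot\xi)}{J_{P}(\xi)}\leq c
$$
uniformly for $g\in C$ and $\xi\in\Xi_{P}$, which is the claim.

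There is no real obstacle here; the lemma is essentially a repackaging of the cocycle relation in Lemma \ref{RadTrans lemma A_P=pi_(a_P cap q)A_KAN}. The only subtlety worth noting is that one must use the right-$(L_{P}\cap H)N_{P}$-invariance of $\Acomp_{P}$ to make sense of $J_{P}(\xi)$ in terms of a chosen lift $g'$, but this was already observed just after the definition of $\Acomp_{P}$.
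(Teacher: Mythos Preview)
Your proof is correct and follows essentially the same approach as the paper: both use the cocycle relation from Lemma~\ref{RadTrans lemma A_P=pi_(a_P cap q)A_KAN} to reduce the ratio $J_{P}(g\cdot\xi)/J_{P}(\xi)$ to $e^{-2\rho_{P}(Y)}$ for some $Y\in\Acomp_{P}(gK)$, and then bound this uniformly over $g\in C$ via the compactness of $\Acomp_{P}(CK)$. Your version is slightly more explicit about the well-definedness step, but otherwise the arguments are the same.
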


\begin{proof}
Let $C$ be a compact subset of $G$, let $g\in C$ and $\xi\in\Xi_{P}$. By Lemma \ref{RadTrans lemma A_P=pi_(a_P cap q)A_KAN} we have $J_{P}(g\cdot\xi)\in e^{-2\rho_{P}\Acomp_{P}(gK)}J_{P}(\xi)$.
The set $CK$ is compact and hence $\Acomp_{P}(CK)$ is also compact. Let $c>0$ be such that
$$
c>
\max_{\pm}\max_{Y\in\Acomp_{P}(CK)}e^{\pm2\rho_{P}(Y)}.
$$
Then the desired inequalities hold.
\end{proof}

It follows from Lemma  \ref{FuncSpaces lemma c^-1g^*J<J<c g^*J} that the space $L^{1}(\Xi_{P},J_{P})$ is invariant under left translation
by elements of $G$. Accordingly, we define the representation  $\pi$ of $G$ in this space by
\begin{equation}\label{FuncSpaces eq def rep pi on L^1(Xi_P,J_P)}
[\pi(g) \phi] (\xi) = \phi(g^{-1} \cdot \xi), \qquad (\phi \in L^{1}(\Xi_{P},J_{P}), \; g \in G).
\end{equation}

\begin{prop}\label{FuncSpaces prop L^1(Xi_P,J_P) Banach rep}
The representation $\pi$ is a continuous Banach-representation.
\end{prop}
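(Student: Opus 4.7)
The plan is to verify the two standard requirements for $\pi$ to be a continuous Banach representation: that each $\pi(g)$ is a bounded operator on $L^{1}(\Xi_{P},J_{P})$, and that the orbit map $g\mapsto\pi(g)\phi$ is continuous from $G$ into $L^{1}(\Xi_{P},J_{P})$ for every $\phi$.

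For boundedness, I would fix $g\in G$ and use the $G$-invariance of $d\xi$ together with the substitution $\xi\mapsto g\cdot\xi$ to rewrite
$$\|\pi(g)\phi\|_{L^{1}(\Xi_{P},J_{P})}=\int_{\Xi_{P}}|\phi(g^{-1}\cdot\xi)|\,J_{P}(\xi)\,d\xi=\int_{\Xi_{P}}|\phi(\xi)|\,J_{P}(g\cdot\xi)\,d\xi.$$
Lemma \ref{FuncSpaces lemma c^-1g^*J<J<c g^*J}, applied to the compact set $\{g\}$, gives a constant $c_{g}>0$ with $J_{P}(g\cdot\xi)\le c_{g}J_{P}(\xi)$, so $\|\pi(g)\phi\|\le c_{g}\|\phi\|$. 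Applied to an arbitrary compact $C\subseteq G$, the same lemma yields $\sup_{g\in C}\|\pi(g)\|<\infty$; this uniform boundedness on compacta will be crucial for the continuity argument.

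For strong continuity I would follow the customary density and approximation scheme. First, I would establish continuity at $g=e$ on the dense subspace $C_{c}(\Xi_{P})$. Given $\phi\in C_{c}(\Xi_{P})$ with support in a compact set $\Omega\subseteq\Xi_{P}$, choose a compact neighbourhood $U$ of $e$ in $G$; then $\supp(\pi(g)\phi)\subseteq U\cdot\Omega$ for all $g\in U$. Joint continuity of the action $G\times\Xi_{P}\to\Xi_{P}$ together with compactness of $\Omega$ yields $\pi(g)\phi\to\phi$ uniformly on $\Xi_{P}$ as $g\to e$. Since $J_{P}$ is bounded on the compact set $U\cdot\Omega$, dominated convergence (with dominating function $2\|\phi\|_{\infty}\mathbf{1}_{U\cdot\Omega}J_{P}\in L^{1}(\Xi_{P})$) gives $\pi(g)\phi\to\phi$ in $L^{1}(\Xi_{P},J_{P})$. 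Second, I would verify that $C_{c}(\Xi_{P})$ is dense in $L^{1}(\Xi_{P},J_{P})$: for $\phi$ in the weighted space, $\phi J_{P}\in L^{1}(\Xi_{P})$ can be approximated in $L^{1}(\Xi_{P})$-norm by functions $\psi\in C_{c}(\Xi_{P})$, and because $J_{P}$ is continuous and strictly positive, the quotients $\psi/J_{P}$ lie in $C_{c}(\Xi_{P})$ and approximate $\phi$ in $L^{1}(\Xi_{P},J_{P})$-norm.

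Combining these with a standard $\varepsilon/3$-argument, using the uniform bound $\sup_{g\in U}\|\pi(g)\|<\infty$ from the first step, extends the continuity at $e$ from $C_{c}(\Xi_{P})$ to all of $L^{1}(\Xi_{P},J_{P})$. Continuity at an arbitrary $g_{0}\in G$ then follows from the identity $\pi(g)-\pi(g_{0})=\pi(g_{0})\bigl(\pi(g_{0}^{-1}g)-I\bigr)$ and boundedness of $\pi(g_{0})$. I expect the main technical point to be the uniform convergence $\pi(g)\phi\to\phi$ for $\phi\in C_{c}(\Xi_{P})$, since $\Xi_{P}$ is not a group; but this is handled by the joint continuity of the $G$-action on $\Xi_{P}$ combined with compactness of $\supp(\phi)$, which is a routine compactness/uniform-continuity argument.
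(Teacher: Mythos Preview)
Your proposal is correct and follows essentially the same approach as the paper: both use Lemma~\ref{FuncSpaces lemma c^-1g^*J<J<c g^*J} for boundedness and local equicontinuity, then verify strong continuity at $e$ on the dense subspace $C_{c}(\Xi_{P})$ via uniform convergence with supports in a fixed compactum, and finally extend to the whole space using the uniform bound on compacta. The paper is terser---it simply asserts density of $C_{c}(\Xi_{P})$ and packages your $\varepsilon/3$-extension into a one-line appeal to equicontinuity and ``the principle of uniform boundedness''---while you spell out the density argument and the passage from $e$ to arbitrary $g_{0}$ explicitly.
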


\begin{proof}
Put $V=L^{1}(\Xi_{P}, J_{P})$, and write $\|\cdot\|_{V}$ for the norm
given in (\ref{FuncSpaces eq def norm L^1(Xi_P,J_P)}). For a
compact subset $C\subseteq G$, let $c>0$ be the constant of Lemma
\ref{FuncSpaces lemma c^-1g^*J<J<c g^*J}. Then for all $\phi\in V$ and $g\in C$ we have
$$
\|\pi(g)\phi\|_{V}
=\int_{\Xi_{P}}|\phi(g^{-1}\cdot\xi) J_{P}(\xi)|\,d\xi
=\int_{\Xi_{P}}|\phi(\xi) J(g\cdot \xi)|\,d\xi \leq c \|\phi\|_{V}.
$$
This shows that each map $\pi(g): V \to V$ is bounded, and that
the family $\{\pi(g):g\in C\}$ is equicontinuous.

We will now show that $\lim_{g \to e} \pi(g)\phi = \phi$ for each
$\phi \in V$. By the above mentioned equicontinuity, it suffices
to do this for a dense subspace of $V$. We take the dense subspace
$V_{0}= C_{c}(\Xi_{P})$. Then for each $\phi \in V_{0}$ we have by
the principle of uniform continuity that $\pi(g)\phi  \to \phi $
uniformly and with supports in a compact set as $g \to e$. This in
turn implies that $\pi(g)\phi  \to \phi$ in $V$.

It now follows by application of the principle of uniform
boundedness that $\pi$ is a continuous representation.
\end{proof}

\begin{lemma}\label{FuncSpaces lemma int_Xi phi J= int_K int_L/(L cap H)phi}
Let $P$ be a  $\sigma\circ\theta$-stable parabolic subgroup
containing $A$. Then with suitably normalized measures, we have
$$
\int_{\Xi_{P}}\phi(\xi)J_{P}(\xi)\,d\xi
=\int_{K}\int_{L_{P}/(L_{P}\cap H)}\phi(kl\cdot\xi_{P})\,d_{L_{P}\cap H}l\,dk
$$
for every $\phi\in L^{1}(\Xi_{P}, J_{P})$.
\end{lemma}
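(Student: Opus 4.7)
The plan is to realize the right-hand side as a Radon measure $\mu$ on $\Xi_P$ and verify that $J_P^{-1}\mu$ is $G$-invariant, hence equal to the $G$-invariant Radon measure $d\xi$ of Corollary \ref{Measures prop existence of inv measures}(i) up to a positive constant, which is absorbed in the normalization.

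First, I would realize $\mu$ via the surjection $\alpha:K\times L_P/(L_P\cap H)\to\Xi_P$, $(k,l(L_P\cap H))\mapsto kl\cdot\xi_P$. Surjectivity follows from $G=KP=KL_PN_P$ together with the fact that $N_P$ stabilizes $\xi_P$, and the fibers are precisely the orbits of the right $K\cap M_P$-action $(k,l(L_P\cap H))\cdot m=(km,m^{-1}l(L_P\cap H))$. Since the product measure $dk\otimes d_{L_P\cap H}l$ is invariant under this action and $K\cap M_P$ is compact, it descends to a positive Radon measure $\mu$ on $\Xi_P$ whose pairing with $\phi\in C_c(\Xi_P)$ is the right-hand side of the stated identity.

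Next, to compute $(g_0)_*\mu$ for $g_0\in G$, I would decompose $g_0k=k'(k)\,l(k)\,n(k)$ via $G=KL_PN_P$ (well-defined modulo $K\cap M_P$). Since $L_P$ normalizes $N_P$, one has $g_0kl\cdot\xi_P=k'(k)(l(k)l)\cdot\xi_P$, and the substitution $l\mapsto l(k)^{-1}l$ (which preserves $d_{L_P\cap H}l$ by unimodularity of $L_P$) yields
$$
((g_0)_*\mu)(\phi)=\int_K\int_{L_P/(L_P\cap H)}\phi(k'(k)l\cdot\xi_P)\,d_{L_P\cap H}l\,dk.
$$
Thus $(g_0)_*\mu=\alpha_*\bigl((k'_*dk)\otimes d_{L_P\cap H}l\bigr)$, and the problem reduces to identifying $k'_*dk$ on $K$. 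This is Helgason's classical Jacobian for the $G$-action on the $K$-invariant measure on $G/P=K/(K\cap M_P)$; its density relative to $dk$ equals $e^{-2\rho_P\Acomp_P(g_0^{-1}k)}$. I would derive this from the minimal-parabolic Iwasawa case by integrating out $N_{P_m}^P$ and using $\rho_{P_m}|_{\g[a]_P}=\rho_P$ together with $\rho_P|_{\g[a]_P\cap\g[h]}=0$ (from the proof of Proposition \ref{Measures prop H, L_P cap H, (L_P cap H)N_Q unimodular}) to replace the full Iwasawa $A_P$-component by $\Acomp_P$.

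The $G$-invariance of $J_P^{-1}\mu$ is then immediate from the cocycle $\Acomp_P(gl)=\Acomp_P(g)+\Acomp_P(l)$ for $g\in G$, $l\in L_P$, which follows from the centrality of $A_P$ in $L_P$ and the second part of Lemma \ref{RadTrans lemma A_P=pi_(a_P cap q)A_KAN}. For $\xi=kl\cdot\xi_P$ this gives $J_P(g_0^{-1}\xi)/J_P(\xi)=e^{-2\rho_P\Acomp_P(g_0^{-1}k)}$, matching the Radon-Nikodym derivative above. Uniqueness of the $G$-invariant Radon measure on $\Xi_P$ then forces $J_P^{-1}\mu=c\,d\xi$ for some $c>0$, and renormalizing the involved Haar measures yields the identity for $\phi\in C_c(\Xi_P)$; extension to $\phi\in L^1(\Xi_P,J_P)$ is by continuity of both sides in that norm.

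I expect the main obstacle to be Step 3, the adaptation of Helgason's Jacobian formula to the non-minimal $\sigma\circ\theta$-stable parabolic $P$. An alternative route avoiding any explicit pointwise Jacobian is to unfold $\int_G\phi(g\cdot\xi_P)J_P(g\cdot\xi_P)\,\omega(g)\,dg$ for a cutoff $\omega\in C_c(G)$ satisfying $\int_{(L_P\cap H)N_P}\omega(g\cdot s)\,ds=1$, using the Iwasawa integration formula on $G$ for a minimal parabolic $P_m\subseteq P$ and the decompositions $A=(A\cap M_P)A_P$, $N_{P_m}=N_{P_m}^PN_P$. In this approach the factor $J_P=a_P^{-2\rho_P}$ cancels the $a_P^{2\rho_P}$ contribution in the Iwasawa Jacobian, and after an averaging over $K\cap M_P$ the remaining pieces assemble into the Iwasawa integral on $L_P$, producing the right-hand side directly.
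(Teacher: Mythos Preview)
Your argument is correct, but it takes a longer route than the paper. Both proofs ultimately invoke uniqueness of the $G$-invariant Radon measure on $\Xi_P$; the difference lies in how $G$-invariance is established.

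The paper obtains invariance for free by \emph{descending} from Haar measure on $G$: starting from $\int_G\chi(g)\,dg=\int_K\int_P\chi(kp)\Delta_P(p)\,dp\,dk$ and fibering $P$ over $L_P/(L_P\cap H)$ with fiber $(L_P\cap H)N_P$, one sees immediately that
\[
\eta\longmapsto\int_K\int_{L_P/(L_P\cap H)}\eta(kl\cdot\xi_P)\,\Delta_P(l)\,d_{L_P\cap H}l\,dk
\]
is a $G$-invariant Radon measure on $\Xi_P$. No computation of the $G$-action is needed; invariance is inherited from $dg$. The remaining step is the pointwise identification $J_P(kl\cdot\xi_P)=\Delta_P(l)^{-1}$, which is immediate from the definitions of $J_P$ and $\Delta_P$.

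You instead push forward $dk\otimes d_{L_P\cap H}l$ and then \emph{compute} how $G$ acts on it, via an adapted Helgason Jacobian on $K/(K\cap M_P)$. This works, and your cocycle $\Acomp_P(gl)=\Acomp_P(g)+\Acomp_P(l)$ for $l\in L_P$ is correct (it follows from centrality of $A_P$ in $L_P$), but you have correctly identified the cost: the Jacobian formula for non-minimal $P$ must be derived, which is extra labour the paper avoids entirely. Your alternative route in the final paragraph---unfolding $\int_G$ through Iwasawa for a minimal $P_m\subseteq P$---is essentially the paper's argument in disguise, and would be the cleaner choice.
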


See Appendix A for the normalization of measures.

\begin{proof}
We leave it to the reader to check that all measures that appear
in this proof may be normalized such that the equalities hold.

If $\chi\in L^{1}(G)$, then
\begin{align*}
\int_{G}\chi(g)\,dg
&=\int_{K}\int_{P}\chi(kp)\Delta_{P}(p)\,dp\,dk\\
&=\int_{K}\int_{L_{P}/(L_{P}\cap H)}\int_{(L_{P}\cap H)N_{P}}\chi(kls)\Delta_{P}(ls)\,ds\,d_{(L_{P}\cap H)}l\,dk.
\end{align*}
Here $dp$ denotes the left invariant measure, and $\Delta_{P}$  the
modular function of $P$. $\Delta_{P}$ is right
$(L_{P}\cap H)N_{P}$-invariant, hence
$$\int_{K}\int_{L_{P}/(L_{P}\cap H)}\eta(kl\cdot\xi_{P})\Delta_{P}(l)\,d_{(L_{P}\cap H)}l\,dk\qquad\big(\eta\in L^{1}(\Xi_{P})\big)$$
defines a $G$-invariant Radon measure on $\Xi_{P}$. Note that $G$-invariant Radon measures on $\Xi_{P}$ are unique up to multiplication by a  constant.
Therefore, under the assumption that the measures are suitably normalized,
$$
\int_{\Xi_{P}}\phi(\xi)J_{P}(\xi)\,d\xi
=\int_{K}\int_{L_{P}/(L_{P}\cap H)}\Delta_{P}(l)\phi(kl\cdot\xi_{P})J_{P}(kl\cdot\xi_{P})\,d_{L_{P}\cap H}l\,dk.
$$
for every $\phi\in L^{1}(\Xi_{P},J_{P})$.
The pullback of $J_{P}$ along 
$$
K\times L_{P}/(L_{P}\cap H)\to\Xi_{P};\qquad \big(k,l\cdot(L_ {P}\cap H)\big)\mapsto kl\cdot\xi_{P}
$$
equals
$$
K\times L_{P}/(L_{P}\cap H)\to\R;\quad \big(k,l\cdot(L_{P}\cap H)\big)\mapsto\frac{1}{\Delta_{P}(l)}.
$$
This proves the proposition.
\end{proof}

We define $\Es^{1}(\Xi_{P},J_{P})$ to be the subspace of
$\Es(\Xi_{P})$ consisting of functions that represent a smooth
vector for the $G$-representation $\big(\pi,L^{1}(\Xi_{P},J_{P})\big)$ defined in (\ref{FuncSpaces eq def rep pi on L^1(Xi_P,J_P)}). We endow this space with the Fr\'{e}chet
topology induced by the natural bijection $\Es^{1}(\Xi_{P},J_{P})\to L^{1}(\Xi_{P},J_{P})^{\infty}$.
The space $\Es^{1}(X,J_{G})$ is denoted by $\Es^{1}(X)$. (Recall
that $J_{G}=\1_{X}$.)

\begin{prop}\label{FuncSpaces prop Es^1(Xi_P,J_P) subset C_0(Xi_P)}
If $\phi\in \Es^{1}(\Xi_{P}, J_{P})$, then $\phi$ vanishes at
infinity.
\end{prop}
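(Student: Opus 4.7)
The strategy is to reinterpret $L^{1}(\Xi_{P}, J_{P})$ as an ordinary $L^{1}$-space on a product of compact and Euclidean factors, via the integral formula of Lemma \ref{FuncSpaces lemma int_Xi phi J= int_K int_L/(L cap H)phi}, and then invoke a Sobolev-type embedding.

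By that lemma,
$$
\int_{\Xi_{P}}|\phi| J_{P}\,d\xi \;=\; \int_{K}\int_{L_{P}/(L_{P}\cap H)}|\phi(kl\cdot\xi_{P})|\,d_{L_{P}\cap H}l\,dk,
$$
so $L^{1}(\Xi_{P}, J_{P})$ is isometric to an ordinary $L^{1}$-space on $K\times L_{P}/(L_{P}\cap H)$. Combined with the polar decomposition $L_{P}=(M_{P}\cap K)A_{\g[q]}(L_{P}\cap H)$ recalled in Section \ref{subsection sigma circ theta-stable parabolic subgroups}, one obtains a surjective map
$$
K\times(M_{P}\cap K)\times\g[a]_{\g[q]}\longrightarrow\Xi_{P},\qquad (k,m,Y)\mapsto km\exp(Y)\cdot\xi_{P},
$$
whose generic fibers consist of the finite group $\Nor_{M_{P}\cap K\cap H}(\g[a]_{\g[q]})$. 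Under this parametrization, a point $km\exp(Y)\cdot\xi_{P}$ tends to infinity in $\Xi_{P}$ precisely when $|Y|\to\infty$ in $\g[a]_{\g[q]}$, so the conclusion reduces to showing $\phi(km\exp(Y)\cdot\xi_{P})\to0$ as $|Y|\to\infty$, uniformly in the compact variables $(k,m)$.

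Since $\phi\in\Es^{1}(\Xi_{P}, J_{P})$, we have $\pi(u)\phi\in L^{1}(\Xi_{P}, J_{P})$ for every $u\in U(\g)$. I would translate these $G$-derivatives into partial derivatives with respect to the polar coordinates $(k,m,Y)$, showing that every such coordinate derivative lies in $L^{1}$ with respect to a smooth density arising from the Weyl integration formula on $L_{P}/(L_{P}\cap H)$. A standard Sobolev embedding on the compact-times-Euclidean product $K\times(M_{P}\cap K)\times\g[a]_{\g[q]}$ — or, equivalently, an application of Riemann--Lebesgue after Fourier-transforming in $\g[a]_{\g[q]}$ and Peter--Weyl-decomposing in the compact factors — then delivers the desired pointwise vanishing.

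The main obstacle is this translation step: the infinitesimal $G$-action mixes the polar coordinates $(k,m,Y)$ nontrivially, and the Jacobian of the polar decomposition degenerates on the walls of the Weyl chambers in $\g[a]_{\g[q]}$. Expressing coordinate partial derivatives as combinations of $U(\g)$-derivatives requires inverting this Jacobian, whose singularity on the walls must be cancelled against the simultaneous vanishing of the Jacobian appearing in the $L^{1}$-density. Away from the walls this is routine; uniformly up to the walls one needs a weighted-Sobolev variant of the embedding argument, ensuring that the resulting $L^{1}$-bound on all coordinate derivatives is strong enough to produce the $C_{0}$-conclusion.
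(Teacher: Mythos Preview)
Your starting point---identifying $L^{1}(\Xi_{P},J_{P})$ with $L^{1}$ on $K\times L_{P}/(L_{P}\cap H)$ via Lemma \ref{FuncSpaces lemma int_Xi phi J= int_K int_L/(L cap H)phi}---is exactly right, and is also the paper's first move. But then you and the paper diverge.

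You pass further to polar coordinates on $L_{P}/(L_{P}\cap H)$, landing on $K\times(M_{P}\cap K)\times\g[a]_{\g[q]}$, and then confront the Jacobian singularity on the Weyl-chamber walls. You correctly identify this as the main obstacle, but you do not resolve it: the closing sentence (``one needs a weighted-Sobolev variant \dots'') is a description of what would be required, not an argument. Converting $\U(\g)$-derivatives into $\g[a]_{\g[q]}$-coordinate derivatives near the walls is genuinely delicate, and nothing in your sketch controls the blowup. As written, this is a gap.

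The paper sidesteps the singularity entirely. Rather than decomposing $L_{P}/(L_{P}\cap H)$ by polar coordinates, it observes that $L_{P}/(L_{P}\cap H)$ is itself a reductive symmetric space, and that
\[
S=\big((K\times K)/\diag(K)\big)\times\big(L_{P}/(L_{P}\cap H)\big)
\]
is therefore a symmetric space of the Harish-Chandra class. The map $\Phi:S\to\Xi_{P}$, $((k_{1},k_{2}),l)\mapsto k_{1}k_{2}^{-1}l\cdot\xi_{P}$, is a surjective submersion, and pullback along $\Phi$ gives an isometric embedding $L^{1}(\Xi_{P},J_{P})\hookrightarrow L^{1}(S)$. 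A short argument with $\Ad(K)$-finiteness in $\U(\g)$ then shows $\Phi^{*}$ carries $\Es^{1}(\Xi_{P},J_{P})$ into $\Es^{1}(S)$. At that point the paper simply invokes \cite[Theorem 3.1]{Krotz&Schlichtkrull_OnFunctionSpacesOnSymmetricSpaces}, which gives the $C_{0}$-property for smooth $L^{1}$-vectors on any reductive symmetric space. Since $\Phi$ is a continuous surjection, vanishing at infinity pulls back.

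The moral: stop one step earlier than you did. Keeping $L_{P}/(L_{P}\cap H)$ intact as a symmetric space lets you import the result wholesale from Kr\"otz--Schlichtkrull, and the wall singularity never appears.
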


\begin{proof}
Let $\diag(K)$ be the diagonal in $K\times K$ and let
$S=\big((K\times K)/\diag(K)\big)\times \big(L_{P}/(L_{P}\cap
H)\big)$. The map
$$\Phi:S\to \Xi_{P};\qquad
\big((k_{1},k_{2})\cdot\diag(K),l\cdot(L_{P}\cap H)\big)\mapsto k_{1}k_{2}^{-1}l\cdot\xi_{P}$$
is a surjective, smooth submersion. Since we take the measure of
$K$ to be normalized, pullback along $\Phi$ defines an isometric
embedding $\Phi^{*}:L^{1}(\Xi_{P},J_{P})\to L^{1}(S)$ by Lemma \ref{FuncSpaces lemma int_Xi phi J= int_K int_L/(L cap H)phi}.

Let $n\in\N$ and let $v\in\U(\g)$ be of degree smaller than or
equal to $n$. If $k\in K$, then $\Ad(k)v$ can be written as a
finite sum $\Ad(k)v=\sum_{j}c_{j}(k)u_{j}$,
where the $c_{j}$ are continuous functions from $K$ to $\C$ and
the $u_{j}$ form a basis for the subspace of $\U(\g)$ consisting
of the elements of order at most $n$. Since $K$ is compact, the
functions $c_{j}$ are bounded. Therefore, if $\phi\in
\Es^{1}(\Xi_{P},J_{P})$ and $u,v_{1}\in\U(\g[k])$ and
$v_{2}\in\U(\g[l]_{P})$, then the $L^{1}(S)$-norm of $(u\otimes
v_{1}\otimes v_{2})\Phi^{*}\phi$ can be estimated by a constant
times
$$
\sum_{j}\int_{\Xi_{P}}|uu_{j}\phi(\xi)|J_{P}(\xi)\,d\xi.
$$

This proves that pullback along $\Phi$ maps
$\Es^{1}(\Xi_{P},J_{P})$ to the space $\Es^{1}(S)$ of smooth
representatives for elements in $L^{1}(S)^{\infty}$. Note that $S$
is a symmetric space of the Harish-Chandra class. According to
\cite[Theorem
3.1]{Krotz&Schlichtkrull_OnFunctionSpacesOnSymmetricSpaces} every
function $\psi\in\Es^{1}(S)$ vanishes at infinity. Since pullback
along $\Phi$ maps $\Es^{1}(\Xi_{P},J_{P})$ to $\Es^{1}(S)$ and
$\Phi$ is a continuous surjection, it follows that every function
$\phi\in\Es^{1}(\Xi_{P},J_{P})$ vanishes at infinity.
\end{proof}

Define
$$L^{\infty}(\Xi_{P},J_{P}^{-1})
=\left\{\phi:\frac{\phi}{J_{P}}\in L^{\infty}(\Xi_{P})\right\}.$$
We endow this space with the norm $\phi\mapsto\|J_{P}^{-1}\phi\|_{L^{\infty}(\Xi_{P})}$.
Since $(\Xi_{P},d\xi)$ is a $\sigma$-finite measure space, the pairing
$$
(\psi,\phi)\mapsto\int_{\Xi_{P}}\psi\phi\,d\xi
$$
induces an isometric isomorphism $L^{\infty}(\Xi_{P},J_{P}^{-1})\to
L^{1}(\Xi_{P},J_{P})'$. (See for example \cite[Theorem 4.14.6]{Friedman_FoundationsOfModernAnalysis}.)

Let $\Es_{b}(\Xi_{P},J_{P}^{-1})$ be the space of
$\phi\in\Es(\Xi_{P})$ such that $(u\phi)/J_{P}$ is bounded for every $u\in\U(\g)$.

\begin{prop}
The left regular representation of $G$ on the space $\Es_{b}(\Xi_{P},J_{P}^{-1})$
is a smooth Fr\'echet representation.
\end{prop}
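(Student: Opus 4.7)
The plan is to verify three things in sequence: (i) that $\Es_{b}(\Xi_{P},J_{P}^{-1})$ is Fr\'echet with the natural seminorms, (ii) that the left regular representation $\pi$ preserves this space with $\pi(g)$ continuous for each $g$ and with equicontinuity on compacta, and (iii) that for every $\phi \in \Es_{b}(\Xi_{P},J_{P}^{-1})$ the orbit map $g\mapsto\pi(g)\phi$ is smooth into this space. Together (i)--(iii) give a smooth Fr\'echet representation.

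For (i), the seminorms are $\phi\mapsto\|u\phi/J_{P}\|_{\infty}$ indexed by a filtered basis of $\U(\g)$, so they are countable and separate points. For completeness, if $(\phi_{n})$ is Cauchy then each $(u\phi_{n})/J_{P}$ converges uniformly on $\Xi_{P}$; since $J_{P}$ is locally bounded above and below, this gives local uniform convergence of $u\phi_{n}$ for all $u$, and a standard bootstrap produces a limit $\phi\in\Es(\Xi_{P})$ with $u\phi/J_{P}$ bounded. For (ii), the key identity is
$$
u\,\pi(g)\phi=\pi(g)\bigl[\Ad(g^{-1})u\cdot\phi\bigr].
$$
Expanding $\Ad(g^{-1})u=\sum_{j}c_{j}(g)u_{j}$ in a basis of the finite-dimensional subspace of $\U(\g)$ of order at most $\deg u$, with continuous coefficients $c_{j}$, we obtain
$$
\frac{|u\pi(g)\phi(\xi)|}{J_{P}(\xi)}\le\Bigl(\sup_{j}|c_{j}(g)|\Bigr)\,\frac{J_{P}(g^{-1}\cdot\xi)}{J_{P}(\xi)}\sum_{j}\|u_{j}\phi\|_{L^{\infty}(\Xi_{P},J_{P}^{-1})}.
$$
Lemma \ref{FuncSpaces lemma c^-1g^*J<J<c g^*J} controls the ratio $J_{P}(g^{-1}\xi)/J_{P}(\xi)$ uniformly for $g$ in a compact set, giving continuity of every $\pi(g)$ and equicontinuity of $\{\pi(g):g\in C\}$ for each compact $C\subseteq G$.

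For (iii), I first prove continuity of the orbit map at $e$ using the decomposition
$$
u\pi(g)\phi-u\phi=\pi(g)\bigl[(\Ad(g^{-1})u-u)\phi\bigr]+(\pi(g)-1)(u\phi).
$$
The first term is handled as in (ii), noting that $c_{j}(g)\to\delta_{j,u}$ as $g\to e$. The second, the essential difficulty, is treated by the mean value theorem along one-parameter subgroups: for $g=\exp(Y)$ with $Y\in\g$ small and $\psi=u\phi$,
$$
(\pi(\exp Y)-1)\psi(\xi)=-\int_{0}^{1}(Y\psi)(\exp(-tY)\cdot\xi)\,dt,
$$
so
$$
\frac{|(\pi(\exp Y)-1)\psi(\xi)|}{J_{P}(\xi)}\le\|Y\psi\|_{L^{\infty}(\Xi_{P},J_{P}^{-1})}\sup_{0\le t\le 1}\frac{J_{P}(\exp(-tY)\cdot\xi)}{J_{P}(\xi)}\cdot|Y|,
$$
which tends to $0$ uniformly in $\xi$ by Lemma \ref{FuncSpaces lemma c^-1g^*J<J<c g^*J}. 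Crucially, boundedness of \emph{every} derivative of $\phi$ weighted by $J_{P}^{-1}$, which is precisely what membership in $\Es_{b}(\Xi_{P},J_{P}^{-1})$ provides, makes the same argument work with $u\phi$ replaced by an arbitrary $u\in\U(\g)$.

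Smoothness of the orbit map is then established by induction: a similar Taylor-remainder estimate shows that the derivative of $g\mapsto\pi(g)\phi$ at $g_{0}$ in direction $Y\in\g$ equals $\pi(g_{0})(Y\phi)$ (with the appropriate sign convention for the left regular action), and since $Y\phi\in\Es_{b}(\Xi_{P},J_{P}^{-1})$ whenever $\phi$ is, one iterates to obtain $C^{k}$-differentiability for every $k$. Hence every $\phi$ is a smooth vector for $\pi$, proving that $\pi$ is a smooth Fr\'echet representation. The main obstacle throughout is step (iii): converting pointwise MVT estimates into uniform $(J_{P}^{-1})$-weighted control requires the bounded-derivative hypothesis to propagate through the higher-order arguments together with Lemma \ref{FuncSpaces lemma c^-1g^*J<J<c g^*J} to absorb the effect of left translation on the weight.
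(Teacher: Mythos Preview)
Your proposal is correct and follows essentially the same route as the paper: the $\Ad$-expansion $\Ad(g^{-1})u=\sum_{j}c_{j}(g)u_{j}$ together with Lemma~\ref{FuncSpaces lemma c^-1g^*J<J<c g^*J} gives continuity and local equicontinuity of $\pi(g)$, a first-order mean value estimate along $t\mapsto\pi(\exp(tY))\phi$ gives continuity of the orbit map at $e$, and a second-order Taylor remainder (which you only allude to) gives the existence of the G\^ateaux derivative $Y\phi$ in the space; iteration then yields smoothness. The only noteworthy differences are organizational: you spell out the Fr\'echet structure and the decomposition $u\pi(g)\phi-u\phi=\pi(g)[(\Ad(g^{-1})u-u)\phi]+(\pi(g)-1)(u\phi)$ to pass from the zeroth seminorm to all seminorms, whereas the paper handles continuity by appealing to \cite[Proposition~4.1.1.1]{Warner_HarmonicAnalysisOnSemiSimpleLieGroups} and for smoothness writes out the explicit integral remainder $\int_{0}^{t}(t-s)\frac{d^{2}}{ds^{2}}\pi(\exp(sY))\phi\,ds$ to get the $O(t)$ bound. (One cosmetic point: in your displayed mean value estimate the extra factor $|Y|$ is not needed; the smallness as $Y\to 0$ already comes from the linearity of $Y\psi$ in $Y$.)
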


\begin{proof}
We denote the left regular representation of $G$ on
$\Es_{b}(\Xi_{P},J_{P}^{-1})$ by $\pi$. Let $u\in\U(\g)$ be of order
$n$. Let $\{u_{j}:1\leq j\leq m\}$ be a basis for the subspace of
$\U(\g)$ consisting of elements of order at most $n$. Then there
exist continuous functions $c_{j}:G\to\C$ such that
$\Ad(g)u=\sum_{j=1}^{m}c_{j}(g)u_{j}$. Let $C$ be a compact subset
of $G$ and $\phi\in\Es_{b}(\Xi_{P},J_{P}^{-1})$. By Lemma
\ref{FuncSpaces lemma c^-1g^*J<J<c g^*J} there exists a constant
$c>0$ such that for every $g\in C$
\begin{align*}
\sup_{\xi\in\Xi_{P}}\frac{|u(\pi(g)\phi)(\xi)|}{J_{P}(\xi)}
&=\sup_{\xi\in\Xi_{P}}\sum_{j=1}^{m}\frac{c_{j}(g) u_{j}\phi(g\cdot\xi)}{J_{P}(\xi)}
\leq\sup_{\xi\in\Xi_{P}}c\sum_{j=1}^{m}\frac{|u_{j}\phi(g\cdot\xi)|}{J_{P}(g\cdot\xi)}\\
&=\sup_{\xi\in\Xi_{P}}c\sum_{j=1}^{m}\frac{|u_{j}\phi(\xi)|}{J_{P}(\xi)}.
\end{align*}
This implies in particular that $\pi(g)$ is continuous for every
$g\in G$ and every seminorm of $\pi(g)\phi$ is locally uniformly
bounded  in $g$.  Furthermore, if $g=\exp(Y)$ for some element
$Y\in \g$, then again by Lemma \ref{FuncSpaces lemma c^-1g^*J<J<c
g^*J} there exists a constant $c$ such that
\begin{align*}
\sup_{\Xi_{P}}\frac{|\pi(g)\phi-\phi|}{J_{P}}
&\leq\sup_{\Xi_{P}}\frac{1}{J_{P}}\int_{0}^{1}\left|\frac{d}{dt}\pi\big(\exp(tY)\big)\phi\right|\,dt
\leq\int_{0}^{1}\sup_{\Xi_{P}}\frac{|\pi\big(\exp(tY)\big)(Y\phi)|}{J_{P}}\,dt\\
&\leq c\sup_{\Xi_{P}}\frac{|Y\phi|}{J_{P}}.
\end{align*}
Therefore $\lim_{g\to e}\pi(g)\phi=\phi$.
We conclude that the assumptions in \cite[Proposition 4.1.1.1]{Warner_HarmonicAnalysisOnSemiSimpleLieGroups} are satisfied and hence that $\pi$ is a continuous representation.

In order to prove that the representation is smooth, it suffices to show that for every $\phi\in\Es_{b}(\Xi_{P},J_{P}^{-1})$ and $Y\in\g$ we have
\begin{equation}\label{Funcspaces eq limit for derivative}
\lim_{t\to0}\,\sup_{\Xi_{P}}\frac{1}{J_{P}}\left|\frac{\pi\big(\exp(tY)\big)\phi-\phi}{t}-Y\phi\right|=0.
\end{equation}
Since
$$\int_{s=0}^{t}(t-s)\frac{d^{2}}{ds^{2}}\pi\big(\exp(sY)\big)\phi\,ds
    =\pi\big(\exp(tY)\big)\phi-\phi-tY\phi,
$$
it follows that
\begin{align*}
\sup_{\Xi_{P}}\frac{1}{J_{P}}\left|\frac{\pi\big(\exp(tY)\big)\phi-\phi}{t}-Y\phi\right|
&\leq\sup_{\Xi_{P}}\frac{1}{J_{P}}\int_{s=0}^{t}\frac{t-s}{t}\,\Big|\frac{d^{2}}{ds^{2}}\pi\big(\exp(sY)\big)\phi\Big|\,ds\\
&\leq\int_{s=0}^{t}\frac{t-s}{t}\,\sup_{\Xi_{P}}\frac{\big|\frac{d^{2}}{ds^{2}}\pi\big(\exp(sY)\big)\phi\big|}{J_{P}}\,ds.
\end{align*}
By Lemma \ref{FuncSpaces lemma
c^-1g^*J<J<c g^*J} there exists a constant $c$ such that
the latter is smaller than or equal to
$$c\,\sup_{\Xi_{P}}\frac{|Y^{2}\phi|}{J_{P}}\int_{s=0}^{t}\frac{t-s}{t}ds=
c\,\sup_{\Xi_{P}}\frac{|Y^{2}\phi|}{J_{P}}\frac{t}{2}.$$
This implies (\ref{Funcspaces eq limit for derivative}).
\end{proof}

%%% ----------------------------------------------------------------------
\subsection{Extensions of the Radon transforms}
\label{subsection Extensions of the Radon transforms}
%%% ----------------------------------------------------------------------
Recall that for a parabolic subgroup $P$ of $G$ and $g\in G$ the conjugate parabolic subgroup $g^{-1}Pg$ is denoted by
$P^{g}$. Furthermore, recall that $\WGs[M_{P}]$ denotes a set of
representatives for the quotient of Weyl groups $\WG[M_{P}\cap
K]/\WG[M_{P}\cap K\cap H]$. (See Section \ref{subsection Notation}.)

\begin{lemma}\label{RadTransExt lemma int_Xi phi J=sum int_K int_A int_N phi}
Let $P_{0}\in\Psg(\g[a]_{\g[q]})$  and let $P$ be a $\sigma\circ\theta$-stable parabolic subgroup containing $P_{0}$.
Then with a suitable normalization of the measure $d\xi$ on $\Xi_{P}$,
$$
\int_{\Xi_{P}}\phi(\xi)J_{P}(\xi)\,d\xi
=\sum_{w\in\WGs[M_{P}]}\int_{K}\int_{A_{\g[q]}}\int_{N_{P_{0}^{w}}^{P}}
    \phi(kan\cdot\xi_{P})\,dn\,da\,dk
$$
for all $\phi\in L^{1}(\Xi_{P}, J_{P})$.
\end{lemma}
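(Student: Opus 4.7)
By Lemma \ref{FuncSpaces lemma int_Xi phi J= int_K int_L/(L cap H)phi} we have
$$
\int_{\Xi_{P}}\phi(\xi)J_{P}(\xi)\,d\xi = \int_{K}\int_{Y}\phi(k\ell\cdot\xi_{P})\,d\ell\,dk,
$$
where $Y := L_{P}/(L_{P}\cap H)$ carries its $L_{P}$-invariant measure $d\ell$. It therefore suffices to establish an integration formula on $Y$ of the shape
$$
\int_{Y}F\,d\ell = \sum_{w\in\WGs[M_{P}]}\int_{A_{\g[q]}}\int_{N_{P_{0}^{w}}^{P}}\int_{(M_{P_{0}^{w}}\cap K)/(M_{P_{0}^{w}}\cap K\cap H)} F(anm\cdot e)\,d\bar m\,dn\,da,
$$
and then to absorb the $\bar m$-integration into the outer $K$-integration. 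The absorption uses that $m\in M_{P_{0}^{w}}\cap K$ centralizes $A_{\g[q]}$, normalizes $N_{P_{0}^{w}}^{P}$, and acts orthogonally on $\g[n]_{P_{0}^{w}}^{P}$ (so that the conjugation $n\mapsto mnm^{-1}$ has unit Jacobian), combined with the substitution $k\mapsto km^{-1}$ (right $K$-invariance of Haar) and our convention that Haar measures on compact groups are probability measures.

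To establish the integration formula on $Y$: the group $L_{P}$ is reductive of the Harish-Chandra class, $L_{P}\cap H$ is essentially connected in $(L_{P})^{\sigma}$, and $\g[a]_{\g[q]}$ is maximal abelian in $\g[p]\cap\g[q]\subseteq\g[l]_{P}$, so the material of Section \ref{subsection sigma circ theta-stable parabolic subgroups} applies to $L_{P}$. The minimal $\sigma\circ\theta$-stable parabolic subgroups of $L_{P}$ containing $A_{\g[q]}$ are exactly the subgroups $P_{0}^{w}\cap L_{P} = M_{P_{0}^{w}}A_{\g[q]}N_{P_{0}^{w}}^{P}$ for $w\in\WGs[M_{P}]$, noting that $w\in\Nor_{M_{P}\cap K}(\g[a]_{\g[q]})\subseteq L_{P}$ forces $P_{0}^{w}\subseteq w^{-1}Pw = P$. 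By the generalized Iwasawa decomposition (at the end of Section \ref{subsection sigma circ theta-stable parabolic subgroups}) applied to $L_{P}$, the union $\bigsqcup_{w\in\WGs[M_{P}]}(P_{0}^{w}\cap L_{P})(L_{P}\cap H)$ is open and dense in $L_{P}$, and each orbit $(P_{0}^{w}\cap L_{P})\cdot e$ in $Y$ is diffeomorphic to $N_{P_{0}^{w}}^{P}\times A_{\g[q]}\times(M_{P_{0}^{w}}\cap K)/(M_{P_{0}^{w}}\cap K\cap H)$ via $(n,a,\bar m)\mapsto nam\cdot e$. Unwinding $d\ell$ in these coordinates via the Langlands decomposition of $P_{0}^{w}\cap L_{P}$ and the unimodularity of $L_{P}$ and $L_{P}\cap H$ yields, in the order $nam$, a Jacobian $a^{-2\rho_{P_{0}^{w}\cap L_{P}}}$; rewriting $nam\cdot e = a(a^{-1}na)m\cdot e$ and performing the substitution $n\mapsto a^{-1}na$ on $N_{P_{0}^{w}}^{P}$ (which contributes the factor $a^{2\rho_{P_{0}^{w}\cap L_{P}}}$) exactly cancels the previous factor and delivers the claimed formula in the order $anm$.

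The main obstacle is the careful bookkeeping of measure normalizations and of the various Jacobian factors, in particular the verification that the modular factor $a^{-2\rho_{P_{0}^{w}\cap L_{P}}}$ from the Langlands decomposition of $L_{P}$ cancels exactly with the factor $a^{2\rho_{P_{0}^{w}\cap L_{P}}}$ arising from commuting $N_{P_{0}^{w}}^{P}$ past $A_{\g[q]}$. This cancellation, which is a non-trivial manifestation of the $\sigma\circ\theta$-stability of $P$ and the $P_{0}^{w}$, is what makes the Jacobian-free right-hand side of the lemma possible.
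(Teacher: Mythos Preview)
Your reduction via Lemma~\ref{FuncSpaces lemma int_Xi phi J= int_K int_L/(L cap H)phi} matches the paper, and the plan to then decompose $Y=L_{P}/(L_{P}\cap H)$ into open pieces is exactly what the paper accomplishes by citing \'Olafsson's Theorem~1.2 and Lemma~7.2. However, the intermediate integration formula you write down on $Y$ is false as stated. The sets $(P_{0}^{w}\cap L_{P})(L_{P}\cap H)$ for $w\in\WGs[M_{P}]$ are \emph{not} disjoint: each contains the identity of $L_{P}$, so in $Y$ every orbit $(P_{0}^{w}\cap L_{P})\cdot e$ is an open neighbourhood of the base point. For $F$ supported near $e\cdot(L_{P}\cap H)$ your right-hand side therefore equals $|\WGs[M_{P}]|\int_{Y}F$, not $\int_{Y}F$. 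You have misread the generalized Iwasawa decomposition: it says that for a \emph{fixed} minimal parabolic $R=P_{0}\cap L_{P}$, the sets $Rw(L_{P}\cap H)$ with $w\in\WGs[M_{P}]$ are the open $R\times(L_{P}\cap H)$-orbits, and it is \emph{these} that are disjoint with dense union. (Relatedly, the minimal $\sigma\circ\theta$-stable parabolics of $L_{P}$ containing $A_{\g[q]}$ are parametrized by all of $\WG[M_{P}\cap K]$, not by the quotient represented by $\WGs[M_{P}]$.)

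The repair is to parametrize each correct orbit $(P_{0}\cap L_{P})w\cdot e$ by $(m,a,n)\mapsto manw\cdot e$ with $n\in N_{P_{0}}^{P}$; your Jacobian bookkeeping then gives $\int_{Y}F\,d\ell=\sum_{w}\int F(manw\cdot e)\,dn\,da\,d\bar m$, which is precisely \'Olafsson's formula. The conversion from $N_{P_{0}}^{P}$ to $N_{P_{0}^{w}}^{P}$ and the disappearance of the explicit $w$ happen only \emph{after} combining with the outer $K$-integral, via the substitutions $k\mapsto kw^{-1}$, $a\mapsto w^{-1}aw$, $n\mapsto w^{-1}nw\in N_{P_{0}^{w}}^{P}$. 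So the passage through $K$ is not merely an absorption of a compact factor into Haar measure; it is essential for reconciling the genuine open-orbit decomposition of $Y$ with the $N_{P_{0}^{w}}^{P}$-form of the final statement.
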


\begin{proof}
The lemma follows directly from Lemma \ref{FuncSpaces lemma int_Xi phi J= int_K int_L/(L cap H)phi} and \cite[Theorem 1.2 \& Lemma 7.2]{Olafsson_FourierAndPoissonTransformationAssociatedToASemisimpleSymmetricSpace} applied to $L_{P}/(L_{P}\cap H)$ and the minimal $\sigma\circ\theta$-stable parabolic subgroup
$P_{0}\cap L_{P}=M_{P_{0}}A_{P_{0}}N_{P_{0}}^{P}$ of $L_{P}$.
\end{proof}

From now on we assume that the measure on $\Xi_{P}$ is normalized such that the identity in Lemma \ref{RadTransExt lemma int_Xi phi J=sum int_K int_A int_N phi} holds. (See Appendix A for the normalization of measures.)

As before, let $P$ and $Q$ be $\sigma\circ\theta$-stable parabolic subgroups of $G$ with $A \subseteq P \subseteq Q$.
\begin{lemma}\label{RadTransExt lemma int_Xi int_N|phi| leq int_X|phi|}
If $\phi\in L^{1}(\Xi_{Q},J_{Q})$, then
$$
\int_{\Xi_{P}}\Big(\int_{N_{P}^{Q}}|\phi(gn\cdot\xi_{Q})|\,dn\Big) J_{P}(g\cdot\xi_{P})\,d_{(L_{P}\cap H)N_{P}}g
\leq \int_{\Xi_{Q}}|\phi(\xi)|J_{Q}(\xi)\,d\xi.
$$
\end{lemma}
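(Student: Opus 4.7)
The plan is to reduce both sides to sums of integrals of the same form and then to compare the index sets.

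Fix a minimal $\sigma\circ\theta$-stable parabolic subgroup $P_0 \in \Psg(\g[a]_{\g[q]})$ contained in $P$. First I verify that $g \mapsto \int_{N_P^Q}|\phi(gn \cdot \xi_Q)|\,dn$ descends to a function on $\Xi_P$, using that $L_P$ normalizes $N_P^Q$, that $N_Q$ is normal in $N_P$ with $N_P = N_P^Q N_Q$, and the unimodularity results of Proposition \ref{Measures prop H, L_P cap H, (L_P cap H)N_Q unimodular}. Then Lemma \ref{RadTransExt lemma int_Xi phi J=sum int_K int_A int_N phi} together with Fubini rewrites the left-hand side as
$$
\sum_{w \in \WGs[M_P]} \int_K \int_{A_{\g[q]}} \int_{N_{P_0^w}^P}\int_{N_P^Q} |\phi(kann' \cdot \xi_Q)|\,dn'\,dn\,da\,dk.
$$
Since $w \in M_P \subseteq P$, the parabolic $P_0^w$ is still contained in $P$, so identity (\ref{Measures eq int_(N_P^S)=int_(N_P^Q) int_(N_Q^S)}) applied to $P_0^w \subseteq P \subseteq Q$ collapses the innermost double integral into $\int_{N_{P_0^w}^Q} |\phi(kan \cdot \xi_Q)|\,dn$.

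Next I apply Lemma \ref{RadTransExt lemma int_Xi phi J=sum int_K int_A int_N phi} to the right-hand side $\int_{\Xi_Q}|\phi|J_Q\,d\xi$ with the same choice of $P_0$, obtaining an analogous sum indexed by $\WGs[M_Q]$. The two sides now share the same general summand, and all summands are non-negative.

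The main obstacle is to show that the natural map $\WG[M_P\cap K]/\WG[M_P\cap K\cap H] \to \WG[M_Q\cap K]/\WG[M_Q\cap K\cap H]$ is injective, so that $\WGs[M_P]$ can be realized as a subset of $\WGs[M_Q]$ and the left-hand sum is a non-negative sub-sum of the right-hand sum. For injectivity I combine $M_P \subseteq M_Q$ (from the Langlands theory for $P \subseteq Q$) with $\Cen_K(\g[a]_{\g[q]}) = M_{P_0}\cap K \subseteq M_P \cap K$ (which follows from $P_0 \subseteq P$ and the characterization of $L_{P_0}$ as the centralizer of $\g[a]_{\g[q]}$ in $G$): if $k \in \Nor_{M_P\cap K}(\g[a]_{\g[q]})$ has trivial image in the right-hand quotient, we write $k = k_Q c$ with $k_Q \in \Nor_{M_Q\cap K\cap H}(\g[a]_{\g[q]})$ and $c \in \Cen_K(\g[a]_{\g[q]})$, and since $k, c \in M_P\cap K$ we get $k_Q \in M_P\cap K\cap H$, making the class of $k$ already trivial in the left-hand quotient.
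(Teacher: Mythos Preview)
Your proof is correct and follows essentially the same route as the paper: apply Lemma~\ref{RadTransExt lemma int_Xi phi J=sum int_K int_A int_N phi} on both sides with a common $P_{0}\subseteq P$, collapse the iterated $N$-integral via (\ref{Measures eq int_(N_P^S)=int_(N_P^Q) int_(N_Q^S)}), and compare the two sums over Weyl-group representatives. The paper simply asserts that one may choose $\WGs[M_{P}]\subseteq\WGs[M_{Q}]$; your explicit verification that $\WG[M_{P}\cap K]\cap\WG[M_{Q}\cap K\cap H]\subseteq\WG[M_{P}\cap K\cap H]$ (using $\Cen_{K}(\g[a]_{\g[q]})\subseteq M_{P}\cap K$) is exactly the content behind that assertion.
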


\begin{proof}
Since $A_{\g[q]}\subseteq L_{P}\subseteq L_{Q}$, we can choose the
sets of representatives $\WGs[M_{P}]$ and $\WGs[M_{Q}]$ such that
$\WGs[M_{P}]\subseteq\WGs[M_{Q}]$. Let
$P_{0}\in\Psg(\g[a]_{\g[q]})$ be a minimal
$\sigma\circ\theta$-stable parabolic subgroup with $P_{0}
\subseteq P$. By Lemma \ref{RadTransExt lemma int_Xi phi J=sum
int_K int_A int_N phi},
\begin{align*}
&\int_{\Xi_{P}}\Big(\int_{N_{P}^{Q}}|\phi(gn\cdot \xi_{Q})|\,dn\Big)J_{P}(g\cdot\xi_{P})\,d_{(L_{P}\cap H)N_{P}}g
=\\&\qquad \sum_{w\in\WGs[M_{P}]}\int_{K}\int_{A_{\g[q]}}\int_{N_{P_{0}^{w}}^{P}}\int_{N_{P}^{Q}}
    |\phi(kann_{P}\cdot\xi_{Q})|\,dn_{P}\,dn\,da\,dk
=\\&\qquad \sum_{w\in\WGs[M_{P}]}\int_{K}\int_{A_{\g[q]}}\int_{N_{P_{0}^{w}}^{Q}}
    |\phi(kan\cdot\xi_{Q})|\,dn\,da\,dk.
\end{align*}
Here we used (\ref{Measures eq int_(N_P^S)=int_(N_P^Q) int_(N_Q^S)}).  Now we use that $\WGs[M_{P}]\subseteq\WGs[M_{Q}]$ and apply Lemma \ref{RadTransExt lemma int_Xi phi J=sum int_K int_A int_N phi} once more, to obtain
\begin{align*}
&\sum_{w\in\WGs[M_{P}]}\int_{K}\int_{A_{\g[q]}}\int_{N_{P_{0}^{w}}^{Q}}
    |\phi(kan\cdot\xi_{Q})|\,dn\,da\,dk
\leq\\&\qquad\sum_{w\in\WGs[M_{Q}]}\int_{K}\int_{A_{\g[q]}}\int_{N_{P_{0}^{w}}^{Q}}
    |\phi(kan\cdot\xi_{Q})|\,dn\,da\,dk
=\int_{\Xi_{Q}}|\phi(\xi)|J_{Q}(\xi)\,d\xi.
\end{align*}
\end{proof}

Recall the definition of the map $\Rt[P][Q]: \Ds(\Xi_{Q})\to\Es(\Xi_{P})$ from (\ref{RadTrans eq def R_P^Q}).

\begin{prop}\label{RadTransExt prop Rt_P^Q:Ds(Xi_Q) to Es^1(Xi_P,J_P) continuously}
The transform $\Rt[P][Q]$ defines a continuous map from $\Ds(\Xi_{Q})$ to
$\Es^{1}(\Xi_{P},J_{P})$.
\end{prop}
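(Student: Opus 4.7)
The plan is to split the claim into three parts: (a) $\Rt[P][Q]\phi$ is smooth; (b) it lies in $L^1(\Xi_P,J_P)$, and in fact $\Rt[P][Q]$ extends to a bounded operator at the $L^1$-level; (c) it represents a smooth vector for the representation $\pi$ on $L^1(\Xi_P,J_P)$. Continuity will then follow from combining (b) with the $G$-equivariance of the Radon transform.

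For (a), fix $\phi\in\Ds(\Xi_Q)$. Since $n\mapsto n\cdot\xi_Q$ is a proper embedding of $N_P^Q$ into $\Xi_Q$ by Proposition \ref{RadTrans prop N_P cdot xi_Q closed submanifold}, the integrand $n\mapsto\phi(gn\cdot\xi_Q)$ is smooth of compact support in $n$ for each $g\in G$, and the supports stay in a common compact subset of $N_P^Q$ as $g$ varies over a compact subset of $G$. A standard differentiation-under-the-integral argument then gives $\Rt[P][Q]\phi\in\Es(\Xi_P)$. For (b), the inclusion $\Ds(\Xi_Q)\hookrightarrow L^1(\Xi_Q,J_Q)$ is continuous because $J_Q$ is continuous, so Lemma \ref{RadTransExt lemma int_Xi int_N|phi| leq int_X|phi|} yields
$$\|\Rt[P][Q]\phi\|_{L^1(\Xi_P,J_P)}\le\|\phi\|_{L^1(\Xi_Q,J_Q)},$$
and $\Rt[P][Q]$ extends by density to a bounded operator $L^1(\Xi_Q,J_Q)\to L^1(\Xi_P,J_P)$.

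For (c), I would combine three facts: the orbit map $g\mapsto\pi(g)\phi$ is smooth as a map $G\to\Ds(\Xi_Q)$ (with $\pi(g)$ denoting left translation throughout); the inclusion $\Ds(\Xi_Q)\hookrightarrow L^1(\Xi_Q,J_Q)$ is continuous; and the extended operator of (b) is bounded. Since continuous linear maps between locally convex spaces preserve smoothness of parameterized curves, the composition $g\mapsto\Rt[P][Q](\pi(g)\phi)$ is smooth from $G$ to $L^1(\Xi_P,J_P)$. By the $G$-equivariance $\Rt[P][Q]\circ\pi(g)=\pi(g)\circ\Rt[P][Q]$, this coincides with $g\mapsto\pi(g)\Rt[P][Q]\phi$, so $\Rt[P][Q]\phi$ is a smooth vector, i.e.\ $\Rt[P][Q]\phi\in\Es^1(\Xi_P,J_P)$.

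For continuity, the defining seminorms of $\Es^1(\Xi_P,J_P)$ are $\psi\mapsto\|u\psi\|_{L^1(\Xi_P,J_P)}$ for $u\in\U(\g)$. Differentiating the equivariance yields $u\Rt[P][Q]\phi=\Rt[P][Q](u\phi)$, whence by (b),
$$\|u\Rt[P][Q]\phi\|_{L^1(\Xi_P,J_P)}\le\|u\phi\|_{L^1(\Xi_Q,J_Q)},$$
which depends continuously on $\phi\in\Ds(\Xi_Q)$ through the continuous inclusion of (b) and the fact that $u$ acts continuously on $\Ds(\Xi_Q)$. The main technical point is the smoothness assertion in (c), which relies on the classical fact that continuous linear maps commute with derivatives of smooth vector-valued curves; the remaining ingredients are routine applications of the integral estimate from Lemma \ref{RadTransExt lemma int_Xi int_N|phi| leq int_X|phi|}.
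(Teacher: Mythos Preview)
Your proof is correct and follows essentially the same approach as the paper's. The paper's argument is terser: it notes that $\Rt[P][Q]$ is continuous from $\Ds(\Xi_Q)$ to $L^1(\Xi_P,J_P)$ by Lemma~\ref{RadTransExt lemma int_Xi int_N|phi| leq int_X|phi|}, then invokes the general principle that a continuous $G$-equivariant map between representation spaces restricts to a continuous map between the spaces of smooth vectors, and finally observes that $\Rt[P][Q]\phi$ is actually a smooth function (so lies in $\Es^1$ rather than merely $L^1(\Xi_P,J_P)^\infty$). Your parts (a), (b), (c) and your continuity paragraph simply unpack that general principle explicitly via the identity $u\,\Rt[P][Q]\phi=\Rt[P][Q](u\phi)$ and the seminorm estimate; nothing is missing and nothing is superfluous.
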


\begin{proof}
$\Rt[P][Q]$ defines a continuous map from $\Ds(\Xi_{Q})$ to $L^{1}(\Xi_{P},J_{P})$ by Lemma \ref{RadTransExt lemma int_Xi int_N|phi| leq int_X|phi|}. Due to continuity and equivariance, it is a continuous transform between the spaces of smooth vectors for the left-regular representation of $G$, i.e., it is a continuous map from $\Ds(\Xi_{Q})$ to $L^{1}(\Xi_{P},J_{P})^{\infty}$. The proposition now follows from the fact that $\Rt[P][Q]$ maps elements in $\Ds(X)$ to smooth functions.
\end{proof}

The image of the injection $\Ds(\Xi_{Q})\hookrightarrow L^{1}(\Xi_{Q},J_{Q})$
is dense. Hence, by Lemma \ref{RadTransExt lemma int_Xi int_N|phi| leq
int_X|phi|} and Proposition \ref{RadTransExt prop Rt_P^Q:Ds(Xi_Q)
to Es^1(Xi_P,J_P) continuously} there exists a unique continuous
transform $\Tt[P][Q]:L^{1}(\Xi_{Q},J_{Q})\to L^{1}(\Xi_{P},J_{P})$
such that
$$
\xymatrix@C+1pc@R-1pc{
\ar@{^{(}->}[dd]\Ds(\Xi_{Q})\ar[r]^{\Rt[P][Q]} & \Es^{1}(\Xi_{P},J_{P})\ar@{^{(}->}[dd]\\
\\
L^{1}(\Xi_{Q},J_{Q})\ar[r]^{\Tt[P][Q]} & L^{1}(\Xi_{P},J_{P})
}
$$
is a commuting diagram. Note that $\Tt[P][Q]$ is equivariant and if $\phi\in L^{1}(\Xi_{Q},J_{Q})$, then
$$
\Tt[P][Q]\phi(g\cdot\xi_{P})=\int_{N_{P}^{Q}}\phi(gn\cdot\xi_{Q})\,dn
$$
for almost every $g\cdot\xi_{P}\in \Xi_{P}$. Since $\Tt[P][Q]$ is equivariant and continuous, it maps $L^{1}(\Xi_{Q},J_{Q})^{\infty}$ continuously to $L^{1}(\Xi_{P},J_{P})^{\infty}$.
It follows that $\Rt[P][Q]$ extends to a continuous $G$-map $\Rt[P][Q]: \Es^{1}(\Xi_{Q},J_{Q})\to\Es^{1}(\Xi_{P},J_{P})$.

\begin{prop}\label{RadTransExt prop Rt_P^Q: Es^1 to Es^1}
The map $\Rt[P][Q]: \Ds(\Xi_{Q})\to\Es(\Xi_{P})$ has a unique extension to a continuous linear $G$-map
$\Rt[P][Q]: \Es^{1}(\Xi_{Q},J_{Q})\to \Es^{1}(\Xi_{P},J_{P}).$
The extension is given by the absolute convergent integral
\begin{equation}\label{RadTransExt eq Rt_P^Q phi=int_(N_P^Q) phi}
\Rt[P][Q]\phi(g\cdot\xi_{P})= \int_{N_{P}^{Q}}\phi(gn\cdot\xi_{Q})\,dn
\qquad\big(\phi\in \Es^{1}(\Xi_{Q},J_{Q}), g\in G\big).
\end{equation}
\end{prop}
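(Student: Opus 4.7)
The continuous extension $\Rt[P][Q]: \Es^{1}(\Xi_{Q}, J_{Q}) \to \Es^{1}(\Xi_{P}, J_{P})$ was already constructed in the paragraph preceding the proposition, by restricting the continuous $G$-map $\Tt[P][Q]: L^1(\Xi_Q, J_Q) \to L^1(\Xi_P, J_P)$ to the respective subspaces of smooth vectors. Its uniqueness follows from this continuity together with the density of $\Ds(\Xi_Q)$ in $\Es^1(\Xi_Q, J_Q)$, which can be verified by truncating and convolving with a smooth approximate identity on $G$. The remaining content of the proposition is therefore the pointwise integral representation (\ref{RadTransExt eq Rt_P^Q phi=int_(N_P^Q) phi}) together with its absolute convergence for every $g \in G$.

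I would establish this in three stages. First, applying Lemma \ref{RadTransExt lemma int_Xi int_N|phi| leq int_X|phi|} to $|\phi|$ (valid since $\phi \in L^1(\Xi_Q, J_Q)$) shows that $g \cdot \xi_P \mapsto \int_{N_P^Q} |\phi(gn \cdot \xi_Q)|\, dn$ is finite for almost every $g \cdot \xi_P \in \Xi_P$, and Fubini identifies the resulting $L^1$-function with $\Tt[P][Q]\phi = \Rt[P][Q]\phi$ on that full-measure set. Second, I would promote this almost-everywhere statement to every $g_0 \in G$ by invoking the smooth-vector hypothesis. Fixing $g_0$, a relatively compact neighborhood $V$ of $e$ in $G$, and a nonnegative $\chi \in \Ds(V)$ with $\int_G \chi = 1$, the fundamental theorem of calculus applied along one-parameter subgroups yields a pointwise majoration of $|\phi(g_0 n \cdot \xi_Q)|$ by a finite sum of averages of the form $\int_G \chi_j(h)\, |(u_j \phi)(g_0 h n \cdot \xi_Q)|\, dh$ for suitable $u_j \in \U(\g)$ and smooth bumps $\chi_j$ supported in $V$. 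Since each $u_j \phi$ again lies in $L^1(\Xi_Q, J_Q)$, integrating over $n \in N_P^Q$, applying Fubini, and using the first stage applied to $u_j \phi$ yields $\int_{N_P^Q} |\phi(g_0 n \cdot \xi_Q)|\, dn < \infty$.

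Third, dominated convergence combined with the pointwise bound just obtained and the continuity of the smooth integrand shows that $g \mapsto \int_{N_P^Q} \phi(gn \cdot \xi_Q)\, dn$ is continuous on $G$; being manifestly right-invariant under $(L_P \cap H) N_P$, it descends to a continuous function on $\Xi_P$. Since this continuous function agrees almost everywhere with $\Rt[P][Q]\phi \in \Es^1(\Xi_P, J_P) \subset C_0(\Xi_P)$ (by Proposition \ref{FuncSpaces prop Es^1(Xi_P,J_P) subset C_0(Xi_P)}), the two are equal pointwise, yielding (\ref{RadTransExt eq Rt_P^Q phi=int_(N_P^Q) phi}). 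The main obstacle is the second stage: promoting almost-everywhere integrability to pointwise integrability genuinely requires the smooth-vector hypothesis via a G\aa rding/Sobolev-type control of pointwise values by $L^1$-norms of derivatives, and cannot be reached from the mere $L^1$-membership of $\phi$.
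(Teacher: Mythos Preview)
Your outline is correct, but it takes a genuinely different route from the paper's proof. The paper does not argue via a pointwise Sobolev majoration; instead it invokes the Dixmier--Malliavin theorem to write an arbitrary $\phi\in\Es^{1}(\Xi_{Q},J_{Q})$ as a finite sum of convolutions $\chi*\varphi$ with $\chi\in\Ds(G)$ and $\varphi\in\Es^{1}(\Xi_{Q},J_{Q})$. For such a convolution the pointwise formula follows immediately from Fubini: the integral $\int_{G}\chi(g\gamma)\,\Tt[P][Q]\varphi(\gamma^{-1}\cdot\xi_{P})\,d\gamma$ is absolutely convergent for \emph{every} $g$ (because $\Tt[P][Q]\varphi\in L^{1}(\Xi_{P},J_{P})$ and $J_{P}$ is smooth and nonvanishing), and interchanging the $G$-integral with the $N_{P}^{Q}$-integral already hidden in $\Tt[P][Q]$ gives $\int_{N_{P}^{Q}}(\chi*\varphi)(gn\cdot\xi_{Q})\,dn$. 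Smoothness of the result in $g$ is then read off by dominated convergence.

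Your approach buys you a proof that does not appeal to Dixmier--Malliavin, at the cost of having to set up the local Sobolev inequality carefully. Two comments on that setup. First, the phrase ``fundamental theorem of calculus along one-parameter subgroups'' undersells what is needed: a single application of FTC does not give an $L^{1}$ control of a point value. What you actually use is the local Sobolev embedding $W^{m,1}\hookrightarrow C^{0}$ for $m>\dim G$, written in terms of invariant vector fields; this is standard but worth naming. Second, in your inequality $|\phi(g_{0}n\cdot\xi_{Q})|\leq\sum_{j}\int_{G}\chi_{j}(h)\,|(u_{j}\phi)(g_{0}hn\cdot\xi_{Q})|\,dh$, the derivatives in $h$ translate into left-regular derivatives of $\phi$ only after an $\Ad(g_{0})$-twist (or, equivalently, one should write $h^{-1}g_{0}n$ rather than $g_{0}hn$); since $g_{0}$ is fixed this is harmless, but the $u_{j}$'s then depend on $g_{0}$. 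Once stage~2 is in place, your dominated-convergence argument for continuity works, with dominating function $\Psi(n)=\sum_{j}\|\chi_{j}\|_{\infty}\int_{V\cdot V}|(v_{j}\phi)(g_{0}hn\cdot\xi_{Q})|\,dh$, whose $N_{P}^{Q}$-integral is finite by the local integrability established in stage~2.
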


\begin{proof}
It is clear that the extension is unique and is given by (\ref{RadTransExt eq Rt_P^Q
phi=int_(N_P^Q) phi}) for almost every $g\in G$. It remains to
be shown that for all $g\in G$ the extension is given by (\ref{RadTransExt eq Rt_P^Q
phi=int_(N_P^Q) phi}).

Let $\chi\in\Ds(G)$ and $\varphi\in\Es^{1}(\Xi_{Q},J_{Q})$. Since $\Tt[P][Q]\varphi\in L^{1}(\Xi_{P}, J_{P})^{\infty}$ and since $J_{P}$ is smooth and non-vanishing, the integral
$$
\psi(g\cdot\xi_{P})
=\int_{G}\chi(\gamma)\Tt[P][Q]\varphi(\gamma^{-1}g\cdot\xi_{P})\,d\gamma
=\int_{G}\chi(g\gamma)\Tt[P][Q]\varphi(\gamma^{-1}\cdot\xi_{P})\,d\gamma
$$
is absolutely convergent for every $g\in G$. Furthermore, by Fubini's theorem,
$$\psi(g\cdot\xi_{P})=\int_{N_{P}^{Q}}(\chi*\varphi)(gn\cdot\xi_{Q})\,dn,$$
where $\chi*\varphi$ denotes the convolution of $\varphi$ with $\chi$, i.e.,
$$\chi*\varphi(g\cdot\xi_{Q})
=\int_{G}\chi(g\gamma)\varphi(\gamma^{-1}\cdot\xi_{Q})\,d\gamma.$$
By application of Lebesgue's dominated convergence theorem one
sees that $\psi$ is a smooth function on $\Xi_{P}$. The function
$\chi*\varphi$ is an element of $\Es^{1}(\Xi_{Q},J_{Q})$, hence
$\psi$ is a smooth representative for the smooth vector
$\Tt[P][Q](\chi*\varphi)\in L^{1}(\Xi_{P},J_{P})^{\infty}$. This
implies that $\psi=\Rt[P][Q](\chi*\varphi)$. We conclude that
$\Rt[P][Q]\phi$ is given by (\ref{RadTransExt eq Rt_P^Q phi=int_(N_P^Q) phi}), with absolutely convergent integrals, for functions $\phi=\chi*\varphi$ with $\chi\in\Ds(G)$ and
$\varphi\in\Es^{1}(\Xi_{P},J_{P})$.

By Proposition \ref{FuncSpaces prop L^1(Xi_P,J_P) Banach rep} the
left regular representation of $G$ on $L^{1}(\Xi_{Q}, J_{Q})$ is a
Banach representation. By \cite[Th\'eor\`eme
3.3]{Dixmier&Malliavin_FactorisationsDeFunctionsEtDeVecteursIndefinimentDifferentiables}
the space of smooth vectors for this representation is spanned by
$\{\chi*\phi:\chi\in\Ds(G),\phi\in\Es^{1}(\Xi_{Q},J_{Q})\}$.
This proves the proposition.
\end{proof}

\begin{lemma}\label{RadTransExt lemma int_(Xi_Q)int_(L_Q cap H)/(L_P cap H)|psi(gl cdot xi_P) phi(g cdot xi_Q)|leq|psi|_(infty,J_P)|phi|_(1,J_Q)}
There exists a normalization of the invariant measure on $(L_{Q}\cap H)/(L_{P}\cap H)$ such that for every $\phi\in L^{1}(\Xi_{Q},J_{Q})$ and $\psi\in L^{\infty}(\Xi_{P},J_{P}^{-1})$
\begin{align*}
&\int_{\Xi_{Q}}\int_{(L_{Q}\cap H)/(L_{P}\cap H)}\psi(gl\cdot\xi_{P})\phi(g\cdot\xi_{Q})
    \,d_{(L_{P}\cap H)}l\,d_{(L_{Q}\cap H)N_{Q}}g\\
&\qquad=\int_{\Xi_{P}}\int_{N_{P}^{Q}}\psi(g\cdot\xi_{P})\phi(gn\cdot\xi_{Q})\,dn\,d_{(L_{P}\cap H)N_{P}}g
\end{align*}
with absolutely convergent integrals.
\end{lemma}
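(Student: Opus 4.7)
The plan is to rewrite both sides as a single integral over the intermediate space $Z = G/(L_P\cap H)N_Q$ appearing at the top of the double fibration (\ref{RadTrans eq double fibration}), using Fubini together with the two different fiberings of $Z$ over $\Xi_P$ and $\Xi_Q$. By Corollary \ref{Measures prop existence of inv measures}, $Z$ carries a non-zero $G$-invariant Radon measure $d\gamma$, and since $(L_P\cap H)N_Q\subseteq (L_P\cap H)N_P$ and $(L_P\cap H)N_Q\subseteq (L_Q\cap H)N_Q$, the map
$$
F:Z\to\C;\qquad g\cdot (L_P\cap H)N_Q\mapsto \psi(g\cdot\xi_P)\,\phi(g\cdot\xi_Q)
$$
is well defined (the first factor is constant on $(L_P\cap H)N_P$-cosets while the second is constant on $(L_Q\cap H)N_Q$-cosets).

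First I would establish absolute convergence. Using $|\psi(\eta)|\leq\|\psi\|_{L^\infty(\Xi_P,J_P^{-1})}\,J_P(\eta)$ and the $(L_Q\cap H)$-invariance of $\phi$ along the fiber of $Z\to \Xi_Q$, Lemma \ref{RadTransExt lemma int_Xi int_N|phi| leq int_X|phi|} gives
$$
\int_{\Xi_P}\!\!\int_{N_P^Q}|\psi(g\cdot\xi_P)\phi(gn\cdot\xi_Q)|\,dn\,J_P(g\cdot\xi_P)^{-1}\,d_{(L_P\cap H)N_P}g\;\leq\; \|\psi\|_{L^\infty(\Xi_P,J_P^{-1})}\,\|\phi\|_{L^1(\Xi_Q,J_Q)},
$$
wait --- I should be careful: the factor $J_P^{-1}$ is not what appears in the claim. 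The actual bound I want is obtained by first dropping to the $L^\infty$-bound on $\psi$ (introducing $J_P$) and then applying Lemma \ref{RadTransExt lemma int_Xi int_N|phi| leq int_X|phi|} to control $\int_{\Xi_P}J_P(g\cdot\xi_P)\int_{N_P^Q}|\phi(gn\cdot\xi_Q)|dn\,dg$ by $\int_{\Xi_Q}|\phi|J_Q\,d\xi$. A symmetric argument bounds the left-hand integrand.

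Next I would carry out the main identity by applying two distinct quotient-integration formulas to $\int_Z F(\gamma)\,d\gamma$. Using the fibration $Z\to\Xi_P$ with fiber $(L_P\cap H)N_P/(L_P\cap H)N_Q$, combined with the identification $(L_P\cap H)N_P/(L_P\cap H)N_Q\cong N_P^Q$ (and the corresponding normalization of measures noted in Section \ref{subsection Invariant measures}), together with the $(L_P\cap H)N_P$-invariance of $\psi(\,\cdot\,\xi_P)$, produces exactly the right-hand side:
$$
\int_Z F(\gamma)\,d\gamma=\int_{\Xi_P}\!\!\int_{N_P^Q}\psi(g\cdot\xi_P)\,\phi(gn\cdot\xi_Q)\,dn\,d_{(L_P\cap H)N_P}g.
$$
Using instead the fibration $Z\to\Xi_Q$ with fiber $(L_Q\cap H)N_Q/(L_P\cap H)N_Q\cong (L_Q\cap H)/(L_P\cap H)$, and the $(L_Q\cap H)N_Q$-invariance of $\phi(\,\cdot\,\xi_Q)$, produces the left-hand side. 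The equality between the two expressions, and the choice of the normalization of the measure on $(L_Q\cap H)/(L_P\cap H)$, follow from the uniqueness (up to a positive scalar) of $G$-invariant Radon measures on $Z$.

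The only real step that requires care is verifying that, for each of the two fiberings, the identification of the fiber with $N_P^Q$ respectively $(L_Q\cap H)/(L_P\cap H)$ is compatible with the invariant measure chosen on the fiber --- but this is precisely what is spelled out in Section \ref{subsection Invariant measures}, so the entire argument reduces to an application of Fubini on $Z$ once absolute convergence is in place.
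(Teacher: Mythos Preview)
Your approach is essentially the same as the paper's: both show absolute convergence of the right-hand side via the bound $|\psi|\le \|\psi\|_{L^\infty(\Xi_P,J_P^{-1})}J_P$ combined with Lemma \ref{RadTransExt lemma int_Xi int_N|phi| leq int_X|phi|}, and then use the uniqueness of the $G$-invariant Radon measure on $Z=G/(L_P\cap H)N_Q$ together with Fubini to obtain the identity and the convergence of the other side. One small remark: your aside about a ``symmetric argument'' for the left-hand side is unnecessary (and not quite symmetric, since no analogue of Lemma \ref{RadTransExt lemma int_Xi int_N|phi| leq int_X|phi|} for the $(L_Q\cap H)/(L_P\cap H)$-fibers has been established); the paper simply deduces convergence of the left-hand side from that of the right via the measure identity on $Z$ and Fubini--Tonelli, which is also what your main argument does.
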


\begin{proof}
Since $\psi\in
L^{\infty}(\Xi_{P},J_{P}^{-1})$ the function $|\frac{\psi}{J_{P}}|$ is essentially
bounded on $\Xi_{P}$. Hence, by Lemma \ref{RadTransExt lemma
int_Xi int_N|phi| leq int_X|phi|}
\begin{align*}
&\int_{\Xi_{P}}\int_{N_{P}^{Q}}\psi(g\cdot\xi_{P})\phi(gn\cdot\xi_{Q})\,dn\,d_{(L_{P}\cap H)N_{P}}g\\
&\qquad=\int_{\Xi_{P}}\int_{N_{P}^{Q}}
    \frac{\psi(g\cdot\xi_{P})}{J_{P}(g\cdot\xi_{P})}\phi(gn\cdot\xi_{Q})J_{P}(g\cdot\xi_{P})\,dn\,d_{(L_{P}\cap H)N_{P}}g
\end{align*}
is absolutely convergent.

By the uniqueness of $G$-invariant Radon measures on $G/(L_{P}\cap H)N_{Q}$, we have (up to a positive normalizing constant)
\begin{align*}
&\int_{\Xi_{Q}}\int_{(L_{Q}\cap H)/(L_{P}\cap H)}\theta\big(gl\cdot(L_{P}\cap H)N_{Q}\big)\,d_{(L_{P}\cap H)}l\,d_{(L_{Q}\cap H)N_{Q}}g\\
&\qquad=\int_{\Xi_{P}}\int_{N_{P}^{Q}}\theta\big(gn\cdot(L_{P}\cap H)N_{Q}\big)
    \,dn\,d_{(L_{P}\cap H)N_{P}}g
        \qquad\Big(\theta\in \Ds(G/(L_{P}\cap H)N_{Q})\Big).
\end{align*}
By the first part of the proof and Fubini's theorem,
$$\int_{\Xi_{Q}}\int_{(L_{Q}\cap H)/(L_{P}\cap H)}\psi(gl\cdot\xi_{P})\phi(g\cdot\xi_{Q})
    \,d_{(L_{P}\cap H)}l\,d_{(L_{Q}\cap H)N_{Q}}g
$$
is absolutely convergent as well and the claimed equality holds.
\end{proof}

From now on we assume that the $L_{Q}\cap H$-invariant measure on the homogeneous space $(L_{Q}\cap H)/(L_{P}\cap H)$ is normalized such that the equality in Lemma \ref{RadTransExt lemma int_(Xi_Q)int_(L_Q cap H)/(L_P cap H)|psi(gl cdot xi_P) phi(g cdot xi_Q)|leq|psi|_(infty,J_P)|phi|_(1,J_Q)} holds. (See Appendix A for the normalization of measures.)

\begin{prop}\label{RadTransExt prop Rt_P^Q*: Es_b to Es_b}
If $\psi\in \Es_{b}(\Xi_{P},J_{P}^{-1})$, then for every $g\in G$ the integral
\begin{equation}\label{RadTransExt eq Rt_P^Q* def}
\dRt[P][Q] \psi(g\cdot\xi_{Q})
= \int_{(L_{Q}\cap H)/(L_{P}\cap H)}\psi(gl\cdot\xi_{P})\,d_{(L_{P}\cap H)}l
\end{equation}
is absolutely convergent and the associated function $\dRt[P][Q] \psi: \Xi_{Q} \to \C$
belongs to the space $\Es_{b}(\Xi_{Q},J_{Q}^{-1})$.
Furthermore, the transform
\begin{equation}\label{RadTransExt eq Rt_P^Q* :Es_b to Es_b}
 \dRt[P][Q]:\Es_{b}(\Xi_{P},J_{P}^{-1})\to \Es_{b}(\Xi_{Q},J_{Q}^{-1})\
\end{equation}
thus obtained, is continuous.
\end{prop}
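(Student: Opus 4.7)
The proof will establish, in order: (a) absolute convergence of the integral (\ref{RadTransExt eq Rt_P^Q* def}) for every $g\in G$, (b) the $\Es_b$-membership of $\dRt[P][Q]\psi$, and (c) continuity of the map. The driving estimate is
\begin{equation*}
\int_{(L_Q\cap H)/(L_P\cap H)} J_P(gl\cdot\xi_P)\, d_{(L_P\cap H)}l \leq C\, J_Q(g\cdot\xi_Q) \qquad (g\in G),
\end{equation*}
for a constant $C>0$. Combined with the pointwise bound $|\psi(gl\cdot\xi_P)| \leq \|\psi\|_{L^\infty(\Xi_P,J_P^{-1})}\,J_P(gl\cdot\xi_P)$, this immediately yields absolute convergence of (\ref{RadTransExt eq Rt_P^Q* def}) as well as the estimate $|\dRt[P][Q]\psi(g\cdot\xi_Q)|\leq C\,\|\psi\|_{L^\infty(\Xi_P,J_P^{-1})}\,J_Q(g\cdot\xi_Q)$.

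To prove the driving estimate I first reduce to $g=e$. Using Lemma \ref{RadTrans lemma A_P=pi_(a_P cap q)A_KAN} and Lemma \ref{FuncSpaces lemma c^-1g^*J<J<c g^*J}, one gets $J_P(gl\cdot\xi_P)\leq C_0\,J_P(l\cdot\xi_P)\,J_Q(g\cdot\xi_Q)$ with $C_0$ absolute (the factor $J_Q(g\cdot\xi_Q)$ captures the $\rho_Q$-part of the cocycle shift in $\Acomp_P(gK)$, while the remainder involves only $\rho_P-\rho_Q=\rho_{P\cap L_Q}^{(L_Q)}$ on $\g[a]_P$ and is bounded on compact $g$-sets). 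At $g=e$, where $J_Q(\xi_Q)=1$, I need $\int_{(L_Q\cap H)/(L_P\cap H)} J_P(l\cdot\xi_P)\,dl<\infty$. Using that $\rho_Q|_{\g[a]_Q\cap\g[h]}=0$ (cf.\ the proof of Proposition \ref{Measures prop H, L_P cap H, (L_P cap H)N_Q unimodular}), the restriction of $J_P$ to the $L_Q$-orbit $L_Q\cdot\xi_P$ coincides with the analog of $J_P$ for the reductive Harish-Chandra class group $L_Q$ relative to the $\sigma\circ\theta$-stable parabolic subgroup $P\cap L_Q$ (whose Levi is $L_P$ and whose unipotent radical is $N_P^Q$). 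Applying Lemma \ref{FuncSpaces lemma int_Xi phi J= int_K int_L/(L cap H)phi} within $L_Q$ reduces the required integral to one over the compact group $K\cap L_Q$ and the symmetric space $L_P/(L_P\cap H)$, giving the finite bound.

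For smoothness I differentiate under the integral. Because $\Es_b(\Xi_P,J_P^{-1})$ is $\U(\g)$-invariant, for each $u\in\U(\g)$ the derivative $u\psi$ again lies in $\Es_b(\Xi_P,J_P^{-1})$, and the integrand $l\mapsto(u\psi)(gl\cdot\xi_P)$ is dominated, locally uniformly in $g$, by a fixed multiple of $l\mapsto J_P(l\cdot\xi_P)$, which is $d_{(L_P\cap H)}l$-integrable by the preceding step. Dominated convergence then justifies the commutation $u\,\dRt[P][Q]\psi=\dRt[P][Q](u\psi)$ (by induction on the order of $u$), and the pointwise bound applied to $u\psi$ gives
\begin{equation*}
|u\,\dRt[P][Q]\psi(g\cdot\xi_Q)|\leq C\,\|u\psi\|_{L^\infty(\Xi_P,J_P^{-1})}\,J_Q(g\cdot\xi_Q).
\end{equation*}
This simultaneously places $\dRt[P][Q]\psi$ in $\Es_b(\Xi_Q,J_Q^{-1})$ and shows that each defining seminorm of the target space is dominated by the corresponding single seminorm of the source, yielding the continuity of (\ref{RadTransExt eq Rt_P^Q* :Es_b to Es_b}).

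The main obstacle is the driving estimate, and concretely the identification of $J_P|_{L_Q\cdot\xi_P}$ with the intrinsic $J$-function of $L_Q$ for its $\sigma\circ\theta$-stable parabolic $P\cap L_Q$: once that identification is set up, the remaining finiteness follows by applying Lemma \ref{FuncSpaces lemma int_Xi phi J= int_K int_L/(L cap H)phi} inside the smaller group $L_Q$, and everything else is a formal consequence of dominated convergence and $\U(\g)$-equivariance.
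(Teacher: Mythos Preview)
Your overall strategy---prove a pointwise ``driving estimate'' $\int_{(L_Q\cap H)/(L_P\cap H)} J_P(gl\cdot\xi_P)\,dl \leq C\,J_Q(g\cdot\xi_Q)$ and then differentiate under the integral sign---is natural, but both steps in your derivation of that estimate have genuine gaps.

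First, the reduction to $g=e$. You claim $J_P(gl\cdot\xi_P)\leq C_0\,J_P(l\cdot\xi_P)\,J_Q(g\cdot\xi_Q)$ with $C_0$ ``absolute'', yet your parenthetical says the $\rho_P-\rho_Q$ remainder is ``bounded on compact $g$-sets''. These are contradictory: a constant that is merely locally bounded in $g$ does not give the global inequality you need for $\Es_b$-membership. One can show, by writing $g=kl_0$ with $l_0\in L_Q$ and using $\rho_Q|_{\g[a]_P\cap\g[m]_Q}=0$, that $J_P(l_0l\cdot\xi_P)=J_Q(l_0\cdot\xi_Q)\cdot J_{P\cap L_Q}^{L_Q}(l_0l\cdot\xi_{P\cap L_Q}^{L_Q})$; but the remaining integral still depends on $l_0$, so you have not reduced to $g=e$, only to the same problem inside $L_Q$. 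Second, your appeal to Lemma~\ref{FuncSpaces lemma int_Xi phi J= int_K int_L/(L cap H)phi} is misplaced: that lemma rewrites $\int_{\Xi_{P}^{L_Q}}\phi\,J_{P}^{L_Q}\,d\xi$ as an integral over $(K\cap M_Q)\times L_P/(L_P\cap H)$, whereas you need the finiteness of an integral over $(L_Q\cap H)/(L_P\cap H)$, which is a different homogeneous space. Nothing in Lemma~\ref{FuncSpaces lemma int_Xi phi J= int_K int_L/(L cap H)phi} produces that.

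The paper avoids the pointwise driving estimate entirely. From Lemma~\ref{RadTransExt lemma int_(Xi_Q)int_(L_Q cap H)/(L_P cap H)|psi(gl cdot xi_P) phi(g cdot xi_Q)|leq|psi|_(infty,J_P)|phi|_(1,J_Q)} one gets absolute convergence of the defining integral for \emph{almost every} $g$ and a continuous map $\Es_b(\Xi_P,J_P^{-1})\to L^\infty(\Xi_Q,J_Q^{-1})$ directly by duality with $L^1(\Xi_Q,J_Q)$. To upgrade from ``almost every $g$'' to ``every $g$'' and to obtain smoothness, the paper writes $\psi$ as a finite sum of convolutions $\chi*\eta$ with $\chi\in\Ds(G)$ (Dixmier--Malliavin), for which the double integral $\int_G\chi(g\gamma)\int\eta(\gamma^{-1}l\cdot\xi_P)\,dl\,d\gamma$ converges absolutely by Fubini; this simultaneously gives pointwise convergence and smoothness. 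Continuity into $\Es_b(\Xi_Q,J_Q^{-1})$ then follows from $G$-equivariance plus smoothness of the left regular representation on $\Es_b(\Xi_P,J_P^{-1})$. Your driving inequality does follow \emph{a.e.} from that duality (take $\psi=J_P$), but turning the a.e.\ bound into a pointwise one is exactly what the Dixmier--Malliavin argument accomplishes; your direct route does not.
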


\begin{proof}
Let $\eta\in\Es_{b}(\Xi_{P}, J_{P}^{-1})$ and $\chi\in\Ds(G)$. It
follows from Lemma \ref{RadTransExt lemma int_(Xi_Q)int_(L_Q cap
H)/(L_P cap H)|psi(gl cdot xi_P) phi(g cdot
xi_Q)|leq|psi|_(infty,J_P)|phi|_(1,J_Q)} that the integral
(\ref{RadTransExt eq Rt_P^Q* def}) is absolutely convergent for
almost every $g\in G$ and the associated almost everywhere defined
function $g\mapsto\dRt[P][Q]\eta(g\cdot\xi_{Q})$ on $G$ is locally
integrable. Therefore, for every $g\in G$, the integral
$$\int_{G}\chi(g\gamma)\int_{(L_{Q}\cap H)/(L_{P}\cap H)}\eta(\gamma^{-1}l\cdot\xi_{P})\,d_{(L_{P}\cap H)}l\,d\gamma$$
is absolutely convergent. Furthermore, the integral depends
smoothly on $g$ and by Fubini's theorem it is equal to
$\dRt[P][Q](\chi*\eta)(l\cdot\xi_{Q})$. Here $\chi*\eta$ denotes
the convolution product between $\chi$ and $\eta$, i.e.,
$\chi*\eta$ is the function on $\Xi_{P}$ given by
$$\chi*\eta(\xi)=\int_{G}\chi(g)\eta(g^{-1}\cdot\xi)\,dg\qquad(\xi\in\Xi_{P}).$$
This proves that for every $\chi\in\Ds(G)$ and $\eta\in\Es_{b}(\Xi_{P},J_{P}^{-1})$ the function $\dRt[P][Q](\chi*\eta)$ is defined by absolutely convergent integrals (\ref{RadTransExt eq Rt_P^Q* def}) and is smooth.

Let $\psi\in\Es_{b}(\Xi_{P}, J_{P}^{-1})$. The left regular representation of $G$ on $\Es_{b}(\Xi_{P},J_{P}^{-1})$ is a smooth Fr\'echet representation. Therefore, it follows from \cite[Th\'eor\`eme 3.3]{Dixmier&Malliavin_FactorisationsDeFunctionsEtDeVecteursIndefinimentDifferentiables}
that $\psi$ is equal to a finite sum of convolutions
$\chi*\eta$, with $\chi\in\Ds(G)$ and $\eta\in\Es_{b}(\Xi_{P},
J_{P}^{-1})$. We conclude from the above argument that $\dRt[P][Q]
\psi$ is a smooth function on $\Xi_{Q}$ defined by absolutely
convergent integrals (\ref{RadTransExt eq Rt_P^Q* def}).

From Lemma \ref{RadTransExt lemma int_Xi int_N|phi| leq int_X|phi|} and Lemma \ref{RadTransExt lemma int_(Xi_Q)int_(L_Q cap H)/(L_P cap H)|psi(gl cdot xi_P) phi(g cdot xi_Q)|leq|psi|_(infty,J_P)|phi|_(1,J_Q)} it follows that for every $\phi\in L^{1}(\Xi_{Q},J_{Q})$
$$
\int_{\Xi_{Q}}|\dRt[P][Q]\psi(\xi)\phi(\xi)|\,d\xi
\leq
\sup_{\Xi_{P}}\frac{|\psi|}{J_{P}}\int_{\Xi_{Q}}|\phi(\xi)|J_{Q}(\xi)\,d\xi.
$$
Therefore $\dRt[P][Q]$ defines a continuous map from
$\Es_{b}(\Xi_{P},J_{P}^{-1})$ to the dual space of
$L^{1}(\Xi_{Q},J_{Q})$, which is $L^{\infty}(\Xi_{Q},J_{Q}^{-1})$.
Since $\dRt[P][Q]$ is equivariant and the left regular
representation of $G$ on $\Es_{b}(\Xi_{P},J_{P}^{-1})$ is smooth, it
follows that $\dRt[P][Q]$ is a continuous map
$\Es_{b}(\Xi_{P},J_{P}^{-1})\to\Es_{b}(\Xi_{Q},J_{Q}^{-1})$.
\end{proof}

Note that the transform (\ref{RadTransExt eq Rt_P^Q* :Es_b to Es_b}) is an extension of the earlier defined transform (\ref{RadTrans eq def S_P^Q}) for compactly supported smooth functions. Thus, the notation is unambiguous.

Lemma \ref{RadTransExt lemma int_(Xi_Q)int_(L_Q cap H)/(L_P cap H)|psi(gl cdot xi_P) phi(g cdot xi_Q)|leq|psi|_(infty,J_P)|phi|_(1,J_Q)} has the following
corollary.

\begin{cor}\label{RadTransExt Cor duality}
If $\phi\in \Es^{1}(\Xi_{Q},J_{Q})$ and
$\psi\in\Es_{b}(\Xi_{P},J_{P}^{-1})$, then
$$
\int_{\Xi_{P}}\Rt[P][Q]\phi(\xi)\psi(\xi)\,d\xi
=\int_{\Xi_{Q}}\phi(\zeta)\dRt[P][Q]\psi(\zeta)\,d\zeta.
$$
\end{cor}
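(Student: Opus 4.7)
The plan is essentially to recognize the corollary as a direct reformulation of Lemma \ref{RadTransExt lemma int_(Xi_Q)int_(L_Q cap H)/(L_P cap H)|psi(gl cdot xi_P) phi(g cdot xi_Q)|leq|psi|_(infty,J_P)|phi|_(1,J_Q)} once the inner integrals are identified as values of $\Rt[P][Q]\phi$ and $\dRt[P][Q]\psi$, then to invoke Fubini to swap the order of integration.

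First I would apply Lemma \ref{RadTransExt lemma int_(Xi_Q)int_(L_Q cap H)/(L_P cap H)|psi(gl cdot xi_P) phi(g cdot xi_Q)|leq|psi|_(infty,J_P)|phi|_(1,J_Q)} to the pair $(\phi,\psi)$, noting that the hypotheses $\phi \in \Es^{1}(\Xi_{Q},J_{Q}) \subseteq L^{1}(\Xi_{Q},J_{Q})$ and $\psi \in \Es_{b}(\Xi_{P},J_{P}^{-1}) \subseteq L^{\infty}(\Xi_{P},J_{P}^{-1})$ are satisfied. The lemma gives the equality of the two iterated integrals together with absolute convergence, so Fubini's theorem applies on both sides.

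Next, on the right-hand side of that lemma I would pull $\psi(g\cdot\xi_{P})$ out of the inner $N_{P}^{Q}$-integral and recognize the remaining inner integral $\int_{N_{P}^{Q}}\phi(gn\cdot\xi_{Q})\,dn$ as $\Rt[P][Q]\phi(g\cdot\xi_{P})$, by Proposition \ref{RadTransExt prop Rt_P^Q: Es^1 to Es^1}. Symmetrically, on the left-hand side I would pull $\phi(g\cdot\xi_{Q})$ out of the inner $(L_{Q}\cap H)/(L_{P}\cap H)$-integral and recognize the remaining inner integral as $\dRt[P][Q]\psi(g\cdot\xi_{Q})$, by Proposition \ref{RadTransExt prop Rt_P^Q*: Es_b to Es_b}.

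Finally, both measures $d_{(L_{P}\cap H)N_{P}}g$ on $\Xi_{P}$ and $d_{(L_{Q}\cap H)N_{Q}}g$ on $\Xi_{Q}$ are $G$-invariant Radon measures and therefore agree, up to the normalizations fixed earlier, with the measures $d\xi$ and $d\zeta$ appearing in the statement of the corollary. Substituting these identifications into the equality produced by the lemma yields
$$
\int_{\Xi_{P}}\psi(\xi)\,\Rt[P][Q]\phi(\xi)\,d\xi
=\int_{\Xi_{Q}}\phi(\zeta)\,\dRt[P][Q]\psi(\zeta)\,d\zeta,
$$
which is the claimed duality. There is no real obstacle here; the only point requiring a bit of care is consistency of the normalizations of the invariant measures fixed in Sections \ref{subsection Invariant measures} and \ref{subsection Extensions of the Radon transforms}, but these have been chosen precisely so that the constants match.
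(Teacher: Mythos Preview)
Your proof is correct and is exactly the argument the paper intends: the corollary is stated as an immediate consequence of Lemma~\ref{RadTransExt lemma int_(Xi_Q)int_(L_Q cap H)/(L_P cap H)|psi(gl cdot xi_P) phi(g cdot xi_Q)|leq|psi|_(infty,J_P)|phi|_(1,J_Q)}, and your identification of the inner integrals via Propositions~\ref{RadTransExt prop Rt_P^Q: Es^1 to Es^1} and~\ref{RadTransExt prop Rt_P^Q*: Es_b to Es_b} spells out precisely why.
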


Let $\dRt[P][Qt]$ be the adjoint transform of
$\dRt[P][Q]$. By Corollary \ref{RadTransExt Cor duality}, the following diagram commutes.
$$
\xymatrix@C+1pc@R-1pc{
\ar@{^{(}->}[dd]\Es^{1}(\Xi_{Q},J_{Q})\ar[r]^{\Rt[P][Q]}& \Es^{1}(\Xi_{P},J_{P})\ar@{^{(}->}[dd]\\
\\
\Es'_{b}(\Xi_{Q},J_{Q}^{-1})\ar[r]^{\dRt[P][Qt]}&\Es_{b}'(\Xi_{P},J_{P}^{-1})
}
$$
This allows to extend the definition
(\ref{RT eq def distributions}) of the Radon transform
$\Rt[P][Q]\mu$ of a compactly supported distribution $\mu\in\Es'(\Xi_{Q})$ to the Radon transform $\Rt[P][Q]\mu$ of a
distribution $\mu\in \Es_{b}'(\Xi_{Q},J_{Q}^{-1})$. Accordingly, from now
on we will write $\Rt[P][Q]$ for $\dRt[P][Qt]$. If $\mu\in\Es_{b}'(\Xi_{Q},J_{Q}^{-1})$, then $\Rt[P][Q]\mu\in\Es_{b}'(\Xi_{P},J_{P}^{-1})$ is the distribution given by
$$\Rt[P][Q]\mu(\psi)=\mu\big(\dRt[P][Q]\psi\big)\qquad\big(\psi\in\Es_{b}(\Xi_{P},J_{P}^{-1})\big).$$

%%% ----------------------------------------------------------------------
\subsection{Relations between Radon transforms}
\label{subsection Relations between Radon transforms}
%%% ----------------------------------------------------------------------
Let $P$, $Q$ and $S$  be three $\sigma\circ\theta$-stable parabolic
subgroups such that $P\subseteq Q\subseteq S$.

We now consider the following diagram.
$$\xymatrix@C=15pt@R=20pt{
        &                               &G/(L_{P}\cap H)N_{S}\ar[ld]\ar[rd] &                               &\\
        &G/(L_{Q}\cap H)N_{S}\ar[ld]\ar[rd] &\circlearrowleft                   &G/(L_{P}\cap H)N_{Q}\ar[ld]\ar[rd] &\\
\Xi_{S} &                               &\Xi_{Q}                            &                               &\Xi_{P}
}
$$
Here every map is a canonical projection. This diagram describes four double fibrations of the type
considered in Section \ref{subsection Introduction to Radon
transforms}. Only three of these are relevant for our
purposes:
\medbreak\noindent
$
\xymatrix{
    &G/(L_{Q}\cap H)N_{S}\ar[ld]\ar[rd]  &\\
**[l]\Xi_{S}  &               &**[r]\Xi_{Q}
    }$
\hspace{-2.2pt}
$\xymatrix{
    &G/(L_{P}\cap H)N_{Q}\ar[ld]\ar[rd]  &\\
**[l]\Xi_{Q}  &               &**[r]\Xi_{P}
    }
$
\medbreak
\medbreak\noindent
and
$$
\xymatrix{
    &G/(L_{P}\cap H)N_{S}\ar[ld]\ar[rd]  &\\
**[l]\Xi_{S}  &               &**[r]\Xi_{P}
    }
$$

The Radon transforms for these double fibrations are related to each other in the following way.

\pagebreak[3]
\begin{prop}\label{RadTransRel prop Rt_P=Rt_P^Q circ Rt_Q}\
\begin{enumerate}[(i)]
\item
The Radon transforms of functions $\Rt[P][Q]$ and $\Rt[Q][S]$ compose as
$$\Rt[P][Q]\circ\Rt[Q][S] = \Rt[P][S]: \Es^{1}(\Xi_{S},J_{S})\to\Es^{1}(\Xi_{P},J_{P}).$$

\item
The dual Radon transforms of functions $\dRt[P][Q]$ and $\dRt[Q][S]$ compose as
$$\dRt[Q][S]\circ\dRt[P][Q] = \dRt[P][S]:\Es_{b}(\Xi_{P},J_{P}^{-1})\to\Es_{b}(\Xi_{S},J_{S}^{-1}).$$

\item
The Radon transforms of distributions $\Rt[P][Q]$ and $\Rt[Q][S]$ compose as
$$\Rt[P][Q]\circ\Rt[Q][S] = \Rt[P][S]: \Es_{b}'(\Xi_{S},J_{S}^{-1})\to\Es_{b}'(\Xi_{P},J_{P}^{-1}).$$
\end{enumerate}
\end{prop}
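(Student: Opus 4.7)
All three parts reduce to Fubini-type identities, once one recognises that the iterated integrals defining the successive Radon transforms factor through the compatible decompositions $N_{P}^{S}=N_{P}^{Q}\cdot N_{Q}^{S}$ on the one hand, and $(L_{S}\cap H)/(L_{P}\cap H)\twoheadrightarrow(L_{S}\cap H)/(L_{Q}\cap H)$ with fibre $(L_{Q}\cap H)/(L_{P}\cap H)$ on the other. Part~(iii) will then drop out from part~(ii) by the duality built into the definition of the Radon transform on distributions.

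For (i), take $\phi\in\Es^{1}(\Xi_{S},J_{S})$. By Proposition \ref{RadTransExt prop Rt_P^Q: Es^1 to Es^1} the function $\Rt[Q][S]\phi$ lies in $\Es^{1}(\Xi_{Q},J_{Q})$, and the same proposition gives $\Rt[P][S]\phi\in\Es^{1}(\Xi_{P},J_{P})$ together with absolute convergence of all the defining integrals. Writing out the composition,
\begin{align*}
\Rt[P][Q]\bigl(\Rt[Q][S]\phi\bigr)(g\cdot\xi_{P})
 &=\int_{N_{P}^{Q}}\Rt[Q][S]\phi(gn\cdot\xi_{Q})\,dn\\
 &=\int_{N_{P}^{Q}}\int_{N_{Q}^{S}}\phi(gnn'\cdot\xi_{S})\,dn'\,dn.
\end{align*}
Invoking the normalisation identity~(\ref{Measures eq int_(N_P^S)=int_(N_P^Q) int_(N_Q^S)}), the right-hand side equals $\int_{N_{P}^{S}}\phi(gn\cdot\xi_{S})\,dn=\Rt[P][S]\phi(g\cdot\xi_{P})$. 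The absolute convergence that legitimises this application of Fubini is supplied by Lemma~\ref{RadTransExt lemma int_Xi int_N|phi| leq int_X|phi|} applied twice.

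For (ii), take $\psi\in\Es_{b}(\Xi_{P},J_{P}^{-1})$. By Proposition \ref{RadTransExt prop Rt_P^Q*: Es_b to Es_b} the integrals defining $\dRt[P][Q]\psi$ and $\dRt[Q][S]\dRt[P][Q]\psi$ converge absolutely, so we may compute
\[
\dRt[Q][S]\bigl(\dRt[P][Q]\psi\bigr)(g\cdot\xi_{S})
=\int_{(L_{S}\cap H)/(L_{Q}\cap H)}\int_{(L_{Q}\cap H)/(L_{P}\cap H)}
\psi(gll'\cdot\xi_{P})\,d_{L_{P}\cap H}l'\,d_{L_{Q}\cap H}l.
\]
All three groups $L_{S}\cap H$, $L_{Q}\cap H$, $L_{P}\cap H$ are unimodular (Proposition \ref{Measures prop H, L_P cap H, (L_P cap H)N_Q unimodular} with $P=Q$), so the standard factorisation of invariant measures along the fibration $(L_{S}\cap H)/(L_{P}\cap H)\to(L_{S}\cap H)/(L_{Q}\cap H)$ collapses the iterated integral to $\int_{(L_{S}\cap H)/(L_{P}\cap H)}\psi(gl\cdot\xi_{P})\,d_{L_{P}\cap H}l=\dRt[P][S]\psi(g\cdot\xi_{S})$, provided the measures are compatibly normalised. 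The main bit of bookkeeping in the whole proof is checking that the normalisations already fixed in Section~\ref{subsection Invariant measures} and after Lemma~\ref{RadTransExt lemma int_(Xi_Q)int_(L_Q cap H)/(L_P cap H)|psi(gl cdot xi_P) phi(g cdot xi_Q)|leq|psi|_(infty,J_P)|phi|_(1,J_Q)} are consistent with this chain rule; this follows from the uniqueness (up to positive scalar) of invariant Radon measures on each quotient together with part~(i), which forces the constants to match.

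Finally (iii) is purely formal. For $\mu\in\Es_{b}'(\Xi_{S},J_{S}^{-1})$ and $\psi\in\Es_{b}(\Xi_{P},J_{P}^{-1})$, the definition of the Radon transform of a distribution together with (ii) gives
\[
\bigl(\Rt[P][Q]\circ\Rt[Q][S]\bigr)\mu(\psi)
=\Rt[Q][S]\mu\bigl(\dRt[P][Q]\psi\bigr)
=\mu\bigl(\dRt[Q][S]\dRt[P][Q]\psi\bigr)
=\mu\bigl(\dRt[P][S]\psi\bigr)
=\Rt[P][S]\mu(\psi),
\]
which is the desired identity. The only delicate point throughout is thus the measure-normalisation verification in~(ii); the rest is Fubini.
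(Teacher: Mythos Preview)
Your proof is correct and follows essentially the same approach as the paper: part~(i) via the decomposition $N_{P}^{S}=N_{P}^{Q}N_{Q}^{S}$ and Fubini, part~(ii) via the fibration of homogeneous spaces and uniqueness of invariant measures, and part~(iii) as a formal consequence of~(ii) by duality. The only noteworthy difference is in the normalisation check for~(ii): the paper simply asserts that the chosen normalisations (fixed after Lemma~\ref{RadTransExt lemma int_(Xi_Q)int_(L_Q cap H)/(L_P cap H)|psi(gl cdot xi_P) phi(g cdot xi_Q)|leq|psi|_(infty,J_P)|phi|_(1,J_Q)}) force the iterated and direct invariant measures on $(L_{S}\cap H)/(L_{P}\cap H)$ to agree, whereas you explain that the constant is pinned down by part~(i) together with the duality pairings --- which is in fact the cleanest way to see why the paper's assertion holds.
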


\begin{proof}
\ \\
{\em (i):}
The multiplication map $N_{P}^{Q}\times N_{Q}^{S}\to N_{P}^{S}$
is a diffeomorphism with Jacobian equal to $1$. Therefore the identity follows from the definitions and
by application of Fubini's theorem.
\ \\
{\em (ii):}
The continuous linear functional on $\Ds\big((L_{S}\cap H)/(L_{P}\cap H)\big)$ mapping a function $\psi$ to
$$
\int_{(L_{S}\cap H)/(L_{Q}\cap H)}\int_{(L_{Q}\cap H)/(L_{P}\cap H)}\psi\big(l_{S}l_{Q}\cdot(L_{P}\cap H)\big)
    \,d_{(L_{P}\cap H)}l_{Q}\,d_{(L_{Q}\cap H)}l_{S}$$
defines a $L_{S}\cap H$ invariant measure on $(L_{S}\cap H)/(L_{P}\cap H)$. If the measures are normalized such that the equality in Lemma \ref{RadTransExt lemma int_(Xi_Q)int_(L_Q cap H)/(L_P cap H)|psi(gl cdot xi_P) phi(g cdot xi_Q)|leq|psi|_(infty,J_P)|phi|_(1,J_Q)} holds, as we assumed, then this measure and the invariant measure on $(L_{S}\cap H)/(L_{P}\cap H)$ are equal. This proves the claim.
\ \\
{\em (iii):}
This is a direct corollary of {\em (ii)}.
\end{proof}

%%% ----------------------------------------------------------------------
\section{Convex geometry}
\label{section convex geometry}
%%% ----------------------------------------------------------------------
In this section we prove some results in convex geometry that are needed in the next sections.

Let $V$ be a finite dimensional vector space. If $B$ is a subset of $V$,
we denote the convex hull of $B$ by $\ch(B)$, i.e., $\ch(B)$
is the smallest convex set containing $B$. We call a subset of  $V$ a cone if it
is closed under the action of the multiplicative group $\R_{>0}$.

%%% ----------------------------------------------------------------------
\subsection{Support functions}
\label{subsection support functions}
%%% ----------------------------------------------------------------------
Let $S$ be a subset of $V$. The function $H_{S}:V^{*}\to\R\cup\{\pm\infty\}$; $\lambda\mapsto\sup_{x\in S}\lambda(x)$
is called the support function of $S$. The image of $H_{S}$ contains $-\infty$ if and only if $S=\emptyset$.
Define
\begin{equation}\label{ConvexGeom eq def C_S}
 \Cone_{S}=\{\lambda\in V^{*}:H_{S}(\lambda)<\infty\}.
\end{equation}
Note that $\Cone_{S}$ is a convex cone.
It is well known that $x\in \overline{\ch(S)}$ if and only if $\lambda(x)\leq H_{S}(\lambda)$ for all $\lambda\in \Cone_{S}$. (See for example \cite[Theorem 13.1]{Rockafellar_ConvexAnalysis}.)

\begin{lemma}\label{ConvexGeom lemma properties H_S}
Let $B\subseteq V$ be non-empty and let $\Gamma$ be a cone in
$V$ containing $0$. Then the following statements hold.
\begin{enumerate}[(i)]
\item
    $H_{\Gamma}-H_{-B}\leq H_{B+\Gamma}\leq H_{\Gamma}+H_{B}$.

\item
    $\Cone_{B+\Gamma}=\Cone_{\Gamma}\cap\Cone_{B}=\{\lambda\in\Cone_{B}:\lambda(x)\leq 0 \text{ for every }x\in\Gamma\}$. In particular $\Cone_{\Gamma}=\{\lambda\in V^{*}:H_{\Gamma}(\lambda)\leq 0\}$.

\item
    $H_{B+\Gamma}\big|_{\Cone_{B+\Gamma}}=H_{B}\big|_{\Cone_{B+\Gamma}}$. In particular, $H_{\Gamma}\big|_{\Cone_{\Gamma}}=0$.
\end{enumerate}
\end{lemma}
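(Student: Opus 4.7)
The proof is essentially a bookkeeping exercise with suprema, handled one item at a time. The whole thing rests on the identity
\[
H_{B+\Gamma}(\lambda) \;=\; \sup_{b\in B,\,\gamma\in\Gamma}\bigl(\lambda(b)+\lambda(\gamma)\bigr) \;=\; H_B(\lambda)+H_\Gamma(\lambda),
\]
which holds because the supremum runs over a product set. Since $B$ is non-empty we have $H_B(\lambda)>-\infty$, and since $0\in\Gamma$ we have $H_\Gamma(\lambda)\ge\lambda(0)=0$, so the right-hand side is well-defined in $\R\cup\{+\infty\}$ and no $+\infty-\infty$ ever occurs. Once this identity is in hand, every claim in the lemma falls out in one line.

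For \emph{(i)}, the upper inequality is the displayed identity above (a fortiori an inequality). The lower inequality follows from $H_{-B}(\lambda)=\sup_{b\in B}\lambda(-b)=-\inf_{b\in B}\lambda(b)$, so $-H_{-B}(\lambda)=\inf_{b\in B}\lambda(b)\le H_B(\lambda)$; adding $H_\Gamma(\lambda)$ and using the identity above gives $H_\Gamma(\lambda)-H_{-B}(\lambda)\le H_B(\lambda)+H_\Gamma(\lambda)=H_{B+\Gamma}(\lambda)$.

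For \emph{(ii)}, the identity $H_{B+\Gamma}=H_B+H_\Gamma$ together with the positivity observations $H_B>-\infty$ and $H_\Gamma\ge 0$ shows that finiteness of the sum is equivalent to finiteness of each term, giving $\Cone_{B+\Gamma}=\Cone_B\cap\Cone_\Gamma$. The key step is then to identify $\Cone_\Gamma$ explicitly: because $\Gamma$ is $\R_{>0}$-invariant, the existence of $\gamma_0\in\Gamma$ with $\lambda(\gamma_0)>0$ forces $\lambda(t\gamma_0)\to+\infty$, so $H_\Gamma(\lambda)<\infty$ is equivalent to $\lambda\le 0$ on $\Gamma$. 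Combined with $0\in\Gamma$, this simultaneously yields the second equality in \emph{(ii)} and the parenthetical $\Cone_\Gamma=\{\lambda:H_\Gamma(\lambda)\le 0\}$.

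For \emph{(iii)}, any $\lambda\in\Cone_{B+\Gamma}\subseteq\Cone_\Gamma$ satisfies $H_\Gamma(\lambda)\le 0$ by \emph{(ii)}, and $H_\Gamma(\lambda)\ge\lambda(0)=0$ because $0\in\Gamma$, hence $H_\Gamma(\lambda)=0$. Substituting into $H_{B+\Gamma}=H_B+H_\Gamma$ yields $H_{B+\Gamma}(\lambda)=H_B(\lambda)$, which is the first claim; the specialisation $B=\{0\}$ (or just the computation above) gives $H_\Gamma|_{\Cone_\Gamma}=0$.

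There is no real obstacle here: the only delicate point is avoiding an indeterminate $+\infty-\infty$ when manipulating support functions, and this is dealt with once and for all by invoking $B\ne\emptyset$ and $0\in\Gamma$ at the outset.
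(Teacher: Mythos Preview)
Your proof is correct and follows essentially the same route as the paper's: both are direct manipulations of suprema, using $-H_{-B}(\lambda)=\inf_{B}\lambda$, the cone property of $\Gamma$, and $0\in\Gamma$. The one notable difference is that you begin with the \emph{equality} $H_{B+\Gamma}=H_B+H_\Gamma$ (valid since the supremum runs over a product set and no $+\infty-\infty$ arises), whereas the paper only records the inequality $H_{B+\Gamma}\le H_B+H_\Gamma$ and then recovers equality on $\Cone_{B+\Gamma}$ in part~(iii) via a sandwich argument using $B\subseteq B+\Gamma$; your version is a mild streamlining of the same idea.
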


\begin{proof}\ \\
{\em (i):}
Let $\lambda\in V^{*}$.
Then  $-H_{-B}(\lambda)=\inf_{B}\lambda$.
Moreover, for every $x\in B$ and $y\in \Gamma$
$$\lambda(y)+\inf_{B}\lambda
\leq \lambda(x+y)
\leq \lambda(y)+\sup_{B}\lambda
$$
because $B\neq \emptyset$.
The required estimate at the point $\lambda$ now follows by taking suprema over $x\in B$ and $y\in\Gamma$.
\\
{\em (ii):}
By (i) we have $H_{B+\Gamma}\leq H_{\Gamma}+H_{B}$, hence $\Cone_{B}\cap\Cone_{\Gamma}\subseteq\Cone_{B+\Gamma}$.

To prove the converse inclusion, let $\lambda\in\Cone_{B+\Gamma}$. From  $B\subseteq B+\Gamma$ we see that
$H_{B}(\lambda)\leq H_{B+\Gamma}(\lambda)$, hence $\lambda\in\Cone_{B}$. If $\lambda\notin\Cone_{\Gamma}$, then there exists an $x\in \Gamma$ such that $\lambda(x)>0$, hence, because $\Gamma$ is a cone,
$$H_{B+\Gamma}(\lambda)\geq \sup_{b\in B,r\in\R_{>0}}\lambda(b+rx)=\infty.$$
This contradicts the assumption $\lambda\in\Cone_{B+\Gamma}$, and we see that $\lambda\in\Cone_{\Gamma}$.
We have now established the first equality of (ii).
The second equality is obtained by taking $B=\{0\}$.
\\
{\em (iii):}
Let $\lambda\in\Cone_{B+\Gamma}$. Then by {\em (ii)} we have $\lambda(x)\leq 0$ for every $x\in\Gamma$.
Since $\Gamma$ is a cone, it follows that
$H_{\Gamma}\big|_{\Cone_{B+\Gamma}}=0$. Using subsequently {\em (i)} and the fact that $B\subseteq B+\Gamma$, we find
$$H_{B+\Gamma}(\lambda)
\leq(H_{B}+H_{\Gamma})(\lambda)
=H_{B}(\lambda)
\leq H_{B+\Gamma}(\lambda).
$$
This establishes the equality of the restrictions. The final assertion now follows by taking $B =\{0\}$.
\end{proof}

%%% ----------------------------------------------------------------------
\subsection{Some lemmas}
\label{subsection finitely generated cones}
%%% ----------------------------------------------------------------------
\begin{lemma}\label{ConvexGeom lemma W cdot B identity}
Let $B$ be a convex compact subset of $V$ and let $\Gamma$ be a closed convex cone in $V$. If $\mathcal{B}$ is a dense subset of $\Cone_{\Gamma}$, then
$$
B+\Gamma
=\big\{x\in V:
    \lambda(x)\leq H_{B}(\lambda)
\textnormal{ for all }\lambda\in\mathcal{B}\big\}.
$$
\end{lemma}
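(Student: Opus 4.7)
The plan is to reduce the statement to the standard characterization of closed convex sets by their support functions (the fact quoted just after the definition of $\Cone_S$ in Section 3.1), and then use density and continuity to shrink the quantifier from all of $\Cone_\Gamma$ to $\mathcal{B}$.

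First I would check that $B + \Gamma$ is closed and convex. Convexity is immediate since both summands are convex; closedness holds because $B$ is compact and $\Gamma$ is closed, so a limit point of $B+\Gamma$ can be rewritten as a limit of a convergent subsequence in $B$ plus a corresponding limit in $\Gamma$. Thus $\overline{\ch(B+\Gamma)}=B+\Gamma$, so the cited fact (from \cite[Theorem 13.1]{Rockafellar_ConvexAnalysis}) gives
\[
B+\Gamma=\{x\in V : \lambda(x)\leq H_{B+\Gamma}(\lambda)\text{ for all }\lambda\in\Cone_{B+\Gamma}\}.
\]

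Next I would simplify the right-hand side using Lemma \ref{ConvexGeom lemma properties H_S}. Since $B$ is compact, $H_B$ is finite on all of $V^{*}$, so $\Cone_B=V^{*}$ and part (ii) gives $\Cone_{B+\Gamma}=\Cone_\Gamma$. Part (iii) then says $H_{B+\Gamma}|_{\Cone_\Gamma}=H_B|_{\Cone_\Gamma}$. Combining these,
\[
B+\Gamma=\{x\in V : \lambda(x)\leq H_B(\lambda)\text{ for all }\lambda\in\Cone_\Gamma\}.
\]

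Finally I would pass from $\Cone_\Gamma$ to the dense subset $\mathcal{B}$. The inclusion $\subseteq$ of the asserted identity is immediate from the displayed equality and $\mathcal{B}\subseteq\Cone_\Gamma$. For the reverse inclusion, fix a point $x$ satisfying $\lambda(x)\leq H_B(\lambda)$ for every $\lambda\in\mathcal{B}$, and let $\mu\in\Cone_\Gamma$. Choose a sequence $(\lambda_n)$ in $\mathcal{B}$ converging to $\mu$. The map $\lambda\mapsto\lambda(x)$ is linear hence continuous, and $H_B$ is Lipschitz on $V^{*}$ because $B$ is bounded (indeed $|H_B(\lambda)-H_B(\mu)|\leq\|\lambda-\mu\|\sup_{b\in B}\|b\|$). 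Passing to the limit in $\lambda_n(x)\leq H_B(\lambda_n)$ yields $\mu(x)\leq H_B(\mu)$, and since $\mu\in\Cone_\Gamma$ was arbitrary, $x\in B+\Gamma$ by the displayed identity above.

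I do not anticipate any serious obstacle: the whole argument is a direct combination of Lemma \ref{ConvexGeom lemma properties H_S} with the duality between closed convex sets and their support functions, plus a routine density-continuity step. The only point worth being careful about is ensuring that $B+\Gamma$ is closed, which is where the compactness (not just closedness) of $B$ is used.
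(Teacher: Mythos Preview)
Your proof is correct and follows essentially the same approach as the paper's: both establish that $B+\Gamma$ is closed and convex, invoke the support-function characterization, apply Lemma~\ref{ConvexGeom lemma properties H_S} (ii) and (iii) to replace $\Cone_{B+\Gamma}$ by $\Cone_\Gamma$ and $H_{B+\Gamma}$ by $H_B$, and then use density of $\mathcal{B}$ together with continuity of $H_B$ (from compactness of $B$) to finish. You are simply more explicit about the closedness of $B+\Gamma$ and the limiting step than the paper, which states these points tersely.
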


\begin{proof}
The set $B+\Gamma$ is convex and closed. Therefore 
$$
B+\Gamma
=\{x\in V:\lambda(x)\leq H_{B+\Gamma}(\lambda)\textnormal{ for all }\lambda\in\Cone_{B+\Gamma}\}.
$$
By Lemma \ref{ConvexGeom lemma properties H_S} (ii) we have
$\Cone_{B+\Gamma}=\Cone_{\Gamma}=\overline{\mathcal{B}}$.
Furthermore, by Lemma \ref{ConvexGeom lemma properties H_S} (iii)
the functions $H_{B+\Gamma}$ and
$H_{B}$ coincide on that set.
Since $B$ is compact, $H_{B}$ is continuous. This proves the lemma.
\end{proof}

\begin{lemma}\label{ConvexGeom Lemma S=cap(S+Gamma)}
Let $\mathscr{C}$ be a collection of cones $\Gamma$ in $V$ with $0\in\Gamma$ and let $S$ be a closed convex subset of $V$.
If $\Cone_{S}\subseteq\bigcup_{\Gamma\in \mathscr{C}}\Cone_{\Gamma}$ then
$$
S=\bigcap_{\Gamma\in \mathscr{C}}(S+\Gamma).
$$
\end{lemma}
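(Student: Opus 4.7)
The inclusion $S\subseteq\bigcap_{\Gamma\in\mathscr{C}}(S+\Gamma)$ is immediate: since each $\Gamma\in\mathscr{C}$ contains $0$, we have $S=S+\{0\}\subseteq S+\Gamma$. The work lies in the reverse inclusion, and my plan is to extract it from the support-function characterization of closed convex sets that was already invoked in the proof of Lemma \ref{ConvexGeom lemma W cdot B identity}. Namely, since $S$ is closed and convex,
\[
S=\{x\in V:\lambda(x)\leq H_{S}(\lambda)\text{ for all }\lambda\in\Cone_{S}\},
\]
so it suffices to show that every $x\in\bigcap_{\Gamma\in\mathscr{C}}(S+\Gamma)$ satisfies $\lambda(x)\leq H_{S}(\lambda)$ for each $\lambda\in\Cone_{S}$.

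Fix such an $x$ and such a $\lambda$. By the hypothesis on $\mathscr{C}$, there is some $\Gamma\in\mathscr{C}$ with $\lambda\in\Cone_{\Gamma}$. By assumption $x\in S+\Gamma$, so we can decompose $x=s+\gamma$ with $s\in S$ and $\gamma\in\Gamma$, giving
\[
\lambda(x)=\lambda(s)+\lambda(\gamma)\leq H_{S}(\lambda)+H_{\Gamma}(\lambda).
\]
Lemma \ref{ConvexGeom lemma properties H_S} (iii) (or equivalently (ii) combined with $0\in\Gamma$) yields $H_{\Gamma}(\lambda)=0$ for $\lambda\in\Cone_{\Gamma}$, so $\lambda(x)\leq H_{S}(\lambda)$ as required.

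There is no real obstacle here: the proof is essentially a one-line application of the preceding lemma, and the crucial input is precisely the assumption $\Cone_{S}\subseteq\bigcup_{\Gamma}\Cone_{\Gamma}$, which guarantees that each supporting functional $\lambda$ of $S$ is handled by \emph{some} cone in the family. The only point to double-check during writing is that $\Cone_{S}$ is indeed the correct index set for the characterization of $S$ via support functions, which holds because $S$ is closed and convex (and nonempty; if $S=\emptyset$ then $\bigcap_{\Gamma}(S+\Gamma)=\emptyset$ as well and the statement is trivial).
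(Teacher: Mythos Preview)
Your proof is correct and follows essentially the same approach as the paper's: both use the support-function characterization of the closed convex set $S$ and, for each $\lambda\in\Cone_{S}$, pick a cone $\Gamma\in\mathscr{C}$ with $\lambda\in\Cone_{\Gamma}$ to conclude $\lambda(x)\leq H_{S}(\lambda)$. The only cosmetic difference is that the paper phrases the key estimate as $\lambda(x)\leq H_{S+\Gamma}(\lambda)=H_{S}(\lambda)$ via Lemma~\ref{ConvexGeom lemma properties H_S}(ii),(iii), whereas you decompose $x=s+\gamma$ explicitly and bound $\lambda(s)$ and $\lambda(\gamma)$ separately; the content is identical.
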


\begin{proof}
If $\Gamma\in\mathscr{C}$ then $0 \in \Gamma$ and it follows that $S \subseteq \cap_{\Gamma\in\mathscr{C}}(S+\Gamma)$.
Conversely, assume that $x\in\cap_{\Gamma\in \mathscr{C}}(S+\Gamma)$. Let $\lambda\in\Cone_{S}$. By assumption there exists a $\Gamma\in \mathscr{C}$ such that $\lambda\in\Cone_{\Gamma}$. According to Lemma \ref{ConvexGeom lemma properties H_S} {\em (ii)}, $\lambda\in\Cone_{S+\Gamma}$. Since $x\in S+\Gamma$, we have $\lambda(x)\leq H_{S+\Gamma}(\lambda)$.
By Lemma \ref{ConvexGeom lemma properties H_S} {\em (iii)}, $H_{S+\Gamma}(\lambda)=H_{S}(\lambda)$, hence
$\lambda(x)\leq H_{S}(\lambda)$.
As $S$ is closed and convex, this implies that $x\in S$.
\end{proof}

We say that a cone $\Gamma$ in a finite dimensional real vector space $V$ is finitely generated if there exists
a finite set $\{\omega_{k}\in V:1\leq k\leq n\}$ such that $\Gamma=\sum_{k=1}^{n}\R_{\geq0}\omega_{k}$.
A cone is said to be polyhedral if it equals the intersection of finitely many closed halfspaces.
According to \cite[Theorem 19.1]{Rockafellar_ConvexAnalysis} every finitely generated
cone is polyhedral and, vice versa, every polyhedral cone is finitely generated.
If $\Gamma$ is a finitely generated cone, generated by $\{\omega_{k}\in V:1\leq k\leq n\}$, then $\Cone_{\Gamma}$ equals the polyhedral cone
$$
\Cone_{\Gamma}=\{\lambda\in V^{*}:\lambda(\omega_{k})\leq 0 \text{ for all }1\leq k\leq n\}.
$$
Therefore $\Cone_{\Gamma}$ is finitely generated as well.

Note that every finitely generated cone  is closed and convex.

\begin{lemma}\label{FuncSpaces lemma cap_k (B+Gamma_k) subseteq B'+cap_k Gamma_k}
Let $V$ be a finite dimensional real vector space and let $n\in\N$. For $k\in\N$ with $k\leq n$,
let $\Gamma_{k}$ be a finitely generated cone in $V$ and let $B_{k}$ be a compact subset of $V$.
Then there exists a compact subset $B$ of $V$ such that
$$\bigcap_{k=1}^{n}\big(B_{k}+\Gamma_{k}\big)\subseteq B+\bigcap_{k=1}^{n}\Gamma_{k}.$$
\end{lemma}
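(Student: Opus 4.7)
The plan is to reduce to the case where each $B_k$ is a convex polytope, at which point the Minkowski--Weyl theorem produces the desired compact set.

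First, since each $B_k$ is compact, it is contained in some convex polytope $B_k^{\ast}\subseteq V$ (for example, a sufficiently large cube). Replacing $B_k$ by $B_k^{\ast}$ only enlarges the left-hand side of the desired inclusion, so it suffices to prove the statement under the additional assumption that every $B_k$ is a polytope. Under this assumption, each $B_k+\Gamma_k$ is the Minkowski sum of a polytope and a polyhedral cone, hence itself a (convex) polyhedron. Consequently, $P:=\bigcap_{k=1}^{n}(B_k+\Gamma_k)$ is a polyhedron.

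If $P$ is empty we take $B=\emptyset$. Otherwise, by the Minkowski--Weyl theorem there exists a finite set $V_0\subseteq V$ such that $P=\ch(V_0)+\operatorname{rec}(P)$, where $\operatorname{rec}(P)$ denotes the recession cone of $P$. The set $B:=\ch(V_0)$ is compact as the convex hull of a finite set, so it remains to identify $\operatorname{rec}(P)$ with $\bigcap_{k=1}^{n}\Gamma_k$. Since each $B_k$ is compact and each $\Gamma_k$ is a closed convex cone, a direct argument (using that $\gamma_t/t\to v$ whenever $b_0+tv=b_t+\gamma_t\in B_k+\Gamma_k$ with $b_t\in B_k$ bounded and $\Gamma_k$ closed under positive scaling) shows $\operatorname{rec}(B_k+\Gamma_k)=\Gamma_k$. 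Moreover, for any non-empty intersection of closed convex sets the recession cone commutes with intersection, yielding $\operatorname{rec}(P)=\bigcap_{k=1}^{n}\Gamma_k$, as required.

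The delicate step is precisely the identity $\operatorname{rec}\big(\bigcap_k C_k\big)=\bigcap_k\operatorname{rec}(C_k)$, which genuinely needs $\bigcap_k C_k\neq\emptyset$; this is why the case $P=\emptyset$ must be separated at the outset. All remaining ingredients---enlarging a compact set to a polytope, the Minkowski--Weyl decomposition of a polyhedron as the sum of a polytope and its recession cone, and the closedness of the Minkowski sum of a compact set with a closed set---are standard results from polyhedral convex geometry and are already invoked elsewhere in this section.
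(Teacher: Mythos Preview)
Your proof is correct and takes a genuinely different route from the paper's.

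The paper works on the dual side with support functions. Writing $\Gamma_{0}=\bigcap_{k}\Gamma_{k}$, it first shows $\Cone_{\Gamma_{0}}=\sum_{k}\Cone_{\Gamma_{k}}$ (using that the $\Cone_{\Gamma_{k}}$ are finitely generated, hence their sum is closed), then builds a subadditive, positively homogeneous function $H_{0}(\lambda)=\min\{\sum_{k}\|\lambda_{k}\|:\sum_{k}\lambda_{k}=\lambda,\ \lambda_{k}\in\Cone_{\Gamma_{k}}\}$ and proves $H_{0}\leq R\|\cdot\|$ on $\Cone_{\Gamma_{0}}$ by decomposing the latter into finitely many proper polyhedral cones. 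For $x$ in the intersection this yields $\lambda(x)\leq rR\|\lambda\|$ for all $\lambda\in\Cone_{\Gamma_{0}}$, hence $x\in B(0,rR)+\Gamma_{0}$. Your argument instead stays on the primal side: after enlarging each $B_{k}$ to a polytope, the intersection is itself a polyhedron, and the Minkowski--Weyl decomposition together with the identity $\operatorname{rec}\big(\bigcap_{k}C_{k}\big)=\bigcap_{k}\operatorname{rec}(C_{k})$ (valid for a non-empty intersection of closed convex sets) finishes the job. Your proof is shorter and more conceptual; the paper's proof is more explicit, producing a concrete ball as the compact set $B$. Both arguments use the polyhedral hypothesis in an essential way---yours to invoke Minkowski--Weyl, the paper's to guarantee $\sum_{k}\Cone_{\Gamma_{k}}$ is closed---which is consistent with the counterexample in the remark immediately following the lemma.
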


\begin{proof}
Fix an inner product on $V$.
This inner product induces a dual inner product on $V^{*}$ in the usual manner.
We write $\Gamma_{0}=\bigcap_{k=1}^{n}\Gamma_{k}$.

Since the cones $\Gamma_{k}$ are closed and convex,
$$
\Gamma_{0} = \{ x\in V:\,\lambda(x) \leq 0\;\; \text{for all}\;\;
              \lambda \in \sum_{k=1}^{n}\Cone_{\Gamma_{k}}\}.
$$
Hence, $\Cone_{\Gamma_{0}}$ equals the closure of $\sum_{k=1}^{n}\Cone_{\Gamma_{k}}$.
As the cones $\Cone_{\Gamma_{k}}$ are finitely generated, so is their sum.
In particular the sum is closed and we conclude that $\Cone_{\Gamma_{0}}=\sum_{k=1}^{n}\Cone_{\Gamma_{k}}$.

We define the continuous functions
$$s:\Cone_{\Gamma_{1}}\times\dots\times\Cone_{\Gamma_{n}}\to\Cone_{\Gamma_{0}};
    \quad (\lambda_{k})_{k=1}^{n}\mapsto \sum_{k=1}^{n}\lambda_{k}$$
and
$$\nu:(V^{*})^{n}\to\R;\quad(\lambda_{k})_{k=1}^{n}\mapsto \sum_{k=1}^{n}\|\lambda_{k}\|.$$
Note that $s$ is a surjection. Since $\nu$ is proper and non-negative,  $\nu$ has a minimum on $s^{-1}(\{\lambda\})$. We define the function
$$H_{0}:\Cone_{\Gamma_{0}}\to\R;\quad\lambda\mapsto\min_{s^{-1}(\{\lambda\})}\nu.$$

If $\kappa, \lambda\in\Cone_{\Gamma_{0}}$,
then $s^{-1}(\{\kappa\})+s^{-1}(\{\lambda\})\subseteq s^{-1}(\{\kappa+\lambda\})$.
Since $\nu$ is subadditive, we deduce that
$$H_{0}(\kappa+\lambda)
\leq\min_{s^{-1}(\{\kappa\})+s^{-1}(\{\lambda\})}\nu
\leq\min_{s^{-1}(\{\kappa\})}\nu+\min_{s^{-1}(\{\lambda\})}\nu
=H_{0}(\kappa)+H_{0}(\lambda).$$
Hence $H_{0}$ is subadditive. Furthermore, if $r>0$ and
 $\lambda\in\Cone_{\Gamma_{0}}$, then
$s^{-1}(\{r\lambda\})=r s^{-1}(\{\lambda\})$ and thus
it follows that $H_{0}$ is positively homogeneous of degree $1$.
This implies in particular that $H_{0}$ is a convex function
on $\Cone_{\Gamma_{0}}$ .

Since  $\Cone_{\Gamma_{0}}$ is finitely generated,
the intersection of $\Cone_{\Gamma_{0}}$ with any finitely generated cone is again finitely generated. If we fix an orthonormal basis for $V$, then this is in particular the case for the intersection of $\Cone_{\Gamma_{0}}$ with any orthant (hyperoctant). Such an intersection is a proper cone in the sense that if $\lambda$ is a non-zero element of the cone, then $-\lambda$ is not. We can thus conclude that there exists a finite collection $\{\Cone_{j}:1\leq j\leq m\}$ of proper finitely generated cones $\Cone_{j}$ such that $\Cone_{\Gamma_{0}}=\bigcup_{j=1}^{m}\Cone_{j}$.
For $1\leq j\leq m$,
let $\{\omega^{j}_{k}\in \Cone_{j}:1\leq k\leq n_{j}\}$
be a finite set such that $\Cone_{j}=\sum_{k=1}^{n_{j}}\R_{\geq0}\omega^{j}_{k}$.
Since $\Cone_{j}$ is a proper closed cone,  $\Cone_{j}\setminus \{0\}$
is contained in an open halfspace. This implies that
$$\prod_{k=1}^{n_{j}}\R_{\geq0}\omega^{j}_{k}\to\Cone_{j};
\quad (r_{k}\omega^{j}_{k})_{k=1}^{n_{j}}
\mapsto \sum_{k=1}^{n_{j}}r_{k}\omega^{j}_{k}
$$
is a proper map. Therefore, there exist
$r_{k} >0$, for $1\leq k\leq n_{j}$, such that the intersection of the unit sphere with $\Cone_{j}$ is contained in $\ch\big(\bigcup_{k=1}^{n_{j}}[0,r_{k}]\omega^{j}_{k}\big)$.
Since $H_{0}$ is convex, the supremum of $H_{0}$
over the intersection of the unit sphere with $\Cone_{j}$
is smaller than or equal to the supremum of $H_{0}$
over the sets $[0,r_{k}]\omega^{j}_{k}$.
The latter is finite because $H_{0}$ is
homogeneous and $H_{0}(\omega^{j}_{k})$ is
finite for every $1\leq k\leq n_{j}$.
Therefore, there exists an $R_{j}>0$ such that $H_{0}(\lambda)\leq R_{j}\|\lambda\|$ for every $\lambda\in \Cone_{j}$. Let $R$ be the maximum of the $R_{j}$. Then $H_{0}(\lambda)\leq R\|\lambda\|$ for every $\lambda\in\Cone_{\Gamma_{0}}$.

Let now  $x\in \bigcap_{k}(B_{k}+\Gamma_{k})$.
We will use that by compactness of each $B_{k}$, for
$1 \leq k\leq n$,
we have $\Cone_{\Gamma_{k}} = \Cone_{B_{k} + \Gamma_{k}}$ and
$H_{B_{k} + \Gamma_{k}} = H_{B_{k}}$ on the latter set (see Lemma \ref{ConvexGeom lemma properties H_S}).
Let $\lambda\in\Cone_{\Gamma_{0}};$
then we may write $\lambda=\sum_{k=1}^{n}\lambda_{k}=s\big((\lambda_{k})_{k=1}^{n}\big)$
with $\lambda_{k}\in\Cone_{\Gamma_{k}}$, and we see that
$$
\lambda(x)
=s\big((\lambda_{k})_{k=1}^{n}\big)(x)
\leq\sum_{k=1}^{n}H_{B_{k}}(\lambda_{k}).
$$
Again, by compactness of the sets $B_{k}$ ,
there exists an $r>0$ such that $H_{B_{k}}\leq r\|\cdot\|$
and we finally see that $\lambda(x)\leq r\sum_{k=1}^{n}\|\lambda_{k}\|$.
This inequality holds for every
$n$-tuple $(\lambda_{k})_{k=1}^{n}\in\Cone_{\Gamma_{1}}
\times\dots\times\Cone_{\Gamma_{n}}$ such that $s\big((\lambda_{k})_{k=1}^{n}\big)=\lambda$. Therefore, $\lambda(x)\leq rH_{0}(X)\leq rR\|\lambda\|$.

Let $B(0,rR)$ be the closed ball centered at the
origin with radius $rR$. From Lemma
\ref{ConvexGeom lemma properties H_S} it follows that
$\Cone_{B(0,rR)+\Gamma_{0}}=\Cone_{\Gamma_{0}}$
and the restriction of $H_{B(0,rR)+\Gamma_{0}}$ to
$\Cone_{\Gamma_{0}}$ equals $H_{B(0,rR)}$.
The latter support function is given by $H_{B(0,rR)}(\lambda)=rR\|\lambda\|$ for $\lambda\in V^{*}$.

We conclude that $\lambda(x)\leq H_{B(0,rR)}(\lambda)$ for every $\lambda\in\Cone_{\Gamma_{0}}$, and therefore $x\in B(0,rR)+\Gamma_{0}$.
This establishes the desired inclusion with $B = B(0, rR)$.
\end{proof}

\begin{rem}\label{FuncSpaces rem opmerking Bart}
The lemma does not hold true if ``finitely generated'' is replaced
by ``closed and convex''. Bart van den Dries showed us the
following counterexample. Let
\begin{align*}
&\Gamma_{1}=\{(x,0,z)\in\R^{3}:0\leq x\leq z\},\\
&\Gamma_{2}=\{(x,y,z)\in\R^{3}:0\leq x\leq z,\frac{x^{2}}{z}\leq y\leq x\}.
\end{align*}
If $B=\{(0,y,0):-\frac{1}{2}\leq y\leq \frac{1}{2}\}$, then for every $t>1$,
$$(t,\frac{1}{2},t^2)=
\begin{cases}
(t,0,t^{2})+(0,\frac{1}{2},0)\\
(t,1,t^{2})+(0,-\frac{1}{2},0)
\end{cases}$$
is contained in the intersection of $B+\Gamma_{1}$ and $B+\Gamma_{2}$, but there exists no compact subset $B'$ of $\R^{3}$ such that
$$\{(t,\frac{1}{2},t^{2}):t>1\}\subseteq B'+\{(0,0,z):z\geq0\}=B'+(\Gamma_{1}\cap\Gamma_{2}).$$
When going through the proof for Lemma \ref{FuncSpaces lemma cap_k (B+Gamma_k) subseteq B'+cap_k Gamma_k} in this particular case,  the first serious obstruction encountered is that $\Cone_{\Gamma_{1}}+\Cone_{\Gamma_{2}}$ is not closed and therefore $\Cone_{\Gamma_{1}\cap\Gamma_{2}}\neq \Cone_{\Gamma_{1}}+\Cone_{\Gamma_{2}}$.
\end{rem}

For a subset $T$ of $\Sigma(\g,\g[a]_{\g[q]})$, we define $\Gamma(T)=\sum_{\alpha\in T}\R_{\geq0}H_{\alpha}$.
Here $H_{\alpha}$ is given by (\ref{PSubgrp eq
alpha(Y)=B(H_alpha,Y)}). For a $\sigma\circ\theta$-stable parabolic subgroup $P$ containing $A$, we define the cone $\Gamma_{P}=\Gamma(\Sigma(\g,\g[a]_{\g[q]};P))$.

We recall from Section \ref{subsection sigma circ theta-stable parabolic subgroups} that
$\Psg(\g[a]_{\g[q]})$ denotes the collection of minimal
$\sigma\circ \theta$-stable parabolic subgroups containing $A$.

\begin{lemma}\label{RadTransSupp Lemma B+Gamma_P=cap(B+Gamma_P_0)}
Let $P$ be a $\sigma\circ\theta$-stable parabolic subgroup containing $A$. Let $\mathscr{C}$ be the collection of $P_{0}\in
\Psg(\g[a]_{\g[q]})$ with $P_{0}\subseteq P$ and let $B$
be a closed and convex subset of $\g[a]_{\g[q]}$. Then
$$B+\Gamma_{P}=\bigcap_{P_{0}\in \mathscr{C}}(B+\Gamma_{P_{0}}).$$
\end{lemma}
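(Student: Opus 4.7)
The plan is to prove the two inclusions separately. The inclusion $\subseteq$ is straightforward: for any $P_{0}\in\mathscr{C}$ we have $P_{0}\subseteq P$, hence $\g[n]_{P}\subseteq\g[n]_{P_{0}}$ and consequently $\Sigma(\g,\g[a]_{\g[q]};P)\subseteq \Sigma(\g,\g[a]_{\g[q]};P_{0})$. Therefore $\Gamma_{P}\subseteq\Gamma_{P_{0}}$, and $B+\Gamma_{P}\subseteq B+\Gamma_{P_{0}}$. Intersecting over $\mathscr{C}$ gives the desired inclusion.

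For the reverse inclusion, I would invoke Lemma \ref{ConvexGeom Lemma S=cap(S+Gamma)} with $S=B+\Gamma_{P}$ (which is closed and convex, being the sum of a closed convex set with a finitely generated closed convex cone containing $0$) and the collection $\mathscr{C}'=\{\Gamma_{P_{0}}:P_{0}\in\mathscr{C}\}$. It therefore suffices to verify the hypothesis $\Cone_{B+\Gamma_{P}}\subseteq\bigcup_{P_{0}\in\mathscr{C}}\Cone_{\Gamma_{P_{0}}}$. By Lemma \ref{ConvexGeom lemma properties H_S}~(ii), $\Cone_{B+\Gamma_{P}}\subseteq\Cone_{\Gamma_{P}}$, so it is enough to show that any $\lambda\in\g[a]_{\g[q]}^{*}$ satisfying $\lambda(H_{\alpha})\leq 0$ for all $\alpha\in\Sigma(\g,\g[a]_{\g[q]};P)$ lies in $\Cone_{\Gamma_{P_{0}}}$ for some $P_{0}\in\mathscr{C}$.

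The core step is to construct such a $P_{0}$ out of $\lambda$. Using the bilinear form $B$, identify $\lambda$ with $X_{\lambda}\in\g[a]_{\g[q]}$ so that $\lambda(H_{\alpha})=\alpha(X_{\lambda})$. The restricted root system $\Sigma(\g[l]_{P},\g[a]_{\g[q]})$ lives in $\g[a]_{\g[q]}^{*}$, and $-X_{\lambda}$ lies in the closure of some of its Weyl chambers. Choose a regular element $X'\in\g[a]_{\g[q]}$ for $\Sigma(\g[l]_{P},\g[a]_{\g[q]})$ sufficiently close to $-X_{\lambda}$ that $\alpha(-X_{\lambda})<0$ implies $\alpha(X')<0$ for every $\alpha\in\Sigma(\g[l]_{P},\g[a]_{\g[q]})$, and set
$$
\Sigma^{+}(\g[l]_{P},\g[a]_{\g[q]})=\{\alpha\in\Sigma(\g[l]_{P},\g[a]_{\g[q]}):\alpha(X')>0\}.
$$
By construction every $\alpha$ in this positive system satisfies $\alpha(-X_{\lambda})\geq 0$, i.e., $\lambda(H_{\alpha})\leq 0$. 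I would then take
$$
\Sigma^{+}=\Sigma(\g,\g[a]_{\g[q]};P)\;\cup\;\Sigma^{+}(\g[l]_{P},\g[a]_{\g[q]}),
$$
and let $P_{0}$ be the corresponding minimal $\sigma\circ\theta$-stable parabolic subgroup in $\Psg(\g[a]_{\g[q]})$. Then $\Gamma_{P_{0}}=\sum_{\alpha\in\Sigma^{+}}\R_{\geq 0}H_{\alpha}$, so $\lambda(H_{\alpha})\leq 0$ for every generator of $\Gamma_{P_{0}}$, hence $\lambda\in\Cone_{\Gamma_{P_{0}}}$.

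The one thing requiring care is verifying that $\Sigma^{+}$ really is a positive system of $\Sigma(\g,\g[a]_{\g[q]})$ and that the corresponding $P_{0}$ is contained in $P$. Both facts rest on the disjoint decomposition $\Sigma(\g,\g[a]_{\g[q]})=\Sigma(\g,\g[a]_{\g[q]};P)\sqcup\Sigma(\g[l]_{P},\g[a]_{\g[q]})\sqcup(-\Sigma(\g,\g[a]_{\g[q]};P))$: roots in the first block, plus any root of $\g[l]_{P}$, yield (when they are roots at all) another root in the first block, giving closure under addition; and the Lie algebra $\g[p]_{0}=\Cen_{\g}(\g[a]_{\g[q]})\oplus\bigoplus_{\alpha\in\Sigma^{+}}\g_{\alpha}$ is contained in $\g[l]_{P}\oplus\g[n]_{P}=\g[p]_{P}$, forcing $P_{0}\subseteq P$ and hence $P_{0}\in\mathscr{C}$.
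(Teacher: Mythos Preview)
Your proof is correct and follows essentially the same strategy as the paper: both apply Lemma~\ref{ConvexGeom Lemma S=cap(S+Gamma)} with $S=B+\Gamma_{P}$ and reduce to a covering statement about dual cones. The paper organizes this slightly more cleanly by first writing $\Gamma_{P_{0}}=\Gamma_{P}+\Gamma_{P;R}$ for $R=P_{0}\cap L_{P}$ and then observing that the $\Cone_{\Gamma_{P;R}}$ are exactly the closed Weyl chambers for $\Sigma(\g[l]_{P},\g[a]_{\g[q]})$, so their union is all of $\g[a]_{\g[q]}^{*}$ and the hypothesis of Lemma~\ref{ConvexGeom Lemma S=cap(S+Gamma)} is immediate; your explicit construction of $P_{0}$ from $\lambda$ via a regular perturbation amounts to the same chamber-selection argument spelled out by hand.
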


\begin{proof}
Let $\mathscr{C}_{L_{P}}$ denote the set of $\sigma\circ\theta$-stable parabolic subgroups of $L_{P}$ containing $A$. The assignment $P_{0}\mapsto P_{0}\cap L_{P}$ sets up a bijection between the minimal $\sigma\circ\theta$-stable parabolic subgroups of $G$ contained in $P$ and the minimal $\sigma\circ\theta$-stable parabolic subgroups of $L_{P}$. In particular it sets up a bijection between $\mathscr{C}$ and $\mathscr{C}_{L_{P}}$. If $P_{0}\in \mathscr{C}$ and $R=L_{P}\cap P_{0}$, then $\Gamma_{P_{0}}=\Gamma_{P}+\Gamma_{P;R}$, where $\Gamma_{P;R}=\Gamma(\Sigma(\g[l]_{P},\g[a]_{\g[q]};R))$.

The cone $\Cone_{\Gamma_{P;R}}$ equals the closure of the dual Weyl chamber
corresponding to $\theta R$. Hence,
$$\bigcup_{R\in\mathscr{C}_{L_{P}}}\Cone_{\Gamma_{P;R}}=\g[a]_{\g[q]}^{*}.$$
Application of Lemma \ref{ConvexGeom Lemma S=cap(S+Gamma)} with $S=B+\Gamma_{P}$ now yields
$$
\bigcap_{P_{0}\in \mathscr{C}}(B+\Gamma_{P_{0}})
=\bigcap_{R\in\mathscr{C}_{L_{P}}}(B+\Gamma_{P}+\Gamma_{P;R})
=B+\Gamma_{P}.
$$
\end{proof}

%%% ----------------------------------------------------------------------
\subsection{Polar decompositions}
\label{subsection polar decompositions}
%%% ----------------------------------------------------------------------
Let $P$ and $Q$ be $\sigma\circ\theta$-stable parabolic subgroups of $G$ with $A \subseteq P\subseteq Q$.

Since $G=KL_{P}N_{P}$ and $L_{P}=(L_{P}\cap K)A_{\g[q]}(L_{P}\cap H)$, the map $K\times A_{\g[q]}\to \Xi_{P}$; $(k,a)\mapsto ka\cdot\xi_{P}$
is surjective. The decomposition $\Xi_{P}=KA_{\g[q]}\cdot x_{0}$ is called the polar decomposition of $\Xi_{P}$. For $P=G$, we obtain the polar decomposition $X=KA_{\g[q]}\cdot x_{0}$ of $X$. If $B$ is a subset of $\g[a]_{\g[q]}$, then we define
$$
X(B)=K\exp(B)\cdot x_{0}
\quad {\rm and} \quad
\Xi_{P}(B)=K\exp(B)\cdot\xi_{P}.
$$
Note that $X(B)=\Xi_{G}(B)$.

We recall that the definition of $\Acomp_{P}:G\to\g[a]_{P}\cap \g[q]$ is given by (\ref{RadTrans eq A_P def}). We further recall that $\WG[M_{P}\cap K\cap H]$ denotes the subgroup of
$\WG$ consisting of all elements that can be realized in
$M_{P}\cap K\cap H$ and that $\Sigma_{-}(\g,\g[a]_{\g[q]};P)$ is the subset of $\Sigma(\g,\g[a]_{\g[q]};P)$ consisting of roots $\alpha$ such that $-1$-eigenspace for $\sigma\circ\theta$ in $\g_{\alpha}$ is non-trivial.

For convenience we state here the convexity theorem of Van den Ban.

\begin{thm}[{Van den Ban's convexity Theorem \cite[Theorem 1.1]{vdBan_ConvexityTheoremForSemisimpleSymmetricSpaces}}]
\label{RadTransSupp Thm Convexity theorem}
Let $P_{0}\in\Psg(\g[a]_{\g[q]})$ and $a\in A_{\g[q]}$. Then
$$
\Acomp_{P_{0}}(aH)=\ch(\WG[K\cap H]\cdot\log a)+\Gamma\big(\Sigma_{-}(\g,\g[a]_{\g[q]};P)\big)
$$
\end{thm}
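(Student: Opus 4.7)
The plan is to establish the two inclusions separately. The containment $\supseteq$ is obtained by constructing explicit families of elements of $H$ whose images under $\Acomp_{P_0}$ realize the right-hand side; the opposite containment $\subseteq$ is the substantive content of the convexity theorem.

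For $\supseteq$: since $\Acomp_{P_0}$ is left $K$-invariant, for any $w \in \Nor_{K \cap H}(\g[a]_{\g[q]})$ the element $waw^{-1}$ lies in $A_{\g[q]}$, and
$$\Acomp_{P_0}\bigl((waw^{-1})H\bigr) = \Acomp_{P_0}(wa \cdot w^{-1}H) = \Acomp_{P_0}(waH) = \Acomp_{P_0}(aH),$$
while evaluation at the identity coset gives $\log(waw^{-1}) = \Ad(w)\log a$. Hence $\WG[K \cap H] \cdot \log a \subseteq \Acomp_{P_0}(aH)$. Next, for each $\alpha \in \Sigma_-(\g,\g[a]_{\g[q]};P_0)$ pick a nonzero $X \in \g_\alpha$ with $\sigma\theta X = -X$; then $\sigma X = -\theta X$, so $Y := X - \theta X$ satisfies $\sigma Y = Y$ and $\theta Y = -Y$, placing $Y \in \g[p] \cap \g[h]$ and $\exp(tY) \in H$. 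The elements $X, \theta X, [X, \theta X]$ span a copy of $\g[sl](2,\R)$ (modulo the centralizer of $\g[a]_{\g[q]}$), and a direct rank-one computation of the Iwasawa decomposition of $a\exp(tY)$ inside this $\g[sl](2,\R)$ shows that $\Acomp_{P_0}(a\exp(tY))$ traces a smooth curve in $\g[a]_{\g[q]}$ starting at $\log a$ and tending to $\log a + \infty \cdot H_\alpha$ as $t \to \infty$, contributing the generator $H_\alpha$ of $\Gamma(\Sigma_-(\g,\g[a]_{\g[q]};P_0))$. Attaching this cone at each Weyl translate and invoking the convexity of $\Acomp_{P_0}(aH)$ (which must be proved concurrently) then yields $\supseteq$.

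For the opposite containment $\subseteq$: by Lemma \ref{ConvexGeom lemma W cdot B identity} it suffices to show that $\Acomp_{P_0}(aH)$ is closed and convex and that its support function satisfies
$$H_{\Acomp_{P_0}(aH)}(\lambda) \leq \max_{w \in \WG[K\cap H]} \lambda(\Ad(w)\log a)$$
for every $\lambda \in \Cone_{\Gamma(\Sigma_-(\g,\g[a]_{\g[q]};P_0))}$, with $H_{\Acomp_{P_0}(aH)}(\lambda) = +\infty$ otherwise. The strategy, in the spirit of Kostant's linear convexity theorem, is to analyze the critical points of the smooth function $h \mapsto \lambda(\Acomp_{P_0}(ah))$ on $H$, show that these critical values lie in the Weyl orbit of $\lambda(\log a)$, and combine this with a careful control of the function near the ends of the noncompact space $H/(L_{P_0} \cap H)$; an induction on the real split rank of $\g[a]_{\g[q]}$, with the $\g[sl](2,\R)$-reduction as the base case, then completes the argument.

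The main obstacle is precisely this support-function inequality in the direction $\subseteq$: because $H$ is noncompact, the orbit $aH$ escapes to infinity in $X$, and it is exactly this asymptotic escape that forces a cone to appear on the right rather than only a convex hull. Controlling the behavior of $\Acomp_{P_0}$ along the escaping directions, and ruling out critical values of $\lambda \circ \Acomp_{P_0}(a\cdot)$ outside the Weyl orbit of $\lambda(\log a)$, is the heart of the proof and constitutes the bulk of \cite{vdBan_ConvexityTheoremForSemisimpleSymmetricSpaces}; a short self-contained argument does not seem available.
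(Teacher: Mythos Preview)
The paper does not prove this theorem; it is quoted verbatim from \cite[Theorem 1.1]{vdBan_ConvexityTheoremForSemisimpleSymmetricSpaces} and used as a black-box tool throughout (e.g., in Lemma~\ref{RadTransSupp lemma relation Xi_(P,B), Acomp}, Proposition~\ref{FuncSpaces prop Ls^1_P G-invariant}, and the proof of Theorem~\ref{SuppThm thm Support theorem for distributions and non-minimal P}). There is therefore no proof in the paper to compare your proposal against.

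Your sketch is a reasonable outline of the structure of Van den Ban's original argument: the easy inclusion $\supseteq$ via Weyl-orbit invariance and rank-one $\mathfrak{sl}(2,\R)$ computations along $\Sigma_-$-roots, and the hard inclusion $\subseteq$ via a critical-point analysis of $h\mapsto\lambda(\Acomp_{P_0}(ah))$ combined with control at infinity. You correctly flag that the latter is the substantive content and that no short self-contained proof is available. One small caution: in your $\supseteq$ argument you appeal to the convexity of $\Acomp_{P_0}(aH)$ ``which must be proved concurrently''; this convexity is itself part of the conclusion and is established in the cited reference only as a consequence of identifying the full image, so invoking it at that stage is circular unless you supply an independent reason. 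But since the paper under review does not attempt a proof, this is moot for the comparison you were asked to make.
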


The convexity theorem allows us to give the following characterization of the sets $\Xi_{P}(B+\Gamma_{P})$ with $B$ a closed $\WG[M_{P}\cap K\cap H]$-invariant convex subset of $\g[a]_{\g[q]}$.

\begin{lemma}\label{RadTransSupp lemma relation Xi_(P,B), Acomp}
Let $B$ be a closed $\WG[M_{P}\cap K\cap H]$-invariant convex subset of
$\g[a]_{\g[q]}$ and $g\in G$. Then the following two assertions
are equivalent.
\begin{enumerate}[(i)]
\item
$g\cdot\xi_{P}\in\Xi_{P}(B+\Gamma_{P})$
\item
$\Acomp_{P_{0}}\big(g(L_{P}\cap H)\big)\subseteq B+\Gamma_{P_{0}}$ for all $P_{0}\in\Psg(\g[a]_{\g[q]})$ such that $P_{0} \subseteq P$.
\end{enumerate}
\end{lemma}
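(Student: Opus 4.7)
The overall plan is to reduce to the case $g = \exp(Y_0)$ with $Y_0 \in \g[a]_{\g[q]}$, then combine Van den Ban's convexity theorem applied to $L_P$ with the identity $B + \Gamma_P = \bigcap_{P_0 \in \mathscr{C}}(B + \Gamma_{P_0})$ from Lemma \ref{RadTransSupp Lemma B+Gamma_P=cap(B+Gamma_P_0)}, where $\mathscr{C} := \{P_0 \in \Psg(\g[a]_{\g[q]}) : P_0 \subseteq P\}$. Using the polar decomposition $\Xi_P = K A_{\g[q]} \cdot \xi_P$, I would write $g \cdot \xi_P = k_0 \exp(Y_0) \cdot \xi_P$ so that $g = k_0 \exp(Y_0) \ell_0 n_0$ for some $\ell_0 \in L_P \cap H$ and $n_0 \in N_P$. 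Condition (i) for $g$ is then equivalent to (i) for $\exp(Y_0)$ by $K$-invariance of $\Xi_P(B+\Gamma_P)$. For (ii), I would combine the left $K$-invariance and right $N_{P_0}$-invariance of $\Acomp_{P_0}$ (noting $N_P \subseteq N_{P_0}$ since $P_0 \subseteq P$) with the fact that $L_P$ normalizes $N_P$, to obtain $\Acomp_{P_0}(gh) = \Acomp_{P_0}(\exp(Y_0)\ell_0 h)$ for every $h \in L_P \cap H$; as $\ell_0 h$ ranges over $L_P \cap H$, (ii) for $g$ reduces to (ii) for $\exp(Y_0)$.

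Next I would translate (i) for $g = \exp(Y_0)$: it reads $\exp(Y_0) \in K\exp(B+\Gamma_P)(L_P \cap H)N_P$, and intersecting with $L_P$ using $L_P \cap N_P = \{e\}$ and $L_P \cap K = M_P \cap K$ reduces it to $\exp(Y_0) \in (M_P \cap K)\exp(B+\Gamma_P)(L_P \cap H)$. The uniqueness of the polar decomposition of $L_P$ modulo $\WG[M_{P}\cap K\cap H]$ (recalled at the end of Section \ref{subsection sigma circ theta-stable parabolic subgroups}), together with the $\WG[M_{P}\cap K\cap H]$-invariance of $B$ (by hypothesis) and of $\Gamma_P$ (representatives of $\WG[M_{P}\cap K\cap H]$ lie in $M_P$ and therefore permute $\Sigma(\g,\g[a]_{\g[q]};P)$ via the $\Ad$-action), then translates (i) into $Y_0 \in B + \Gamma_P$. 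By Lemma \ref{RadTransSupp Lemma B+Gamma_P=cap(B+Gamma_P_0)}, this is equivalent to $Y_0 \in B + \Gamma_{P_0}$ for every $P_0 \in \mathscr{C}$.

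For (ii) with $g = \exp(Y_0)$, I would apply Van den Ban's convexity theorem (Theorem \ref{RadTransSupp Thm Convexity theorem}) to the reductive symmetric pair $(L_P, L_P \cap H)$ and the minimal $\sigma\circ\theta$-stable parabolic $P_0 \cap L_P$; a short computation with the generalized Iwasawa decomposition shows $\Acomp_{P_0}|_{L_P} = \Acomp_{P_0 \cap L_P}$, so
$$\Acomp_{P_0}\bigl(\exp(Y_0)(L_P \cap H)\bigr) = \ch\bigl(\WG[M_{P}\cap K\cap H] \cdot Y_0\bigr) + \Gamma\bigl(\Sigma_-(\g[l]_P, \g[a]_{\g[q]}; P_0 \cap L_P)\bigr),$$
and the second summand is contained in $\Gamma_{P_0}$. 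The implication (ii) $\Rightarrow$ (i) is then immediate by evaluating at $Y_0$ itself. For (i) $\Rightarrow$ (ii), I would exploit the fact that $\WG[M_{P}\cap K\cap H]$ permutes $\mathscr{C}$ and satisfies $\Gamma_{\tilde w^{-1}P_0 \tilde w} = w^{-1}\Gamma_{P_0}$ for $\tilde w \in \Nor_{M_P \cap K \cap H}(\g[a]_{\g[q]})$ representing $w$; combined with $wB = B$, (i) then yields $w Y_0 \in B + \Gamma_{P_0}$ for every $w \in \WG[M_{P}\cap K\cap H]$ and every $P_0 \in \mathscr{C}$, and convexity of $B + \Gamma_{P_0}$ gives (ii). The main obstacle lies in the reduction to $g = \exp(Y_0)$ and in verifying the identification $\Acomp_{P_0}|_{L_P} = \Acomp_{P_0 \cap L_P}$; once these are in hand, the equivalence follows directly from Van den Ban's theorem applied to $L_P$ combined with the geometric lemma on intersections of the cones $B + \Gamma_{P_0}$.
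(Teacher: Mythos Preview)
Your proof is correct and follows essentially the same strategy as the paper: reduce to $g=\exp(Y_0)$ via the polar decomposition, apply Van den Ban's convexity theorem to $L_P/(L_P\cap H)$ with the minimal parabolic $P_0\cap L_P$, and invoke Lemma~\ref{RadTransSupp Lemma B+Gamma_P=cap(B+Gamma_P_0)}. The only difference is in the direction (i)\,$\Rightarrow$\,(ii): the paper argues more directly by noting that $B+\Gamma_P$ is itself $\WG[M_P\cap K\cap H]$-invariant (since both $B$ and $\Gamma_P$ are), so $\ch(\WG[M_P\cap K\cap H]\cdot Y_0)\subseteq B+\Gamma_P\subseteq B+\Gamma_{P_0}$, which bypasses your detour through the permutation action on $\mathscr{C}$ and the extra appeal to Lemma~\ref{RadTransSupp Lemma B+Gamma_P=cap(B+Gamma_P_0)}.
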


\begin{proof}
Let $g \in G$ and let $Y\in\g[a]_{\g[q]}$ be such that $g\in K \exp Y (L_{P}\cap H)N_{P}$. Let $P_{0}\in \Psg(\g[a]_{\g[q]})$ with $P_{0}\subseteq P$. Then
$$
\Acomp_{P_{0}}\big(g(L_{P}\cap H)\big)
=\Acomp_{P_{0}}\big(\exp Y (L_{P}\cap H)\big).
$$
Note that $\exp Y (L_{P}\cap H)\subseteq L_{P}$. Now we apply Theorem \ref{RadTransSupp Thm Convexity theorem} to
$L_{P}/(L_{P}\cap H)$ and the minimal $\sigma\circ\theta$-stable parabolic subgroup $P_{0}\cap L_{P}$ of $L_{P}$ and
we thus obtain that
\begin{equation}\label{RadTransSupp eq Acomp(gL_PH)=ch(W Acomp(g))+Gamma}
\Acomp_{P_{0}}\big(g(L_{P}\cap H)\big)
=\ch\big(\WG[M_{P}\cap K\cap H]\cdot Y\big)
    +\Gamma\big(\Sigma_{-}(\g[l]_{P},\g[a]_{\g[q]};P_{0}\cap L_{P})\big).
\end{equation}
Note that $\WG[M_{P}\cap K\cap H]\cdot\Gamma_{P}=\Gamma_{P}$.

Now assume that (i) holds, then  $Y\in B+\Gamma_{P}$. The latter set is $\WG[M_{P}\cap K \cap H]$-invariant, so if $P_{0}$ is as in (ii),
then it follows from (\ref{RadTransSupp eq Acomp(gL_PH)=ch(W Acomp(g))+Gamma}) that
$$
\Acomp_{P_{0}}\big(g(L_{P}\cap H)\big)
\subseteq
B+\Gamma_{P}+\Gamma\big(\Sigma_{-}(\g[l]_{P},\g[a]_{\g[q]};P_{0}\cap
L_{P})\big) \subseteq B+\Gamma_{P_{0}},
$$
and (ii) follows.

Conversely, assume that (ii) holds. Then it follows from (\ref{RadTransSupp eq Acomp(gL_PH)=ch(W Acomp(g))+Gamma}) that $Y \in B + \Gamma_{P_{0}}$ for all $P_{0}\in\Psg(\g[a]_{\g[q]})$ with $P_{0}\subseteq P$.
In view of Lemma \ref{RadTransSupp Lemma B+Gamma_P=cap(B+Gamma_P_0)}, this implies that $Y\in B+\Gamma_{P}$, so that (i) follows.
\end{proof}

%%% ----------------------------------------------------------------------
\section{Support of a transformed function}
\label{section Support of a transformed function}
%%% ----------------------------------------------------------------------
Throughout this section, let $P$ and $Q$ be $\sigma\circ\theta$-stable parabolic subgroups of $G$ with $A \subseteq P\subseteq Q$.

The support of $\Rt[P][Q]\phi$ for $\phi\in\Ds(\Xi_{Q})$ need not be compact in general.
The aim of the present section is to give a description of $\supp(\Rt[P][Q]\phi)$ in terms of $\supp(\phi)$. We start with the general case in Section \ref{subsection the general case}. For the horospherical transform some stronger statements can be obtained. We deal with this in Section \ref{subsection support for the horospherical transform}.

%%% ----------------------------------------------------------------------
\subsection{The general case}
\label{subsection the general case}
%%% ----------------------------------------------------------------------
\begin{prop}\label{RadTransSupp prop relation Xi_(Q,B+Gamma^+),Xi_(P,B+Gamma^+)}
Let $B$ be a $\WG[M_{Q}\cap K\cap H]$-invariant closed convex subset of
$\g[a]_{\g[q]}$ and let $g\in G$.
%\begin{enumerate}[(i)]
%\item
If $g\cdot\xi_{Q}\in\Xi_{Q}(B+\Gamma_{Q})$, then $g\cdot\xi_{P}\in\Xi_{P}(B+\Gamma_{P})$.
%\item
%    $\{\xi\in\Xi_{P}:E_{P}^{Q}(\xi)\cap \Xi_{Q}(B)\neq\emptyset\}
%    \subseteq\Xi_{P}(B+\Gamma_{P})$.
%\end{enumerate}
\end{prop}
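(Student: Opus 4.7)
The plan is to reduce the statement to the characterization of the sets $\Xi_{R}(B+\Gamma_{R})$ provided by Lemma \ref{RadTransSupp lemma relation Xi_(P,B), Acomp}, applied to both $R=Q$ and $R=P$. Before doing so, I would verify that the hypothesis on $B$ transfers from $Q$ to $P$: since $P\subseteq Q$ implies $L_{P}\subseteq L_{Q}$, we have $M_{P}\cap K\subseteq M_{Q}\cap K$, and hence the Weyl group inclusion $\WG[M_{P}\cap K\cap H]\subseteq\WG[M_{Q}\cap K\cap H]$ holds. In particular $B$ is also $\WG[M_{P}\cap K\cap H]$-invariant, so Lemma \ref{RadTransSupp lemma relation Xi_(P,B), Acomp} can be legitimately invoked for both parabolics.

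Next, I would apply the forward implication of Lemma \ref{RadTransSupp lemma relation Xi_(P,B), Acomp} to $Q$: the hypothesis $g\cdot\xi_{Q}\in\Xi_{Q}(B+\Gamma_{Q})$ translates to
$$
\Acomp_{P_{0}}\bigl(g(L_{Q}\cap H)\bigr)\subseteq B+\Gamma_{P_{0}}\qquad\text{for every }P_{0}\in\Psg(\g[a]_{\g[q]})\text{ with }P_{0}\subseteq Q.
$$
Then I would check the analogous condition relative to $P$. For any $P_{0}\in\Psg(\g[a]_{\g[q]})$ contained in $P$, one has $P_{0}\subseteq Q$ as well, and the inclusion $L_{P}\cap H\subseteq L_{Q}\cap H$ yields
$$
\Acomp_{P_{0}}\bigl(g(L_{P}\cap H)\bigr)\subseteq \Acomp_{P_{0}}\bigl(g(L_{Q}\cap H)\bigr)\subseteq B+\Gamma_{P_{0}}.
$$
Finally, the reverse implication of Lemma \ref{RadTransSupp lemma relation Xi_(P,B), Acomp}, now applied to $P$, gives $g\cdot\xi_{P}\in\Xi_{P}(B+\Gamma_{P})$, which is the desired conclusion.

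The argument is essentially formal once the convexity/characterization lemma is in hand; the only substantive check is the transfer of Weyl-group invariance from $Q$ to $P$, and this follows directly from $L_{P}\subseteq L_{Q}$. I do not anticipate a serious obstacle, because all the heavy lifting — the use of Van den Ban's convexity theorem and Lemma \ref{RadTransSupp Lemma B+Gamma_P=cap(B+Gamma_P_0)} — has already been absorbed into Lemma \ref{RadTransSupp lemma relation Xi_(P,B), Acomp}, reducing the proposition to comparing two families of $\Acomp_{P_{0}}$-images indexed by nested families of minimal $\sigma\circ\theta$-stable parabolics.
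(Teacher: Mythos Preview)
Your proof is correct and follows essentially the same route as the paper: apply Lemma \ref{RadTransSupp lemma relation Xi_(P,B), Acomp} to $Q$, restrict to minimal $P_{0}\subseteq P$ (which are automatically $\subseteq Q$), and apply the lemma again to $P$. You are in fact slightly more careful than the paper, making explicit both the transfer of the $\WG[M_{P}\cap K\cap H]$-invariance of $B$ and the inclusion $\Acomp_{P_{0}}(g(L_{P}\cap H))\subseteq\Acomp_{P_{0}}(g(L_{Q}\cap H))$ coming from $L_{P}\cap H\subseteq L_{Q}\cap H$, which the paper leaves implicit.
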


\begin{proof}
%{\em (i):}
Let $g\cdot\xi_{Q}\in\Xi_{Q}(B+\Gamma_{Q})$. Then
(ii) of Lemma \ref{RadTransSupp lemma relation Xi_(P,B), Acomp} holds with $Q$ in place of $P$.
Since $P\subseteq Q$, every minimal $\sigma\circ\theta$-stable parabolic
subgroup $P_{0}$ that is contained in $P$ is also contained in
$Q$ so that (ii) of Lemma \ref{RadTransSupp lemma relation
Xi_(P,B), Acomp} also holds for $P$.
By the mentioned lemma it then follows that $g\cdot\xi_{P}\in\Xi_{P}(B+\Gamma_{P})$.

%{\em (ii):} Assume
%$E_{P}^{Q}(g\cdot\xi_{P})\cap\Xi_{Q}(B)\neq\emptyset$. There
%exists an $n\in N_{P}$ such that $gn\cdot\xi_{Q}\in \Xi_{Q}(B)$.
%Therefore there exist $k\in K$ and $n \in N_{P}$ such that $kgn\in\exp(B)(L_{Q}\cap H)$.
%Let $P_{0}\in\Psg(\g[a]_{\g[q]})$ be such that $P_{0} \subseteq
%P$. Since $L_{P}\subseteq L_{Q}$,
%$$
%\Acomp_{P_{0}}\big(g(L_{P}\cap H)\big)
%=\Acomp_{P_{0}}\big(kgn(L_{P}\cap H)\big)
%\subseteq\Acomp_{P_{0}}\big(\exp(B)(L_{Q}\cap H)\big).
%$$
%Note that $\exp(B)(L_{Q}\cap H)\subseteq L_{Q}$. Therefore the last of these sets is equal to
%$B+\Gamma\big(\Sigma_{-}(\g[l]_{Q},\g[a]_{\g[q]};P_{0}\cap L_{Q})\big)$ by application of Theorem \ref{RadTransSupp Thm %Convexity theorem} to $L_{Q}/(L_{Q}\cap H)$ and the minimal $\sigma\circ\theta$-stable parabolic $P_{0}\cap L_{Q}$ of %$L_{Q}$. As
%$\Gamma\big(\Sigma_{-}(\g[l]_{Q},\g[a]_{\g[q]};P_{0}\cap L_{Q})\big)\subseteq\Gamma_{P_{0}}$, the statement now follows %by application of
%Lemma \ref{RadTransSupp lemma relation Xi_(P,B), Acomp}.
\end{proof}

Proposition \ref{RadTransSupp prop relation
Xi_(Q,B+Gamma^+),Xi_(P,B+Gamma^+)} has the following corollary.

\begin{cor}\label{RadTransSupp cor supp(Rt_Q phi)subseteq Xi_Q,B => supp(Rt_P phi)subseteq Xi_P,B}
Assume that $B$ is a $\WG[M_{Q}\cap K\cap H]$-invariant compact convex subset of
$\g[a]_{\g[q]}$ and that $\phi\in\Es^{1}(\Xi_{Q},J_{Q})$. If
$$
\supp(\phi)\subseteq\Xi_{Q}(B+\Gamma_{Q})
$$
then
$$
\supp(\Rt[P][Q]\phi)\subseteq\Xi_{P}(B+\Gamma_{P}).
$$
In particular, if $B\subseteq \g[a]_{\g[q]}$ is compact, convex and $\WG[K\cap H]$-invariant and $\phi\in\Es^{1}(X)$, then $\supp(\phi)\subseteq X(B)$ implies $\supp(\Rt[P]\phi)\subseteq\Xi_{P}(B+\Gamma_{P})$.
\end{cor}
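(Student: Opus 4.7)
The strategy is to reduce the statement directly to Proposition \ref{RadTransSupp prop relation Xi_(Q,B+Gamma^+),Xi_(P,B+Gamma^+)} by working pointwise on the integral representation of $\Rt[P][Q]\phi$ supplied by Proposition \ref{RadTransExt prop Rt_P^Q: Es^1 to Es^1}. The goal is to show that $\Rt[P][Q]\phi$ vanishes on the complement of $\Xi_{P}(B+\Gamma_{P})$ and then to invoke closedness of that set.

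First, fix $g\in G$ with $g\cdot\xi_{P}\notin\Xi_{P}(B+\Gamma_{P})$. By Proposition \ref{RadTransExt prop Rt_P^Q: Es^1 to Es^1},
$$
\Rt[P][Q]\phi(g\cdot\xi_{P})=\int_{N_{P}^{Q}}\phi(gn\cdot\xi_{Q})\,dn
$$
with absolutely convergent integrand. Since $N_{P}^{Q}\subseteq N_{P}\subseteq(L_{P}\cap H)N_{P}$, every $n\in N_{P}^{Q}$ fixes $\xi_{P}$, so $gn\cdot\xi_{P}=g\cdot\xi_{P}$. Applying Proposition \ref{RadTransSupp prop relation Xi_(Q,B+Gamma^+),Xi_(P,B+Gamma^+)} with $gn$ in place of $g$, the containment $gn\cdot\xi_{Q}\in\Xi_{Q}(B+\Gamma_{Q})$ would force $g\cdot\xi_{P}=gn\cdot\xi_{P}\in\Xi_{P}(B+\Gamma_{P})$, contradicting the choice of $g$. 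Hence $gn\cdot\xi_{Q}\notin\Xi_{Q}(B+\Gamma_{Q})\supseteq\supp(\phi)$ for every $n\in N_{P}^{Q}$, the integrand vanishes identically, and $\Rt[P][Q]\phi(g\cdot\xi_{P})=0$.

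Next, I would verify that $\Xi_{P}(B+\Gamma_{P})$ is closed in $\Xi_{P}$. Since $P\subseteq Q$ gives $M_{P}\subseteq M_{Q}$, the $\WG[M_{Q}\cap K\cap H]$-invariance of $B$ automatically implies $\WG[M_{P}\cap K\cap H]$-invariance, so Lemma \ref{RadTransSupp lemma relation Xi_(P,B), Acomp} characterizes $\Xi_{P}(B+\Gamma_{P})$ as the set of cosets $g\cdot\xi_{P}$ for which $\Acomp_{P_{0}}\big(g(L_{P}\cap H)\big)\subseteq B+\Gamma_{P_{0}}$ for every $P_{0}\in\Psg(\g[a]_{\g[q]})$ with $P_{0}\subseteq P$. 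Each $B+\Gamma_{P_{0}}$ is closed ($B$ compact, $\Gamma_{P_{0}}$ a finitely generated cone) and each $\Acomp_{P_{0}}$ is continuous, so $\Xi_{P}(B+\Gamma_{P})$ is a closed intersection. Combined with the pointwise vanishing this yields $\supp(\Rt[P][Q]\phi)\subseteq\Xi_{P}(B+\Gamma_{P})$.

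The second assertion follows by specializing to $Q=G$: then $\Xi_{G}=X$, $\xi_{G}=x_{0}$, $\Sigma(\g,\g[a]_{\g[q]};G)=\emptyset$ so $\Gamma_{G}=\{0\}$, and from $K\cap\exp(\g[p])=\{e\}$ one deduces $K\subseteq M_{G}$, whence $\WG[M_{G}\cap K\cap H]=\WG[K\cap H]$. Thus the hypothesis matches that of the first part with $Q=G$. Of the steps above, the one requiring genuine input is the closedness of $\Xi_{P}(B+\Gamma_{P})$ used to pass from pointwise vanishing to a support inclusion; this is the main potential obstacle, but it is dispatched cleanly by the characterization in Lemma \ref{RadTransSupp lemma relation Xi_(P,B), Acomp}.
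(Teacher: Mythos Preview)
Your proof is correct and follows essentially the same route as the paper: both reduce to Proposition \ref{RadTransSupp prop relation Xi_(Q,B+Gamma^+),Xi_(P,B+Gamma^+)} via the pointwise integral representation, observe that $n\in N_{P}^{Q}$ fixes $\xi_{P}$, and then close up using that $\Xi_{P}(B+\Gamma_{P})$ is closed. The paper argues closedness in one line from compactness of $B$, while you give a more explicit justification via Lemma \ref{RadTransSupp lemma relation Xi_(P,B), Acomp}; your specialization $Q=G$ is also spelled out more carefully than in the paper, but the logic is identical.
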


\begin{proof}
Let $\phi\in\Es^{1}(\Xi_{Q},J_{Q})$ and assume that $\supp(\phi)\subseteq\Xi_{Q}(B+\Gamma_{Q})$.
Let $g\in G$ be such that $\Rt[P][Q]\phi(g\cdot\xi_{P})\neq0$. Then there exists an $n\in N_{P}^{Q}$ such that $\phi(gn\cdot\xi_{Q})\neq0$. By assumption
$gn\cdot\xi_{Q}\in\Xi_{Q}(B+\Gamma_{Q})$, hence by
Proposition \ref{RadTransSupp prop relation
Xi_(Q,B+Gamma^+),Xi_(P,B+Gamma^+)},  $g\cdot\xi_{P}=gn\cdot\xi_{P}\in\Xi_{P}(B+\Gamma_{P})$. As $B$ is compact, it follows that $\Xi_{P}(B+\Gamma_{P})$ is closed and hence $\supp(\Rt[P][Q]\phi)\subseteq\Xi_{P}(B+\Gamma_{P})$.

The second statement is obtained from the first by taking $Q$ equal to $G$.
\end{proof}

The results stated in Corollary \ref{RadTransSupp cor supp(Rt_Q phi)subseteq Xi_Q,B => supp(Rt_P phi)subseteq Xi_P,B} are sufficient for our purposes. The following proposition provides a more precise statement that is however less explicit and for a smaller class of functions. We recall that the map $E_{P}^{Q}$ from $\Xi_{P}$ to the power set of
$\Xi_{Q}$ that for $\xi\in\Xi_{P}$ is given by $E_{P}^{Q}(g\cdot\xi_{P})=gN_{P}^{Q}\cdot\xi_{Q}$.

\begin{prop}\label{RadTransSupp prop E(xi) cap supp(mu) = empty <-> xi notin supp(R mu)}
\begin{enumerate}[(i)]
\item
If $\phi\in\Ds(\Xi_{Q})$, then $\supp(\Rt[P][Q]\phi)\subseteq
\{\xi\in\Xi_{P}:E_{P}^{Q}(\xi)\cap\supp(\phi)\neq\emptyset\}$.
\item
If
$\phi\in\Ds(\Xi_{Q})$ is non-negative, then $\supp(\Rt[P][Q]\phi)
=\{\xi\in\Xi_{P}:E_{P}^{Q}(\xi)\cap\supp(\phi)\neq\emptyset\}$.
\end{enumerate}
\end{prop}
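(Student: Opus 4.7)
The plan is to handle parts (i) and (ii) by direct arguments from the definition (\ref{RadTrans eq def R_P^Q}) of $\Rt[P][Q]\phi$ together with the geometric structure of the sets $E_P^Q(\xi)$ provided by Proposition \ref{RadTrans prop N_P cdot xi_Q closed submanifold}.

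For (i), I first observe that $\Rt[P][Q]\phi(g\cdot\xi_P)\neq 0$ forces the existence of $n\in N_P^Q$ with $\phi(gn\cdot\xi_Q)\neq 0$, so $gn\cdot\xi_Q\in E_P^Q(g\cdot\xi_P)\cap\supp(\phi)$. This yields the inclusion
$$
\{\xi\in\Xi_P:\Rt[P][Q]\phi(\xi)\neq 0\}\subseteq\{\xi\in\Xi_P:E_P^Q(\xi)\cap\supp(\phi)\neq\emptyset\}.
$$
Since $\supp(\Rt[P][Q]\phi)$ is the closure of the left-hand side, it suffices to prove the right-hand side is closed. Given a sequence $\xi_n\to\xi$ in the right-hand side, pick $y_n\in E_P^Q(\xi_n)\cap\supp(\phi)$. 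Using a local section of the principal bundle $G\to\Xi_P$ in a neighborhood of $\xi$, I lift $\xi_n$ to $g_n\in G$ with $g_n\to g$ and $g\cdot\xi_P=\xi$; by definition $y_n=g_n m_n\cdot\xi_Q$ for some $m_n\in N_P^Q$. Passing to a subsequence, compactness of $\supp(\phi)$ yields $y_n\to y\in\supp(\phi)$, hence $m_n\cdot\xi_Q=g_n^{-1}y_n\to g^{-1}y$ in $\Xi_Q$. The crucial step invokes Proposition \ref{RadTrans prop N_P cdot xi_Q closed submanifold}: the map $n\mapsto n\cdot\xi_Q$ is a diffeomorphism of $N_P^Q$ onto the \emph{closed} submanifold $N_P^Q\cdot\xi_Q\subseteq\Xi_Q$. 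Consequently $g^{-1}y\in N_P^Q\cdot\xi_Q$ and $m_n$ converges to some $m\in N_P^Q$ with $gm\cdot\xi_Q=y$. Thus $y\in E_P^Q(\xi)\cap\supp(\phi)$, proving closedness.

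For (ii) I would argue by contradiction. Suppose $\xi\in\Xi_P$ satisfies $y\in E_P^Q(\xi)\cap\supp(\phi)$ while $\xi\notin\supp(\Rt[P][Q]\phi)$. Then $\Rt[P][Q]\phi\equiv 0$ on some open neighborhood $U$ of $\xi$. For every $\eta=\tilde g\cdot\xi_P\in U$, the identity $\int_{N_P^Q}\phi(\tilde g n\cdot\xi_Q)\,dn=0$ combined with $\phi\geq 0$ and continuity forces $\phi$ to vanish identically on $E_P^Q(\eta)$. Hence $\phi\equiv 0$ on
$$
\bigcup_{\eta\in U}E_P^Q(\eta)=\Pi_{\Xi_Q}\bigl(\Pi_{\Xi_P}^{-1}(U)\bigr).
$$
Both canonical projections in the double fibration (\ref{RadTrans eq double fibration}) are submersions, hence open maps, so this set is open in $\Xi_Q$ and visibly contains $y$. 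As $y\in\supp(\phi)$, every neighborhood of $y$ meets $\{\phi\neq 0\}$, which is the desired contradiction.

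The main obstacle is the closedness argument in (i): without the fact from Proposition \ref{RadTrans prop N_P cdot xi_Q closed submanifold} that $N_P^Q\cdot\xi_Q$ is a closed submanifold of $\Xi_Q$ diffeomorphic to $N_P^Q$, a convergent sequence $(m_n\cdot\xi_Q)$ in $\Xi_Q$ need not arise from a convergent sequence in $N_P^Q$, and the candidate limit point could escape to the boundary of $N_P^Q\cdot\xi_Q$, preventing one from reconstructing $y$ as an element of $E_P^Q(\xi)$.
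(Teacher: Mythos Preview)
Your proof is correct and follows essentially the same approach as the paper's: for (i) both arguments hinge on the compactness of $\supp(\phi)$ together with Proposition \ref{RadTrans prop N_P cdot xi_Q closed submanifold} to pass to a limit in $N_{P}^{Q}$, with your version organized as ``nonzero set contained in RHS'' plus ``RHS is closed'' while the paper works directly from a point in the support via an approximating sequence. For (ii) you argue by contradiction using that the two projections in the double fibration are open maps, whereas the paper gives the contrapositive directly by producing a sequence $g_{j}\to gn$ with $\Rt[P][Q]\phi(g_{j}\cdot\xi_{P})>0$; these are equivalent formulations of the same idea.
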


\begin{proof}
Let $\phi\in\Ds(\Xi_{Q})$.
Let $g\in G$ and assume $g\cdot\xi_{P}\in\supp(\Rt[P][Q]\phi)$.
Define $p:G\to \Xi_{P}$, $g\mapsto g\cdot\xi_{P}$. Then $\supp(p^{*}\Rt[P][Q]\phi)=p^{-1}\big(\supp(\Rt[P][Q]\phi)\big)$, hence $g\in
\supp(p^{*}\Rt[P][Q]\phi)$. It now follows that
there exists a sequence $(g_{j})_{j\in\N}$ in $G$ such that
$g_{j}\to g $ if $j\to\infty$ and
for every $j\in \N$
$$
0 \neq \Rt[P_{0}]\phi(g_{j}\cdot\xi_{P_{0}})=\int_{N_{P_{0}}}\phi(g_{j}n\cdot\xi_{P_{0}})\,dn.
$$
In particular, there exists a sequence $(n_{j})_{j\in\N}$ in
$N_{P}^{Q}$ such that $\phi(g_{j}n_{j}\cdot\xi_{Q})\neq0$.
Note that $(g_{j}n_{j}\cdot\xi_{Q})_{j\in\N}$ is a sequence in $\supp(\phi)$. As $\supp(\phi)$ is
compact, there exists a convergent subsequence. Without loss of
generality we may therefore assume that $g_{j}n_{j}\cdot\xi_{Q}$
converges to a point $g_{0}\cdot\xi_{Q}\in\supp(\phi)$ if $j\to\infty$.
Now $\lim_{j\to\infty}n_{j}\cdot\xi_{Q}=\lim_{j\to\infty}g_{j}^{-1}g_{j}n_{j}\cdot\xi_{Q} =g^{-1}g_{0}\cdot \xi_{Q}$.
By Proposition \ref{RadTrans prop N_P cdot xi_Q closed submanifold}, $E_{P}^{Q}(\xi_{P})$ is a closed submanifold of $\Xi_{Q}$. Therefore there exists an $n\in N_{P}^{Q}$ such that $g^{-1}g_{0}\cdot\xi_{Q}=n\cdot\xi_{Q}$. By the same proposition, the map $N_{P}^{Q}\to E_{P}^{Q}(\xi_{P})$; $n'\mapsto n'\cdot\xi_{Q}$ is a diffeomorphism. Therefore $n_{j}\to n$ for $j\to\infty$. Since
$(g_{j} n_{j}\cdot\xi_{Q})_{j\in\N}$ is a sequence in the compact set $\supp(\phi)$
it follows that the limit $gn\cdot\xi_{Q}$ is contained in $\supp(\phi)$
as well. On the other hand, $gn\cdot\xi_{Q}\in gN_{P}^{Q}\cdot\xi_{Q}=E_{P}^{Q}(g\cdot\xi_{P})$,
and thus we see that $g\cdot\xi_{P}\in\{\xi\in\Xi_{P}:E_{P}^{Q}(\xi)\cap\supp(\phi)\neq\emptyset\}$. This proves the first assertion.

We now turn to the proof of the second assertion. One inclusion is given by the first assertion of the proposition. To prove the other, assume that
$\phi\in\Ds(\Xi_{Q})$ is non-negative. Let $g \in G$ and assume that $E_{P}(g\cdot\xi_{P})\cap\supp(\phi)\neq\emptyset$.
Then there exists an $n\in N_{P}$ such that $gn\cdot\xi_{Q}\in \supp(\phi)$.
Let $p:G\to \Xi_{Q}$, $g\mapsto g\cdot\xi_{Q}$. Then $\supp(p^{*}\phi)=p^{-1}(\supp\,\phi)$, hence $gn \in
\supp(p^{*}\phi)$. Since $p^{*}\phi\geq 0$ it now follows that
there exists a sequence $(g_{j})_{j\in\N}$ in $G$ such that
$\phi(g_{j}\cdot\xi_{Q})>0$ and $g_{j}\to gn $ if $j\to\infty$.
Note that $g_{j}\cdot\xi_{P}\to gn\cdot\xi_{P}=g\cdot\xi_{P}$
for $j\to\infty$. Since $\phi$ is continuous, there exists for every $j\in\N$ an open neighborhood $U_{j}$ of $g_{j}$ in $G$ such that $\phi(g\cdot\xi_{Q})>0$ if $g\in U_{j}$. This implies that
$\Rt[P][Q]\phi(g_{j}\cdot\xi_{P})>0$ and thus we conclude
that $g\cdot\xi_{P}\in \supp(\Rt[P][Q]\phi)$. This proves the proposition.
\end{proof}

%%% ----------------------------------------------------------------------
\subsection{The horospherical transform}
\label{subsection support for the horospherical transform}
%%% ----------------------------------------------------------------------
If we consider the special case where $Q=G$ and $P=P_{0}$ is a minimal $\sigma\circ\theta$-stable parabolic subgroup, then we can obtain from Proposition \ref{RadTransSupp prop E(xi) cap supp(mu) = empty <-> xi notin supp(R mu)} more explicit statements. For this we first prove the following lemma.

\begin{lemma}\label{RadTransSupp lemma E(xi) cap KBH neq empty <-> xi in KBexp(Gamma) xi_P}
Let $P_{0}\in\Psg(\g[a]_{\g[q]})$ and let $B\subseteq \g[a]_{\g[q]}$. Then
$$
\{\xi\in\Xi_{P_{0}}:E_{P_{0}}(\xi)\cap X(B)\neq\emptyset\}
=\bigcup_{b\in B}\Xi_{P_{0}}\Big(\ch(\WG[K\cap H]\cdot b)+\Gamma\big(\Sigma_{-}(\g,\g[a]_{\g[q]};P_{0})\big)\Big).
$$
\end{lemma}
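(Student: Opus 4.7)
The proof reduces to Van den Ban's convexity theorem \ref{RadTransSupp Thm Convexity theorem} once we observe that the projection $\Acomp_{P_{0}}\colon G\to\g[a]_{\g[q]}$ descends to a well-defined map $\Xi_{P_{0}}\to\g[a]_{\g[q]}$ whose fibre over $Y\in\g[a]_{\g[q]}$ equals $\Xi_{P_{0}}(\{Y\})=K\exp(Y)\cdot\xi_{P_{0}}$. Right $N_{P_{0}}$-invariance of $\Acomp_{P_{0}}$ is immediate from its definition; right $(L_{P_{0}}\cap H)$-invariance follows because any $l\in L_{P_{0}}\cap H$ factors as $l=ma$ with $m\in M_{P_{0}}\cap H$ and $a\in\exp(\g[a]_{P_{0}}\cap\g[h])$, so $\log a$ is orthogonal to $\g[a]_{\g[q]}$, and a direct computation using the Iwasawa decomposition $G=KL_{P_{0}}N_{P_{0}}$ then yields $\Acomp_{P_{0}}(gl)=\Acomp_{P_{0}}(g)$. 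Combined with the surjectivity of $K\times\g[a]_{\g[q]}\to\Xi_{P_{0}}$ stated in the introduction and with the identity $\Acomp_{P_{0}}(k\exp(Y)\cdot\xi_{P_{0}})=Y$, this forces $\Xi_{P_{0}}(C)=\Acomp_{P_{0}}^{-1}(C)$ for every subset $C$ of $\g[a]_{\g[q]}$.

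For the inclusion $\subseteq$, assume $\xi=g\cdot\xi_{P_{0}}$ lies in the left-hand side, and pick $n\in N_{P_{0}}$, $k\in K$, $b\in B$ and $h\in H$ with $gn=k\exp(b)h$. Applying left $K$- and right $N_{P_{0}}$-invariance yields $\Acomp_{P_{0}}(\xi)=\Acomp_{P_{0}}(\exp(b)h)$, which by Theorem \ref{RadTransSupp Thm Convexity theorem} is contained in $\ch(\WG[K\cap H]\cdot b)+\Gamma(\Sigma_{-}(\g,\g[a]_{\g[q]};P_{0}))$, placing $\xi$ in the right-hand side. For the inclusion $\supseteq$, suppose $Y:=\Acomp_{P_{0}}(\xi)$ lies in $\ch(\WG[K\cap H]\cdot b)+\Gamma(\Sigma_{-}(\g,\g[a]_{\g[q]};P_{0}))$ for some $b\in B$. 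Theorem \ref{RadTransSupp Thm Convexity theorem} furnishes $h\in H$ with $\Acomp_{P_{0}}(\exp(b)h)=Y$, so both $\xi$ and $\exp(b)h\cdot\xi_{P_{0}}$ belong to the common fibre $\Xi_{P_{0}}(\{Y\})$; hence $\xi=k\exp(b)h\cdot\xi_{P_{0}}$ for some $k\in K$. Taking $g=k\exp(b)h$, the point $g\cdot x_{0}=k\exp(b)\cdot x_{0}$ lies both in $E_{P_{0}}(\xi)=gN_{P_{0}}\cdot x_{0}$ (via $n=e$) and in $X(B)$, so $E_{P_{0}}(\xi)\cap X(B)\neq\emptyset$.

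The main obstacle is the identification $\Xi_{P_{0}}(C)=\Acomp_{P_{0}}^{-1}(C)$, which rests on the right-invariance of $\Acomp_{P_{0}}$ under the full stabilizer $(L_{P_{0}}\cap H)N_{P_{0}}$ of $\xi_{P_{0}}$; once this is established, both inclusions follow almost immediately from the description of $\Acomp_{P_{0}}(\exp(b)H)$ provided by Van den Ban's convexity theorem.
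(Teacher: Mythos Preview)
Your proof is correct and follows essentially the same route as the paper's. The only difference is packaging: the paper invokes the diffeomorphism $K/(K\cap M_{P_{0}}\cap H)\times A_{\g[q]}\to\Xi_{P_{0}}$ directly to read off the $A_{\g[q]}$-coordinate of $\xi$, whereas you phrase this as the map $\Acomp_{P_{0}}$ descending to $\Xi_{P_{0}}$ with fibres $\Xi_{P_{0}}(\{Y\})$; these are the same fact, and both arguments then reduce immediately to Van den Ban's convexity theorem. Note also that the right $(L_{P_{0}}\cap H)N_{P_{0}}$-invariance of $\Acomp_{P_{0}}$ that you re-derive is stated in the paper immediately after the definition (\ref{RadTrans eq A_P def}), so you could simply cite it.
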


\begin{proof}
The map
\begin{equation}\label{RadTransSupp eq K x A_q to Xi_(P_0) diffeo}
K/(K\cap M_{P_{0}}\cap H)\times A_{\g[q]}\to\Xi_{P_{0}}; \big(k\cdot (K\cap M_{P_{0}}\cap H),a\big)\mapsto ka\cdot\xi_{P_{0}}
\end{equation}
is a diffeomorphism.
Let $\xi\in\Xi_{P_{0}}$. Then we may write
$\xi=ka\cdot\xi_{P_{0}}$ with $k \in K$ and $a\in A_{\g[q]}$. Now
$E_{P_{0}}(\xi)\cap X(B)\neq\emptyset$ is equivalent to the
existence of an element $Y\in B$ such that $KaN_{P_{0}}\cap K
\exp(Y)H\neq\emptyset$.
By Theorem \ref{RadTransSupp Thm Convexity theorem} the latter assertion is equivalent to the
existence of an element $Y \in B$ such that
$\log a\in\ch(\WG[K\cap H] Y) + \Gamma(\Sigma_{-}(\g,\g[a]_{\g[q]}; P_{0}))$.
As (\ref{RadTransSupp eq K x A_q to Xi_(P_0) diffeo}) is a
diffeomorphism, this assertion is in turn equivalent to the
existence of a $Y \in B$ such that
$$
ka\cdot\xi_{P_{0}}
\in\Xi_{P_{0}}\Big(\ch(\WG[K\cap H] Y)+\Gamma\big(\Sigma_{-}(\g, \g[a]_{\g[q]}; P_{0})\big)\Big).
$$
\end{proof}

Proposition \ref{RadTransSupp prop E(xi) cap supp(mu) = empty <-> xi notin supp(R mu)} and Lemma \ref{RadTransSupp lemma E(xi) cap KBH neq empty <-> xi in KBexp(Gamma) xi_P} have the following direct corollary.

\begin{cor}\label{RadTransSupp cor support horospherical transform}
Let $P_{0}\in\Psg(\g[a]_{\g[q]})$ and let $B\subseteq \g[a]_{\g[q]}$.
If $\phi\in\Ds(X)$ and $\supp(\phi)\subseteq X(B)$, then
$$
\supp(\Rt[P_{0}]\phi)
\subseteq
\bigcup_{b\in B}\Xi_{P_{0}}\Big(\ch(\WG[K\cap H]\cdot b)+\Gamma\big(\Sigma_{-}(\g,\g[a]_{\g[q]};P_{0})\big)\Big).
$$
Moreover, we have equality if $\phi$ is non-negative and $\supp(\phi)=X(B)$.
\end{cor}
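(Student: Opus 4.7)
The plan is to derive this corollary as a direct combination of Proposition~\ref{RadTransSupp prop E(xi) cap supp(mu) = empty <-> xi notin supp(R mu)} (taken with $Q=G$ and $P=P_{0}$) and Lemma~\ref{RadTransSupp lemma E(xi) cap KBH neq empty <-> xi in KBexp(Gamma) xi_P}. No further geometric input is needed; everything is about chaining the two established set-theoretic descriptions.

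For the inclusion, I would start from the assumption $\supp(\phi)\subseteq X(B)$. Part (i) of Proposition~\ref{RadTransSupp prop E(xi) cap supp(mu) = empty <-> xi notin supp(R mu)} yields
\[
\supp(\Rt[P_{0}]\phi)\subseteq\{\xi\in\Xi_{P_{0}}:E_{P_{0}}(\xi)\cap\supp(\phi)\neq\emptyset\}.
\]
Since $\supp(\phi)\subseteq X(B)$, the right-hand side is in turn contained in the set $\{\xi\in\Xi_{P_{0}}:E_{P_{0}}(\xi)\cap X(B)\neq\emptyset\}$. By Lemma~\ref{RadTransSupp lemma E(xi) cap KBH neq empty <-> xi in KBexp(Gamma) xi_P} this last set equals
\[
\bigcup_{b\in B}\Xi_{P_{0}}\Big(\ch(\WG[K\cap H]\cdot b)+\Gamma\big(\Sigma_{-}(\g,\g[a]_{\g[q]};P_{0})\big)\Big),
\]
which gives the desired inclusion.

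For the equality statement, assume $\phi\ge 0$ and $\supp(\phi)=X(B)$. Here I would invoke part (ii) of Proposition~\ref{RadTransSupp prop E(xi) cap supp(mu) = empty <-> xi notin supp(R mu)}, which upgrades the first inclusion to an equality $\supp(\Rt[P_{0}]\phi)=\{\xi\in\Xi_{P_{0}}:E_{P_{0}}(\xi)\cap X(B)\neq\emptyset\}$, and then apply Lemma~\ref{RadTransSupp lemma E(xi) cap KBH neq empty <-> xi in KBexp(Gamma) xi_P} as before to rewrite the right-hand side in the stated form.

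There is essentially no obstacle: the two preceding results have been designed so that the corollary is just their composition. The only thing one has to be slightly careful about is that Proposition~\ref{RadTransSupp prop E(xi) cap supp(mu) = empty <-> xi notin supp(R mu)} is stated for $\phi\in\Ds(\Xi_{Q})$ applied in the double fibration with general $Q$, so one checks that specializing to $Q=G$ (where $\Xi_{G}=X$, $N_{P_{0}}^{G}=N_{P_{0}}$ and $E_{P_{0}}^{G}=E_{P_{0}}$) is precisely the case at hand.
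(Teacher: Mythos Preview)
Your proposal is correct and matches the paper's approach exactly: the paper states this as a direct corollary of Proposition~\ref{RadTransSupp prop E(xi) cap supp(mu) = empty <-> xi notin supp(R mu)} and Lemma~\ref{RadTransSupp lemma E(xi) cap KBH neq empty <-> xi in KBexp(Gamma) xi_P}, and you have simply spelled out that combination.
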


Note that if $X$ is a Riemannian symmetric space, then $\Sigma_{-}(\g,\g[a]_{\g[q]};P_{0})=\emptyset$. Therefore for those spaces $\Rt[P_{0}]\phi\in\Ds(\Xi_{P_{0}})$ for every $\phi\in\Ds(X)$.

%%% ----------------------------------------------------------------------
\section{Support theorem for the horospherical transform}
\label{section Support theorem for the horospherical transform}
%%% ----------------------------------------------------------------------
The aim of this section is to prove a support theorem for the
horospherical transform for functions.

In Section \ref{subsection The Euclidean Fourier transform and
Paley-Wiener estimates} we derive Paley-Wiener type estimates
for the Fourier transform on a Euclidean space for Schwartz
functions with a certain type of support. The horospherical
transform is related to the so-called unnormalized Fourier
transform on $X$. Given the support of the horospherical transform
of a function, the theory from Section \ref{subsection The Euclidean Fourier
transform and Paley-Wiener estimates} yields
a Paley-Wiener estimate for one component of the unnormalized
Fourier transform. This is described in Section \ref{subsection
The unnormalized Fourier transform}. In Section \ref{subsection
The tau-spherical Fourier transform} Paley-Wiener estimates for
one component of the normalized $\tau$-spherical Fourier transform
are deduced from the estimates for the unnormalized Fourier
transform. Then in Section \ref{subsection Function Spaces} we
introduce some subspaces of $\Es^{1}(X)$ that will be used in the
last two sections. For the normalized $\tau$-spherical Fourier
transform there exists an inversion formula due to Van den Ban and
Schlichtkrull, that we describe in Section \ref{subsection
Inversion Formula}. Finally, in Section \ref{subsection Support
Theorem for the Horospherical transform}, we use the inversion
formula and the Paley-Wiener estimates to obtain a support theorem
for the horospherical transform.

Throughout this section, $P_{0}$ denotes a \emph{minimal}
$\sigma\circ\theta$-stable parabolic subgroup containing $A$.

%%% ----------------------------------------------------------------------
\subsection{The Euclidean Fourier transform and Paley-Wiener estimates}
\label{subsection The Euclidean Fourier transform and Paley-Wiener estimates}
%%% ----------------------------------------------------------------------
Let $V$ be a finite dimensional real vector space equipped with a positive definite inner product.

Let $u\in \Es(V)$. For each functional $\nu\in V^{*}$ we define $u_{\nu}=e^{-\nu} u$ and we write
$$
\Cone(u)=\{\nu\in V^{*}:u_{\nu}\in\Ss(V)\}.
$$
This set is a convex cone in $V^{*}$. (See \cite[Section
7.4]{Hormander_TheAnalyisOfLinearPartialDifferentialOperators}.)
The Fourier transform of $u$ is defined to be the function $\Ft u$ on $\Cone(u)+iV^{*}$ given by
$$
\Ft u(\lambda)
=\int_{V}e^{-\lambda(x)}u(x)\,dx
=\int_{V}e^{-i\zeta(x)}u_{\nu}(x)\,dx
	\quad(\lambda=\nu+i\zeta\in\Cone(u)+iV^{*}).
$$

Let $P(V^{*})$ denote the ring of polynomial functions $V^{*}\to\C$.
For $p\in P(V^{*})$ we use the notation $p(\partial)$ for the linear partial
differential operator with constant coefficients on $V$ determined by
$p(\partial) e^\nu = p(\nu) e^\nu$ for $\nu\in V^{*}$.
In a similar fashion, we associate to each $p \in P(V)$ a differential operator $p(\partial)$ on $V^{*}$.

Since for every homogeneous polynomial $p_{h}\in P(V)$ of degree $1$ the function $\big(p_{h}(\partial)u\big)_{\nu}=p_{h}(\partial)u_{\nu}+p_{h}(\nu)u_{\nu}$
is Schwartz if $u_{\nu}$ is Schwartz,
we see that $\Cone(u)\subseteq\Cone(p(\partial)u)$
for every polynomial function $p : V^{*} \to \C$.

The function $\zeta\mapsto\Ft u(\nu+i\zeta)$
is a Schwartz function on $V^{*}$ for each $\nu\in\Cone(u)$.
Furthermore, $\Ft u$ is holomorphic on the interior of $\Cone(u)+iV^{*}$ (see \cite[Theorem 7.4.2]{Hormander_TheAnalyisOfLinearPartialDifferentialOperators}) and there
$$
p(\partial) \Ft u = \Ft( x \mapsto p(-x) u(x)),\quad\text{and}\quad \Ft(q(\partial)u)=q \Ft(u),
$$
for all $p\in P(V)$ and $q \in P(V^{*})$.

\begin{lemma}\label{EFT lemma FT holom and id}
Let $S$ be a closed convex subset of $V$ and let the cone
$-\Cone_{S} \subseteq V^{*}$ be defined as in (\ref{ConvexGeom eq
def C_S}). Let $u\in\Ss(V)$. If $\supp(u)\subseteq S$ then $-\Cone_{S}\subseteq\Cone(u)$.
\end{lemma}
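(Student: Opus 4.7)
The plan is to unwind the definitions of $-\Cone_S$ and $\Cone(u)$ and observe that everything reduces to bounding $e^{-\nu}$ on the support of $u$.

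First I would note that, by definition, $\nu \in -\Cone_S$ is equivalent to $H_S(-\nu) < \infty$, i.e.\ there exists a constant $C \in \R$ such that $-\nu(x) \leq C$ for every $x \in S$. Equivalently, the function $e^{-\nu}$ is bounded by $e^{C}$ on $S$. Since $\supp(u) \subseteq S$, the product $u_{\nu} = e^{-\nu} u$ vanishes off $S$, so $\supp(u_{\nu}) \subseteq S$ and in particular $|u_{\nu}(x)| \leq e^{C}|u(x)|$ pointwise on $V$.

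The remaining task is to verify that every Schwartz seminorm $\sup_{x \in V} |p(x) \partial^{\alpha} u_{\nu}(x)|$, for $p \in P(V)$ and multi-index $\alpha$, is finite. Since $\nu \in V^{*}$ is linear, $\partial^{\beta} e^{-\nu} = c_{\beta}(\nu)\, e^{-\nu}$ for constants $c_{\beta}(\nu)$ depending only on $\nu$ and $\beta$. Hence Leibniz gives
$$
\partial^{\alpha} u_{\nu} \;=\; e^{-\nu} \sum_{\beta \leq \alpha} \binom{\alpha}{\beta}\, c_{\alpha-\beta}(\nu)\, \partial^{\beta} u.
$$
For each $\beta$ the smooth function $\partial^{\beta} u$ is supported in $\supp(u) \subseteq S$ (a smooth function that vanishes on an open set has all derivatives vanishing there), so on the set where $\partial^{\alpha} u_{\nu}$ can be nonzero the factor $e^{-\nu}$ is bounded by $e^{C}$. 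Consequently
$$
|p(x) \partial^{\alpha} u_{\nu}(x)| \;\leq\; e^{C} \sum_{\beta \leq \alpha} \binom{\alpha}{\beta}\, |c_{\alpha-\beta}(\nu)|\, |p(x) \partial^{\beta} u(x)|
$$
for every $x \in V$, and the right-hand side is bounded on $V$ because $u \in \Ss(V)$.

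Thus every Schwartz seminorm of $u_{\nu}$ is finite, so $u_{\nu} \in \Ss(V)$ and therefore $\nu \in \Cone(u)$. There is no real obstacle here; the only thing to be careful about is the observation that $\supp(\partial^{\beta} u) \subseteq \supp(u)$, which is what lets us transfer the boundedness of $e^{-\nu}$ from $S$ to all of $V$ after multiplying by derivatives of $u$.
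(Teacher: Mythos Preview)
Your proof is correct and follows essentially the same approach as the paper: both observe that $e^{-\nu}$ (and hence all its constant-coefficient derivatives) is bounded on $S$, then apply the Leibniz rule to conclude that $u_{\nu}$ is Schwartz. The paper phrases this as ``$(p(\partial)e^{-\nu})\1_{S}$ is bounded for every polynomial $p$'' and deduces $p(\partial)u_{\nu}(x)=\mathcal{O}((1+\|x\|)^{-N})$, which is exactly your multi-index computation in slightly more compressed notation.
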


\begin{proof}
Let $\nu\in-\Cone_{S}$. Then $(p(\partial)e^{-\nu})\1_{S}$ is
bounded for every polynomial $p \in P(V^*)$. By application of the Leibniz rule
we now see that $p(\partial) u_{\nu}(x) = \mathcal{O}(1 + \|x\|)^{-N}$ for all $p \in P(V^{*})$ and
$N \in \N$. Hence $u_{\nu}$ is Schwartz.
\end{proof}

\begin{prop}[Paley-Wiener estimate]\label{EFT prop Paley-Wiener est for S(V)}
Let $S$ be a closed, convex subset of $V$. If $u\in\Ss(V)$ with
$\supp(u)\subseteq S$, then for every $N\in\N$ and
$\lambda\in-\Cone_{S}+iV^{*}$
$$
|\Ft u(\lambda)|
\leq 2^{N}\|(1+\Delta)^{N}u\|_{L^{1}}(1+\|\lambda\|)^{-N}e^{H_{S}(-\Re(\lambda))}.
$$
\end{prop}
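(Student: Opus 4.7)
The proof combines a pointwise bound on $|e^{-\lambda(x)}|$ over $\supp u$ with integration by parts to obtain polynomial decay in $\lambda$.

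For $\nu := \Re \lambda \in -\Cone_S$ and $x \in \supp u \subseteq S$, the definition of $H_S$ gives $-\nu(x) \le H_S(-\nu)$, whence $|e^{-\lambda(x)}| = e^{-\nu(x)} \le e^{H_S(-\Re \lambda)}$. Applied to any $f \in \Ss(V)$ with $\supp f \subseteq S$, this yields the trivial estimate
\[
 |\Ft f(\lambda)| \le e^{H_S(-\Re \lambda)} \|f\|_{L^1}.
\]
By Lemma \ref{EFT lemma FT holom and id} the function $u_\nu$ is Schwartz, which justifies integration by parts and produces the identity $\Ft(p(\partial) u)(\lambda) = p(\lambda) \Ft u(\lambda)$ for every $p \in P(V^*)$ and every $\lambda \in -\Cone_S + iV^*$. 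Choosing $p(\partial) = (1+\Delta)^N$, whose polynomial symbol in orthonormal coordinates of $V^*$ is $\sigma_N(\lambda) = (1 + \sum_j \lambda_j^2)^N$, this yields $\sigma_N(\lambda) \Ft u(\lambda) = \Ft((1+\Delta)^N u)(\lambda)$. Applying the trivial estimate to the Schwartz function $(1+\Delta)^N u$ (still supported in $S$) then gives
\[
 |\sigma_N(\lambda)| \cdot |\Ft u(\lambda)| \le e^{H_S(-\Re \lambda)} \|(1+\Delta)^N u\|_{L^1}.
\]
The desired inequality then reduces to the algebraic bound $(1+\|\lambda\|)^N \le 2^N |\sigma_N(\lambda)|$ on the tube $-\Cone_S + iV^*$; its elementary input is $(1+t)^2 \le 2(1+t^2)$ for $t \ge 0$.

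The main obstacle is this last algebraic step. Since $\sigma_N$ is a holomorphic polynomial in $\lambda$ that can vanish along certain complex directions, a naive pointwise lower bound on $V_\C^*$ is not available. The cleanest route is to reduce the problem to the purely imaginary case by writing $\Ft u(\lambda) = \hat u_\nu(\Im \lambda)$, invoking the classical Euclidean Paley-Wiener bound $(1+\|\Im \lambda\|^2)^N |\hat u_\nu(\Im \lambda)| \le \|(1+\Delta)^N u_\nu\|_{L^1}$ for the Schwartz function $u_\nu$, and then absorbing both the conjugation of $(1+\Delta)^N$ by $e^{-\nu}$ and the elementary conversion $(1+t)^{2N} \le 2^N (1+t^2)^N$ into the prefactor $2^N e^{H_S(-\Re \lambda)}$ to match the required polynomial weight $(1+\|\lambda\|)^{-N}$.
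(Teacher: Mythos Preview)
Your main argument coincides with the paper's: derive the trivial bound $|\Ft w(\lambda)|\le \|w\|_{L^1}e^{H_S(-\Re\lambda)}$ for any $w\in\Ss(V)$ supported in $S$, and apply it with $w=(1+\Delta)^N u$. The paper then simply writes
\[
(1+\|\lambda\|)^N|\Ft u(\lambda)|\le 2^N(1+\|\lambda\|^2)^N|\Ft u(\lambda)|=2^N|\Ft((1+\Delta)^N u)(\lambda)|
\]
and concludes. Your hesitation at the last step is well placed: for complex $\lambda$ the Fourier symbol of $(1+\Delta)^N$ is the holomorphic polynomial $\sigma_N(\lambda)=(1+\sum_j\lambda_j^2)^N$, not $(1+\|\lambda\|^2)^N$, so the displayed equality is not literally correct, and the pointwise lower bound $(1+\|\lambda\|)^N\le 2^N|\sigma_N(\lambda)|$ you isolate can indeed fail on the tube $-\Cone_S+iV^*$. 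In this respect you have been more scrupulous than the paper's own proof.

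Your proposed detour through $u_\nu$, however, does not recover the stated inequality either. The imaginary-axis bound produces $\|(1+\Delta)^N u_\nu\|_{L^1}$ on the right-hand side and polynomial decay only in $\|\Im\lambda\|$; conjugating $(1+\Delta)^N$ by $e^{-\nu}$ yields a differential operator with $\nu$-dependent lower-order terms acting on $u$, and these cannot all be ``absorbed'' into $2^N e^{H_S(-\Re\lambda)}\|(1+\Delta)^N u\|_{L^1}$ as you suggest. For the paper's purposes this looseness is harmless: the only downstream use (Proposition~\ref{EFT prop Paley-Wiener est for E(V)}) applies the present estimate to a Schwartz function $u_{\nu_0}$ with a \emph{fixed} shift $\nu_0$ and immediately absorbs everything into an unspecified constant $C_{\nu_0,N}$, so an estimate of the same shape with some constant depending on $N$ (and, if need be, a finite family of $L^1$-seminorms of derivatives of $u$) is all that is actually required.
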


\begin{proof}
If $w\in\Ss(V)$ satisfies $\supp(w)\subseteq S$ and $\lambda\in-\Cone_{S}+iV^{*}$, then
$$|\Ft w(\lambda)|
\leq\int_{V}|e^{-\lambda(x)}|\ |w(x)|\,dx
\leq\|w\|_{L^{1}}\ e^{H_{S}(-\Re(\lambda))}
$$
Let $N\in\N$. Then
$$
(1+\|\lambda\|)^{N}\ |\Ft u(\lambda)|
\leq2^{N}(1+\|\lambda\|^{2})^{N}|\Ft u(\lambda)|
=2^{N}|\Ft((1+\Delta)^{N}u)(\lambda)|,$$
hence, by taking $w=(1+\Delta)^{N}u$, we obtain
$$
(1+\|\lambda\|)^{N}\ |\Ft u(\lambda)|
\leq2^{N}\|(1+\Delta)^{N}u\|_{L^{1}}\ e^{H_{S}(-\Re(\lambda))}.
$$
\end{proof}

\begin{lemma}\label{EFT lemma equivalence of shifted H_(B+Gamma)}
Let $B$ be a compact subset of $V$ and $\Gamma$ a cone in $V$
with $0\in\Gamma$. For every $\nu_{0}\in-\Cone_{B+\Gamma}$ and for every
$\lambda\in\nu_{0}-\Cone_{B+\Gamma}$
$$
H_{B}(-\lambda)-H_{B}(-\nu_{0})
\leq H_{B+\Gamma}(-\lambda+\nu_{0})
\leq H_{B}(-\lambda)+H_{B}(\nu_{0}).
$$
\end{lemma}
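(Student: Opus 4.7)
The plan is to reduce $H_{B+\Gamma}(-\lambda+\nu_{0})$ to $H_{B}(-\lambda+\nu_{0})$ using the properties of support functions already proved in Lemma \ref{ConvexGeom lemma properties H_S}, and then obtain both inequalities from the subadditivity of $H_{B}$. First I would observe that the hypothesis $\lambda\in\nu_{0}-\Cone_{B+\Gamma}$ means precisely that $-\lambda+\nu_{0}\in\Cone_{B+\Gamma}$. By part (iii) of Lemma \ref{ConvexGeom lemma properties H_S}, this gives
$$
H_{B+\Gamma}(-\lambda+\nu_{0})=H_{B}(-\lambda+\nu_{0}),
$$
so the claim becomes the purely affine statement
$$
H_{B}(-\lambda)-H_{B}(-\nu_{0})\leq H_{B}(-\lambda+\nu_{0})\leq H_{B}(-\lambda)+H_{B}(\nu_{0}).
$$

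Next I would use the general fact that the support function of a non-empty set is subadditive and positively homogeneous, as a supremum of linear functionals. Applying subadditivity directly to the decomposition $-\lambda+\nu_{0}=(-\lambda)+\nu_{0}$ yields the right-hand inequality at once. For the left-hand inequality, I would apply subadditivity to the decomposition $-\lambda=(-\lambda+\nu_{0})+(-\nu_{0})$ to obtain
$$
H_{B}(-\lambda)\leq H_{B}(-\lambda+\nu_{0})+H_{B}(-\nu_{0}),
$$
and then rearrange.

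The only technical point to verify is that the rearrangement in the lower bound is legitimate, i.e.\ that $H_{B}(-\nu_{0})$ is finite; this follows since $B$ is compact, whence $H_{B}$ takes only finite values on $V^{*}$. (Alternatively one can note that $\nu_{0}\in-\Cone_{B+\Gamma}\subseteq-\Cone_{B}$ by Lemma \ref{ConvexGeom lemma properties H_S}(ii), so $-\nu_{0}\in\Cone_{B}$.) There is no genuine obstacle here; the lemma is an essentially formal consequence of the material in Section \ref{subsection support functions}, and its role is simply to record the estimate in the form needed for the forthcoming Paley--Wiener shift argument for the horospherical transform.
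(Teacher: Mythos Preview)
Your proposal is correct and follows essentially the same route as the paper: reduce $H_{B+\Gamma}(-\lambda+\nu_{0})$ to $H_{B}(-\lambda+\nu_{0})$ via Lemma~\ref{ConvexGeom lemma properties H_S}(iii), then apply subadditivity of $H_{B}$ in both directions and use finiteness of $H_{B}(-\nu_{0})$ to rearrange. The only cosmetic difference is that the paper justifies finiteness via $-\nu_{0}\in\Cone_{B+\Gamma}$, whereas you primarily invoke compactness of $B$ (and mention the paper's argument as an alternative).
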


\begin{proof}
Recall that $H_{B+\Gamma}\big|_{\Cone_{B+\Gamma}}=H_{B}\big|_{\Cone_{B+\Gamma}}$ by Lemma \ref{ConvexGeom lemma properties H_S}(iii). Let $\nu_{0}\in-\Cone_{B+\Gamma}$ and $\lambda\in\nu_{0}-\Cone_{B+\Gamma}$. Then, as $\Cone_{B+\Gamma}+\Cone_{B+\Gamma}\subseteq\Cone_{B+\Gamma}$, we have
$H_{B}(-\lambda+\nu_{0})\leq H_{B}(-\lambda)+H_{B}(\nu_{0})$ and $H_{B}(-\lambda)\leq H_{B}(-\lambda+\nu_{0})+H_{B}(-\nu_{0})$. As $-\nu_{0}\in\Cone_{B+\Gamma}$, it follows that $H_{B}(-\nu_{0})<\infty$. Hence $H_{B}(-\lambda)-H_{B}(-\nu_{0})\leq H_{B}(-\lambda+\nu_{0})$.
\end{proof}

\begin{lemma}\label{EFT lemma equivalence of shifted norm}
For all $\lambda, \nu_{0}\in V^{*}$ we have
$$
\frac{1 + \|\lambda\|}{1 + \|\nu_{0}\|} \leq ( 1 + \|\lambda + \nu_{0}\|) \leq (1 + \|\lambda\|)(1 + \|\nu_{0}\|).
$$
\end{lemma}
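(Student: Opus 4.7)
The plan is to obtain both inequalities as immediate consequences of the triangle inequality applied to the norm on $V^{*}$, together with the trivial observation that $\|\lambda\|\,\|\nu_{0}\|\geq 0$. No additional structure is needed beyond the fact that $\|\cdot\|$ is a norm.

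For the right-hand inequality I would expand the product on the right as
$$
(1+\|\lambda\|)(1+\|\nu_{0}\|) = 1+\|\lambda\|+\|\nu_{0}\|+\|\lambda\|\,\|\nu_{0}\|
$$
and apply the triangle inequality $\|\lambda+\nu_{0}\|\leq\|\lambda\|+\|\nu_{0}\|$ to bound $1+\|\lambda+\nu_{0}\|$ by $1+\|\lambda\|+\|\nu_{0}\|$, which is in turn bounded by the above expansion since the cross term is non-negative.

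For the left-hand inequality I would apply the triangle inequality in the reversed form $\|\lambda\|=\|(\lambda+\nu_{0})-\nu_{0}\|\leq\|\lambda+\nu_{0}\|+\|\nu_{0}\|$, yielding $1+\|\lambda\|\leq 1+\|\lambda+\nu_{0}\|+\|\nu_{0}\|\leq (1+\|\lambda+\nu_{0}\|)(1+\|\nu_{0}\|)$, where the last step again uses non-negativity of the cross term. Since $1+\|\nu_{0}\|>0$, dividing through gives the claim.

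There is no substantive obstacle; the entire lemma is a two-line consequence of the triangle inequality, serving merely as a convenient book-keeping tool for comparing the polynomial weights $(1+\|\lambda\|)^{N}$ appearing in the Paley--Wiener estimate of Proposition \ref{EFT prop Paley-Wiener est for S(V)} after a shift $\lambda\mapsto\lambda+\nu_{0}$ of the contour of integration.
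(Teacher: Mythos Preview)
Your proof is correct and essentially identical to the paper's: both inequalities come from the triangle inequality together with non-negativity of the cross term $\|\lambda\|\,\|\nu_{0}\|$. The paper phrases the left-hand inequality as an application of the already-established right-hand inequality to $\lambda=(\lambda+\nu_{0})+(-\nu_{0})$, which is exactly your reversed triangle inequality step.
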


\begin{proof}
The estimate on the right is a straightforward consequence of the
triangle inequality. It follows that
$$
(1 + \|\lambda\|) = (1 + \|\lambda + \nu_{0} + (-\nu_{0})\|) \leq (1 + \|\lambda + \nu_{0}\|)(1 + \|\nu_{0}\|).
$$
This implies the required estimate on the left.
\end{proof}

\begin{prop}[Paley-Wiener estimate]\label{EFT prop Paley-Wiener est for E(V)}
Let $B$ be a compact subset of $V$ and let $\Gamma\subseteq V$ be
a closed cone. Let $\nu_{0}\in-\Cone_{\Gamma}$. Then for every $N
\in \N$ there exists a constant $C_{\nu_{0}, N} > 0$ with the
following property.

If $u$ is a
smooth function on $V$ such that $u_{\nu_{0}}\in\Ss(V)$ and
$\supp(u)\subseteq B+\Gamma$, then for every
$\lambda\in\nu_{0}-\Cone_{\Gamma}+iV^{*}$,
$$|\Ft u(\lambda)|
\leq C_{\nu_{0}, N}\|(1+\Delta)^{N}u_{\nu_{0}}\|_{L^{1}}(1+\|\lambda\|)^{-N}e^{H_{B}(-\Re(\lambda))}.
$$
\end{prop}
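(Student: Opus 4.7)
The plan is to reduce this estimate to the Schwartz-function case already treated in Proposition \ref{EFT prop Paley-Wiener est for S(V)}, by shifting the contour via the weight $e^{-\nu_{0}}$. The key observation is the identity
$$
\Ft u(\lambda)
= \int_{V} e^{-(\lambda-\nu_{0})(x)}\, u_{\nu_{0}}(x)\, dx
= \Ft u_{\nu_{0}}(\lambda - \nu_{0}),
$$
valid for every $\lambda \in \nu_{0} - \Cone_{\Gamma} + iV^{*}$, since $u = e^{\nu_{0}} u_{\nu_{0}}$ and $u_{\nu_{0}}\in\Ss(V)$.

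First I would set $S = \overline{\ch(B+\Gamma)}$, which is closed and convex, and observe that $\supp(u_{\nu_{0}}) = \supp(u) \subseteq B+\Gamma \subseteq S$. Since $B$ is compact, $\Cone_{B} = V^{*}$, and from Lemma \ref{ConvexGeom lemma properties H_S}(ii) together with the fact that the support function determines the closed convex hull, one has $\Cone_{S} = \Cone_{B+\Gamma} = \Cone_{\Gamma}$. Hence for $\lambda \in \nu_{0} - \Cone_{\Gamma} + iV^{*}$ the point $\mu := \lambda - \nu_{0}$ lies in $-\Cone_{S} + iV^{*}$, so Proposition \ref{EFT prop Paley-Wiener est for S(V)} applies to $u_{\nu_{0}}$ and gives
$$
|\Ft u(\lambda)|
= |\Ft u_{\nu_{0}}(\lambda-\nu_{0})|
\leq 2^{N}\,\|(1+\Delta)^{N} u_{\nu_{0}}\|_{L^{1}}\,
    (1+\|\lambda-\nu_{0}\|)^{-N}\, e^{H_{S}(-\Re(\lambda)+\nu_{0})}.
$$

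Next I would absorb the shift by $\nu_{0}$ into a $\lambda$-independent constant using the two preparatory lemmas. Since $H_{S} = H_{B+\Gamma}$ and since $\Re\lambda \in \nu_{0} - \Cone_{\Gamma} = \nu_{0} - \Cone_{B+\Gamma}$, Lemma \ref{EFT lemma equivalence of shifted H_(B+Gamma)} (applied with $\Re\lambda$ in place of $\lambda$) yields
$$
H_{S}(-\Re(\lambda)+\nu_{0})
\leq H_{B}(-\Re\lambda) + H_{B}(\nu_{0}),
$$
while Lemma \ref{EFT lemma equivalence of shifted norm} (applied with $\lambda - \nu_{0}$ in place of $\lambda$) gives
$$
(1+\|\lambda-\nu_{0}\|)^{-N}
\leq (1+\|\nu_{0}\|)^{N}(1+\|\lambda\|)^{-N}.
$$
Combining these three inequalities produces the desired estimate with the explicit constant
$$
C_{\nu_{0},N} = 2^{N}(1+\|\nu_{0}\|)^{N} e^{H_{B}(\nu_{0})},
$$
which is finite because $B$ is compact.

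There is no serious obstacle in this argument; the whole proof is essentially bookkeeping around the translation $\mu = \lambda - \nu_{0}$. The only point requiring a moment of attention is the identification $\Cone_{B+\Gamma} = \Cone_{\Gamma}$ (so that the hypothesis of the Schwartz Paley--Wiener estimate is met for $u_{\nu_{0}}$ on the shifted tube), and the minor technical issue that $B + \Gamma$ need not itself be convex, which is why one passes to its closed convex hull $S$ without changing the support function or the cone $\Cone_{S}$.
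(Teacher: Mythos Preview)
Your proof is correct and follows essentially the same route as the paper: apply the Schwartz Paley--Wiener estimate (Proposition \ref{EFT prop Paley-Wiener est for S(V)}) to $u_{\nu_{0}}$ via the identity $\Ft u(\lambda)=\Ft u_{\nu_{0}}(\lambda-\nu_{0})$, then absorb the $\nu_{0}$-shift using Lemmas \ref{EFT lemma equivalence of shifted H_(B+Gamma)} and \ref{EFT lemma equivalence of shifted norm}, arriving at the same explicit constant $C_{\nu_{0},N}=2^{N}(1+\|\nu_{0}\|)^{N}e^{H_{B}(\nu_{0})}$. The only cosmetic difference is that you pass to the closed convex hull $S=\overline{\ch(B+\Gamma)}$ to match the hypothesis of Proposition \ref{EFT prop Paley-Wiener est for S(V)} exactly, whereas the paper applies that proposition directly with $B+\Gamma$; since the support function and the cone $\Cone_{S}$ are unchanged under taking closed convex hulls, both are equivalent.
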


\begin{proof}
Let $N\in\N$. Since $u_{\nu_{0}}\in\Ss(V)$ and
$\supp(u_{\nu_{0}})=\supp(u)\subseteq B+\Gamma$,  it follows by application of Proposition
\ref{EFT prop Paley-Wiener est for S(V)} that
\begin{equation}\label{EFT eq Paley-Wiener est for Ft u_(nu_0)}
|\Ft u(\lambda)|
=|\Ft u_{\nu_{0}}(\lambda-\nu_{0})|
\leq 2^{N}\|(1+\Delta)^{N}u_{\nu_{0}}\|_{L^{1}}(1+\|\lambda-\nu_{0}\|)^{-N}
        e^{H_{B+\Gamma}(-\Re(\lambda-\nu_{0}))}
\end{equation}
for all $\lambda\in\nu_{0}-\Cone_{B+\Gamma}+iV^{*}$. Since $\Re(\lambda)\in \nu_{0}-\Cone_{B+\Gamma}=\nu_{0}-\Cone_{\Gamma}$, it follows by application of  Lemma \ref{EFT lemma equivalence of shifted H_(B+Gamma)} that
\begin{equation}\label{EFT eq H_(B+Gamma)(-Re lambda+ nu_0) leq H_B(-Re lambda)+H_B(nu_0)}
H_{B+\Gamma}(-\Re(\lambda-\nu_{0}))\leq H_{B}(-\Re\lambda)+H_{B}(\nu_{0}).
\end{equation}
Finally, by application of Lemma \ref{EFT lemma equivalence of shifted norm} we see that
\begin{equation}\label{EFT eq (1+|lambda+nu_0|)^(-N) leq (1+|lambda|)^(-N)(1+|nu_0|)^(-N)}
(1 +\|\lambda-\nu_{0}\|)^{-N} \leq (1+\|\lambda\|)^{-N} (1+\|\nu_{0}\|)^N.
\end{equation}
Substituting the estimates (\ref{EFT eq H_(B+Gamma)(-Re lambda+ nu_0) leq H_B(-Re lambda)+H_B(nu_0)}) and (\ref{EFT eq (1+|lambda+nu_0|)^(-N) leq (1+|lambda|)^(-N)(1+|nu_0|)^(-N)}) in (\ref{EFT eq Paley-Wiener est for Ft u_(nu_0)}), we obtain the required estimate with
$C_{\nu_{0},N}=2^{N} ( 1 + \|\nu_{0}\|)^{N} e^{H_{B}(\nu_{0})}$.
\end{proof}

\begin{rem}
Proposition \ref{EFT prop Paley-Wiener est for E(V)} is part of
the following Paley-Wiener theorem which we state here for the sake
of completeness.

\medbreak
\medbreak
\noindent
{\em Let $B$ be a
compact subset of $V$ and let $\Gamma\subseteq V$ be a closed
cone. Assume that $\nu_{0}\in-\Cone_{\Gamma}$ and that $w$ is a
function $\nu_{0}-\Cone_{\Gamma}+iV^{*}\to\C$. Then the
following two statements are equivalent.
\begin{enumerate}[(I)]
\item $w$ equals the restriction to $\nu_{0}-\Cone_{\Gamma}+iV^{*}$
of the Fourier transform $\Ft u$ of a function $u\in\Es(V)$ such
that $u_{\nu_{0}}\in\Ss(V)$ and $\supp(u)\subseteq B+\Gamma$
\item
The function $w$ is continuous and its restriction to
$\nu_{0}+iV^{*}$ is Schwartz. For every $\nu\in-\Cone_{\Gamma}$
and $\lambda\in\nu_{0}-\Cone_{\Gamma}+iV^{*}$ the function
$$
z\mapsto w(z\nu+\lambda)
$$
is holomorphic on $\{z\in {\C}:\, \Re\,z >0\}$ and for every $N\in\N$ there exists a positive constant $C_{N}$
such that for all $\lambda\in\nu_{0}-\Cone_{\Gamma}+iV^{*}$
$$
|w(\lambda)|\leq C_{N}
(1+\|\lambda\|)^{-N}e^{H_{B}(-\Re(\lambda))}.
$$
\end{enumerate}
For every $N$ there exists a constant $C_{\nu_{0},N}$, depending on $\nu_{0}$ and $N$
only, such that if $(I)$ holds, then $(II)$ holds with $C_{N}$
smaller than or equal to
$C_{\nu_{0},N}\|(1+\Delta)^{N}u_{\nu_{0}}\|_{L^{1}}$.}
\medbreak
\medbreak
The proof for the case $\nu_{0}=0$ is similar to the usual proof
for the Paley-Wiener theorem for $\Ds(V)$. See for example
\cite[Theorem 7.22]{Rudin_FunctionalAnalyis}. For $\nu_{0}\neq 0$
the theorem then follows by application of Lemmas \ref{EFT lemma equivalence of
shifted H_(B+Gamma)} and \ref{EFT lemma equivalence of shifted
norm}.
\end{rem}

%%% ----------------------------------------------------------------------
\subsection{The unnormalized Fourier transform}
\label{subsection The unnormalized Fourier transform}
%%% ----------------------------------------------------------------------
We start by recalling several definitions and results from
\cite{vdBan_ThePrincipalSeriesForAReductiveSymmetricSpaceI}, and \cite{vdBan&Schlichtkrull_FourierTransformOnASemisimpleSymmetricSpace}.

Let $(\zeta,\H_{\zeta})$ be a unitary representation of $M_{P_{0}}$ in
a finite dimensional Hilbert space $\H_{\zeta}$ and let
$\lambda\in\g[a]^{*}_{\g[q]\C}$. The space $\Es(P_{0}:\zeta:\lambda)$
of smooth vectors for the (left) induced representation
$\Ind^{G}_{P_{0}}(\zeta\otimes e^{\lambda}\otimes 1)$ consists of the smooth functions
$f:G\to\H_{\zeta}$ satisfying
\begin{equation}\label{FT eq f(mang)=a^(lambda+rho)xi(m)f(g)}
f(mang)=a^{\lambda+\rho_{P_{0}}}\zeta(m)f(g)\qquad(m\in M_{P_{0}}, a\in A_{P_{0}}, n\in N_{P_{0}}, g\in G).
\end{equation}
Here $\rho_{P_{0}}$ is defined as in (\ref{Measures eq def rho_Q}) with $P_{0}$ in place of $Q$.

We define $V(\zeta)$
to be the formal direct sum of Hilbert spaces
$$
V(\zeta)
=\bigoplus_{w\in\WGs}V(\zeta,w),
\qquad
V(\zeta,w)
=\H_{\zeta}^{w(H\cap M_{P_{0}})w^{-1}},
$$
where $\H_{\zeta}^{w(H\cap M_{P_{0}})w^{-1}}$ is the subspace of $w(H\cap
M_{P_{0}})w^{-1}$-fixed vectors in $\H_{\zeta}$.

Let $\widehat{M_{P_{0}}}_{H}$ be the set of equivalence classes of
finite dimensional unitary representations $(\zeta,\H_{\zeta})$ of
$M_{P_{0}}$ such that $V(\zeta)\neq\{0\}$. The principal series of
representations for $X$ is the series of representations
$\Ind^{G}_{P_{0}}(\zeta\otimes e^{\lambda}\otimes 1)$ with $\lambda\in\g[a]^{*}_{\g[q]\C}$ and
$(\zeta,\H_{\zeta})\in\widehat{M_{P_{0}}}_{H}$.

Let $\Ind^{G}_{P_{0}}(\zeta\otimes e^{\lambda}\otimes 1)$ be a
principal series representation. The space of generalized
functions $G\to \H_{\zeta}$ satisfying (\ref{FT eq
f(mang)=a^(lambda+rho)xi(m)f(g)}) is denoted by
$C^{-\infty}(P_{0}:\zeta:\lambda)$.
Following \cite[Section 5]{vdBan_ThePrincipalSeriesForAReductiveSymmetricSpaceI} we define
$j(P_{0}:\zeta:\lambda):V(\zeta)\to C^{-\infty}(P_{0}:\zeta:\lambda)^{H}$ as follows.
The sets $P_{0}wH$, for $w \in \WGs$ are disjoint and open in $G$ and their union
$$
\Omega(P_{0}) = \bigcup_{w \in\WGs} P_{0} w H
$$
is dense in $G$. For
$\lambda\in\g[a]_{\g[q]}^{*}(P_{0},0)-\rho_{P_{0}}$
the function  is given by
\begin{equation}\label{FT eq def j}
j(P_{0}:\zeta:\lambda)(\eta)(x) =
\begin{cases}a^{\lambda+\rho_{P_{0}}}\zeta(m)\eta_{w}
    & \text{for }x=manwh\in\Omega(P_{0}) \text{ with}\\
    &\qquad m \in M_{P_{0}}, a\in  A_{P_{0}}, n \in N_{P_{0}},\\
    &\qquad w\in\WGs \text{ and } h\in H \\
    0 &\text{for }\;\; x \notin\Omega(P_{0}).

        \end{cases}
\end{equation}
It is known that for $\lambda\in\g[a]_{\g[q]}^{*}(P_{0}, 0) - \rho_{P_{0}}$ the function $j(P_{0}: \zeta: \lambda)(\eta)$ thus defined is continuous;
see \cite[Proposition 5.6]{vdBan_ThePrincipalSeriesForAReductiveSymmetricSpaceI}. For the remaining $\lambda\in\g[a]_{\g[q],\C}^{*}$ it is defined by
meromorphic continuation. For generic $\lambda\in\g[a]^{*}_{\g[q]\C}$,
the map $j(P_{0}:\zeta:\lambda)$ is known to be a bijection
$V(\zeta)\to C^{-\infty}(P_{0}:\zeta:\lambda)^{H}$. See \cite[Theorem 5.10]{vdBan_ThePrincipalSeriesForAReductiveSymmetricSpaceI}.

\begin{lemma}\label{FT lemma a^nu psi Schwartz}
Let $B$ be a compact subset of $\g[a]_{\g[q]}$ and let $\psi\in
\Es^{1}(\Xi_{P_{0}},J_{P_{0}})$ be such that $\supp(\psi)\subseteq\Xi_{P_{0}}(B+\Gamma_{P_{0}})$.
For $k\in K$, define $\psi_{k}:\g[a]_{\g[q]}\to\C$ by $\psi_{k}(Y)=
\psi\big(k\exp(Y)\cdot\xi_{P_{0}}\big)$.
If $\nu\in\g[a]^{*}_{\g[q]}(P_{0},0)$ then $e^{\nu}\psi_{k}$
is a Schwartz function for every $k \in  K$. The map
$$
K\to\Ss(\g[a]_{\g[q]});\quad k\mapsto e^{\nu}\psi_{k}
$$
thus defined is continuous.
\end{lemma}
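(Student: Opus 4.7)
My plan is to reduce to bounding $e^{\nu(Y)}\phi(k\exp(Y)\cdot\xi_{P_{0}})$ for $\phi$ ranging over $\U(\g[a]_{\g[q]})$-derivatives of $\psi$, and then to combine two inputs: the support hypothesis restricts $Y$ to $B+\Gamma_{P_{0}}$, while the assumption $\nu\in\g[a]_{\g[q]}^{*}(P_{0},0)$ makes $|e^{\nu}|$ decay exponentially on that set.

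First I would pass from $B$ to $\ch(B)$, which is legitimate since $\Xi_{P_{0}}(B+\Gamma_{P_{0}})\subseteq\Xi_{P_{0}}(\ch(B)+\Gamma_{P_{0}})$ and $\ch(B)$ is compact and convex. Because $P_{0}$ is minimal $\sigma\circ\theta$-stable, $M_{P_{0}}$ centralizes $\g[a]_{\g[q]}$, so $\WG[M_{P_{0}}\cap K\cap H]$ is trivial and the invariance hypothesis of Lemma \ref{RadTransSupp lemma relation Xi_(P,B), Acomp} is vacuous. Applying that lemma with $P=P_{0}$, together with $\Acomp_{P_{0}}(k\exp(Y))=Y$, yields
$$\supp(\psi_{k})\subseteq \ch(B)+\Gamma_{P_{0}}\qquad(k\in K).$$

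For the Schwartz bound I would identify Euclidean partial derivatives with the action of $\U(\g[a]_{\g[q]})$: for $Z_{1},\dots,Z_{m}\in\g[a]_{\g[q]}$, the abelianness of $\g[a]_{\g[q]}$ together with the identification of elements of $\g$ with vector fields on $\Xi_{P_{0}}$ gives
$$\partial_{Z_{1}}\cdots\partial_{Z_{m}}\psi_{k}(Y)=(Z_{1}\cdots Z_{m}\psi)\bigl(k\exp(Y)\cdot\xi_{P_{0}}\bigr).$$
Applying Leibniz to $p(\partial)(e^{\nu}\psi_{k})$ for polynomials $p$ on $\g[a]_{\g[q]}^{*}$, every resulting term has the form $e^{\nu(Y)}c(\nu)\phi(k\exp(Y)\cdot\xi_{P_{0}})$ with $\phi\in\U(\g[a]_{\g[q]})\psi\subseteq\Es^{1}(\Xi_{P_{0}},J_{P_{0}})$, and by Proposition \ref{FuncSpaces prop Es^1(Xi_P,J_P) subset C_0(Xi_P)} every such $\phi$ is bounded. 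It therefore suffices to show that $(1+\|Y\|)^{N}|e^{\nu(Y)}|$ is bounded on $\ch(B)+\Gamma_{P_{0}}$ for every $N$. The assumption $\Re\,\nu(H_{\alpha})<0$ for all $\alpha\in\Sigma(\g,\g[a]_{\g[q]};P_{0})$, together with continuity of $\Re\,\nu$ on the compact cross-section $\{\gamma\in\Gamma_{P_{0}}:\|\gamma\|=1\}$, produces $c>0$ with $\Re\,\nu(\gamma)\leq -c\|\gamma\|$ for every $\gamma\in\Gamma_{P_{0}}$; a decomposition $Y=b+\gamma$ with $b\in\ch(B)$ and compactness of $\ch(B)$ then give $\Re\,\nu(Y)\leq C-c\|Y\|$ for a constant $C$ depending only on $B$ and $\nu$, which yields the required bound.

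For the continuity of $k\mapsto e^{\nu}\psi_{k}$ into $\Ss(\g[a]_{\g[q]})$, I would fix a Schwartz seminorm and $\epsilon>0$, and use the exponential decay above to choose $R>0$ so that the contribution from $\{Y\in\ch(B)+\Gamma_{P_{0}}:\|Y\|>R\}$ is at most $\epsilon\sup_{\Xi_{P_{0}}}|\phi|$ uniformly in $k$; on the compact region $\|Y\|\leq R$, joint continuity of $(k,Y)\mapsto\phi(k\exp(Y)\cdot\xi_{P_{0}})$ and compactness of $K$ give uniform continuity in $k$, handling the bulk term. The main subtlety is the support reduction in the first step: since neither convexity nor Weyl-invariance of $B$ is assumed in the statement, one has to pass to $\ch(B)$ and then invoke the convexity theorem through Lemma \ref{RadTransSupp lemma relation Xi_(P,B), Acomp} before the rest of the argument can proceed.
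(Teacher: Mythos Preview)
Your approach is essentially the paper's: bound $\U(\g[a]_{\g[q]})$-derivatives of $\psi_{k}$ via Proposition~\ref{FuncSpaces prop Es^1(Xi_P,J_P) subset C_0(Xi_P)}, and combine with the exponential decay of $e^{\nu}$ on $B+\Gamma_{P_{0}}$. Two points deserve correction.

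First, the displayed identity
\[
\partial_{Z_{1}}\cdots\partial_{Z_{m}}\psi_{k}(Y)=(Z_{1}\cdots Z_{m}\psi)\bigl(k\exp(Y)\cdot\xi_{P_{0}}\bigr)
\]
is not correct as written. Since $\g[a]_{\g[q]}$ is abelian one has $\partial_{Z}\psi_{k}(Y)=\frac{d}{dt}\big|_{0}\psi\bigl(\exp(t\Ad(k)Z)\,k\exp(Y)\cdot\xi_{P_{0}}\bigr)$, so the derivative corresponds to $\Ad(k)Z$, not $Z$, acting on $\psi$. The paper absorbs this by expanding $\Ad(k)u=\sum_{w\in F_{u}}c_{u,w}(k)\,w$ with $F_{u}\subset\U(\g)$ finite and $c_{u,w}$ continuous (hence bounded) on $K$. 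With this correction your argument goes through unchanged: the $\phi$'s that appear are finitely many fixed elements $w\psi\in\Es^{1}(\Xi_{P_{0}},J_{P_{0}})$, each bounded by Proposition~\ref{FuncSpaces prop Es^1(Xi_P,J_P) subset C_0(Xi_P)}, weighted by $k$-continuous coefficients. This $\Ad(k)$ bookkeeping is also what makes your continuity argument work, since the $k$-dependence now sits in the continuous scalars $c_{u,w}(k)$ and in the evaluation points $k\exp(Y)\cdot\xi_{P_{0}}$ of finitely many fixed uniformly continuous functions.

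Second, the ``main subtlety'' you flag in the support reduction is not there. Because $P_{0}$ is minimal, the map
\[
K/(M_{P_{0}}\cap K\cap H)\times\g[a]_{\g[q]}\longrightarrow\Xi_{P_{0}},\qquad
\bigl(k\cdot(M_{P_{0}}\cap K\cap H),\,Y\bigr)\mapsto k\exp(Y)\cdot\xi_{P_{0}}
\]
is a diffeomorphism (see the proof of Proposition~\ref{RadTrans prop N_P cdot xi_Q closed submanifold}). Hence $k\exp(Y)\cdot\xi_{P_{0}}\in\Xi_{P_{0}}(B+\Gamma_{P_{0}})=K\exp(B+\Gamma_{P_{0}})\cdot\xi_{P_{0}}$ forces $Y\in B+\Gamma_{P_{0}}$ directly, with no need to pass to $\ch(B)$ or to invoke Lemma~\ref{RadTransSupp lemma relation Xi_(P,B), Acomp} and the convexity theorem. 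The paper takes exactly this shortcut.
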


\begin{proof}
Let $\psi$ be fixed as above. According to Proposition
\ref{FuncSpaces prop Es^1(Xi_P,J_P) subset C_0(Xi_P)}, the
function $u\psi$ vanishes at infinity for every $u\in\U(\g)$. In
particular, it follows that each of the functions $u \psi$ is
bounded and uniformly continuous on $\Xi_{P_{0}}$.

For every $u\in\U(\g)$ there exists a finite set
$F_u \subseteq \U(\g)$,  consisting of linearly independent elements,
such that $\Ad(k) u \in {\rm span}(F_u)$ for all $k \in K$.
Write $\Ad(k)u=\sum_{w \in F_u} c_{u,w}(k) w$, then the $c_{u,w}$
are continuous, hence bounded functions on $K$. It follows that
there exists a constant $C_u > 0$ such that
\begin{align*}
&\sup_{k\in K}\sup_{\g[a]_{\g[q]}}|u \psi_{k}|\\
&\quad= \sup_{k\in K}\sup_{Y\in\g[a]_{\g[q]}}|
                   \big(\Ad(k)u\big)\psi\big(k \exp(Y)\cdot\xi_{P_{0}}\big)|
\leq\sum_{w\in F_{u}} \sup_{k \in K}\sup_{Y \in \g[a]_{\g[q]}} |c_{u,w}(k)|\, |w\psi(k \exp Y)|
<  C_{u}.
\end{align*}
Since
$$
K/(M_{P_{0}}\cap K\cap H)\times\g[a]_{\g[q]}\to\Xi_{P_{0}};\quad
 (k\cdot(M_{P_{0}}\cap K\cap H),Y)\mapsto k\exp(Y)\cdot\xi_{P_{0}}$$
is a diffeomorphism and $\supp(\psi)$
is contained in $\Xi(B+\Gamma_{P_{0}})$, the support of $\psi_{k}$ is contained
in $B+\Gamma_{P_{0}}$. Let $\nu\in\g[a]_{\g[q]}(P_{0},0)$. Then $\Re(\nu)<0$ on
$\Gamma_{P_{0}}\setminus\{0\}$.
Let $p$ be a polynomial function
on $\g[a]_{\g[q]}$ and let $u\in\Sym(\g[a]_{\g[q]})$.
Then  by the Leibniz rule
 there exist finitely many elements $u_{j}\in\Sym(\g[a]_{\g[q]})$
(independent of $k$) such that
$$
\sup_{\g[a]_{\g[q]}}|p\,u (e^{\nu}\psi_{k})|
\leq \sum_{j}\sup_{\g[a]_{\g[q]}}|p\, e^{\nu} u_{j}\psi_{k}|
\leq \sup_{B+\Gamma_{P_{0}}}|p\,e^{\nu}|\sum_{j}C_{u_{j}}
<\infty.
$$
This proves that the functions $e^{\nu}\psi_{k}$ are Schwartz functions on
$\g[a]_{\g[q]}$.

If $k,k'\in  K$, then
$$
\sup_{\g[a]_{\g[q]}}|p\, u\big(e^{\nu}\psi_{k}-e^{\nu}\psi_{k'}\big)|
\leq\Big(\sup_{B+\Gamma_{P_{0}}}|p\,e^{\nu}|\Big)\Big(\sum_{j}\sup_{\g[a]_{\g[q]}}
|u_{j}\psi_{k}-u_{j}\psi_{k'}|\Big).
$$
Now
\begin{equation}\label{Ft eq estimate of |u_j psi_k-u_j psi_k'|}
\sup_{\g[a]_{\g[q]}}|u_{j}\psi_{k}-u_{j}\psi_{k'}|
\leq \sum_{w\in F_{u_{j}}} \sup_{\Xi_{P_{0}}}
    \big|c_{u_{j}, w}(k)\,(w\psi)_{k}-c_{u_{j}, w}(k')\,(w \psi)_{k'}\big|.
\end{equation}
The second statement of  the proposition now follows since the right-hand side of
 (\ref{Ft eq estimate of |u_j psi_k-u_j psi_k'|}) converges to $0$ if $k\to k'$
by the uniform continuity of the functions $w\psi$ and the continuity of the functions
$c_{u_{j}, w}$.
\end{proof}

Let $\Ft_{A_{\g[q]}}$ be the Fourier transform on
$A_{\g[q]}$ normalized by
$$
\Ft_{A_{\g[q]}}\psi(\lambda)=\int_{A_{\g[q]}}\psi(a)a^{-\lambda}\,da
$$
for $\psi\in L^{1}(A_{\g[q]})$ and $\lambda\in\g[a]_{\g[q],\C}^{*}$. Note that $\Ft_{A_{\g[q]}}\psi=\Ft_{\g[a]_{\g[q]}}(\psi\circ\exp)$.
For $g\in G$ and $a\in A_{\g[q]}$, we write $\Ht[P_{0}]\phi(g)(a) =a^{\rho_{P_{0}}}\Rt[P_{0}]\phi(ga\cdot \xi_{P_{0}})$.

\begin{prop}\label{FT prop FT_e=FT_A circ Rt}
Let $B$ be a compact subset of $\g[a]_{\g[q]}$ , let $\eta\in
V(\zeta,e)$ and let $\lambda\in\g[a]^{*}_{\g[q]}(\overline{P}_{0},0)+\rho_{P_{0}}$.
If $\phi\in \Es^{1}(X)$ satisfies $\supp(\Rt[P_{0}]\phi)\subseteq\Xi_{P_{0}}(B+\Gamma_{P_{0}})$, then
\begin{equation}\label{FT eq int_X phi j(P:zeta:-lambda)}
\int_{X}\phi(x)j(P_{0}:\zeta:-\lambda)(\eta)(g\cdot x)\,dx
=\int_{M_{P_{0}}\cap K}\Ft_{A_{\g[q]}}\Big(\Ht[P_{0}]\phi(g^{-1}m)\Big)(\lambda)\zeta(m)\eta\,dm,
\end{equation}
where the integrals converge absolutely for every $g\in G$.
\end{prop}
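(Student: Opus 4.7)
The plan is to compute both sides by reducing to an integration over the open dense $P_{0}H$-orbit of $x_{0}$ in $X$, parameterising it via $(m,a,n)\mapsto man\cdot x_{0}$, and using the explicit formula for $j:=j(P_{0}:\zeta:-\lambda)(\eta)$. Since $\eta\in V(\zeta,e)$ means $\eta_{w}=0$ for $w\neq e$, the defining formula (\ref{FT eq def j}) shows that $j$ vanishes on $G\setminus P_{0}H$, hence on $X\setminus P_{0}H\cdot x_{0}$. The assumption $\lambda\in\g[a]^{*}_{\g[q]}(\overline{P}_{0},0)+\rho_{P_{0}}$ translates into $-\lambda\in\g[a]^{*}_{\g[q]}(P_{0},0)-\rho_{P_{0}}$, the range where $j$ is continuous, and for $(m,a,n)\in(M_{P_{0}}\cap K)\times A_{\g[q]}\times N_{P_{0}}$ the commutativity of $m$ and $a$ combined with (\ref{FT eq def j}) yields
$$
j(man\cdot x_{0})=a^{-\lambda+\rho_{P_{0}}}\zeta(m)\eta.
$$

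For the absolute convergence of the right-hand side I apply Lemma~\ref{FT lemma a^nu psi Schwartz} to $\psi:=\Rt[P_{0}]\phi$ with $\nu:=\rho_{P_{0}}-\lambda$; the condition on $\lambda$ forces $\Re\nu\in\g[a]^{*}_{\g[q]}(P_{0},0)$, so $e^{\nu}\psi_{k}$ is Schwartz on $\g[a]_{\g[q]}$ and depends continuously on $k\in K$. Combining this with Lemma~\ref{RadTrans lemma A_P=pi_(a_P cap q)A_KAN}, which lets me write $g^{-1}ma\cdot\xi_{P_{0}}=k(g^{-1}m)\exp(\Acomp_{P_{0}}(g^{-1}m)+\log a)\cdot\xi_{P_{0}}$, a translation in $\g[a]_{\g[q]}$ produces the estimate
$$
\int_{A_{\g[q]}} a^{-\Re\lambda+\rho_{P_{0}}}|\Rt[P_{0}]\phi(g^{-1}ma\cdot\xi_{P_{0}})|\,da\leq e^{(\Re\lambda-\rho_{P_{0}})(\Acomp_{P_{0}}(g^{-1}m))}\,\|e^{\nu}\psi_{k(g^{-1}m)}\|_{L^{1}},
$$
which is continuous in $m$ over the compact set $M_{P_{0}}\cap K$, yielding absolute convergence of the RHS.

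For the main computation I make the change of variables $y=g\cdot x$ in the LHS (by $G$-invariance of $dx$) and parameterise $P_{0}H\cdot x_{0}\subseteq X$ via the diffeomorphism $\Phi\colon((M_{P_{0}}\cap K)/(M_{P_{0}}\cap K\cap H))\times A_{\g[q]}\times N_{P_{0}}\to P_{0}H\cdot x_{0}$, $(m,a,n)\mapsto man\cdot x_{0}$, furnished by the generalised Iwasawa decomposition at the end of Section~\ref{subsection sigma circ theta-stable parabolic subgroups}. A Jacobian computation based on the fact that $\Ad(a)$ acts on $\g[n]_{P_{0}}$ with determinant $a^{2\rho_{P_{0}}}$, together with the measure normalisations fixed in Section~\ref{subsection Invariant measures} and Lemma~\ref{RadTransExt lemma int_Xi phi J=sum int_K int_A int_N phi}, shows that the $G$-invariant measure on $X$ pulls back along $\Phi$ to $dm\,da\,dn$. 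Substituting the formula for $j$ and invoking Fubini (justified by the preceding estimate via Lemma~\ref{RadTransExt lemma int_Xi int_N|phi| leq int_X|phi|} to dominate the joint integrand), I obtain
$$
\int_{(M_{P_{0}}\cap K)/(M_{P_{0}}\cap K\cap H)}\int_{A_{\g[q]}}\Big(\int_{N_{P_{0}}}\phi(g^{-1}man\cdot x_{0})\,dn\Big)a^{-\lambda+\rho_{P_{0}}}\zeta(m)\eta\,da\,dm.
$$
Identifying the inner $N_{P_{0}}$-integral as $\Rt[P_{0}]\phi(g^{-1}ma\cdot\xi_{P_{0}})$, absorbing $a^{\rho_{P_{0}}}$ into $\Ht[P_{0}]$, recognising the $A_{\g[q]}$-integral as $\Ft_{A_{\g[q]}}(\Ht[P_{0}]\phi(g^{-1}m))(\lambda)$, and using that $\eta\in V(\zeta,e)$ makes the integrand $M_{P_{0}}\cap K\cap H$-invariant in $m$ (so the quotient may be replaced by the full $M_{P_{0}}\cap K$), gives the RHS. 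The main technical hurdle is pinning down the Jacobian of $\Phi$ in terms of the measure normalisations fixed earlier; the Fubini interchange and the distinction between absolute and iterated convergence on the LHS side are the other delicate points, but both are handled by the Schwartz bound from Lemma~\ref{FT lemma a^nu psi Schwartz}.
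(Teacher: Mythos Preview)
Your proof follows essentially the same route as the paper's: reduce to the open orbit $P_{0}\cdot x_{0}$ via the $(m,a,n)$-parametrization, invoke Lemma~\ref{FT lemma a^nu psi Schwartz} for the Schwartz decay needed in the $A_{\g[q]}$-integral, and use that $j(P_{0}:\zeta:-\lambda)(\eta)$ is supported on $\overline{P_{0}\cdot x_{0}}$ since $\eta\in V(\zeta,e)$. The only differences are organizational: the paper first handles $g=k\in K$ and then extends to general $g$ via $g=m_{g}a_{g}n_{g}k_{g}$ together with the transformation law for $j$, whereas you substitute $y=g\cdot x$ directly; and where you sketch a Jacobian computation for the integration formula on $P_{0}\cdot x_{0}$, the paper simply cites \'Olafsson's result (equation~(\ref{FT eq int_(P_0 cdot x0)=sum_W int_LcapK int_A_q int_N_P})).
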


\begin{proof}
Let $\phi$ satisfy the above hypotheses. For $\lambda$ as stated,
the function $j(P_{0}: \zeta : -\lambda)(\eta)$ is continuous. We
will first prove the assertions under the assumption that $g =e$.

By Proposition \ref{RadTransExt prop Rt_P^Q: Es^1 to Es^1}, the function $\Rt[P_{0}]\phi$
is an element of $\Es^{1}(\Xi_{P_{0}},J_{P_{0}})$.
In view of the condition on the support of $\Rt[P_{0}]\phi$ it now follows
by application of Lemma  \ref{FT lemma a^nu psi Schwartz}
that
\begin{equation}\label{Ft eq a_q ni Y mapsto Rt phi(k exp(Y))}
\g[a]_{\g[q]}\ni Y\mapsto
e^{-\lambda(Y)+\rho_{P_{0}}(Y)}\Rt[P_{0}]\phi\big(k\exp(Y)\cdot\xi_{P_{0}}\big)
=e^{-\lambda(Y)}\Ht[P_{0}]\phi(k)(\exp(Y))
\end{equation}
is a continuous family (with family parameter $k\in K$) of
functions in the Schwartz space
$\Ss(\g[a]_{\g[q]})$.
Therefore, the integral
$$
\int_{A_{\g[q]}}
    a^{-\lambda}\Ht[P_{0}]\phi(k^{-1}m)(a)\,da
$$
is absolutely convergent for all $k\in K$ and $m\in M_{P_{0}}\cap K$ and depends continuously on both.
Since $M_{P_{0}}\cap K$ is compact, the integral
\begin{equation}\label{FT eq int_M int_A a^(-lambda+rho)Rt phi zeta(m)eta}
\int_{M_{P_{0}}\cap K}\int_{A_{\g[q]}}
    a^{-\lambda}\Ht[P_{0}]\phi(k^{-1}m)(a)\zeta(m)\eta\,da\,dm
\end{equation}
is absolutely convergent as well. Clearly, this integral equals the right-hand side of (\ref{FT eq int_X phi j(P:zeta:-lambda)}), with $g = k$.

We will proceed to show that the integral also equals the left-hand side of (\ref{FT eq int_X phi j(P:zeta:-lambda)}). Indeed, substituting the
definition of the Radon transform in (\ref{FT eq int_M int_A a^(-lambda+rho)Rt phi zeta(m)eta}), we obtain the absolutely convergent integral
\begin{align}\label{FT eq int_M int_A int_N a^(-lambda+rho)phi zeta(m)eta=int_M int_A int_N phi j(P:zeta:-lambda)}
&\int_{M_{P_{0}}\cap K}\int_{A_{\g[q]}}\int_{N_{P_{0}}}
    a^{-\lambda+\rho_{P_{0}}}\phi(k^{-1}man\cdot\xi_{P_{0}})
                    \zeta(m)\eta\,dn\,da\,dm\\
\nonumber&\qquad=\int_{M_{P_{0}}\cap K}\int_{A_{\g[q]}}\int_{N_{P_{0}}}
    \phi(k^{-1}man\cdot\xi_{P_{0}}) j(P_{0}:\zeta: - \lambda)(\eta)(man\cdot x_{0})
                    \,dn\,da\,dm.
\end{align}
For the last equality we have used that $-\lambda\in\g[a]_{\g[q]}^{*}(P_{0},0)-\rho_{P_{0}}$,
so that $j(P_{0}:\zeta:-\lambda)(\eta)$ is the continuous function given by (\ref{FT eq def j}).
As $\eta\in V(\zeta, e)$, this continuous function is supported by the closure of the
set $P_{0}H$. As before, we denote $w^{-1}P_{0}w$ by $P_{0}^w$.
We now recall \cite[Theorem 1.2]{Olafsson_FourierAndPoissonTransformationAssociatedToASemisimpleSymmetricSpace}.
According to this result,
\begin{equation}\label{FT eq int_(P_0 cdot x0)=sum_W int_LcapK int_A_q int_N_P}
\int_{P_{0}\cdot x_{0}}\psi(x)\,dx
= \int_{M_{P_{0}}\cap K}\int_{A_{\g[q]}}\int_{N_{P_{0}}}
    \psi(man\cdot x_{0})\,dn\,da\,dm,
\end{equation}
for every $\psi \in C_{c}(X)$. As the multiplication map
$$
(M_{P_{0}}\cap K)/(M_{P_{0}}\cap K\cap H)\times A_{\g[q]}\times N_{P_{0}} \to P_{0}\cdot x_{0}
$$
is a diffeomorphism onto the open subset $P_{0}\cdot x_{0}$ of
$X$, it follows from Fubini's theorem that (\ref{FT eq int_(P_0 cdot x0)=sum_W int_LcapK
int_A_q int_N_P}) is valid for any measurable function $\psi: X
\to \C$, provided the integral on either one of the two sides of
the equation is absolutely convergent, and in that case the other
integral is absolutely convergent as well. Applying this result to
(\ref{FT eq int_M int_A int_N a^(-lambda+rho)phi zeta(m)eta=int_M
int_A int_N phi j(P:zeta:-lambda)}) we obtain the integral
$$
\int_{P_{0}\cdot x_{0}} \phi(k^{-1}\cdot x) j(P_{0}:\zeta:-\lambda)(\eta)(x)\,dx
=\int_{k^{-1}P_{0}\cdot x_{0}} \phi(x) j(P_{0}:\zeta:-\lambda)(\eta)(k\cdot x)\,dx,
$$
which we now can conclude to be absolutely convergent since (\ref{FT eq int_M int_A int_N a^(-lambda+rho)phi zeta(m)eta=int_M int_A int_N phi j(P:zeta:-lambda)}) is absolutely convergent.
As $j(P_{0}:\zeta:-\lambda)(\eta)(kx)$ is supported on $k^{-1}P_{0}\cdot x_{0}$, the latter integral
equals the left-hand side of (\ref{FT eq int_X phi j(P:zeta:-lambda)}). This completes the proof for $g=k\in K$.

Now let $g\in G$. Write $g = m_g a_g n_g k_g$,
with $m_{g}\in M_{P_{0}}$, $a_{g}\in A_{P_{0}}$, $n_{g}\in N_{P_{0}}$ and $k_{g}\in K$.
Then by the transformation properties of $j$, the integral at the left-hand side of (\ref{FT eq int_X phi j(P:zeta:-lambda)}) equals
$$
a_{g}^{-\lambda+\rho_{P_{0}}}\zeta(m_{g})
    \int_{X}\phi(x)j(P_{0}:\zeta:-\lambda)(\eta)(k_{g}\cdot x)\,dx.
$$
By what we proved above, this expression equals
\begin{align*}
&a_{g}^{-\lambda+\rho_{P_{0}}}\zeta(m_g) \int_{M_{P_{0}}\cap K}
    \Ft_{A_{q}}\Big(\Ht[P_{0}]\phi(k_g^{-1}m)\Big)(\lambda)                                                            \zeta(m)\eta\,dm\\
&\qquad=a_{g}^{-\lambda+\rho_{P_{0}}}\zeta(m_g) \int_{M_{P_{0}}\cap K}
    \Ft_{A_{q}}\Big(\Ht[P_{0}]\phi(k_g^{-1}n_{g}^{-1}m)\Big)(\lambda)                                                            \zeta(m)\eta\,dm\\
&\qquad=\int_{M_{P_{0}}\cap K}\Ft_{A_{q}}\Big(
    \Ht[P_{0}]\phi(k_g^{-1}n_{g}^{-1}a_{g}^{-1}m)\Big)(\lambda)\zeta(m_g m)\eta\,dm\\
&\qquad=\int_{M_{P_{0}}\cap K}\Ft_{A_{q}}\Big(
    \Ht[P_{0}]\phi(k_g^{-1}n_g^{-1}a_g^{-1}m_g^{-1}m)\Big)(\lambda)\zeta(m)\eta\,dm.
\end{align*}
Here we subsequently used that $N_{P}$ stabilizes $\xi_{P_{0}}$ and is normalized by $M_{P_{0}}\cap K$, the invariance of the measure of $A_{\g[q]}$, and the invariance of the measure of $M_{P_{0}}\cap K$.
We finally observe that the last integral
equals the right-hand side of (\ref{FT eq int_X phi j(P:zeta:-lambda)}).
\end{proof}

Following \cite{vdBan&Schlichtkrull_FourierTransformOnASemisimpleSymmetricSpace}, we define
the unnormalized Fourier transform
$\Ft^{\un}_{P_{0}}\phi(\zeta:\lambda)$ of a function
$\phi\in\Ds(X)$ to be the element of
$\Hom(V(\zeta),C^{-\infty}(P_{0}:\zeta:\lambda))$ given by
\begin{equation}\label{FT eq def Ft^un}
\Ft^{\un}_{P_{0}}\phi(\zeta:\lambda)\eta:g\mapsto\int_{X}\phi(x)j(P_{0}:\zeta:-\lambda)(\eta)(g\cdot x)\,dx
\end{equation}
for $\eta\in V(\zeta)$. This Fourier transform depends
meromorphically on $\lambda\in\g[a]_{\g[q],\C}^{*}$. For $\lambda\in
\g[a]_{\g[q]}^{*}(\overline{P}_{0},0)+\rho_{P_{0}}$, the dependence is
holomorphic, and the integral in (\ref{FT eq def Ft^un}) is
absolutely convergent.

If $\phi\in \Es^{1}(X)$, satisfies $\supp(\Rt[P_{0}]\phi)\subseteq\Xi_{P_{0}}(B+\Gamma_{P_{0}})$
for some compact subset $B$ of $\g[a]_{\g[q]}$, then we define
(the first component of) the unnormalized Fourier transform
$\Ft^{\un}_{P_{0},e}\phi(\zeta:\lambda)$, for
$\lambda\in\g[a]^{*}_{\g[q]}(\overline{P}_{0},0)+\rho_{P_{0}}$, to
be the homomorphism $V(\zeta,e) \to \Es(P_{0}: \zeta : \lambda)$ that for $\eta\in V(\zeta,e)$ is
given by the absolutely convergent integral (\ref{FT eq def Ft^un}).
We note that by Corollary \ref{RadTransSupp cor supp(Rt_Q phi)subseteq Xi_Q,B => supp(Rt_P phi)subseteq
Xi_P,B} $\Rt[P_{0}]\phi$ has support
in a set of the mentioned form if $\phi \in \Ds(X)$. In that case,
$$
\Ft^{\un}_{P_{0},e}\phi(\zeta:\lambda)
=\Ft^{\un}_{P_{0}}\phi(\zeta:\lambda)\big|_{V(\zeta,e)},
$$
for all $\lambda\in\g[a]_{\g[q]}^{*}(\overline{P}_{0}, 0)+\rho_{P_{0}}$.

\begin{prop}\label{FT prop F^un PW-est}
Let $B$ be a compact subset of $\g[a]_{\g[q]}$ and let $\Gamma$ be
a cone in $\g[a]_{\g[q],\C}^{*}$ generated by a compact subset of
$\g[a]_{\g[q]}^{*}(\overline{P}_{0},0)$. For sufficiently large
$R>0$ there exist for every $N\in\N$ a constant $C_{N}>0$ and a
finite set $F_N\subset\U(\g)$, such that the following holds. For
all $\phi\in \Es^{1}(X)$ satisfying
$$\supp(\Rt_{P_{0}}\phi)\subseteq \Xi_{P_{0}}(B+\Gamma_{P_{0}}),$$
for all $k\in K$, $\eta\in V(\zeta,e)$ and
all $\lambda\in\Gamma$ with $\|\lambda\|>R$,
$$
\|\Ft^{\un}_{P_{0},e}\phi(\zeta:\lambda)\eta(k)\| \leq
C_{N}\sum_{u\in F_{N}}\|u\phi\|_{L^{1}(X)}(1+\|\lambda\|)^{-N}e^{H_{B}(-\Re(\lambda))}\|\eta\|.
$$
Let $\eta\in V(\zeta,e)$. The function
\begin{equation}\label{FT eq (k, lambda) mapsto Ft^(un)_(P,e)phi(lambda)(k)}
K\times\big(\g[a]^{*}_{\g[q]}(\overline{P}_{0},0)+\rho_{P_{0}}\big)\ni
 (k,\lambda) \mapsto\Ft^{\un}_{P_{0},e}\phi(\zeta:\lambda)\eta(k)
\end{equation}
is smooth and holomorphic in the second variable.
\end{prop}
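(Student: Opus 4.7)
The plan is to combine the integral representation of Proposition \ref{FT prop FT_e=FT_A circ Rt} with the Euclidean Paley--Wiener estimate of Proposition \ref{EFT prop Paley-Wiener est for E(V)}. Fix a real covector $\nu_{0}\in\g[a]_{\g[q]}^{*}$ such that $(\nu_{0}-\rho_{P_{0}})(H_{\alpha})>0$ for every $\alpha\in\Sigma(\g,\g[a]_{\g[q]};P_{0})$; then $\nu_{0}$ lies in the interior of $-\Cone_{\Gamma_{P_{0}}}$. Applying Lemma \ref{FT lemma a^nu psi Schwartz} with $\nu=\rho_{P_{0}}-\nu_{0}$ to $\psi=\Rt[P_{0}]\phi$ shows that for every $\phi$ satisfying the hypothesis and every $k'\in K$, the function
\[
u_{k',\nu_{0}}(Y)=e^{(\rho_{P_{0}}-\nu_{0})(Y)}\Rt[P_{0}]\phi(k'\exp(Y)\cdot\xi_{P_{0}})
\]
lies in $\Ss(\g[a]_{\g[q]})$, is supported in $B+\Gamma_{P_{0}}$, and depends continuously on $k'\in K$ in the Schwartz topology.

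Since $\Gamma$ is generated by a compact subset of $\g[a]_{\g[q]}^{*}(\overline{P}_{0},0)$, there exists $R>0$ such that $\Re\lambda-\nu_{0}\in-\Cone_{\Gamma_{P_{0}}}$ whenever $\lambda\in\Gamma$ and $\|\lambda\|>R$. For such $\lambda$ and each $m\in M_{P_{0}}\cap K$, Proposition \ref{EFT prop Paley-Wiener est for E(V)} applied to $u_{k^{-1}m,\nu_{0}}$ gives
\[
\bigl|\Ft_{A_{\g[q]}}(\Ht[P_{0}]\phi(k^{-1}m))(\lambda)\bigr|\leq C_{\nu_{0},N}\bigl\|(1+\Delta)^{N}u_{k^{-1}m,\nu_{0}}\bigr\|_{L^{1}(\g[a]_{\g[q]})}(1+\|\lambda\|)^{-N}e^{H_{B}(-\Re\lambda)}.
\]
Substituting this into the identity of Proposition \ref{FT prop FT_e=FT_A circ Rt} and using $\|\zeta(m)\eta\|\leq\|\eta\|$ together with the unit normalization of the Haar measure on $M_{P_{0}}\cap K$, the first assertion is reduced to bounding $\sup_{m\in M_{P_{0}}\cap K}\|(1+\Delta)^{N}u_{k^{-1}m,\nu_{0}}\|_{L^{1}(\g[a]_{\g[q]})}$ uniformly in $k\in K$ by $\sum_{u\in F_{N}}\|u\phi\|_{L^{1}(X)}$ for a suitable finite set $F_{N}\subset\U(\g)$.

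To obtain this bound, I control the $L^{1}$-norm by a Schwartz seminorm via $\|f\|_{L^{1}(\g[a]_{\g[q]})}\leq C\sup_{Y}(1+\|Y\|^{2})^{r}|f(Y)|$ for $r>\dim(\g[a]_{\g[q]})/2$. The argument in the proof of Lemma \ref{FT lemma a^nu psi Schwartz}, applied with the polynomial $p(Y)=(1+\|Y\|^{2})^{r}$ and with $(1+\Delta)^{N}$ in the role of $u$, then bounds this seminorm uniformly in $k'\in K$ by a finite sum of suprema $\sup_{\Xi_{P_{0}}}|v\Rt[P_{0}]\phi|$ over $v$ in a finite subset of $\U(\g)$. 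A standard Sobolev-type embedding of the smooth vectors of the Banach representation in Proposition \ref{FuncSpaces prop L^1(Xi_P,J_P) Banach rep} into $C_{b}(\Xi_{P_{0}})$ yields an estimate $\sup|v\Rt[P_{0}]\phi|\leq C\sum_{|u|\leq s}\|uv\Rt[P_{0}]\phi\|_{L^{1}(\Xi_{P_{0}},J_{P_{0}})}$; combining with the $G$-equivariance of $\Rt[P_{0}]$ and the contractivity $\|\Rt[P_{0}]\psi\|_{L^{1}(\Xi_{P_{0}},J_{P_{0}})}\leq\|\psi\|_{L^{1}(X)}$ of Lemma \ref{RadTransExt lemma int_Xi int_N|phi| leq int_X|phi|} delivers the desired bound.

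For the second assertion, holomorphy of $\lambda\mapsto\Ft^{\un}_{P_{0},e}\phi(\zeta:\lambda)\eta(k)$ on $\g[a]_{\g[q]}^{*}(\overline{P}_{0},0)+\rho_{P_{0}}$ follows from holomorphy of the Euclidean Fourier transforms appearing in Proposition \ref{FT prop FT_e=FT_A circ Rt}, together with the estimates above justifying differentiation under the integral sign. Smoothness in $k$ follows from the continuous dependence of $k'\mapsto u_{k',\nu_{0}}$ on $k'$ in the Schwartz topology (Lemma \ref{FT lemma a^nu psi Schwartz}), iterated to obtain all higher derivatives. The main technical work is the chain of norm estimates in the third paragraph delivering the uniform $L^{1}$-bound in $k$.
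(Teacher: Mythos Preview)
Your overall architecture is the same as the paper's: you correctly identify that the estimate is obtained by feeding Proposition~\ref{FT prop FT_e=FT_A circ Rt} into the Euclidean Paley--Wiener estimate (Proposition~\ref{EFT prop Paley-Wiener est for E(V)}) after invoking Lemma~\ref{FT lemma a^nu psi Schwartz}, and your choice of $\nu_{0}$ and the selection of $R$ match the paper's.

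The one substantive difference is how you pass from the $L^{1}(\g[a]_{\g[q]})$-norm of $(1+\Delta)^{N}u_{k^{-1}m,\nu_{0}}$ back to $L^{1}(X)$-seminorms of $\phi$. You go through sup-norms on $\Xi_{P_{0}}$ and then invoke a Sobolev-type embedding $\Es^{1}(\Xi_{P_{0}},J_{P_{0}})\hookrightarrow C_{b}(\Xi_{P_{0}})$ to return to $L^{1}$; this is correct (the embedding with an explicit norm inequality is the Kr\"otz--Schlichtkrull result underlying Proposition~\ref{FuncSpaces prop Es^1(Xi_P,J_P) subset C_0(Xi_P)}), but it is a detour. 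The paper instead stays in $L^{1}$ throughout: after Leibniz and the boundedness of $a^{\rho_{P_{0}}-\nu_{0}}$ on $\exp(B+\Gamma_{P_{0}})$, it integrates over $m\in M_{P_{0}}\cap K$ and recognises the resulting triple integral over $(M_{P_{0}}\cap K)\times A_{\g[q]}\times N_{P_{0}}$ as the integral over the open set $P_{0}\cdot x_{0}\subseteq X$ via~(\ref{FT eq int_(P_0 cdot x0)=sum_W int_LcapK int_A_q int_N_P}), bounding it directly by $\|u(L_{k}\phi)\|_{L^{1}(X)}$. This avoids both the passage to sup-norms and the Sobolev embedding, and gives a slightly more elementary proof.

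For the second assertion your sketch is thin. Saying ``continuous dependence \ldots\ iterated to obtain all higher derivatives'' is not quite a proof of smoothness in $k$; you need to observe that the argument of Lemma~\ref{FT lemma a^nu psi Schwartz} applies equally to $u\phi$ for every $u\in\U(\g[k])$, so the family $k\mapsto u_{k,\nu_{0}}$ is in fact smooth (not merely continuous) $K\to\Ss(\g[a]_{\g[q]})$. The paper then phrases the conclusion by noting that $\Ft_{A_{\g[q]}}$ is continuous from the relevant closed subspace of $\Ss(\g[a]_{\g[q]})$ into the Fr\'echet space of holomorphic functions on $\g[a]_{\g[q]}^{*}(\overline{P}_{0},0)$, so the composite is a smooth family of holomorphic functions; joint smoothness and holomorphy of $(k,\lambda)\mapsto\Ft^{\un}_{P_{0},e}\phi(\zeta:\lambda)\eta(k)$ follow.
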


\begin{proof}
Let $\nu_{0}\in\g[a]^{*}_{\g[q]}(\overline{P}_{0},0)+\rho_{P_{0}}$. Let $k\in K$ and $m\in M_{P_{0}}\cap K$.
Then $-\nu_{0} + \rho_{P_{0}} \in \g[a]_{\g[q]}^{*}(P_{0},0)$, so
that by Lemma \ref{FT lemma a^nu psi Schwartz} the function
$$
\g[a]_{\g[q]}\ni Y\mapsto
    e^{-\nu_{0}(Y)}\Ht[P_{0}]\phi(k^{-1}m)\big(\exp(Y)\big)
$$
belongs to $\Ss(\g[a]_{\g[q]})$ and is supported in
$B+\Gamma_{P_{0}}$. We now apply Proposition \ref{EFT
prop Paley-Wiener est for E(V)} with $\Gamma_{P_{0}}$ in place of
$\Gamma$, so that $-\Cone_{\Gamma}\supseteq\g[a]_{\g[q]}^{*}(\overline P_{0}, 0)\cap \g[a]_{\g[q]}^{*}$, and
with $u=\Ht[P_{0}]\phi(k^{-1}m)\circ\log$. This gives for each $N \in \N$ the existence of a constant $C_{\nu_{0}, N}>0$, such that for all
$\lambda\in\nu_{0}+\g[a]_{\g[q]}^{*}(\overline P_{0} ,0)$
$$
\Big|\Ft_{A_{q}}\Big(\Ht[P_{0}]\phi(k^{-1}m)\Big)(\lambda)\Big|
\leq C_{N,m^{-1}k}(\phi)(1+\|\lambda\|)^{-N}e^{H_{B}(-\Re(\lambda))}.
$$
Here
$$
C_{N,m^{-1}k}(\phi)
=C_{\nu_{0}, N}
    \big\|(1+\Delta_{A_{\g[q]}})^{N}\Big(e^{-\nu_{0}\circ\log}\Ht[P_{0}]\phi(k^{-1}m)\Big)\big\|_{L^{1}(A_{\g[q]})}
$$
We note that the constant $C_{\nu_{0}, N}$ is
independent of the function $\phi$.
In view of the definition of the unnormalized Fourier transform,
(\ref{FT eq def Ft^un}), and Proposition
\ref{FT prop FT_e=FT_A circ Rt} we now obtain
the estimate
\begin{eqnarray*}
\lefteqn{
    \|\Ft^{\un}_{P_{0},e}\phi(\zeta:\lambda)\eta(k)\|
    =\Big\|\int_{M_{P_{0}}\cap K}\Ft_{A_{q}}\Big(\Ht[P_{0}]\phi(k^{-1}m)\Big)(\lambda)\zeta(m)\eta\,dm\Big\|
    }\\
&&\leq\int_{M_{P_{0}}\cap K}\Big|\Ft_{A_{q}}\Big(\Ht[P_{0}]\phi(k^{-1}m)\Big)(\lambda)\Big|\ \|\zeta(m)\eta\|\,dm
\leq\widetilde{C}_{N,k}(\phi) (1+\|\lambda\|)^{-N}e^{H_{B}(-\Re(\lambda))}\|\eta\|
\end{eqnarray*}
for all $k\in K$ and $\lambda\in\nu_{0}+\g[a]_{\g[q]}^{*}(\overline P_{0},0)$.
For the last inequality we used that $\zeta$ is unitary and
we wrote
$$
\widetilde{C}_{N,k}(\phi) =
 \int_{M_{P_{0}} \cap K} C_{N,m^{-1}k}(\phi)\, dm.
$$
Using Leibniz' rule, the fact that $M_{P_{0}}\cap K$
centralizes $A_{\g[q]}$, and the fact that $a^{\rho_{P_{0}}-\nu_{0}}$ is bounded for $a\in\exp(B +
\Gamma_{P_{0}})$, we infer that there exists a constant ${\widetilde C}_{\nu_{0}, N}$ and a finite subset $F_{N}'
\subseteq S_{2N}(\g[a]_{\g[q]})$, such that
\begin{align*}
\widetilde{C}_{N,k}(\phi)
 &\leq{\widetilde C}_{\nu_{0}, N}\sum_{u \in F_{N}'} \int_{M_{P_{0}}\cap K}
 \big\|u\Big(a\mapsto \Rt[P_{0}]\big(L_{k}\phi\big)(ma\cdot\xi_{P_{0}})\Big)\big\|_{L^{1}(A_{\g[q]})}\,dm\\
&\leq
\widetilde{C}_{\nu_{0}, N}\sum_{u \in F_{N}'} \int_{M_{P_{0}}\cap K}
           \int_{A_{\g[q]}}\int_{N_{P_{0}}}
                |u\big(L_{k}\phi\big)(man\cdot x_{0})|\,dn\,da\,dm.
\end{align*}
We now note that $u(L_{k}\phi) = L_{k}([\Ad(k)^{-1} u]\phi)$ and that
$\Ad(k)^{-1} u$ is expressible in terms of a basis of $\U_{2N}(\g)$,
with coefficients that are continuous, hence bounded, functions of
$k \in K$. Combining this observation with
 (\ref{FT eq int_(P_0 cdot x0)=sum_W int_LcapK int_A_q int_N_P})
we see that there exists a finite subset $F_{N} \subseteq\U(\g)$
such that $\widetilde{C}_{N,k}(\phi)\leq \widetilde{C}_{\nu_{0}, N}\sum_{u \in F_{N}} \|u \phi\|_{L^1(X)}$.
In view of the previous estimates, we now conclude that for all
$\lambda\in\nu_{0}+\g[a]_{\g[q]}^{*}(\overline{P}_{0},0)$
$$
\|\Ft^{\un}_{P_{0},e}\phi(\zeta:\lambda)\eta(k)\| \leq
\widetilde{C}_{\nu_{0},N}\sum_{u\in F_{N}}\|u
\phi\|_{L^{1}(X)}(1+\|\lambda\|)^{-N}e^{H_{B}(-\Re(\lambda))}.
$$
Since
$\Gamma\subset\g[a]_{\g[q]}^{*}(\overline{P}_{0},0)$ is a
cone in $\g[a]_{\g[q],\C}^{*}$ generated by a compact subset, we have
for sufficiently large $R>0$ that $\{\lambda\in\Gamma:\|\lambda\|>R\}
\subseteq\nu_{0}+\g[a]_{\g[q]}^{*}(\overline{P}_{0},0)$.
The first statement now follows.

We address the second statement. Let $U$ be an open subset of
$\g[a]_{\g[q],\C}^{*}$ with compact closure in
$\g[a]_{\g[q]}^{*}(\overline P_{0}, 0)+\rho_{P_{0}}$. Then it
suffices to prove the smoothness and holomorphy of (\ref{FT eq (k,
lambda) mapsto Ft^(un)_(P,e)phi(lambda)(k)}) on $K \times U$.

Put $S = B + \Gamma_{P_{0}}$.
Then $\g[a]_{\g[q]}^{*}(\overline P_{0}, 0)\subseteq-\Cone_{S}$.
We note that
$\Ss_{S}(A_{\g[q]})=\{\psi \in \Ss(A_{\g[q]})\, : \; \supp(\psi) \subseteq S\}$
is a closed subspace of $\Ss(A_{\g[q]})$.
As in the proof of Proposition \ref{FT prop FT_e=FT_A circ Rt},
it follows that (\ref{Ft eq a_q ni Y mapsto Rt phi(k exp(Y))}) is a continuous family in $\Ss_{S}(A_{\g[q]})$,
with family parameter $k\in K$. As this also applies to $u \phi$, for every $u \in \U(\g[k])$,
it actually follows that (\ref{Ft eq a_q ni Y mapsto Rt phi(k exp(Y))}) is a smooth family in $\Ss_{S}(A_{\g[q]})$.
We now note that the Euclidean Fourier transform
defines a continuous linear map
$$
\Ft_{A_{\g[q]}}: \Ss_{S}(A_{\g[q]}) \to \mathcal{O}(-\Cone_{S})
=\mathcal{O}(\g[a]_{\g[q]}^{*}(\overline P_{0}, 0)).
$$
Here $\mathcal{O}(-\Cone_{S})$ denotes the space of holomorphic functions on $-\Cone_{S}$,
equipped with the usual Fr\'echet topology.
Combining this with the above assertion about smooth families we infer that $k \mapsto \Ft_{A_{\g[q]}} \Big(e^{-\lambda_{0}\circ\log}\Ht[P_{0}]\phi(k)\Big)$
is a smooth function on $K$ with values in the Fr\'echet space
$\mathcal{O}(\g[a]_{\g[q]}^{*}(\overline P_{0}, 0))$. Fix $\lambda_{0}\in\g[a]_{\g[q]}^{*}(\overline{P}_{0},0)+\rho_{P_{0}}$
such that $U\subseteq \g[a]_{\g[q]}^{*}(\overline{P}_{0},0)+\lambda_{0}$.
It follows that
$$
(k,\lambda)\mapsto
\Ft_{A_{\g[q]}}\Big(\exp^{-\lambda_{0}\circ\log}\Ht[P_{0}]\phi(k)\Big)(\lambda)
=\Ft_{A_{\g[q]}}\Big(\Ht[P_{0}]\phi(k)\Big)(\lambda+\lambda_{0})
$$
is smooth on $K\times\g[a]_{\g[q]}^{*}(\overline{P}_{0},0)$ and in addition holomorphic in the second variable.
As $U\subseteq \lambda_{0}+\g[a]_{\g[q]}^{*}(\overline{P}_{0},0)$, it now follows from (\ref{FT eq def Ft^un}) and Proposition \ref{FT prop FT_e=FT_A circ Rt} that $\Ft^{\un}_{P_{0},e}\phi(\zeta:\lambda) \eta(k)$ is smooth in
$(k,\lambda) \in K \times U$ and in addition holomorphic in $\lambda \in U$.
\end{proof}

%%% ----------------------------------------------------------------------
\subsection{The $\tau$-spherical Fourier transform $\Ft_{\overline{P}_{0},\tau}$}
\label{subsection The tau-spherical Fourier transform}
%%% ----------------------------------------------------------------------
Let $\vartheta\subset\widehat{K}$ be finite. For a representation $(\pi,V)$ of $K$ we write
$V_{\vartheta}$ for the space of $K$-finite vectors with isotypes contained in $\vartheta$.

Let $V_{\tau}=C(K)_{\vartheta}$, where the set $\vartheta$ of isotypes is taken with respect to the left-regular representation of $K$ on $C(K)$,
and let $\tau=\tau_{\vartheta}$ be the
representation of $K$ on $V_{\tau}$ obtained from the
right-action. We equip $V_{\tau}$ with the inner product
induced from $L^{2}(K)$. With respect to this inner product,
$\tau$ is unitary.

As before, let $P_{0} \in \Psg(\g[a]_{\g[q]})$. In this section we will consider
the $\tau$-spherical Fourier transform $\Ft_{\overline P_{0},\tau}$ as defined in
Section 6 of
\cite{vdBan&Schlichtkrull_FourierTransformOnASemisimpleSymmetricSpace}.
Before we can write down the definition of this Fourier transform,
we need to introduce some notation.

We denote the restriction of $\tau$ to $M_{P_{0}}\cap K$ by
$\tau_{M_{P_{0}}}$. Let $\leftsuperscript{\circ}{\Cs}(\tau)$ be the formal
direct sum of Hilbert spaces
$$
\leftsuperscript{\circ}{\Cs}(\tau)=
\bigoplus_{w\in\WGs}\leftsuperscript{0}{\Cs(\tau)}_{w}.
$$
Here
$$
\leftsuperscript{0}{\Cs(\tau)}_{w}=C^{\infty}(M_{P_{0}}/w(M_{P_{0}}\cap H)w^{-1}:\tau_{M_{P_{0}}})
$$
is the finite dimensional Hilbert space of $\tau_{M_{P_{0}}}$-spherical functions on $M_{P_{0}}/w(M_{P_{0}}\cap H)w^{-1}$,
i.e., the Hilbert space of smooth functions $f:M_{P_{0}}/w(M_{P_{0}}\cap H)w^{-1}\to V_{\tau}$
satisfying
$$
f(k\cdot x)=\tau(k)f(x)\qquad(k\in M_{P_{0}}\cap K, x\in M_{P_{0}}/w(M_{P_{0}}\cap H)w^{-1}).$$
The inner product on
$\leftsuperscript{0}{\Cs(\tau)}_{w}$ is induced from the inner
product on the space of square integrable functions.

Let
 $\zeta\in \widehat{M_{P_{0}}}_{H}$. The space of smooth
functions $f:K\to\H_{\zeta}\otimes V_{\tau}$ satisfying
$$
f(mk_{0}k)
=\big(\zeta(m)\otimes\tau(k)^{-1}\big)f(k_{0})\quad\text{for}\quad
    k,k_{0}\in K, m\in M_{P_{0}}\cap K
$$
is denoted by $\Es(K:\zeta:\tau)$.
Note that evaluation at the identity element induces a linear isomorphism
$$
f \mapsto f(e),\quad
\Es(K:\zeta:\tau)\to \big(\H_{\zeta}\otimes V_{\tau}\big)^{M_{P_{0}} \cap K}.
$$
Let $\overline{V(\zeta)}$ be
the conjugate vector space of $V(\zeta)$.
Following \cite[p. 528]{vdBan&Schlichtkrull_FourierTransformOnASemisimpleSymmetricSpace}, we
define a linear map
$$
\Es(K:\zeta:\tau)\otimes\overline{V(\zeta)}\to\leftsuperscript{0}{\Cs(\tau)};
\quad T\mapsto\psi_{T}$$
by
$$
\big(  \psi_{f\otimes\eta}\big  )_{w}(m) =
\langle f(e) | \zeta(m)\eta_{w} \rangle_{\H_{\zeta}}
\qquad (m \in M_{P_{0}}/w(M_{P_{0}}\cap H )w^{-1}),
$$
for $f\in \Es(K:\zeta:\tau)$ and $\eta\in\overline{V(\zeta)}$.
Here $\langle\cdot|\cdot\rangle_{\H_{\zeta}}$ denotes the inner product on $\H_{\zeta}$.
Let $\Ds(X:\tau)$ be the space of compactly supported smooth
functions $f:X\to V_{\tau}$ satisfying $f(k\cdot x)=\tau(k)f(x)$.
We define $\varsigma:\Ds(X)_{\vartheta}\to \Ds(X:\tau)$ by $\varsigma(\phi)(x)(k)=\phi(kx)$.
This map is a bijection. (See \cite[Lemma
5]{vdBan&Schlichtkrull_FourierTransformOnASemisimpleSymmetricSpace}.)

Restriction to $K$ induces a linear isomorphism from
$\Es(P_{0}:\zeta:\lambda)_{\vartheta}$ onto $\Es(K:\zeta:\tau)$.
Using this isomorphism and the linear isomorphism
$\overline{V(\zeta)} \to V(\zeta)^*$
(defined via the Hermitian inner product on $V(\zeta)$) we may view
$\Ft^{\un}_{P_{0}}(\phi)(\zeta : \lambda)$ as an element of
$\Es(K:\zeta:\tau)\otimes\overline{V(\zeta)}$, for all
$\lambda \in \g[a]_{\g[q]}^{*}(\overline P_{0}, 0) + \rho_{P_{0}}$.
It thus makes sense
to consider inner products between
$\Ft_{P_{0}}^{\un}\phi(\zeta:\lambda)$ and elements in
$\Es(K:\zeta:\tau)\otimes\overline{V(\zeta)}$.

The $\tau$-spherical Fourier Transform $\Ft_{\overline{P}_{0},\tau}$
is the linear transform from the space $\Ds(X:\tau)$ to the space of
meromorphic $\leftsuperscript{0}{\Cs(\tau)}$-valued functions on
$\g[a]_{\g[q],\C}^{*}$,
defined in \cite[(59)]{vdBan&Schlichtkrull_FourierTransformOnASemisimpleSymmetricSpace}.
For our purposes, it is sufficient to use the following characterization
in terms of the unnormalized Fourier transform discussed in the previous section.
Let $\widehat{M_{P_{0}}}_{H}(\tau)$ denote the finite collection
of representations $\zeta \in \widehat{M_{P_{0}}}_{H}$ such that
$\zeta|_{M_{P_{0}}\cap K}$ and $\tau|_{M_{P_{0}} \cap K}$ have a
$(M_{P_{0}}\cap K)$-type in common.
Then the $\tau$-spherical Fourier transform is completely determined by the requirement
that
\begin{equation}\label{FT eq def tau-sph Ft}
\langle \Ft_{\overline{P}_{0},\tau}\varsigma(\phi)(\lambda)|\psi_{f\otimes\eta}\rangle
=\langle \Ft_{P_{0}}^{\un}\phi(\zeta:\lambda)|\big(A(\overline{P}_{0}:P_{0}:\zeta:\overline{\lambda})^{-1}f\big)\otimes\eta\rangle
\end{equation}
for $\phi\in \Ds(X)_{\vartheta}$, $\zeta \in\widehat{M_{P_{0}}}_{H}(\tau)$, $f\in \Es(K:\zeta:\tau)$,
$\eta\in\overline{V(\zeta)}$ and generic $\lambda \in \g[a]_{\g[q],\C}^{*}$.
Here
$$
A(\overline{P}_{0}:P_{0};\zeta:\lambda):
\Es(P_{0}:\zeta:\lambda)\to\Es(\overline{P}_{0}:\zeta:\lambda)
$$
is the standard intertwining operator. It is initially defined
for $\lambda\in\g[a]_{\g[q]}^{*}(\overline{P}_{0},-R)$,
with  $R>0$ sufficiently large, by the
absolutely convergent integral
$$
A(\overline{P}_{0}:P_{0};\zeta:\lambda)\phi(g) =
\int_{\overline{N_{P_{0}}}}\phi(ng)\,dn, \qquad
(\phi\in\Es(P_{0}:\zeta:\lambda),\; g \in G).
$$
For the remaining $\lambda \in \g[a]_{\g[q],\C}^{*}$ it is defined
by meromorphic continuation. (See \cite[Theorem 4.2]{vdBan_ThePrincipalSeriesForAReductiveSymmetricSpaceI}, or alternatively \cite[Theorem 1.13]{VoganWallach_IntertwiningOperatorsForRealReductiveGroups}.) In (\ref{FT eq def tau-sph Ft}) the topological linear isomorphisms $\Es(Q:\zeta:\lambda)_{\vartheta}\to \Es(K:\zeta:\tau)$,
given by $f \mapsto \varsigma(f\big|_K)$, were used for $Q=P_{0}$ and
$Q=\overline{P}_{0}$ to view
$A(\overline{P}_{0}:P_{0};\zeta:\lambda)$ as an endomorphism of the space
$\Es(K:\zeta:\tau)$.

To see that the definition for $\Ft_{\overline{P}_{0},\tau}$ is in
fact equivalent to the defining identity
\cite[(59)]{vdBan&Schlichtkrull_FourierTransformOnASemisimpleSymmetricSpace},
use subsequently loc. cit. (59), (50) with $P$ and $P'$ replaced
by $\overline{P}_{0}$ and $P_{0}$ respectively, (47) and the
identity similar to the one in Proposition 3 for the unnormalized
Fourier transforms. The last mentioned identity is obtained from
the proof of Proposition 3 by using (30) instead of (53).

If $\phi\in \Es^{1}(X)_{\vartheta}$ satisfies $\supp(\Rt[P_{0}]\phi)\subseteq\Xi_{P_{0}}(B+\Gamma_{P_{0}})$,
then by definition the
unnormalized Fourier transform
$\Ft^{\un}_{P_{0},e}\phi(\zeta:\lambda)$ is an element of
the space $\Hom\big(V(\zeta,e), \Es(P_{0}:\zeta:\lambda)\big)$ for
$\lambda\in\g[a]^{*}_{\g[q]}(\overline{P}_{0},0)+\rho_{P_{0}}$.
In accordance with (\ref{FT eq def tau-sph Ft})
we now define the (first component of the) spherical
Fourier transform
$\Ft_{\overline{P}_{0},\tau,e}\varsigma(\phi)(\lambda)$ of such a
function $\phi$ to be the meromorphic
$\leftsuperscript{0}{\Cs(\tau)}_{e}$-valued
function on $\g[a]_{\g[q]}^{*}(\overline P_{0}, 0) + \rho_{P_{0}}$  given by
\begin{equation}\label{FT eq def tau-sph Ft_e}
\langle\Ft_{\overline{P}_{0},\tau,e}\varsigma(\phi)(\lambda)|\psi_{f\otimes\eta}\rangle
=\langle\Ft^{\un}_{P_{0},e}\phi(\zeta:\lambda)|\big(A(\overline{P}_{0}:P_{0}:\zeta:\overline{\lambda})^{-1}f\big)\otimes\eta\rangle
\end{equation}
for $\zeta\in\widehat{M_{P_{0}}}_{H}(\tau)$,
$f\in \Es(K:\zeta:\tau)$, $\eta\in\overline{V(\zeta,e)}$ and
generic  $\lambda\in \g[a]_{\g[q]}^{*}(\overline{P}_{0},0)+\rho_{P_{0}}$.
Note that this definition is compatible with (\ref{FT eq def tau-sph Ft}).
If $\phi\in\Ds(X)_{\vartheta}$, then
$\Ft_{\overline{P}_{0},\tau,e}\varsigma(\phi)(\lambda)
=\pr_{e}\Ft_{\overline{P}_{0},\tau}\varsigma(\phi)(\lambda)$,
for generic $\lambda \in \g[a]_{\g[q]}^{*}(\overline P_{0}, 0) +
\rho_{P_{0}}$. Here $\pr_{e}$ denotes the projection
$\leftsuperscript{0}{\Cs(\tau)}\to\leftsuperscript{0}{\Cs(\tau)}_{e}$.

Let $A(\overline P_{0}: P_{0}: \zeta : \lambda)_\tau$ denote
the standard intertwining operator, viewed as an endomorphism
of $\Es(K: \zeta : \tau)$. It follows from \cite[Lemma
16.6]{vdBan_ThePrincipalSeriesForAReductiveSymmetricSpaceII}
that there exists a polynomial function
$\pi: \g[a]_{\g[q],\C}^{*} \to \C$, which is
a product of linear factors
$\lambda\mapsto\langle\lambda,\alpha\rangle-c$ with
$\alpha\in\Sigma(\g,\g[a]_{\g[q]})$ and $c\in\R$, such that $\lambda\mapsto
\pi(\lambda)A(\overline{P}_{0}:P_{0}:\zeta:\overline{\lambda})_\tau^{-1}$
is a holomorphic $\End(\Es(K : \zeta : \tau))-$valued function
on $\g[a]_{\g[q]}^{*}(\overline{P}_{0},0)$ and
satisfies a polynomial estimate of the form
$$
\|\pi(\lambda)A(\overline{P}_{0}:P_{0}:\zeta:\overline{\lambda})_\tau^{-1}\|
\leq C ( 1 + \|\lambda\|)^N\qquad (\lambda \in \g[a]_{\g[q]}^{*}(\overline P_{0} : 0)),
$$
for some $C > 0$ and $N \in \N$. Here we note that the space
$\End(\Es(K : \zeta : \tau))$ is finite dimensional.

The following proposition is a direct corollary of
Proposition \ref{FT prop F^un PW-est} and
(\ref{FT eq def tau-sph Ft_e}).

\begin{prop}\label{FT prop PW-est for Ft_tau}
Let $B$ be a compact subset of $\g[a]_{\g[q]}$. If
$\phi\in \Es^{1}(X)_{\vartheta}$ satisfies
\begin{equation}\label{FT eq supp Rt phi subseteq Xi(B+Gamma)}
\supp(\Rt[P_{0}]\phi)\subseteq\Xi_{P_{0}}(B+\Gamma_{P_{0}}),
\end{equation}
then the map
$$
\g[a]^{*}_{\g[q]}(\overline{P}_{0},0)+\rho_{P_{0}}\to\leftsuperscript{0}{\Cs(\tau)}_{e};\quad
    \lambda\mapsto \pi(\lambda)\Ft_{\overline{P}_{0},\tau,e}\varsigma(\phi)(\lambda)
$$
is holomorphic. Let  $\Gamma$ be
any cone in $\g[a]_{\g[q],\C}^{*}$ generated by a
compact subset of $\g[a]_{\g[q]}^{*}(\overline{P}_{0},0)$.
Then there exists a  constant $R>0$
and for every $N\in\N$ a constant $C_{N}>0 $
and a finite subset $F_{N} \subseteq \U(\g)$
such that for all $\phi\in \Es^{1}(X)_{\vartheta}$ satisfying
(\ref{FT eq supp Rt phi subseteq Xi(B+Gamma)}) the estimate
$$
\|\pi(\lambda)\Ft_{\overline{P}_{0},\tau,e}\varsigma(\phi)(\lambda)\|
\leq C_{N}\sum_{u\in F_{N}}\|u\phi\|_{L^{1}(X)}
(1+\|\lambda\|)^{-N}e^{H_{B}(-\Re(\lambda))}
$$
 is valid for all $\lambda\in\Gamma$ with $\|\lambda\|>R$.
\end{prop}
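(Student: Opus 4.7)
The plan is to obtain both assertions as direct consequences of the defining identity (\ref{FT eq def tau-sph Ft_e}), Proposition \ref{FT prop F^un PW-est}, and the polynomial bound on the inverse intertwining operator quoted from \cite[Lemma 16.6]{vdBan_ThePrincipalSeriesForAReductiveSymmetricSpaceII}. Since $\widehat{M_{P_{0}}}_{H}(\tau)$ is finite and the spaces $\Es(K:\zeta:\tau)\otimes\overline{V(\zeta,e)}$ are finite dimensional, it suffices to establish the holomorphy and the estimate of the pairing of $\pi(\lambda)\Ft_{\overline{P}_{0},\tau,e}\varsigma(\phi)(\lambda)$ against $\psi_{f\otimes\eta}$ for a fixed $\zeta\in\widehat{M_{P_{0}}}_{H}(\tau)$ and $f\otimes\eta$ ranging over a basis.

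First I would verify the holomorphy. By Proposition \ref{FT prop F^un PW-est}, the map $\lambda\mapsto\Ft^{\un}_{P_{0},e}\phi(\zeta:\lambda)\eta(k)$ is holomorphic on $\g[a]_{\g[q]}^{*}(\overline{P}_{0},0)+\rho_{P_{0}}$ for each $k\in K$, and hence $\lambda\mapsto \Ft^{\un}_{P_{0},e}\phi(\zeta:\lambda)$ is a holomorphic $\Hom(V(\zeta,e),\Es(K:\zeta:\tau))$-valued function there. The map $\lambda\mapsto\pi(\lambda)A(\overline{P}_{0}:P_{0}:\zeta:\overline{\lambda})^{-1}_{\tau}$ is holomorphic on $\g[a]_{\g[q]}^{*}(\overline{P}_{0},0)$ by the quoted result. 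Substituting into (\ref{FT eq def tau-sph Ft_e}) and multiplying both sides by $\pi(\lambda)$ gives the desired holomorphy of $\pi(\lambda)\Ft_{\overline{P}_{0},\tau,e}\varsigma(\phi)(\lambda)$ on $\g[a]_{\g[q]}^{*}(\overline{P}_{0},0)+\rho_{P_{0}}$.

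For the estimate, fix the cone $\Gamma$ and let $N_{0}\in\N$ and $C_{0}>0$ be such that $\|\pi(\lambda)A(\overline{P}_{0}:P_{0}:\zeta:\overline{\lambda})^{-1}_{\tau}\|\leq C_{0}(1+\|\lambda\|)^{N_{0}}$ on $\g[a]_{\g[q]}^{*}(\overline{P}_{0},0)$, taking a maximum over the finitely many $\zeta$. Given $N\in\N$, apply Proposition \ref{FT prop F^un PW-est} with $N$ replaced by $N+N_{0}$ to obtain the existence of $R>0$, $\widetilde{C}_{N}>0$ and a finite set $F_{N}\subseteq\U(\g)$ such that the estimate for $\Ft^{\un}_{P_{0},e}\phi(\zeta:\lambda)\eta(k)$ holds for all $\lambda\in\Gamma$ with $\|\lambda\|>R$ and all $\zeta\in\widehat{M_{P_{0}}}_{H}(\tau)$. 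Pairing both sides of (\ref{FT eq def tau-sph Ft_e}) against $\psi_{f\otimes\eta}$, multiplying through by $\pi(\lambda)$, and combining the two bounds yields
$$
|\langle\pi(\lambda)\Ft_{\overline{P}_{0},\tau,e}\varsigma(\phi)(\lambda)\,|\,\psi_{f\otimes\eta}\rangle|
\leq C_{0}\widetilde{C}_{N}\sum_{u\in F_{N}}\|u\phi\|_{L^{1}(X)}(1+\|\lambda\|)^{-N}e^{H_{B}(-\Re(\lambda))}\|f\|\|\eta\|,
$$
from which the claimed estimate follows after summing over a finite basis of the space $\bigoplus_{\zeta}\Es(K:\zeta:\tau)\otimes\overline{V(\zeta,e)}$ and absorbing all constants.

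I do not expect any serious obstacle here: the result is essentially bookkeeping, trading $N+N_{0}$ decay from Proposition \ref{FT prop F^un PW-est} against the $N_{0}$ growth from the intertwining operator, and using finite dimensionality of the $\tau$-typical pieces to move between operator estimates and scalar pairings. The only point demanding mild care is making sure that the bound on $\pi(\lambda)A^{-1}_{\tau}$ is uniform in $\zeta$, which is automatic since $\widehat{M_{P_{0}}}_{H}(\tau)$ is finite.
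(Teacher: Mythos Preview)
Your proposal is correct and follows exactly the approach the paper intends: the paper states that the proposition ``is a direct corollary of Proposition \ref{FT prop F^un PW-est} and (\ref{FT eq def tau-sph Ft_e})'', and what you have written is precisely the unpacking of that corollary, trading $N+N_{0}$ decay from Proposition \ref{FT prop F^un PW-est} against the polynomial growth of $\pi(\lambda)A(\overline P_{0}:P_{0}:\zeta:\overline{\lambda})_{\tau}^{-1}$ and using finite dimensionality to pass between scalar pairings and norm estimates.
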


%%% ----------------------------------------------------------------------
\subsection{Function Spaces}
\label{subsection Function Spaces}
%%% ----------------------------------------------------------------------
As before, we assume that $P_{0}$ is a minimal $\sigma\circ\theta$-stable parabolic subgroup
containing $A$. Let
$$\invCone{P_{0}}=\bigcap_{w\in\WG[K\cap H]}\Gamma_{P_{0}^{w}}.$$
For a subset $S$ of $\g[a]_{\g[q]}$, we define $\Es^{1}(X;S)
=\big\{\phi\in\Es^{1}(X):\supp(\phi)\subseteq X(S)\big\}$.
We further define
$$
\Es^{1}_{P_{0}}(X)
=\RDs(X)+\bigcup_{\genfrac{}{}{0pt}{}{B\subset\g[a]_{\g[q]}}{B\textnormal{compact}}}
\Es^{1}(X;B+\invCone{P_{0}}).
$$
Here $\RDs(X)$ denotes the space of rapidly decreasing functions
on $X$, which is the intersection of the Harish-Chandra
$L^{p}$-Schwartz spaces $\Cs^{p}(X)$ for $p>0$. (See \cite[Section
17]{vdBan_ThePrincipalSeriesForAReductiveSymmetricSpaceII}.)

\begin{rem}
If $X$ is a Riemannian symmetric space (hence $\sigma=\theta$) or $X$ is a
Lie group (i.e., $G=G_{0}\times G_{0}$ for some reductive Lie
group $G_{0}$ of the Harish-Chandra class and
$H=\textnormal{diag}(G_{0})$), then $\WG[K\cap H]$ equals the full
Weyl group $W$. In these cases the cone $\invCone{P_{0}}$ is the
trivial cone $\{0\}$ so that $\Es^{1}_{P_{0}}(X)=\RDs(X)$ is
independent of $P_{0}$.
\end{rem}

\begin{prop}\label{FuncSpaces prop Ls^1_P G-invariant}
$\Es^{1}_{P_{0}}(X)$ is a $G$-invariant subspace of $\Es^{1}(X)$.
\end{prop}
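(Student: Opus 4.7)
The plan is to check $G$-invariance of each of the two summands in the definition of $\Es^{1}_{P_{0}}(X)$, using that $\Es^{1}(X)$ is itself $G$-invariant (shown in Section~\ref{subsection Spaces of functions and distributions}). The space $\RDs(X)$, being the intersection of the Harish-Chandra $L^{p}$-Schwartz spaces, is standardly $G$-invariant, so the content of the proposition lies in handling the second summand.

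Let $\phi\in\Es^{1}(X; B+\invCone{P_{0}})$ with $B\subseteq\g[a]_{\g[q]}$ compact. Since $X(S)=X(\WG[K\cap H]\cdot S)$ and $\invCone{P_{0}}$ is $\WG[K\cap H]$-invariant, I may replace $B$ by its closed convex $\WG[K\cap H]$-invariant hull, still compact. Fix $g\in G$; I need a compact $B'\subseteq\g[a]_{\g[q]}$ with $g\cdot X(B+\invCone{P_{0}})\subseteq X(B'+\invCone{P_{0}})$, from which the $G$-invariance will follow since $\supp(\pi(g)\phi)=g\cdot\supp(\phi)$ and $\pi(g)\phi\in\Es^{1}(X)$.

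For $w\in\WG[K\cap H]$ the conjugate $P_{0}^{w}$ again belongs to $\Psg(\g[a]_{\g[q]})$, and by definition $\invCone{P_{0}}\subseteq\Gamma_{P_{0}^{w}}$. Let $y=k\exp(Y)\cdot x_{0}\in X(B+\invCone{P_{0}})$, so that $Y\in B+\invCone{P_{0}}$. By Van den Ban's convexity theorem (Theorem~\ref{RadTransSupp Thm Convexity theorem}) applied to $G/H$ and $P_{0}^{w}$,
$$
\Acomp_{P_{0}^{w}}(yH)=\Acomp_{P_{0}^{w}}(\exp(Y)H)\subseteq\ch(\WG[K\cap H]\cdot Y)+\Gamma_{P_{0}^{w}}\subseteq B+\invCone{P_{0}}+\Gamma_{P_{0}^{w}}=B+\Gamma_{P_{0}^{w}},
$$
using $\WG[K\cap H]$-invariance of $B+\invCone{P_{0}}$ in the second inclusion. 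Applying the cocycle relation of Lemma~\ref{RadTrans lemma A_P=pi_(a_P cap q)A_KAN}, I then obtain
$$
\Acomp_{P_{0}^{w}}(gyH)\subseteq\Acomp_{P_{0}^{w}}(gK)+\Acomp_{P_{0}^{w}}(yH)\subseteq B_{g}+B+\Gamma_{P_{0}^{w}},
$$
where $B_{g}:=\bigcup_{w\in\WG[K\cap H]}\Acomp_{P_{0}^{w}}(gK)$ is compact. Writing $gy=k'\exp(Y')\cdot x_{0}$, the convexity theorem also gives $Y'\in\Acomp_{P_{0}^{w}}(gyH)$, hence $Y'\in(B+B_{g})+\Gamma_{P_{0}^{w}}$ for every $w\in\WG[K\cap H]$.

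The main obstacle, and the place where the nontrivial convex-geometric input enters, is the final step: converting the intersection condition into membership of a set of the form $B'+\invCone{P_{0}}$. Since each $\Gamma_{P_{0}^{w}}$ is generated by the finite set $\{H_{\alpha}:\alpha\in\Sigma(\g,\g[a]_{\g[q]};P_{0}^{w})\}$, Lemma~\ref{FuncSpaces lemma cap_k (B+Gamma_k) subseteq B'+cap_k Gamma_k} applies and yields a compact $B'\subseteq\g[a]_{\g[q]}$ with
$$
\bigcap_{w\in\WG[K\cap H]}\bigl((B+B_{g})+\Gamma_{P_{0}^{w}}\bigr)\subseteq B'+\bigcap_{w\in\WG[K\cap H]}\Gamma_{P_{0}^{w}}=B'+\invCone{P_{0}}.
$$
Thus $Y'\in B'+\invCone{P_{0}}$, so $gy\in X(B'+\invCone{P_{0}})$, which is the required inclusion.
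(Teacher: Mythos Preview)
Your proof is correct and follows essentially the same route as the paper's: reduce to showing $g\cdot X(B+\invCone{P_0})\subseteq X(B'+\invCone{P_0})$, use the cocycle relation for $\Acomp$ (Lemma~\ref{RadTrans lemma A_P=pi_(a_P cap q)A_KAN}) together with Van den Ban's convexity theorem to trap the $\g[a]_{\g[q]}$-component of a translated point in each $\Gamma_{P_0^w}$-shifted compact set, and then invoke Lemma~\ref{FuncSpaces lemma cap_k (B+Gamma_k) subseteq B'+cap_k Gamma_k}. The only cosmetic difference is that the paper works with a single $\Acomp_{P_0}$ and uses $W_{K\cap H}$-invariance of the resulting set $S$ to pass to the intersection over $w$, whereas you apply $\Acomp_{P_0^w}$ for each $w$ directly; also, your appeal to the convexity theorem for the inclusion $Y'\in\Acomp_{P_0^w}(gyH)$ is unnecessary, since $Y'=\Acomp_{P_0^w}(k'\exp Y')$ already gives this by left $K$-invariance.
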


\begin{proof}
Let $B \subseteq \g[a]_{\g[q]}$ be a compact subset.
Since $\RDs(X)$ and $\Es^{1}(X)$ are $G$-invariant,
it suffices to show that for every  $g \in G$
there exists a compact subset $B' \subseteq \g[a]_{\g[q]}$ such that
\begin{equation}\label{FuncSpaces eq g cdot X(B+Gamma^W_(K capH)) subseteq X(B'+Gamma^W_(K capH))}
g\cdot X(B + \Gamma_{P_{0}}^{W_{K\cap H}})\subseteq X(B' + \Gamma_{P_{0}}^{W_{K \cap H}}).
\end{equation}
As $Kg\cdot X(B + \Gamma_{P_{0}}^{W_{K\cap H}})$ is a
$K$-invariant subset of $X$, there exists a unique $W_{K\cap
H}$-invariant compact subset $S \subseteq \g[a]_{\g[q]}$ such that $Kg\cdot X(B+\invCone{P_{0}})=X(S)$.
We will finish the proof by showing that $S$ is contained in a set of the form
$B'+\Gamma_{P_{0}}^{W_{K\cap H}}$ with $B'$ compact.

Recall the definition of $\Acomp_{P_{0}}$ in (\ref{RadTrans eq A_P def}). Let $a\in A$ be such that $g\in KaK$.
Then by the convexity theorem of Van
den Ban (Theorem \ref{RadTransSupp Thm Convexity theorem}),
$$
\ch(S)+\Gamma\big(\Sigma_{-}(\g,\g[a]_{\g[q]};P_{0})\big) =
\Acomp_{P_{0}}\big(aK\exp(B+\invCone{P_{0}})H\big).
$$
By Lemma \ref{RadTrans lemma A_P=pi_(a_P cap q)A_KAN}, the set on the right-hand side is contained in $\Acomp_{P_{0}}(aK)+\Acomp_{P_{0}}\big(\exp(B+\invCone{P_{0}})H\big)$.
If we apply Theorem \ref{RadTransSupp Thm Convexity theorem} to the
second term, we obtain
\begin{equation}\label{FuncSpaces eq ch(S)+Gamma= pi_q(W a)+pi_q ch( W_(K cap H) B)+Gamma}
\ch(S)+\Gamma\big(\Sigma_{-}(\g,\g[a]_{\g[q]};P_{0})\big)
\subseteq\Acomp_{P_{0}}(aK)+\ch(\WG[K\cap H]\cdot B)
    +\invCone{P_{0}}+\Gamma\big(\Sigma_{-}(\g,\g[a]_{\g[q]};P_{0})\big).
\end{equation}
Put $B''=\Acomp_{P_{0}}(aK)+\ch(\WG[K\cap H]\cdot B)$. Note that $B''$ is compact. Both cones that appear on the right-hand side of (\ref{FuncSpaces eq ch(S)+Gamma= pi_q(W a)+pi_q ch( W_(K cap H) B)+Gamma}) are contained in $\Gamma_{P_{0}}$.
Hence, it follows from (\ref{FuncSpaces eq ch(S)+Gamma= pi_q(W a)+pi_q ch( W_(K cap H) B)+Gamma}) that
$S \subseteq B'' + \Gamma_{P_{0}}$.
By $W_{K\cap H}$-invariance of $S$ this implies that
$$
S
\subseteq\bigcap_{w\in\WG[K\cap H]}w\cdot(B''+\Gamma_{P_{0}})
=\bigcap_{w\in\WG[K\cap H]}(B''+\Gamma_{P_{0}^{w}}).
$$
The cones $\Gamma_{P_{0}^{w}}$
are finitely generated and $B''$ is compact, so that we may complete the proof by application of
Lemma \ref{FuncSpaces lemma cap_k (B+Gamma_k) subseteq B'+cap_k Gamma_k}.
\end{proof}

\begin{rem}
By inspection of the above proof, one readily sees that for every
compact subset $C\subseteq G$ there exists a compact $B'
\subseteq \g[a]_{\g[q]}$ such that (\ref{FuncSpaces eq g cdot
X(B+Gamma^W_(K capH)) subseteq X(B'+Gamma^W_(K capH))}) is valid
for all $g \in C$.
\end{rem}

%%% ----------------------------------------------------------------------
\subsection{Inversion formula}
\label{subsection Inversion Formula}
%%% ----------------------------------------------------------------------
We continue in the setting of Section \ref{subsection The tau-spherical Fourier transform}.
Let $X_{+}$ be the union of disjoint open subsets of $X$
$$X_{+}=\bigcup_{w\in\WGs}X\big(\g[a]_{\g[q]}^{+}(P_{0}^{w})\big).$$
Let $E_{+}(\overline{P}_{0}:\lambda:\cdot):X_{+}\to\Hom(\leftsuperscript{0}{\Cs(\tau)},V_{\tau})$
be defined by
$$
E_{+}(\overline{P}_{0}:\lambda:kaw\cdot x_{0})(\psi)=
    \tau(k)\Phi_{\overline{P}_{0},w}(\lambda:a)\psi_{w}(e)
$$
where $\Phi_{\overline{P}_{0},w}(\lambda,\cdot)$ is the
$\textnormal{End}\big(V_{\tau}^{K\cap M_{P_{0}}\cap wHw^{-1}}\big)$-valued
function on $A_{\g[q]}^{+}(\overline{P}_{0})$ defined in \cite[Section
10]{vdBan&Schlichtkrull_ExpansionsForEisensteinIntegralsOnSemisimpleSymmetricSpaces}.
Let $\nu\in\g[a]_{\g[q]}^{*}(\overline{P}_{0},0)$.
By \cite[Theorem
4.7]{vdBan&Schlichtkrull_FourierInversionOnAReductiveSymmetricSpace},
$$
\phi(x)
=|\WG|\int_{t\nu+i\g[a]_{\g[q]}^{*}}E_{+}(\overline{P}_{0}:\lambda:x)
    \Ft_{\overline{P}_{0},\tau}\varsigma(\phi)(\lambda)\,d\lambda
$$
if $\phi\in\RDs(X)_\vartheta$,
$x\in X_{+}$ and if $t>0$ is sufficiently large. This result can be partially extended
to the $K$-finite functions in the larger space $\Es^{1}_{P_{0}}(X)$.

\begin{prop}[Inversion Formula]\label{InvFor prop Inversion Formula}
If $\phi\in \Es^{1}_{P_{0}}(X)_{\vartheta}$ and $\nu\in\g[a]_{\g[q]}^{*}(\overline{P}_{0},0)$,
then for $k\in K$, $a\in A_{\g[q]}^{+}(\overline{P}_{0})$ and sufficiently large $t>0$
$$
\phi(ka\cdot x_{0})
=|\WG|\int_{t\nu+i\g[a]_{\g[q]}^{*}}
    \tau(k)\Phi_{\overline{P}_{0},e}(\lambda:a)\Big(\Ft_{\overline{P}_{0},\tau,e}\varsigma(\phi)(\lambda)(e)\Big)
        \,d\lambda.
$$
\end{prop}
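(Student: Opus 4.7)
The plan is to combine the density of $\Ds(X)_{\vartheta}$ in $\Es^{1}_{P_{0}}(X)_{\vartheta}$ with the Paley--Wiener estimates of Proposition~\ref{FT prop PW-est for Ft_tau}. By the definition $\Es^{1}_{P_{0}}(X) = \RDs(X) + \bigcup_{B\text{ compact}}\Es^{1}(X; B + \invCone{P_{0}})$ and linearity, it suffices to treat separately the two cases $\phi \in \RDs(X)_{\vartheta}$ and $\phi \in \Es^{1}(X; B + \invCone{P_{0}})_{\vartheta}$ (for a fixed $\WG[K\cap H]$-invariant, convex, compact $B \subseteq \g[a]_{\g[q]}$). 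The first case is exactly the Van den Ban--Schlichtkrull inversion formula quoted just above the proposition: with $x = ka\cdot x_{0}$ and $a \in A_{\g[q]}^{+}(\overline{P}_{0})$ the point $x$ lies in the summand of $X_{+}$ corresponding to $w = e$, so by the definition of $E_{+}$ only the term $\tau(k)\Phi_{\overline{P}_{0},e}(\lambda:a)\psi_{e}(e)$ contributes; and $\psi_{e}(e) = \pr_{e}\Ft_{\overline{P}_{0},\tau}\varsigma(\phi)(\lambda)(e) = \Ft_{\overline{P}_{0},\tau,e}\varsigma(\phi)(\lambda)(e)$ by construction.

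For $\phi \in \Es^{1}(X; B + \invCone{P_{0}})_{\vartheta}$ I approximate by compactly supported functions. Let $\{\chi_{n}\}_{n \in \N} \subseteq C^{\infty}_{c}(X)^{K}$ be a sequence of $K$-invariant smooth cutoffs equalling $1$ on an exhausting family of compacts in $X$ and such that $\sup_{n}\|u\chi_{n}\|_{\infty} < \infty$ for every $u \in \U(\g)$. Put $\phi_{n} := \chi_{n}\phi \in \Ds(X)_{\vartheta} \subseteq \RDs(X)_{\vartheta}$. Then $\supp \phi_{n} \subseteq \supp \phi \subseteq X(B + \invCone{P_{0}})$, and Leibniz' rule gives a uniform-in-$n$ bound on $\|u\phi_{n}\|_{L^{1}(X)}$ for every $u \in \U(\g)$. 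Since $\invCone{P_{0}} \subseteq \Gamma_{P_{0}}$, Corollary~\ref{RadTransSupp cor supp(Rt_Q phi)subseteq Xi_Q,B => supp(Rt_P phi)subseteq Xi_P,B} yields $\supp(\Rt[P_{0}]\phi_{n}) \subseteq \Xi_{P_{0}}(B + \Gamma_{P_{0}})$ uniformly in $n$, so the hypothesis of Proposition~\ref{FT prop PW-est for Ft_tau} is met uniformly.

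Applying the first case to each $\phi_{n}$, the formula holds with $\phi$ replaced by $\phi_{n}$. Fix $k \in K$ and $a \in A_{\g[q]}^{+}(\overline{P}_{0})$; since $\phi_{n}(ka \cdot x_{0}) \to \phi(ka \cdot x_{0})$ as $n \to \infty$, it remains to pass to the limit under the integral. For $\Gamma \subseteq \g[a]_{\g[q],\C}^{*}$ a cone generated by a compact subset of $\g[a]_{\g[q]}^{*}(\overline{P}_{0}, 0)$ containing $\nu$ in its interior and for $t$ sufficiently large, Proposition~\ref{FT prop PW-est for Ft_tau} supplies, for every $N \in \N$, a constant $C_{N}$ with
\[
\|\pi(\lambda)\Ft_{\overline{P}_{0},\tau,e}\varsigma(\phi_{n})(\lambda)\| \leq C_{N}(1+\|\lambda\|)^{-N}e^{H_{B}(-\Re\lambda)}
\]
for all $\lambda \in t\nu + i\g[a]_{\g[q]}^{*}$ and all $n$. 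Combined with the standard polynomial-in-$\lambda$ estimate on $\pi(\lambda)\Phi_{\overline{P}_{0},e}(\lambda:a)$ (whose poles in the relevant half-space are cancelled by $\pi$) and the boundedness of the scalar factor $a^{-\Re\lambda + \rho_{P_{0}}}$ on the contour for $a \in A_{\g[q]}^{+}(\overline{P}_{0})$, this dominates the integrand by a fixed integrable function of $\lambda$ once $N$ is taken large enough. Lebesgue's dominated convergence theorem then yields the identity for $\phi$.

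The main obstacle is the domination step: one must match the rapid decay of $\Ft_{\overline{P}_{0},\tau,e}\varsigma(\phi_{n})$ on the shifted contour against the growth of $\Phi_{\overline{P}_{0},e}(\lambda:a)$. The latter bound is controlled through the series expansion of the partial Eisenstein integral as in Van den Ban--Schlichtkrull, and the compatibility of the two estimates is precisely what forces the threshold on $t$ appearing in the statement. With these ingredients in place, the approximation argument extends the inversion formula from $\RDs(X)_{\vartheta}$ to the larger space $\Es^{1}_{P_{0}}(X)_{\vartheta}$.
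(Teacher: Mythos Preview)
Your approach is essentially the same as the paper's: reduce to the case $\phi\in\Es^{1}(X;B+\invCone{P_{0}})_{\vartheta}$, approximate by compactly supported $K$-finite functions with support in $X(B+\invCone{P_{0}})$, invoke the known inversion formula for those, and pass to the limit using the uniform Paley--Wiener estimate of Proposition~\ref{FT prop PW-est for Ft_tau} together with a bound on $\Phi_{\overline{P}_{0},e}$.

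There is one imprecision worth correcting. Your claim that $\pi$ cancels poles of $\Phi_{\overline{P}_{0},e}(\lambda:a)$ is not right: the polynomial $\pi$ was chosen to regularize $A(\overline{P}_{0}:P_{0}:\zeta:\overline{\lambda})_{\tau}^{-1}$, not $\Phi$. What you actually need are two separate facts. First, by \cite[Theorem 9.1]{vdBan&Schlichtkrull_ExpansionsForEisensteinIntegralsOnSemisimpleSymmetricSpaces}, the function $i\g[a]_{\g[q]}^{*}\ni\zeta\mapsto\Phi_{\overline{P}_{0},e}(t\nu+\zeta:a)$ is \emph{bounded} (not merely polynomially growing) for $t$ sufficiently large; this is exactly the estimate the paper cites. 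Second, since $\nu\in\g[a]_{\g[q]}^{*}(\overline{P}_{0},0)$ is regular and each factor of $\pi$ has the form $\lambda\mapsto\langle\lambda,\alpha\rangle-c$ with $\alpha\in\Sigma(\g,\g[a]_{\g[q]})$ and $c\in\R$, the contour $t\nu+i\g[a]_{\g[q]}^{*}$ avoids the zero set of $\pi$ for $t$ large, so $|\pi(\lambda)|^{-1}$ is bounded there. These two bounds combine with the Paley--Wiener estimate to give the integrable domination you need. The paper actually phrases the limiting step slightly differently: rather than dominated convergence, it applies the Paley--Wiener estimate directly to $\phi-\phi_{j}$ (using that $\|u(\phi-\phi_{j})\|_{L^{1}(X)}\to 0$ for each $u\in\U(\g)$) to show the integral of the difference tends to zero. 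Your cutoff construction and dominated convergence work equally well once the $\Phi$ bound is stated correctly.
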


\begin{proof}
It suffices to prove the proposition for
$\phi\in\Es^{1}(X;B+\invCone{P_{0}})_{\vartheta}$, where $B$ is a
compact subset of $\g[a]_{\g[q]}$. As
$\Es^{1}(X;B+\invCone{P_{0}})_{\vartheta}\cap\Ds(X)_{\vartheta}$
is dense in $\Es^{1}(X;B+\invCone{P_{0}})_{\vartheta}$, there
exists a sequence $(\phi_{j})_{j\in\N}$ in
$\Es^{1}(X;B+\invCone{P_{0}})_{\vartheta}\cap\Ds(X)_{\vartheta}$
converging to $\phi$ in
$\Es^{1}(X;B+\invCone{P_{0}})_{\vartheta}$. According to
\cite[Theorem
9.1]{vdBan&Schlichtkrull_ExpansionsForEisensteinIntegralsOnSemisimpleSymmetricSpaces},
the function $i\g[a]_{\g[a]}^{*}\ni\lambda\mapsto\Phi_{\overline{P}_{0},e}(t\nu+\lambda:a)$
is bounded if $t>0$ is sufficiently large.
Corollary \ref{RadTransSupp cor supp(Rt_Q phi)subseteq Xi_Q,B => supp(Rt_P phi)subseteq Xi_P,B} and the Paley-Wiener estimate in Proposition \ref{FT prop PW-est for Ft_tau} therefore imply that,
for $t > 0$ sufficiently large,
$$
\lim_{j\to\infty}\int_{t\nu+i\g[a]_{\g[q]}^{*}}\left|
    \tau(k)\Phi_{\overline{P}_{0},e}(\lambda:a)\Big(\Ft_{\overline{P}_{0},\tau,e}\varsigma(\phi-\phi_{j})(\lambda)(e)\Big)\right|\,d\lambda
=0.
$$
\end{proof}

%%% ----------------------------------------------------------------------
\subsection{A support theorem for the horospherical transform for functions}
\label{subsection Support Theorem for the Horospherical transform}
%%% ----------------------------------------------------------------------

We can now prove a preliminary support theorem for the Radon
transform $\Rt[P_{0}]$ associated to a minimal $\sigma\circ
\theta$-stable parabolic subgroup $P_{0} \in \Psg(\g[a]_{\g[q]})$.
The proof is based on a Paley-Wiener type shift argument. At the
end of this section we will sharpen the preliminary result by
invoking the equivariance of the Radon transform.
We start with a lemma.

\begin{lemma}\label{SuppThm lemma W cdot B identity}
Let $B$ be a compact subset of $\g[a]_{\g[q]}$. Then
$$
\big\{Y\in\g[a]_{\g[q]}:
    \nu(Y)+H_{B}(-\nu)\geq0
\textnormal{ for all }\nu\in\g[a]_{\g[q]}^{*}
(\overline{P}_{0},0)\cap\g[a]_{\g[q]}^{*}\big\}
=\ch(B)+\Gamma_{P_{0}}.
$$
\end{lemma}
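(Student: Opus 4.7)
The plan is to reduce this identity to Lemma \ref{ConvexGeom lemma W cdot B identity} via the substitution $\lambda=-\nu$. First I would observe that, since $\Sigma(\g,\g[a]_{\g[q]};\overline{P}_{0})=-\Sigma(\g,\g[a]_{\g[q]};P_{0})$ and $H_{-\alpha}=-H_{\alpha}$, the test set of $\nu$'s appearing in the hypothesis can be rewritten as
$$
\g[a]_{\g[q]}^{*}(\overline{P}_{0},0)\cap\g[a]_{\g[q]}^{*}=\{\nu\in\g[a]_{\g[q]}^{*}:\nu(H_{\alpha})>0\textnormal{ for all }\alpha\in\Sigma(\g,\g[a]_{\g[q]};P_{0})\}.
$$
Setting $\lambda=-\nu$, the condition $\nu(Y)+H_{B}(-\nu)\geq 0$ becomes $\lambda(Y)\leq H_{B}(\lambda)$, and $\lambda$ now ranges over
$$
\mathcal{B}:=\{\lambda\in\g[a]_{\g[q]}^{*}:\lambda(H_{\alpha})<0\textnormal{ for all }\alpha\in\Sigma(\g,\g[a]_{\g[q]};P_{0})\}.
$$
Since $\Gamma_{P_{0}}$ is by definition the cone generated by $\{H_{\alpha}:\alpha\in\Sigma(\g,\g[a]_{\g[q]};P_{0})\}$, the set $\mathcal{B}$ is precisely the topological interior of the dual cone $\Cone_{\Gamma_{P_{0}}}=\{\lambda:\lambda(H_{\alpha})\leq 0\}$.

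Next I would verify that $\mathcal{B}$ is dense in $\Cone_{\Gamma_{P_{0}}}$. Nonemptiness of $\mathcal{B}$ is immediate: pick any $Y_{0}\in\g[a]_{\g[q]}^{+}(P_{0})$ and define $\lambda_{0}\in\g[a]_{\g[q]}^{*}$ by $\lambda_{0}(Z)=-B(Y_{0},Z)$; then $\lambda_{0}(H_{\alpha})=-\alpha(Y_{0})<0$ for every $\alpha\in\Sigma(\g,\g[a]_{\g[q]};P_{0})$, so $\lambda_{0}\in\mathcal{B}$. Because $\Gamma_{P_{0}}$ is finitely generated, $\Cone_{\Gamma_{P_{0}}}$ is a closed convex cone, and a closed convex set with nonempty interior is the closure of that interior; hence $\overline{\mathcal{B}}=\Cone_{\Gamma_{P_{0}}}$.

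Finally I would invoke Lemma \ref{ConvexGeom lemma W cdot B identity} with $\ch(B)$ in place of $B$ (a compact convex set, with $H_{\ch(B)}=H_{B}$), with $\Gamma=\Gamma_{P_{0}}$, and with the dense subset $\mathcal{B}$ identified above. The lemma yields
$$
\ch(B)+\Gamma_{P_{0}}=\{Y\in\g[a]_{\g[q]}:\lambda(Y)\leq H_{B}(\lambda)\textnormal{ for all }\lambda\in\mathcal{B}\},
$$
and undoing the substitution $\lambda=-\nu$ recovers exactly the claimed identity. I do not foresee any real obstacle: all nontrivial geometric content has already been encapsulated in Lemma \ref{ConvexGeom lemma W cdot B identity}, and the present statement is essentially its translation into root-theoretic language, requiring only the observation that the test set of $\nu$'s is (up to sign) the interior of $\Cone_{\Gamma_{P_{0}}}$.
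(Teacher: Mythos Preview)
Your proof is correct and follows essentially the same approach as the paper: both reduce the statement to Lemma~\ref{ConvexGeom lemma W cdot B identity} by observing that (after the sign change $\lambda=-\nu$) the test set is dense in $\Cone_{\Gamma_{P_{0}}}$. The paper's proof is a single sentence asserting that the closure of $\g[a]_{\g[q]}^{*}(\overline{P}_{0},0)\cap\g[a]_{\g[q]}^{*}$ equals $-\Cone_{\Gamma_{P_{0}}}$; you have unpacked this carefully, and in particular you are slightly more explicit than the paper in noting that Lemma~\ref{ConvexGeom lemma W cdot B identity} must be applied to $\ch(B)$ (using $H_{\ch(B)}=H_{B}$) since $B$ is only assumed compact, not convex.
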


\begin{proof}
The lemma is a direct corollary of Lemma \ref{ConvexGeom lemma W cdot B identity} as the closure of $\g[a]_{\g[q]}^{*}
(\overline{P}_{0},0)\cap\g[a]_{\g[q]}^{*}$ equals $-\Cone_{\Gamma_{P_{0}}}$.
\end{proof}

\begin{prop}\label{SuppThm prop supp Rt phi subset Xi_P(B+Gamma)->supp phi cap X(a^+(Pc)) subset X(B)}
Let $B$ be a convex compact subset of $\g[a]_{\g[q]}$ and let
$\phi\in\Es^{1}_{P_{0}}(X)$. If
$$\supp(\Rt[P_{0}]\phi)\subseteq \Xi_{P_{0}}(B+\Gamma_{P_{0}}),$$
then
$$\supp(\phi)\cap A_{\g[q]}^{+}(\overline{P}_{0})\cdot x_{0}
\subseteq \Big(\exp(B+\Gamma_{P_{0}})\cap A_{\g[q]}^{+}(\overline{P}_{0})\Big)\cdot x_{0}.$$
\end{prop}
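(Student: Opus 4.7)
The plan is to combine the inversion formula of Proposition \ref{InvFor prop Inversion Formula} with the Paley--Wiener estimate of Proposition \ref{FT prop PW-est for Ft_tau} and push the contour of integration deep into the cone $\g[a]_{\g[q]}^{*}(\overline{P}_{0},0)$ in a direction that produces exponential decay at the point $a_0$.

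\emph{Reduction to $K$-finite $\phi$.} Fix $a_0\in A_{\g[q]}^{+}(\overline{P}_0)$ with $\log a_0\notin B+\Gamma_{P_0}$; the aim is to show $\phi(a_0\cdot x_0)=0$. First I would replace $\phi$ by the convolutions $\chi_j*\phi$, where $(\chi_j)_{j\in\N}$ is a sequence of left $K$-finite functions in $\Ds(K)$ approximating the Dirac mass at $e\in K$ (which exists by Peter--Weyl). The resulting functions are $K$-finite, converge pointwise to $\phi$, and, in view of Proposition \ref{FuncSpaces prop Ls^1_P G-invariant} together with the left $K$-invariance of the sets $X(B+\invCone{P_0})$ and $\Xi_{P_0}(B+\Gamma_{P_0})$, they still lie in $\Es^1_{P_0}(X)$ and satisfy $\supp\bigl(\Rt[P_0](\chi_j*\phi)\bigr)\subseteq\Xi_{P_0}(B+\Gamma_{P_0})$. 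It therefore suffices to treat the case $\phi\in\Es^1_{P_0}(X)_{\vartheta}$ for some finite $\vartheta\subseteq\widehat{K}$.

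\emph{Choice of $\nu$ and contour.} Since $B$ is convex and $\log a_0\notin B+\Gamma_{P_0}$, Lemma \ref{SuppThm lemma W cdot B identity} yields $\nu\in\g[a]_{\g[q]}^{*}(\overline{P}_0,0)\cap\g[a]_{\g[q]}^{*}$ with $\nu(\log a_0)+H_B(-\nu)<0$. Applying Proposition \ref{InvFor prop Inversion Formula} with this $\nu$, for every sufficiently large $t>0$ one has
\begin{equation*}
\phi(a_0\cdot x_0)=|\WG|\int_{t\nu+i\g[a]_{\g[q]}^{*}}\Phi_{\overline{P}_0,e}(\lambda:a_0)\bigl(\Ft_{\overline{P}_0,\tau,e}\varsigma(\phi)(\lambda)(e)\bigr)\,d\lambda,
\end{equation*}
read off at $e\in K$. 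By \cite[Theorem 9.1]{vdBan&Schlichtkrull_ExpansionsForEisensteinIntegralsOnSemisimpleSymmetricSpaces} the function $\lambda\mapsto\Phi_{\overline{P}_0,e}(\lambda:a_0)$ is bounded in modulus on the contour by a constant multiple of $a_0^{t\nu-\rho_{P_0}}$ times a polynomial in $\|\Im\lambda\|$. Combining this with Proposition \ref{FT prop PW-est for Ft_tau} (dividing by $\pi(\lambda)$), the integrand is dominated by
$C_N\,e^{t[\nu(\log a_0)+H_B(-\nu)]}(1+\|\Im\lambda\|)^{-N}$
for every $N\in\N$. For $N$ large enough the $\Im\lambda$ integral is finite uniformly in $t$, while the prefactor vanishes as $t\to\infty$ by the choice of $\nu$. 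Hence $\phi(a_0\cdot x_0)=0$, and letting $j\to\infty$ in $\chi_j*\phi$ transports this conclusion back to the original $\phi$.

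\emph{Main obstacle.} The delicate step will be controlling $|\pi(\lambda)|^{-1}$ uniformly on the family of contours $t\nu+i\g[a]_{\g[q]}^{*}$ as $t\to\infty$. Since $\pi$ is a product of affine factors $\langle\lambda,\alpha\rangle-c$, I would perturb $\nu$ generically inside $\g[a]_{\g[q]}^{*}(\overline{P}_0,0)\cap\g[a]_{\g[q]}^{*}$, keeping the strict inequality $\nu(\log a_0)+H_B(-\nu)<0$, so that for almost every $t$ the contour avoids the zero set of $\pi$ and $|\pi(\lambda)|^{-1}$ grows at most polynomially in $\|\lambda\|$. Once this arrangement is made, the polynomial factors are absorbed into the factor $(1+\|\lambda\|)^{-N}$ by choosing $N$ large, and the exponential decay drives the integral to zero.
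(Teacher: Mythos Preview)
Your proposal is correct and follows essentially the same route as the paper: reduce to $K$-finite $\phi$, pick $\nu$ via Lemma~\ref{SuppThm lemma W cdot B identity}, insert the inversion formula of Proposition~\ref{InvFor prop Inversion Formula}, bound $\Phi_{\overline P_0,e}$ by \cite[Theorem 9.1]{vdBan&Schlichtkrull_ExpansionsForEisensteinIntegralsOnSemisimpleSymmetricSpaces}, apply the Paley--Wiener estimate of Proposition~\ref{FT prop PW-est for Ft_tau}, and let $t\to\infty$.

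One remark on your ``main obstacle'': the perturbation of $\nu$ is unnecessary. Since $\nu\in\g[a]_{\g[q]}^{*}(\overline{P}_0,0)\cap\g[a]_{\g[q]}^{*}$ is real and regular, $\langle\nu,\alpha\rangle\neq 0$ for every $\alpha\in\Sigma(\g,\g[a]_{\g[q]})$. Each linear factor of $\pi$ has the form $\lambda\mapsto\langle\lambda,\alpha\rangle-c$ with $c\in\R$, so on the contour $\lambda=t\nu+i\zeta$ its real part is $t\langle\nu,\alpha\rangle-c$, whose absolute value exceeds $\tfrac{t}{2}|\langle\nu,\alpha\rangle|$ once $t$ is large. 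Hence $|\pi(\lambda)|^{-1}=O(t^{-\deg\pi})$ uniformly in $\zeta$, and this harmless factor is absorbed without any genericity argument. The paper suppresses this point entirely and simply combines Propositions~\ref{FT prop PW-est for Ft_tau} and~\ref{InvFor prop Inversion Formula} in one line.
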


\begin{proof}
Because of equivariance and continuity of $\Rt[P_{0}]$ it
suffices to prove the claim for $K$-finite functions
$\phi$. We will therefore assume that
$\phi$ is $K$-finite with isotypes contained in a finite subset $\vartheta$ of $\widehat{K}$.

Assume that $a\in A_{\g[q]}^{+}(\overline{P}_{0})$, but $\log
a\notin B+\Gamma_{P_{0}}$. By Lemma \ref{SuppThm lemma W cdot B
identity} there exists a
$\nu\in\g[a]_{\g[q]}^{*}(\overline{P}_{0},0)\cap\g[a]_{\g[q]}^{*}$
such that $\nu(\log a)+H_{\log(B)}(-\nu)<0$.
According to \cite[Theorem
9.1]{vdBan&Schlichtkrull_ExpansionsForEisensteinIntegralsOnSemisimpleSymmetricSpaces}
there exists a constant $c>0$ such that
$\|\Phi_{\overline{P}_{0},e}(\lambda:a)\|<c a^{t\nu}$
for all sufficiently large $t>0$ and $\lambda\in t\nu+i\g[a]_{\g[q]}^{*}$.
The Paley-Wiener estimate for $\Ft_{\overline{P}_{0},\tau}\phi$ (Proposition \ref{FT prop PW-est for Ft_tau}) and the inversion formula (Proposition \ref{InvFor prop Inversion Formula}) imply that for every integer $N$ there exists a constant
$C_{N}>0$ such that for sufficiently large $t>0$
$$
|\phi(a\cdot x_{0})|
\leq C_{N}e^{t(\nu(\log a)+H_{B}(-\nu))}
    \int_{t\nu+i\g[a]_{\g[q]}^{*}}(1+\|\lambda\|)^{-N}\,d\lambda.
$$
Let $N \geq \dim \g[a]_{\g[q]} + 1;$ then by taking the limit for
$t\to\infty$ we find $\phi(a\cdot x_{0})=0$.
\end{proof}

Let $\WG(\g[a])$ be the Weyl group of the root system $\Sigma(\g;\g[a])$.

\begin{lemma}\label{SuppThm lemma kak Xi_P(B+Gamma) subseteq Xi( ch W cdot a+B+Gamma)}
Let $S\subseteq \g[a]_{\g[q]}$, let $a \in A$ and let $g\in KaK$.
Then
$$
g\cdot\Xi_{P_{0}}(S)
\subseteq\Xi_{P_{0}}\Big(\pi_{\g[q]}\,\ch \big(\WG(\g[a])\cdot\log a\big)+S\Big).
$$
\end{lemma}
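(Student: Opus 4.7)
The plan is to unwind the definition of $\Xi_{P_0}(S) = K\exp(S) \cdot \xi_{P_0}$ by picking a representative $x = k_0 \exp(Y) \cdot \xi_{P_0}$ with $k_0 \in K$ and $Y \in S$, and then to compute $g \cdot x$ by applying the Iwasawa decomposition associated with a minimal (ordinary) parabolic $P_m$ with $A \subseteq P_m \subseteq P_0$. Writing $g k_0 \exp(Y) = k \exp(H) n$ with $k \in K$, $H \in \g[a]$ and $n \in N_{P_m}$, the first observation I need is that the $N_{P_m}$-factor stabilizes $\xi_{P_0}$. This follows because $N_{P_m} = N_{P_0} N_{P_m}^{P_0}$ and, by Lemma \ref{PSubgrp lemma n_MAN^(P_0)=n cap h}, $N_{P_m}^{P_0} = N_{P_m}\cap H \subseteq L_{P_0}\cap H$, so $N_{P_m} \subseteq (L_{P_0} \cap H) N_{P_0}$.

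Next, I decompose $H = \pi_{\g[q]}(H) + \pi_{\g[h]}(H)$. Since $\g[a] \cap \g[h] \subseteq \g[l]_{P_0} \cap \g[h]$, the factor $\exp(\pi_{\g[h]}(H))$ lies in $L_{P_0}\cap H$ and hence stabilizes $\xi_{P_0}$. This gives
$$
g \cdot x = k \exp\bigl(\pi_{\g[q]}(H)\bigr)\cdot \xi_{P_0} = k\exp\bigl(\Acomp_{P_0}(g k_0 \exp Y)\bigr)\cdot\xi_{P_0},
$$
where the last equality uses Lemma \ref{RadTrans lemma A_P=pi_(a_P cap q)A_KAN} to identify $\pi_{\g[q]}(H)$ with the value of $\Acomp_{P_0}$.

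Now I apply the subadditivity statement in Lemma \ref{RadTrans lemma A_P=pi_(a_P cap q)A_KAN}, namely $\Acomp_{P_0}(g_1 g_2) \in \Acomp_{P_0}(g_1 K) + \Acomp_{P_0}(g_2)$, with $g_1 = g$ and $g_2 = k_0 \exp(Y)$. Since $\Acomp_{P_0}$ is left $K$-invariant and $\exp(Y) \in A_{\g[q]}$, we have $\Acomp_{P_0}(k_0 \exp Y) = Y$, so
$$
\Acomp_{P_0}\bigl(g k_0 \exp(Y)\bigr) \in \Acomp_{P_0}(gK) + Y \subseteq \Acomp_{P_0}(gK) + S.
$$
Writing $g = k_1 a k_2$ with $k_i \in K$ and using left $K$-invariance again, $\Acomp_{P_0}(gK) = \Acomp_{P_0}(aK)$. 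Kostant's convexity theorem for the full Iwasawa projection $\Acomp_{KAN_{P_m}}$ yields $\Acomp_{KAN_{P_m}}(aK) = \ch(\WG(\g[a])\cdot \log a)$, and a second application of Lemma \ref{RadTrans lemma A_P=pi_(a_P cap q)A_KAN} gives $\Acomp_{P_0}(aK) = \pi_{\g[q]}(\ch(\WG(\g[a])\cdot \log a))$. Combining these inclusions puts the exponent in the claimed set.

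The only point requiring real care is the first step: making sure that replacing the Iwasawa $\g[a]$-component $H$ by its $\g[q]$-projection is legal when acting on $\xi_{P_0}$, which is what frees us to switch from the classical Iwasawa projection (where Kostant's theorem lives) to $\Acomp_{P_0}$ (which has the correct target $\g[a]_{\g[q]}$). Once this bridge is set up via Lemma \ref{RadTrans lemma A_P=pi_(a_P cap q)A_KAN}, the rest is a short bookkeeping exercise.
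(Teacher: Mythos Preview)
Your proof is correct and follows essentially the same approach as the paper: both invoke Kostant's convexity theorem for the minimal parabolic $P_{m}$ with $A\subseteq P_{m}\subseteq P_{0}$, use Lemma~\ref{PSubgrp lemma n_MAN^(P_0)=n cap h} to see that $N_{P_{m}}$ stabilizes $\xi_{P_{0}}$, and then project the $\g[a]$-component onto $\g[a]_{\g[q]}$. The only cosmetic difference is that the paper works set-wise (writing $gK\exp(S)\cdot\xi_{P_{0}}\subseteq K\exp(\ch(W(\g[a])\cdot\log a))N_{P_{m}}\exp(S)\cdot\xi_{P_{0}}$ and commuting $N_{P_{m}}$ past $\exp(S)$), whereas you work pointwise and package the same computation through the subadditivity statement of Lemma~\ref{RadTrans lemma A_P=pi_(a_P cap q)A_KAN}.
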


\begin{proof}
Let $P_{m}$ be a minimal parabolic subgroup of $G$ such that $A\subseteq P_{m}\subseteq P_{0}$. By Kostant's convexity theorem
(\cite[Theorem
4.1]{Kostant_OnConvexityTheWeylGroupAndTheIwasawaDecomposition}),
$gK\subseteq K\exp\Big(\ch\big(W(\g[a])\cdot\log a\big)\Big) N_{P_{m}}$.
Using that $N_{P_{m}} = N_{P_{m}}^{P_{0}} N_{P_{0}}$ and that
$N_{P_{m}}^{P_{0}}\subseteq M_{P_{0}}\cap H $ (see Lemma \ref{PSubgrp lemma n_MAN^(P_0)=n cap h})  we now find that
\begin{align*}
g\cdot\Xi_{P_{0}}(S)
&=gK\exp(S)\cdot\xi_{P_{0}}
\subseteq K\exp\Big(\ch\big(W(\g[a])\cdot\log a\big)\Big) N_{P_{m}}\exp(S)\cdot\xi_{P_{0}}\\
&= K\exp\Big(\ch\big(W(\g[a])\cdot\log a\big) +S\Big)\cdot\xi_{P_{0}}
= \Xi_{P_{0}}\Big(\pi_{\g[q]}\,\ch\big(W(\g[a])\cdot\log a\big)+S\Big).
\end{align*}
\end{proof}

We can now sharpen Proposition \ref{SuppThm prop supp Rt phi subset Xi_P(B+Gamma)->supp phi cap X(a^+(Pc)) subset X(B)} by using
the equivariance of the Radon transform.

\begin{thm}[Support theorem for the horospherical transform]\label{SuppThm thm Support Theorem for functions and minimal P_0}
Let $B$ be a convex compact subset of $\g[a]_{\g[q]}$ and let
$\phi\in\Es^{1}_{P_{0}}(X)$. If
$$\supp(\Rt[P_{0}]\phi)\subseteq \Xi_{P_{0}}(B+\Gamma_{P_{0}}),$$
then
\begin{equation}\label{SuppThm eq supp(phi)subseteq X(cap_w (B+Gamma_P^w))}
\supp(\phi)\subseteq X\Big(\bigcap_{w\in\WG[K\cap H]}w\cdot(B+\Gamma_{P_{0}})\Big).
\end{equation}
\end{thm}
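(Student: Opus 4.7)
The plan is to fix $x\in\supp(\phi)$, write it in polar form $x=k_0\exp(Y_0)\cdot x_0$ with $k_0\in K$ and $Y_0\in\g[a]_{\g[q]}$, and show that $w^{-1}Y_0\in B+\Gamma_{P_0}$ for every $w\in\WG[K\cap H]$. Since $\bigcap_{w\in\WG[K\cap H]}w(B+\Gamma_{P_0})$ is $\WG[K\cap H]$-invariant, this will give $Y_0\in\bigcap_{w}w(B+\Gamma_{P_0})$, hence $x\in X\bigl(\bigcap_{w}w(B+\Gamma_{P_0})\bigr)$. To reduce, I would use that the hypotheses are preserved under $L_{k_0^{-1}}$: the $G$-equivariance of $\Rt[P_0]$, the $K$-invariance of $\Xi_{P_0}(B+\Gamma_{P_0})$, and the $G$-invariance of $\Es^1_{P_0}(X)$ (Proposition \ref{FuncSpaces prop Ls^1_P G-invariant}) together allow assuming $k_0=e$, i.e.\ $x=\exp(Y_0)\cdot x_0$.

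For the main step, fix $w\in\WG[K\cap H]$ with representative $\tilde w\in\Nor_{K\cap H}(\g[a]_{\g[q]})$ and pick $\nu\in\g[a]_{\g[q]}^{+}(\overline P_0)$ deep enough in the open chamber that $\nu+w^{-1}Y_0\in\g[a]_{\g[q]}^{+}(\overline P_0)$ too. Set $g=\exp(\nu)\tilde w^{-1}$. Because $\tilde w^{-1}\in K\cap H$ fixes $x_0$ and $\Ad(\tilde w^{-1})Y_0=w^{-1}Y_0$, a direct computation gives $g\cdot x=\exp(\nu+w^{-1}Y_0)\cdot x_0$. Since $g\in K\exp(\nu)K$, Lemma \ref{SuppThm lemma kak Xi_P(B+Gamma) subseteq Xi( ch W cdot a+B+Gamma)} applied to $L_g\Rt[P_0]\phi=\Rt[P_0]L_g\phi$ will yield
\[
\supp(\Rt[P_0]L_g\phi)\subseteq\Xi_{P_0}\bigl(\pi_{\g[q]}\ch(\WG(\g[a])\cdot\nu)+B+\Gamma_{P_0}\bigr).
\]
The shift set $\pi_{\g[q]}\ch(\WG(\g[a])\cdot\nu)+B$ is compact and convex, $L_g\phi\in\Es^1_{P_0}(X)$, and $g\cdot x$ lies in $A_{\g[q]}^{+}(\overline P_0)\cdot x_0$, so applying Proposition \ref{SuppThm prop supp Rt phi subset Xi_P(B+Gamma)->supp phi cap X(a^+(Pc)) subset X(B)} to $L_g\phi$ gives
\[
\nu+w^{-1}Y_0\in\pi_{\g[q]}\ch(\WG(\g[a])\cdot\nu)+B+\Gamma_{P_0}.
\]

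The closing step needs the convexity fact $\pi_{\g[q]}\ch(\WG(\g[a])\cdot\nu)\subseteq\nu+\Gamma_{P_0}$ for $\nu\in\g[a]_{\g[q]}^{+}(\overline P_0)$. I would derive it as follows. Choose a minimal parabolic $P_m$ with $A\subseteq P_m\subseteq P_0$; each root in $\Sigma^+(\g,\g[a];P_m)$ restricts on $\g[a]_{\g[q]}$ to either zero or an element of $\Sigma^+(\g,\g[a]_{\g[q]};P_0)$, so $\nu$ is anti-dominant for $\Sigma^+(\g,\g[a];P_m)$. Kostant's classical convexity theorem then places $\ch(\WG(\g[a])\cdot\nu)-\nu$ inside the positive-coroot cone of $P_m$ in $\g[a]$, and the orthogonal projection $\pi_{\g[q]}$ sends the coroot of each $\alpha\in\Sigma^+(\g,\g[a];P_m)$ to either $0$ or $H_{\alpha|_{\g[a]_{\g[q]}}}\in\Gamma_{P_0}$, so this cone maps into $\Gamma_{P_0}$. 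Substituting into the previous display and cancelling $\nu$ delivers $w^{-1}Y_0\in\Gamma_{P_0}+B+\Gamma_{P_0}=B+\Gamma_{P_0}$, completing the argument.

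The main obstacle I anticipate is this last convexity fact: while morally immediate from Kostant's theorem, its clean derivation requires carefully tracking how the Weyl group $\WG(\g[a])$, the projection $\pi_{\g[q]}$, and the restricted root system on $\g[a]_{\g[q]}$ interact. Once it is in hand, the translation by $\exp(\nu)\tilde w^{-1}$ works cleanly, simultaneously placing $g\cdot x$ in the open chamber demanded by Proposition \ref{SuppThm prop supp Rt phi subset Xi_P(B+Gamma)->supp phi cap X(a^+(Pc)) subset X(B)} and introducing only cone-compatible ambiguity via Lemma \ref{SuppThm lemma kak Xi_P(B+Gamma) subseteq Xi( ch W cdot a+B+Gamma)}.
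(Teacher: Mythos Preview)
Your proposal is correct and follows essentially the same strategy as the paper's proof: shift by an element of $A_{\g[q]}^{+}(\overline P_0)$ so that the support point lands in the open chamber, use Lemma \ref{SuppThm lemma kak Xi_P(B+Gamma) subseteq Xi( ch W cdot a+B+Gamma)} to control the displaced support of $\Rt[P_0]$, apply Proposition \ref{SuppThm prop supp Rt phi subset Xi_P(B+Gamma)->supp phi cap X(a^+(Pc)) subset X(B)}, and then invoke the convexity inclusion $\pi_{\g[q]}\ch(W(\g[a])\cdot\nu)\subseteq\nu+\Gamma_{P_0}$ to cancel the shift. The only organizational difference is that you fold the Weyl-group twist $\tilde w^{-1}$ into the translating element $g=\exp(\nu)\tilde w^{-1}$ and handle the $K$-factor $k_0$ up front, whereas the paper first proves $Y_0\in B+\Gamma_{P_0}$ via a pure $A_{\g[q]}$-shift and then obtains the intersection over $\WG[K\cap H]$, and finally the full $K$-orbit, by two separate applications of $K$-equivariance; the paper also asserts the convexity inclusion without proof, while you supply a justification via Kostant's theorem.
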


\begin{proof}
Assume that $\phi$ satisfies the hypothesis.
We will first show that
\begin{equation}\label{SuppThm eq supp(phi)cap A_q cdot x_0 subseteq exp(B+Gamma_P)cdot x_0}
\supp(\phi)\cap A_{\g[q]}\cdot x_{0}
\subseteq\exp(B+\Gamma_{P_{0}})\cdot x_{0}.
\end{equation}
Let $Y_{0} \in \g[a]_{\g[q]}$ be such that $\exp
Y_{0}\in\supp(\phi)$. Then there exists a
$Y\in\g[a]_{\g[q]}^{+}(\overline{P}_{0})$ such that $Y_{0}+Y\in
\g[a]_{\g[q]}^{+}(\overline{P}_{0})$. By equivariance of
$\Rt[P_{0}]$ and by application of Lemma \ref{SuppThm lemma kak
Xi_P(B+Gamma) subseteq Xi( ch W cdot a+B+Gamma)} we find that
\begin{align*}
\supp\big(\Rt[P_{0}](\ltpb{\exp(-Y)}\phi)\big)
&=\exp(Y)\cdot \supp(\Rt[P_{0}]\phi)
\subseteq\exp(Y)\cdot\Xi_{P_{0}}(B+\Gamma_{P_{0}})\\
&\subseteq\Xi_{P_{0}}\Big(\pi_{\g[q]}\,\ch \big(\WG(\g[a])\cdot Y\big)+B+\Gamma_{P_{0}}\Big).
\end{align*}
From Propositions \ref{FuncSpaces prop Ls^1_P G-invariant} and
\ref{SuppThm prop supp Rt phi subset Xi_P(B+Gamma)->supp phi cap X(a^+(Pc)) subset X(B)} it now follows that
$$
\exp(Y_{0}+Y)\cdot x_{0}
\in\supp\big(\ltpb{\exp(-Y)}\phi\big)\cap A_{\g[q]}^{+}(\overline{P}_{0})\cdot x_{0}
\subseteq\exp\Big(\pi_{\g[q]}\,\ch \big(\WG(\g[a])\cdot Y\big)+B+\Gamma_{P_{0}}\Big)\cdot x_{0}.
$$
We conclude that $Y_{0}+Y\in\pi_{\g[q]}\,\ch \big(\WG(\g[a])\cdot
Y\big)+B+\Gamma_{P_{0}}$. On the other hand,
since $Y\in\g[a]_{\g[q]}^{+}(\overline{P}_{0})$,
$\pi_{\g[q]}\,\ch\big(\WG(\g[a])\cdot Y\big)\subseteq Y+\Gamma_{P_{0}}$.
We thus see that $Y_{0} + Y \in Y + B + \Gamma_{P_{0}}$, so that
$Y_{0} \in B + \Gamma_{P_{0}}$. We have proved (\ref{SuppThm eq
supp(phi)cap A_q cdot x_0 subseteq exp(B+Gamma_P)cdot x_0}).

If $k \in K$, then by equivariance of the Radon transform, the
function $\ltpb{k}\phi$ satisfies the same hypotheses as $\phi$,
so that (\ref{SuppThm eq supp(phi)cap A_q cdot x_0 subseteq
exp(B+Gamma_P)cdot x_0}) is valid with $\ltpb{k}\phi$ in place of
$\phi$. This implies that
$$
\supp(\phi)\cap  A_{\g[q]}\cdot x_{0}
\subseteq\bigcap_{w\in\mathcal{N}_{K\cap H}(\g[a]_{\g[q]})}w\cdot\exp(B+\Gamma_{P_{0}})\cdot x_{0}
=\exp \Big(\bigcap_{w\in W_{K\cap H}}w\cdot(B+\Gamma_{P_{0}})\Big)\cdot x_{0}.
$$
Invoking the $K$-equivariance of the Radon transform once more in
a similar way, we conclude that (\ref{SuppThm eq supp(phi)subseteq
X(cap_w (B+Gamma_P^w))}) holds.
\end{proof}

\begin{rem}
Let $B$ be a $\WG[K\cap H]$-invariant closed convex subset of $\g[a]_{\g[q]}$. If $\Cone_{\invCone{P_{0}}}$ equals $\bigcup_{w\in\WG[K\cap H]}\Cone_{\Gamma_{P_{0}^{w}}}$
then
\begin{equation}\label{SuppThm eq B+invCone=bigcap(B+Gamma_(P^w))}
\bigcap_{w\in \WG[K\cap H]}B+\Gamma_{P_{0}^{w}}=B+\invCone{P_{0}}
\end{equation}
by Lemma \ref{ConvexGeom Lemma S=cap(S+Gamma)}. This is in particular the case if $\WG=\WG[K\cap
H]$. (See Lemma \ref{RadTransSupp Lemma B+Gamma_P=cap(B+Gamma_P_0)}.)
In general there exists a compact subset $B'$ of $\g[a]_{\g[q]}$
such that
$$B+\invCone{P_{0}}
\subseteq \bigcap_{w\in \WG[K\cap H]}B+\Gamma_{P_{0}^{w}}
\subseteq B'+\invCone{P_{0}}
$$
(see Lemma \ref{FuncSpaces lemma cap_k (B+Gamma_k) subseteq
B'+cap_k Gamma_k}), but if $\Cone_{\invCone{P_{0}}}\neq\bigcup_{w\in\WG[K\cap H]}\Cone_{\Gamma_{P_{0}^{w}}}$, then (\ref{SuppThm eq
B+invCone=bigcap(B+Gamma_(P^w))}) is not necessarily true. The
following is a counterexample.

Let $G=\LG{SL}(3,\R)$. Let
$\theta$ be a Cartan involution for $G$ and $G=KAN$ an Iwasawa
decomposition such that $G^{\theta}=K$. The root system
$\Sigma(\g, \g[a])$ is of type $A_{2}$. Let $\Sigma^{+}(\g,\g[a])$
be a system of positive roots and let $\alpha$ and $\beta$ be the
simple roots in that system. Then $\Sigma^{+}(\g,\g[a])=\{\alpha, \beta,\alpha+\beta\}$.
\vspace{-5pt}
\setlength{\unitlength}{2 cm}
\begin{center}
\begin{picture}(1,1)(0,0)
\includegraphics[width=\unitlength]{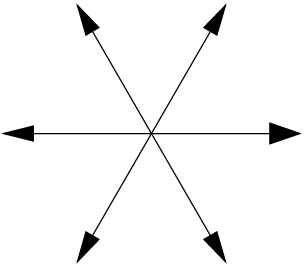}
\end{picture}
\begingroup\makeatletter\ifx\SetFigFont\undefined
\gdef\SetFigFont#1#2#3#4#5{
  \reset@font\fontsize{#1}{#2pt}
  \fontfamily{#3}\fontseries{#4}\fontshape{#5}
  \selectfont}
\fi\endgroup
\begin{picture}(1,1)(0,0)
\put(-0.3,0){\makebox(0,0)[lb]{\smash{{\SetFigFont{10}{12}{\rmdefault}{\mddefault}{\updefault}{$\beta$}}}}}
\put(-0.1,0.4){\makebox(0,0)[lb]{\smash{{\SetFigFont{10}{12}{\rmdefault}{\mddefault}{\updefault}{$\alpha+\beta$}}}}}
\put(-0.3,0.8){\makebox(0,0)[lb]{\smash{{\SetFigFont{10}{12}{\rmdefault}{\mddefault}{\updefault}{$\alpha$}}}}}
\end{picture}
\end{center}
Let $\epsilon:\Sigma(\g,\g[a])\to\{\pm1\}$ be given by $\epsilon(\pm\alpha)=\epsilon(\pm\beta)=-1$ and $\epsilon\big(\pm(\alpha+\beta)\big)=1$.
Let $\theta_{\epsilon}:\g\to\g$ be the Lie algebra involution given by
$$
\theta_{\epsilon}(Y)=
\begin{cases}
-Y&(Y\in\g[a])\\
\epsilon(\gamma)\theta(Y)&\big(Y\in\g_{\gamma}, \gamma\in\Sigma(\g,\g[a])\big).
\end{cases}
$$
The Lie algebra involution $\theta_{\epsilon}$ lifts to a Lie
group involution of $G$, which we also denote by
$\theta_{\epsilon}$. Let $K_{\epsilon}=G^{\theta_{\epsilon}}$ and
let $X=G/K_{\epsilon}$. (Then $K_{\epsilon}\simeq SO(2,1)$.) We claim that (\ref{SuppThm eq
B+invCone=bigcap(B+Gamma_(P^w))}) does not hold for every compact
convex Weyl-group invariant subset $B$ of $\g[a]_{\g[q]}$  in this
case.

The group $\WG[K\cap K_{\epsilon}]$ equals the Weyl group for the
root system
$\Sigma_{+}(\g^{\theta\circ\theta_{\epsilon}},\g[a])=\{\pm(\alpha+\beta)\}$.
The reflection $s$ in $\alpha+\beta$ maps $\alpha$ to $-\beta$ and
$\beta$ to $-\alpha$. Let $P$ be the minimal parabolic subgroup of
$G$ such that $A_{P}=A$ and
$\Sigma(\g,\g[a];P)=\{\alpha,\beta,\alpha+\beta\}$. Then $P$
is $\theta_{\epsilon}\circ\theta$-stable and $P^{s}=\overline{P}$.
Therefore, $\invCone{P}=\Gamma_{P}\cap\Gamma_{\overline{P}}=\{0\}$.
Let $B$ be the closed ball in $\g[a]_{\g[q]}$ with radius $r$,
centered at the origin. The angle between the root vectors
$H_{\alpha}$ and $H_{\alpha+\beta}$ equals the angle between
$H_{\beta}$ and $H_{\alpha+\beta}$; both are equal to
$\frac{\pi}{3}$. Let $v$ be a vector perpendicular to
$H_{\alpha+\beta}$ and with length $r$, then a straightforward
calculation shows that $(B+\Gamma_{P})\cap(B+\Gamma_{\overline{P}})=\ch\big(B\cup\{\pm2v\}\big)$.
In pictures:

\setlength{\unitlength}{35pt}
\begin{center}\begin{tabular}{ccccc}
\parbox[c]{1 \unitlength}{
    \includegraphics[width=\unitlength]{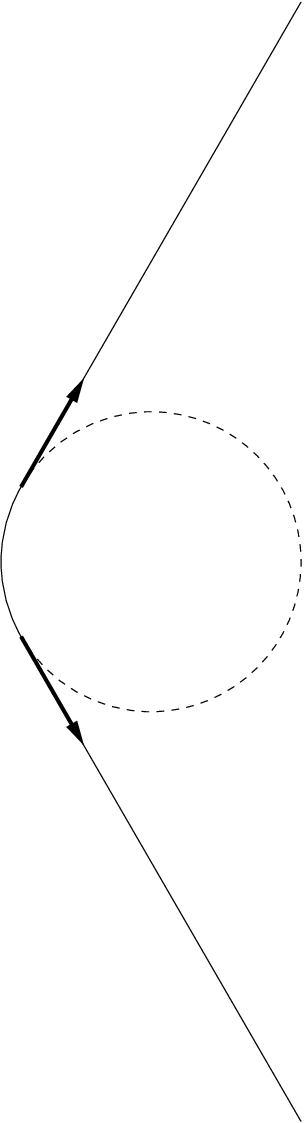}
    \begingroup\makeatletter\ifx\SetFigFont\undefined
    \gdef\SetFigFont#1#2#3#4#5{
    \reset@font\fontsize{#1}{#2pt}
    \fontfamily{#3}\fontseries{#4}\fontshape{#5}
    \selectfont}
    \fi\endgroup
    \begin{picture}(1,0)(0,0)
     \put(-0.5,2.7){\makebox(0,0)[lb]{\smash{{\SetFigFont{10}{12}{\rmdefault}{\mddefault}{\updefault}{$H_{\alpha}$}}}}}
     \put(-0.5,1.6){\makebox(0,0)[lb]{\smash{{\SetFigFont{10}{12}{\rmdefault}{\mddefault}{\updefault}{$H_{\beta}$}}}}}
    \end{picture}
    }
&\parbox[c]{\unitlength}{\begin{center}$\bigcap$\end{center}}
% N.B.: Width of next parbox is manually redefined.
&\parbox[c]{47.7pt}{\reflectbox{
    \includegraphics[width=\unitlength]{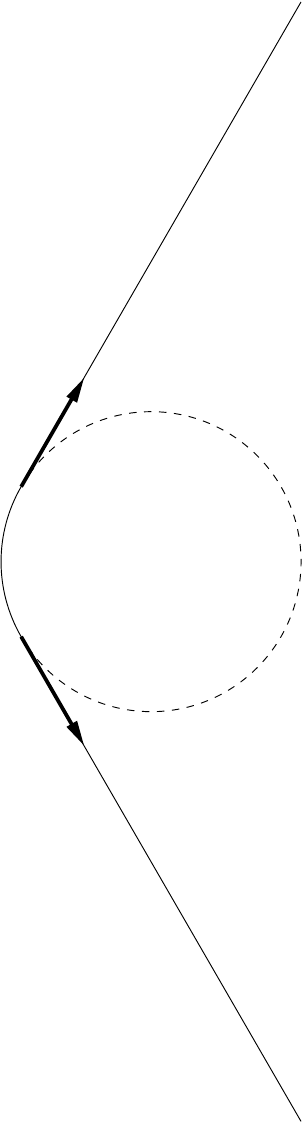}}
    \begingroup\makeatletter\ifx\SetFigFont\undefined
    \gdef\SetFigFont#1#2#3#4#5{
    \reset@font\fontsize{#1}{#2pt}
    \fontfamily{#3}\fontseries{#4}\fontshape{#5}
    \selectfont}
    \fi\endgroup
    \begin{picture}(1,0)(0,0)
     \put(0.7,1.6){\makebox(0,0)[lb]{\smash{{\SetFigFont{10}{12}{\rmdefault}{\mddefault}{\updefault}{$-H_{\alpha}$}}}}}
     \put(0.7,2.7){\makebox(0,0)[lb]{\smash{{\SetFigFont{10}{12}{\rmdefault}{\mddefault}{\updefault}{$-H_{\beta}$}}}}}
    \end{picture}
     }
&\parbox[c]{0.5\unitlength}{\begin{center}$=$\end{center}}
&\parbox[c]{\unitlength}{
    \includegraphics[width=\unitlength]{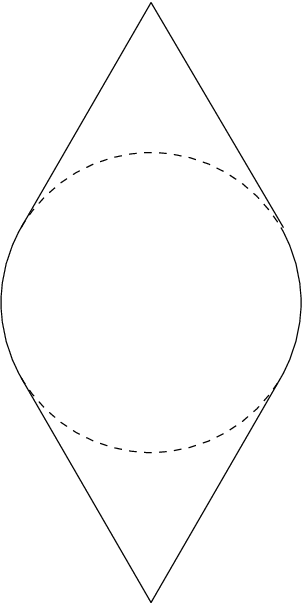}
    \begin{picture}(1,0)(0,0)
    \end{picture}
    }
\end{tabular}
\end{center}
\end{rem}

%%% ----------------------------------------------------------------------
\section{Support theorems}
\label{section Support theorems}
%%% ----------------------------------------------------------------------
The support theorem (Theorem \ref{SuppThm thm Support Theorem for functions and
minimal P_0}) for the horospherical transform for functions can be generalized to a support theorem for the
Radon transform $\Rt[P]$ corresponding to a (possibly non-minimal)
$\sigma\circ\theta$-stable parabolic subgroup $P$ for
distributions in a suitable subspace of $\Es_{b}'(X)$. In Section \ref{subsection Spaces of distributions}
we describe the spaces of distributions needed to formulate the
support theorem in Section \ref{subsection Support theorems}. The
support theorem implies injectivity of the Radon transform on
these spaces of distributions. In Section \ref{subsection
Injectivity} we discuss some implications of this for generalizing
the support theorem to even larger spaces of distributions.

Throughout this section $P$ is assumed to be a
$\sigma\circ\theta$-stable parabolic subgroup of $G$ containing
$A$.

%%% ----------------------------------------------------------------------
\subsection{Spaces of distributions}
\label{subsection Spaces of distributions}
%%% ----------------------------------------------------------------------
We define the convolution product $\theta*\phi$ of
$\theta\in\Ds(G)$ and $\phi\in\Es_{b}(X)$ to be the function on
$X$ given by
$$\theta*\phi(x)=\int_{G}\theta(g)\phi(g^{-1}\cdot x)\,dg\qquad(x\in X).$$
Since the left-regular representation of $G$ on the Fr\'echet
space $\Es_{b}(X)$ is continuous, it follows from standard
representation theory that convolution with a compactly supported
smooth function $\theta$ on $G$ defines a continuous operator from
$\Es_{b}(X)$ to itself.

For $\theta\in\Ds(G)$ define $\check{\theta}\in\Ds(G)$ by $\check{\theta}(g)=\theta(g^{-1})$. For $\theta\in\Ds(G)$ and  $\mu\in\Es_{b}'(X)$ we define the convolution product $\theta*\mu\in\Es_{b}'(X)$ by $\theta*\mu(\phi)=\mu(\check{\theta}*\phi)$ for $\phi\in\Es_{b}(X)$. Note that $\Es_{b}'(X)\ni\mu\mapsto\theta*\mu$ is continuous. Note further that the distribution $\theta*\mu$,
with $\theta\in\Ds(G)$ and $\mu\in\Es_{b}'(X)$, defines a smooth
function.

For $\psi\in\Es_{b}(\Xi_{P},J_{P}^{-1})$ and $\theta\in\Ds(G)$ we
define the convolution product $\theta*\psi$ to be the
function on $\Xi_{P}$ given by
$$\theta*\psi(\xi)=\int_{G}\theta(g)\psi(g^{-1}\cdot\xi)\,dg.$$
As above it follows that $\psi\mapsto\theta*\psi$ is a continuous endomorphism of $\Es_{b}(\Xi_{P},J_{P}^{-1})$.

Let $\invCone{P}$ be the maximal $\WG[K\cap H]$-invariant subcone of $\Gamma_{P}$
and let $\Es^{1}_{P}(X)$ be the subspace of $\Es^{1}(X)$ given by
$$
\Es^{1}_{P}(X)
=\RDs(X)+\bigcup_{\genfrac{}{}{0pt}{}
       {B\subset\g[a]_{\g[q]}}{B\textnormal{ compact}}}\Es^{1}(X;B+\invCone{P}).
$$
For $P\in\Psg(\g[a]_{\g[q]})$ these definitions agree with the definitions given
in the beginning of Section \ref{subsection Function Spaces}.

\begin{prop}\label{FuncSpaces prop Es^1_P=bigcap_(P_0)Es^1_(P_0)}
Let $\mathscr{C}$ denote the collection of $P_{0}\in \Psg(\g[a]_{\g[q]})$
contained in $P$.
Then
$$
\Es^{1}_{P}(X)=\bigcap_{P_{0}\in\mathscr{C}}\Es^{1}_{P_{0}}(X).
$$
In particular, $\Es^{1}_{P}(X)$ is invariant under the left action by $G$.
\end{prop}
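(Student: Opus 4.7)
My plan is to prove the two inclusions separately, with the forward one essentially formal and the reverse one requiring a partition-of-unity patching argument.

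For the inclusion $\Es^{1}_{P}(X) \subseteq \bigcap_{P_{0} \in \mathscr{C}} \Es^{1}_{P_{0}}(X)$, I will verify that $\invCone{P} \subseteq \invCone{P_{0}}$ for every $P_{0} \in \mathscr{C}$. From $P_{0} \subseteq P$ it follows that $N_P \subseteq N_{P_0}$, hence $\Sigma(\g,\g[a]_{\g[q]};P) \subseteq \Sigma(\g,\g[a]_{\g[q]};P_{0})$ and $\Gamma_{P} \subseteq \Gamma_{P_{0}}$. Since $\invCone{P}$ is $\WG[K\cap H]$-invariant and contained in $\Gamma_{P_{0}}$, it is contained in the maximal $\WG[K\cap H]$-invariant subcone of $\Gamma_{P_{0}}$, namely $\invCone{P_{0}} = \bigcap_{w\in\WG[K\cap H]} w\cdot\Gamma_{P_{0}}$. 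Consequently $\Es^{1}(X;B+\invCone{P}) \subseteq \Es^{1}(X;B+\invCone{P_{0}})$ for every compact $B$, and the inclusion of function spaces follows from the definitions.

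For the reverse inclusion, take $\phi \in \bigcap_{P_{0}\in\mathscr{C}}\Es^{1}_{P_{0}}(X)$ and, for each $P_{0}\in\mathscr{C}$, write $\phi = \phi^{\rm rd}_{P_{0}} + \phi^{\rm bd}_{P_{0}}$ with $\phi^{\rm rd}_{P_{0}} \in \RDs(X)$ and $\supp(\phi^{\rm bd}_{P_{0}}) \subseteq X(B_{P_{0}}+\invCone{P_{0}})$. Because $\mathscr{C}$ is finite (positive systems in a finite root system), I may, after replacing each $B_{P_{0}}$ by $\WG[K\cap H]\cdot B_{P_{0}}$ and enlarging, assume all the $B_{P_{0}}$ equal a single compact $\WG[K\cap H]$-invariant set $C$. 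I first observe that $\bigcap_{P_{0}\in\mathscr{C}}\invCone{P_{0}} = \invCone{P}$: the inclusion $\supseteq$ is the forward direction above, while the reverse uses Lemma \ref{RadTransSupp Lemma B+Gamma_P=cap(B+Gamma_P_0)} (with $B = \{0\}$) combined with the $\WG[K\cap H]$-invariance of the intersection. Each $\invCone{P_{0}}$, being a finite intersection of finitely generated cones, is polyhedral, so Lemma \ref{FuncSpaces lemma cap_k (B+Gamma_k) subseteq B'+cap_k Gamma_k} applied to the finitely many sets $C+\invCone{P_{0}}$ yields a compact $B\subseteq\g[a]_{\g[q]}$ with
$$\bigcap_{P_{0}\in\mathscr{C}}\bigl(C+\invCone{P_{0}}\bigr) \subseteq B+\invCone{P}.$$
Since $C$ and $\invCone{P_0}$ are $\WG[K\cap H]$-invariant, translating to subsets of $X$ via the polar decomposition and taking complements gives
$$X\setminus X(B+\invCone{P}) \subseteq \bigcup_{P_{0}\in\mathscr{C}}\bigl(X\setminus X(C+\invCone{P_{0}})\bigr),$$
and on $X\setminus X(C+\invCone{P_{0}})$ the function $\phi$ coincides with the rapidly decreasing $\phi^{\rm rd}_{P_{0}}$.

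To assemble these local identities, I will choose a smooth cutoff $\chi$ on $X$ that equals $1$ on a neighborhood of $X(B+\invCone{P})$ and is supported in $X(B'+\invCone{P})$ for a slightly enlarged compact $B'$, together with a smooth partition of unity $\{\rho_{P_{0}}\}_{P_{0}\in\mathscr{C}}$ satisfying $\supp(\rho_{P_{0}})\subseteq X\setminus X(C+\invCone{P_{0}})$ and $\chi + \sum_{P_{0}}\rho_{P_{0}} \equiv 1$. Then
$$\phi = \chi\phi + \sum_{P_{0}\in\mathscr{C}}\rho_{P_{0}}\phi = \chi\phi + \sum_{P_{0}\in\mathscr{C}}\rho_{P_{0}}\phi^{\rm rd}_{P_{0}},$$
where the first term belongs to $\Es^{1}(X;B'+\invCone{P})$ and the finite sum belongs to $\RDs(X) = \bigcap_{p>0}\Cs^{p}(X)$, since multiplication by a smooth function with all iterated derivatives bounded preserves each $\Cs^{p}(X)$. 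This shows $\phi\in\Es^{1}_{P}(X)$. The ``in particular'' statement is then immediate: each $\Es^{1}_{P_{0}}(X)$ is $G$-invariant by Proposition \ref{FuncSpaces prop Ls^1_P G-invariant}, and an intersection of $G$-invariant subspaces is $G$-invariant. The principal technical obstacle will be constructing the cutoff $\chi$ and the $\rho_{P_{0}}$ with sufficiently controlled derivatives to preserve $\RDs(X)$ under multiplication; this can be done by lifting $\WG[K\cap H]$-invariant smooth functions on $\g[a]_{\g[q]}$ to $K$-invariant smooth functions on $X$ through the polar decomposition, using smoothness away from walls and explicit bumps to interpolate.
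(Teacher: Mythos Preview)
Your forward inclusion matches the paper's. For the reverse, the paper takes a much shorter route: it simply assumes that $\phi$ itself lies in $\bigcup_B \Es^1(X; B+\invCone{P_0})$ for every $P_0\in\mathscr{C}$, intersects the supports, and then invokes Lemma~\ref{RadTransSupp Lemma B+Gamma_P=cap(B+Gamma_P_0)} (applied to each $P^w$) together with Lemma~\ref{FuncSpaces lemma cap_k (B+Gamma_k) subseteq B'+cap_k Gamma_k}. That argument does not explain how to pass from a general element $\phi=\phi^{\rm rd}_{P_0}+\phi^{\rm bd}_{P_0}$ of $\bigcap_{P_0}\Es^1_{P_0}(X)$, with possibly different rapidly decreasing parts for different $P_0$, to that special hypothesis; your patching argument is doing genuine additional work that the paper leaves implicit. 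The underlying convex geometry (your identity $\bigcap_{P_0}\invCone{P_0}=\invCone{P}$ and the use of the two lemmas) is the same in both proofs.

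Two remarks on your argument. First, the full partition of unity $\{\rho_{P_0}\}$ is unnecessary. Once you have the single cutoff $\chi$, write $\phi=\chi\phi+(1-\chi)\phi$; for every $x\in\supp(1-\chi)$ one has $x\notin X(B+\invCone{P})$, hence $x\notin X(C+\invCone{P_0})$ for \emph{some} $P_0\in\mathscr{C}$, and therefore all $\U(\g)$-derivatives of $\phi$ at $x$ coincide with those of $\phi^{\rm rd}_{P_0}$. Taking the maximum over the finite set $\mathscr{C}$ in each $\Cs^p$-seminorm then gives $(1-\chi)\phi\in\RDs(X)$ directly, and the support condition for $\chi\phi$ is immediate. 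Second, producing $\chi$ with bounded $\U(\g)$-derivatives by lifting through the polar decomposition is delicate: the map $K\times A_{\g[q]}\to X$ is singular along the walls and neither smoothness nor boundedness of the lift is automatic. It is cleaner to set $\chi=\psi*\1_{X(B_1+\invCone{P})}$ for some $\psi\in\Ds(G)$ with $\int_G\psi=1$ and small support; then $u\chi=(u\psi)*\1_{X(B_1+\invCone{P})}$ is bounded by $\|u\psi\|_{L^1(G)}$, and the required support conditions follow from the remark after Proposition~\ref{FuncSpaces prop Ls^1_P G-invariant}.
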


\begin{proof}
If $P_{0} \in \mathscr{C}$ then $\invCone{P}\subseteq \invCone{P_{0}}$, hence
$\Es^{1}_{P}(X)\subseteq\Es^{1}_{P_{0}}(X)$.
It follows that $\Es^{1}_{P}(X)$ is contained
in the given intersection.

For the remaining inclusion, assume that
$$
\phi\in \bigcup_{\genfrac{}{}{0pt}{}{B\subset\g[a]_{\g[q]}}{B\textnormal{compact}}}
\Es^{1}(X;B+\invCone{P_{0}})
$$
for each $P_{0}\in \mathscr{C}$. Then for every such  $P_{0}$
there exists a compact subset $B_{P_{0}}$ of $\g[a]_{\g[q]}$ such that
$\supp(\phi)\subseteq X(B_{P_{0}}+\invCone{P_{0}})$.
Let $B$ be a $\WG[K\cap H]$-invariant compact subset of
 $\g[a]_{\g[q]}$ containing the (finite) union of the sets $B_{P_{0}}$.
Then
$$
\supp(\phi)
\subseteq X\Big(\bigcap_{P_{0}\in\mathscr{C}}(B+\invCone{P_{0}})\Big)
\subseteq X\Big(\bigcap_{P_{0}\in\mathscr{C}}\bigcap_{w\in\WG[K\cap H]}(B+\Gamma_{P_{0}^{w}})\Big).$$
In view of Lemma \ref{RadTransSupp Lemma B+Gamma_P=cap(B+Gamma_P_0)}
applied to $P^{w}$ and $\mathscr{C}^{w}$  it follows that $\supp(\phi)\subseteq X\Big(\bigcap_{w\in\WG[K\cap H]}(B+\Gamma_{P^{w}})\Big)$.
According to Lemma \ref{FuncSpaces lemma cap_k (B+Gamma_k)
subseteq B'+cap_k Gamma_k} there exists a compact subset $B'$ of
$\g[a]_{\g[q]}$ such that the support of $\phi$ is contained in
$X(B'+\invCone{P})$ and thus we conclude that
$\phi\in\Es^{1}_{P}(X)$.

The last assertion follows from the fact that each of the spaces
$\Es^{1}_{P_{0}}(X)$ is $G$-invariant by Proposition
\ref{FuncSpaces prop Ls^1_P G-invariant}.
\end{proof}

We define $\Vs_{P}(X)=\{\mu\in\Es_{b}'(X):\theta*\mu\in\Es^{1}_{P}(X)
\text{ for every }\theta\in\Ds(G)\}$.

\begin{prop}\label{FuncSpaces prop Vs G-invariant, Vs_P=bigcap_(P_0) Vs_(P_0)}
The space $\Vs_{P}(X)$ is a $G$-invariant subspace of
$\Es_{b}'(X)$. Furthermore, let $\mathscr{C}$ be as in Proposition
\ref{FuncSpaces prop Es^1_P=bigcap_(P_0)Es^1_(P_0)}. Then
$$
\Vs_{P}(X)=\bigcap_{P_{0}\in\mathscr{C}}\Vs_{P_{0}}(X).
$$
\end{prop}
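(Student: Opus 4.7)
The plan is to establish the two assertions in turn, using the previous proposition together with the interaction between convolution and the regular representation.

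For the $G$-invariance, I would take $\mu\in\Vs_{P}(X)$, $g\in G$ and $\theta\in\Ds(G)$, and show that $\theta*(g\cdot\mu)\in\Es^{1}_{P}(X)$. A direct unwinding of the definition $(\theta*\mu)(\phi)=\mu(\check\theta*\phi)$ shows (using unimodularity of $G$ to change variables) that $\theta*\mu$, viewed as a distribution, equals the weak integral $\int_{G}\theta(\gamma)(\gamma\cdot\mu)\,d\gamma$. Feeding this into $g\cdot\mu$ and applying a substitution, one obtains an identity of the form $\theta*(g\cdot\mu)=(R_{g^{-1}}\theta)*\mu$, where $R_{h}\theta(x)=\theta(xh)$. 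Since $R_{g^{-1}}\theta\in\Ds(G)$ and $\mu\in\Vs_{P}(X)$, the right-hand side lies in $\Es^{1}_{P}(X)$. (Alternatively, one can write $\theta*(g\cdot\mu)=g\cdot(\theta'*\mu)$ for some $\theta'\in\Ds(G)$ obtained by conjugating $\theta$ by $g$, and then invoke $G$-invariance of $\Es^{1}_{P}(X)$ from Proposition \ref{FuncSpaces prop Es^1_P=bigcap_(P_0)Es^1_(P_0)}.) Either route gives $g\cdot\mu\in\Vs_{P}(X)$.

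For the equality $\Vs_{P}(X)=\bigcap_{P_{0}\in\mathscr{C}}\Vs_{P_{0}}(X)$, I would first observe that if $P_{0}\in\mathscr{C}$, then $P_{0}\subseteq P$ gives $N_{P}\subseteq N_{P_{0}}$, hence $\Sigma(\g,\g[a]_{\g[q]};P)\subseteq\Sigma(\g,\g[a]_{\g[q]};P_{0})$ and therefore $\Gamma_{P^{w}}\subseteq\Gamma_{P_{0}^{w}}$ for every $w\in\WG[K\cap H]$. Intersecting over $w$ yields $\invCone{P}\subseteq\invCone{P_{0}}$, so every set $X(B+\invCone{P})$ is contained in $X(B+\invCone{P_{0}})$, which gives $\Es^{1}_{P}(X)\subseteq\Es^{1}_{P_{0}}(X)$ and consequently $\Vs_{P}(X)\subseteq\Vs_{P_{0}}(X)$. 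This establishes the inclusion from left to right.

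For the reverse inclusion, let $\mu\in\bigcap_{P_{0}\in\mathscr{C}}\Vs_{P_{0}}(X)$ and $\theta\in\Ds(G)$. By hypothesis, $\theta*\mu\in\Es^{1}_{P_{0}}(X)$ for every $P_{0}\in\mathscr{C}$, so that $\theta*\mu\in\bigcap_{P_{0}\in\mathscr{C}}\Es^{1}_{P_{0}}(X)$. Proposition \ref{FuncSpaces prop Es^1_P=bigcap_(P_0)Es^1_(P_0)} identifies this intersection with $\Es^{1}_{P}(X)$, hence $\mu\in\Vs_{P}(X)$, and the two statements of the proposition are proved. The only non-routine step is the bookkeeping behind $\theta*(g\cdot\mu)=(R_{g^{-1}}\theta)*\mu$; the remainder of the argument is a formal consequence of Proposition \ref{FuncSpaces prop Es^1_P=bigcap_(P_0)Es^1_(P_0)} and the definitions.
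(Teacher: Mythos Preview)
Your argument is correct and essentially parallel to the paper's. For the intersection statement you do exactly what the paper does: both inclusions are reduced to the identity $\Es^{1}_{P}(X)=\bigcap_{P_{0}\in\mathscr{C}}\Es^{1}_{P_{0}}(X)$ of Proposition~\ref{FuncSpaces prop Es^1_P=bigcap_(P_0)Es^1_(P_0)}.

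For the $G$-invariance the paper takes precisely your ``alternative'' route: it shows
\[
\theta*(\ltpb{g_{0}}\mu)=\ltpb{g_{0}}(\theta^{g_{0}}*\mu),
\qquad \theta^{g_{0}}(g)=\theta(g_{0}^{-1}gg_{0}),
\]
and then invokes the $G$-invariance of $\Es^{1}_{P}(X)$ from Proposition~\ref{FuncSpaces prop Es^1_P=bigcap_(P_0)Es^1_(P_0)}. Your primary route, the right-translation identity $\theta*(g\cdot\mu)=(R_{g^{-1}}\theta)*\mu$, is a slight streamlining: since $R_{g^{-1}}\theta\in\Ds(G)$, membership in $\Vs_{P}(X)$ follows immediately without appealing to the $G$-invariance of $\Es^{1}_{P}(X)$. (Depending on whether $g\cdot\mu$ denotes the regular action or the pull-back $\ltpb{g}\mu$, the right shift comes out as $R_{g^{-1}}$ or $R_{g}$; either way $R_{h}\theta\in\Ds(G)$ and the conclusion is the same.) Both routes are short computations with the convolution formula, so the difference is one of taste rather than substance.
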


\begin{proof}
Let $\mu\in\Vs_{P}(X)$ and let $g_{0}\in G$. We will prove that $\theta*(\ltpb{g_{0}}\mu)\in\Es^{1}_{P}(X)$ for every $\theta\in\Ds(G)$. To do so, let $\theta\in\Ds(G)$. If $\phi\in\Es_{b}(X)$, then by unimodularity of $G$
$$
\ltpb{g_{0}^{-1}}(\check{\theta}*\phi)
=\int_{G}\theta(g^{-1})\ltpb{g^{-1}g_{0}^{-1}}\phi\,dg
=\int_{G}\theta(g_{0}^{-1}g^{-1}g_{0})\ltpb{g_{0}^{-1}g^{-1}}\phi\,dg
=\check{\theta}^{g_{0}}*(\ltpb{g_{0}^{-1}}\phi),
$$
where $\theta^{g_{0}}$ is the function given by $\theta^{g_{0}}(g)=\theta(g_{0}^{-1}gg_{0})$.
Hence for every $\phi\in\Es_{b}(X)$
$$
\Big(\theta*(\ltpb{g_{0}}\mu)\Big)(\phi)
=\mu\big(\ltpb{g_{0}^{-1}}(\check{\theta}*\phi)\big)
=\mu\big(\check{\theta}^{g_{0}}*(\ltpb{g_{0}^{-1}}\phi)\big)
=\big(\theta^{g_{0}}*\mu\big)\big(\ltpb{g_{0}^{-1}}\phi\big)
=\ltpb{g_{0}}\big(\theta^{g_{0}}*\mu\big)(\phi).
$$
Since $\theta^{g_{0}}\in\Ds(G)$ and $\mu\in\Vs_{P}(X)$, we have
$\theta^{g_{0}}*\mu\in\Es^{1}_{P}(X)$. The latter space is
$G$-invariant by Proposition \ref{FuncSpaces prop
Es^1_P=bigcap_(P_0)Es^1_(P_0)}. This proves the first statement of
the proposition.

The second statement is a direct corollary of Proposition
\ref{FuncSpaces prop Es^1_P=bigcap_(P_0)Es^1_(P_0)}.
\end{proof}

We finally define
$$\Vs(X)=\{\mu\in\Es'_{b}(X):\theta*\mu\in\RDs(X)\text{ for every }\theta\in\Ds(G)\}.$$
Let $\WGs_{P}$ be a set of representatives in $K$ for $\WG[M_{P}\cap K]\setminus\WG/\WG[K\cap H]$.

\begin{prop}
$\Vs(X)=\bigcap_{w\in\WGs_{P}}\Vs_{P^{w}}(X)$.
In particular, $\Vs(X)$ is a $G$-invariant subspace of $\Es_{b}'(X)$.
\end{prop}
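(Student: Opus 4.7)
My plan is to reduce to the functional identity
\begin{equation*}
\RDs(X)=\bigcap_{w\in\WGs_{P}}\Es^{1}_{P^{w}}(X).
\end{equation*}
Granting this, the proposition is immediate: $\theta\ast\mu\in\RDs(X)$ for every $\theta\in\Ds(G)$ if and only if $\theta\ast\mu\in\Es^{1}_{P^{w}}(X)$ for every $\theta$ and every $w\in\WGs_{P}$, which is precisely $\Vs(X)=\bigcap_{w}\Vs_{P^{w}}(X)$. The $G$-invariance of $\Vs(X)$ is then inherited from the $G$-invariance of each $\Vs_{P^{w}}(X)$, which is Proposition~\ref{FuncSpaces prop Vs G-invariant, Vs_P=bigcap_(P_0) Vs_(P_0)} applied with $P^{w}$ in place of $P$, since an intersection of $G$-invariant subspaces is $G$-invariant.

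The inclusion $\RDs(X)\subseteq\bigcap_{w}\Es^{1}_{P^{w}}(X)$ is built into the definition of $\Es^{1}_{P^{w}}(X)$. For the converse, let $\phi$ lie in the intersection and, for each $w\in\WGs_{P}$, fix a decomposition $\phi=f_{w}+g_{w}$ with $f_{w}\in\RDs(X)$ and $\supp(g_{w})\subseteq X(B_{w}+\invCone{P^{w}})$, $B_{w}\subseteq\g[a]_{\g[q]}$ compact. Suppose, as will be argued below, that $\bigcap_{w\in\WGs_{P}}(B_{w}+\invCone{P^{w}})$ is bounded in $\g[a]_{\g[q]}$, contained in some compact $B'$. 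Then for any $x\in X\setminus X(B')$ there exists $w$ with $x\notin X(B_{w}+\invCone{P^{w}})$, whence $ug_{w}(x)=0$ and $u\phi(x)=uf_{w}(x)$ for every $u\in\U(\g)$. Consequently $|u\phi(x)|\leq\sum_{w}|uf_{w}(x)|$ on $X\setminus X(B')$, which satisfies the Harish-Chandra Schwartz estimates since each $f_{w}$ does. Combined with boundedness of $u\phi$ on the relatively compact open set $X(B')$, this yields $\phi\in\Cs^{p}(X)$ for every $p>0$, i.e., $\phi\in\RDs(X)$.

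The outstanding step, which is the main obstacle, is the boundedness of $\bigcap_{w}(B_{w}+\invCone{P^{w}})$. Each $\invCone{P^{w}}=\bigcap_{v\in\WG[K\cap H]}\Gamma_{P^{wv}}$ is an intersection of finitely many finitely generated cones and hence polyhedral, so by Lemma~\ref{FuncSpaces lemma cap_k (B+Gamma_k) subseteq B'+cap_k Gamma_k} it suffices to show $\bigcap_{w\in\WGs_{P}}\invCone{P^{w}}=\{0\}$. Since $\WGs_{P}\cdot\WG[K\cap H]$ is a system of representatives for $\WG[M_{P}\cap K]\setminus\WG$ and $\Gamma_{P^{u}}$ depends on $u\in\WG$ only through its $\WG[M_{P}\cap K]$-coset (elements of $\WG[M_{P}\cap K]\subseteq\WG$ stabilize $P$ by conjugation), this intersection coincides with $\bigcap_{u\in\WG}\Gamma_{P^{u}}$. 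Choose a minimal $\sigma\circ\theta$-stable parabolic subgroup $P_{0}$ with $A_{\g[q]}\subseteq P_{0}\subseteq P$; then $\Gamma_{P^{u}}\subseteq\Gamma_{P_{0}^{u}}$, and the claim reduces to $\bigcap_{u\in\WG}\Gamma_{P_{0}^{u}}=\{0\}$. This in turn follows from an averaging argument in the pointed cone $\Gamma_{P_{0}}\subseteq\mathrm{span}\{H_{\alpha}:\alpha\in\Sigma(\g,\g[a]_{\g[q]})\}$: any $H$ in the intersection has every $\WG$-translate in $\Gamma_{P_{0}}$; their sum is both $\WG$-invariant and contained in $\Gamma_{P_{0}}$. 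The faithfulness of the $\WG$-action on $\mathrm{span}\{H_{\alpha}\}$ forces this sum to vanish, and pointedness of $\Gamma_{P_{0}}$ then yields $H=0$. With this cone-intersection verification in hand, the rest of the proof is a routine combination of the definitions, the support decomposition, and the cited lemma.
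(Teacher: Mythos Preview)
Your proof is correct and follows essentially the same approach as the paper: the key step in both is the computation $\bigcap_{w\in\WGs_{P}}\invCone{P^{w}}=\bigcap_{u\in\WG}\Gamma_{P^{u}}=\{0\}$ followed by an application of Lemma~\ref{FuncSpaces lemma cap_k (B+Gamma_k) subseteq B'+cap_k Gamma_k}, after which the paper simply asserts the conclusion while you spell out the decomposition $\phi=f_{w}+g_{w}$ argument explicitly. One small technical point to tighten: to pass from $x\notin X(B')$ to $x\notin X(B_{w}+\invCone{P^{w}})$ for some $w$, you implicitly use $\bigcap_{w}X(B_{w}+\invCone{P^{w}})=X\big(\bigcap_{w}(B_{w}+\invCone{P^{w}})\big)$, which requires the sets $B_{w}+\invCone{P^{w}}$ to be $\WG[K\cap H]$-invariant---this is easily arranged by enlarging each $B_{w}$ to $\WG[K\cap H]\cdot B_{w}$, since the cones $\invCone{P^{w}}$ are already $\WG[K\cap H]$-invariant.
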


\begin{proof}
It is clear that $\Vs(X)$ is contained in each of the spaces
$\Vs_{P^{w}}(X)$.
Using that $M_{P}\cap K$ normalizes $P$, we obtain
$$
\bigcap_{w\in\WGs_{P}}\invCone{P^{w}}
=\bigcap_{w''\in\WG[M_{P}\cap K]}\bigcap_{w\in\WGs_{P}}\bigcap_{w'\in\WG[K\cap H]}\Gamma_{P^{w''ww'}}
=\bigcap_{w\in\WG}\Gamma_{P^{w}}
=\{0\}.
$$
For $w\in\WGs_{P}$, let $B_{w}$ be a compact subset of $\g[a]_{\g[q]}$. Then, by Lemma \ref{FuncSpaces lemma cap_k (B+Gamma_k) subseteq B'+cap_k Gamma_k}, the intersection $\bigcap_{w\in\WGs_{P}}(B_{w}+\invCone{P^{w}})$ is compact. This proves that the other inclusion also holds.

The second statement is now a direct corollary of Proposition \ref{FuncSpaces prop Vs G-invariant, Vs_P=bigcap_(P_0) Vs_(P_0)}.
\end{proof}

\begin{rem}
Note that $\Es'(X),\RDs(X)\subseteq\Vs_{P}(X)\cap\Vs(X)$.
Furthermore, the spaces
 $\Vs_{P}(X)$ and $\Vs(X)$ contain all
integrable functions $\phi$ on $X$ that are of rapid decay, i.e.,
the functions $\phi$ with the property that
if $C$ is a compact subset of $G$, then for every $n\in\N$
$$
\sup_{x\in X}\|x\|^{n}\int_{C} |\ltpb{g}\phi(x)|\,dg<\infty.$$
Here $\|\cdot\|:X\to\R$ denotes the function
given by $\|ka\cdot x_{0}\|=e^{\|\log a\|}$ for $k \in K$ and $a \in A_{\g[q]}$.
The subspace of $L^{1}(X)$ consisting of the
functions with support contained in $X(B+\invCone{P})$
for some compact subset $B$ of $\g[a]_{\g[q]}$, is a subspace of
$\Vs_{P}(X)$ as well.
\end{rem}

%%% ----------------------------------------------------------------------
\subsection{Support theorems}
\label{subsection Support theorems}
%%% ----------------------------------------------------------------------
We start with a lemma.

\begin{lemma}
Let $\theta\in\Ds(G)$, $\mu\in\Es'_{b}(X)$ and
$\psi\in\Es_{b}(\Xi_{P},J_{P}^{-1})$, then $\Rt[P](\theta*\mu)(\psi)=\Rt[P]\mu(\check{\theta}*\psi)$.
\end{lemma}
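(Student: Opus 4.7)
The plan is to chase definitions and use equivariance of $\dRt[P]$ to commute the dual transform with convolution from the left.

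First I would unfold the two distribution Radon transforms. By the definition of $\Rt[P]$ on $\Es_{b}'(X)$ and of convolution of a distribution with a test function on $G$, we have
\[
\Rt[P](\theta*\mu)(\psi)
=(\theta*\mu)\big(\dRt[P]\psi\big)
=\mu\big(\check{\theta}*\dRt[P]\psi\big),
\]
while, again by definition,
\[
\Rt[P]\mu(\check{\theta}*\psi)=\mu\big(\dRt[P](\check{\theta}*\psi)\big).
\]
Thus the lemma reduces to the pointwise identity
\begin{equation}\label{ST eq reduction}
\dRt[P](\check{\theta}*\psi)=\check{\theta}*\dRt[P]\psi
\end{equation}
in $\Es_{b}(X,J_{G}^{-1})=\Es_{b}(X)$, for every $\theta\in\Ds(G)$ and $\psi\in\Es_{b}(\Xi_{P},J_{P}^{-1})$.

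Next I would verify \eqref{ST eq reduction}. The key ingredients are the $G$-equivariance of $\dRt[P]$ and the fact that convolution with an element of $\Ds(G)$ can be written as a Bochner integral valued in the appropriate Fr\'echet space. More concretely, for $x=g\cdot x_{0}\in X$, using the left invariance of Haar measure on $H/(L_{P}\cap H)$ implicit in (\ref{RadTransExt eq Rt_P^Q* def}) and Fubini's theorem, which applies by Proposition~\ref{RadTransExt prop Rt_P^Q*: Es_b to Es_b} (absolute convergence of the defining integral for $\dRt[P]$) together with the compactness of $\supp(\check{\theta})$, I compute
\begin{align*}
\dRt[P](\check{\theta}*\psi)(g\cdot x_{0})
&=\int_{H/(L_{P}\cap H)}(\check{\theta}*\psi)(gh\cdot\xi_{P})\,d_{L_{P}\cap H}h\\
&=\int_{H/(L_{P}\cap H)}\int_{G}\check{\theta}(\gamma)\psi(\gamma^{-1}gh\cdot\xi_{P})\,d\gamma\,d_{L_{P}\cap H}h\\
&=\int_{G}\check{\theta}(\gamma)\int_{H/(L_{P}\cap H)}\psi(\gamma^{-1}gh\cdot\xi_{P})\,d_{L_{P}\cap H}h\,d\gamma\\
&=\int_{G}\check{\theta}(\gamma)\dRt[P]\psi(\gamma^{-1}g\cdot x_{0})\,d\gamma
=(\check{\theta}*\dRt[P]\psi)(g\cdot x_{0}).
\end{align*}

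The only point requiring a little care is the interchange of the two integrations in the third line; this is justified by the fact that for $\gamma$ ranging over the compact set $\supp(\check\theta)$ the integrand $\check\theta(\gamma)\psi(\gamma^{-1}gh\cdot\xi_{P})$ is dominated by $C\,J_{P}(gh\cdot\xi_{P})$ for a suitable constant $C$ (by Lemma~\ref{FuncSpaces lemma c^-1g^*J<J<c g^*J} and the definition of $\Es_{b}(\Xi_{P},J_{P}^{-1})$), and the $J_{P}$-weighted integral over $H/(L_{P}\cap H)$ is finite by the proof of Proposition~\ref{RadTransExt prop Rt_P^Q*: Es_b to Es_b}. This establishes \eqref{ST eq reduction} and hence the lemma. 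The main (mild) obstacle is organizing the Fubini argument; the equivariance of $\dRt[P]$ itself is essentially built into its definition.
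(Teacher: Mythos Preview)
Your proof is correct and follows essentially the same approach as the paper: both reduce the identity to the commutation $\dRt[P](\check{\theta}*\psi)=\check{\theta}*\dRt[P]\psi$, which is a consequence of the $G$-equivariance of $\dRt[P]$. The only cosmetic difference is that the paper establishes this commutation at the level of the distribution $\mu$ via a Bochner-integral step (writing $\mu(\check{\theta}*\dRt[P]\psi)=\int_{G}\theta(g^{-1})\mu(L_{g}\dRt[P]\psi)\,dg$ and using $L_{g}\dRt[P]=\dRt[P]L_{g}$), whereas you verify the function identity directly via Fubini on the integral formula for $\dRt[P]$; the content is the same.
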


\begin{proof}
We denote the left regular representation on $\Es_{b}(X)$ and
$\Es_{b}(\Xi_{P}, J_{P}^{-1})$ both by $L$. Using equivariance and
continuity of $\dRt[P]$, we obtain
\begin{align*}
\Rt[P](\theta*\mu)(\psi)
&=(\theta* \mu)(\dRt[P]\psi)
=\mu\big(\check{\theta}*(\dRt[P]\psi)\big)
=\int_{G}\theta(g^{-1})\mu\big(L_{g}\dRt[P]\psi\big)\,dg\\
&=\int_{G}\theta(g^{-1})\Rt[P]\mu(L_{g}\psi)\,dg
=\Rt[P]\mu(\check{\theta}*\psi).
\end{align*}
\end{proof}

\begin{thm}[Support Theorem]\label{SuppThm thm Support theorem for distributions and non-minimal P}
Let $B$ be a $\WG[M_{P}\cap K\cap H]$-invariant convex compact subset of $\g[a]_{\g[q]}$ and let $\mu\in\Vs_{P}(X)$. If
$$\supp(\Rt[P]\mu)\subseteq \Xi_{P}(B+\Gamma_{P}),$$
then
$$\supp(\mu)\subseteq
X\Big(\bigcap_{w\in \WG[K\cap H]}w\cdot(B+\Gamma_{P})\Big).
$$
\end{thm}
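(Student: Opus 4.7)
The plan is to reduce the statement to the horospherical case of Theorem \ref{SuppThm thm Support Theorem for functions and minimal P_0} by two simultaneous reductions: from distributions in $\Vs_P(X)$ to smooth functions in $\Es^1_P(X)$ via convolution with elements of $\Ds(G)$, and from $\Rt[P]$ to the horospherical transforms $\Rt[P_0]$ for each $P_0\in\Psg(\g[a]_{\g[q]})$ with $P_0\subseteq P$, using the composition identity $\Rt[P_0]=\Rt[P_0][P]\circ\Rt[P]$ of Proposition \ref{RadTransRel prop Rt_P=Rt_P^Q circ Rt_Q}.

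Fix $\theta\in\Ds(G)$ and set $\phi_\theta:=\theta*\mu$. By definition of $\Vs_P(X)$ and Proposition \ref{FuncSpaces prop Es^1_P=bigcap_(P_0)Es^1_(P_0)}, $\phi_\theta\in\Es^1_{P_0}(X)$ for every $P_0\in\Psg(\g[a]_{\g[q]})$ with $P_0\subseteq P$. The lemma preceding the theorem yields $\Rt[P]\phi_\theta=\theta*\Rt[P]\mu$, and hence $\supp(\Rt[P]\phi_\theta)\subseteq\supp(\theta)\cdot\Xi_P(B+\Gamma_P)$. Using the $KAK$-decomposition together with Kostant's convexity theorem (in the spirit of Lemma \ref{SuppThm lemma kak Xi_P(B+Gamma) subseteq Xi( ch W cdot a+B+Gamma)} but for $\Xi_P$) and exploiting the $\WG[M_P\cap K\cap H]$-invariance of $B$, one manufactures a compact convex $\WG[M_P\cap K\cap H]$-invariant set $B_\theta\subseteq\g[a]_{\g[q]}$ containing $B$, depending only on $\supp(\theta)$, such that
$$
\supp(\theta)\cdot\Xi_P(B+\Gamma_P)\subseteq\Xi_P(B_\theta+\Gamma_P),
$$
and such that $B_\theta\downarrow B$ in the Hausdorff sense as $\supp(\theta)\downarrow\{e\}$.

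For each $P_0\in\Psg(\g[a]_{\g[q]})$ with $P_0\subseteq P$, Corollary \ref{RadTransSupp cor supp(Rt_Q phi)subseteq Xi_Q,B => supp(Rt_P phi)subseteq Xi_P,B} applied to the function $\Rt[P]\phi_\theta\in\Es^1(\Xi_P,J_P)$ then gives
$$
\supp(\Rt[P_0]\phi_\theta)=\supp\bigl(\Rt[P_0][P]\Rt[P]\phi_\theta\bigr)\subseteq\Xi_{P_0}(B_\theta+\Gamma_{P_0}),
$$
and the horospherical support theorem (Theorem \ref{SuppThm thm Support Theorem for functions and minimal P_0}) applied to $\phi_\theta\in\Es^1_{P_0}(X)$ yields $\supp(\phi_\theta)\subseteq X\bigl(\bigcap_{w\in\WG[K\cap H]}w\cdot(B_\theta+\Gamma_{P_0})\bigr)$. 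Intersecting over all minimal $\sigma\circ\theta$-stable $P_0\subseteq P$ and invoking Lemma \ref{RadTransSupp Lemma B+Gamma_P=cap(B+Gamma_P_0)}, together with the observation that for a family $\{S_\alpha\}$ of $\WG[K\cap H]$-invariant subsets of $\g[a]_{\g[q]}$ one has $\bigcap_\alpha X(S_\alpha)=X\bigl(\bigcap_\alpha S_\alpha\bigr)$ (a consequence of the refined polar decomposition recorded at the end of Section \ref{subsection sigma circ theta-stable parabolic subgroups}), one deduces
$$
\supp(\theta*\mu)\subseteq X\Big(\bigcap_{w\in\WG[K\cap H]}w\cdot(B_\theta+\Gamma_P)\Big).
$$

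To finish, choose an approximate identity $(\theta_n)$ in $\Ds(G)$ with $\supp(\theta_n)\downarrow\{e\}$, so that $\theta_n*\mu\to\mu$ in $\Es'_b(X)$ and $B_{\theta_n}\downarrow B$. For any test function $\phi\in\Ds(X)$ whose support is disjoint from the closed set $X\bigl(\bigcap_w w\cdot(B+\Gamma_P)\bigr)$, a standard argument using compactness of $\supp(\phi)$ and the fact that $\bigcap_n\bigcap_w w\cdot(B_{\theta_n}+\Gamma_P)=\bigcap_w w\cdot(B+\Gamma_P)$ (since $B_{\theta_n}\downarrow B$ and each $\Gamma_{P^w}$ is closed) shows that $\supp(\phi)$ is eventually disjoint from $X\bigl(\bigcap_w w\cdot(B_{\theta_n}+\Gamma_P)\bigr)$; consequently $(\theta_n*\mu)(\phi)=0$ for $n$ large, and passing to the limit gives $\mu(\phi)=0$. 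The main obstacle is the construction of $B_\theta$ with all the required properties---compactness, convexity, $\WG[M_P\cap K\cap H]$-invariance, and Hausdorff-convergence to $B$---since this is precisely the point where the $\WG[M_P\cap K\cap H]$-invariance hypothesis on $B$ intervenes to make the reduction to the horospherical theorem possible.
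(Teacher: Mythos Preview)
Your approach is essentially the same as the paper's. The paper carries out the construction of $B_\theta$ explicitly: for $\supp(\theta)\subseteq U=K\exp(B_U)K$ with $B_U$ a closed ball in $\g[a]$ centered at $0$, it takes $B_\theta=(B_U\cap\g[a]_{\g[q]})+B$, and the inclusion $U\cdot\Xi_P(B+\Gamma_P)\subseteq\Xi_P(B_\theta+\Gamma_P)$ is obtained via Lemma \ref{RadTransSupp lemma relation Xi_(P,B), Acomp} (which in turn rests on Van den Ban's convexity theorem applied to $L_P/(L_P\cap H)$, not only Kostant's); this is exactly where the $\WG[M_P\cap K\cap H]$-invariance of $B$ enters. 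The only minor organisational difference is that the paper passes to the limit in $\theta$ before intersecting over the minimal $P_0\subseteq P$, whereas you do these in the opposite order; both are valid.
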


\begin{rem}
Note that if $P=P_{0}$ is a minimal $\sigma\circ\theta$-stable parabolic subgroup, then any subset $B$ of $\g[a]_{\g[q]}$ is $\WG[M_{P_{0}}\cap K\cap H]$-invariant since $M_{P_{0}}$ centralizes $\g[a]_{\g[q]}$.

If $P=G$, then $\Rt[P]=\Rt[G]$ equals the identity operator $\Vs_{G}(X)\to\Vs_{G}(X)$. In this case the support theorem reduces to the following tautology.  Let $B$ be a $\WG[K\cap H]$-invariant convex compact subset of $\g[a]_{\g[q]}$ and let $\mu\in\Vs_{P}(X)$. Then $\supp(\mu)\subseteq X(B)$ implies $\supp(\mu)\subseteq X(B)$.
\end{rem}

\begin{proof}[Proof for Theorem \ref{SuppThm thm Support theorem for distributions and non-minimal P}]
Let $\mu\in\Vs_{P}(X)$ and assume that $\supp(\Rt[P]\mu)\subseteq\Xi_{P}(B+\Gamma_{P})$.
Let $B_{U}$ be a closed ball in $\g[a]$  centered at $0$ and
let $U=K\exp(B_{U})K$. Note that $U$ is symmetric in the sense that $\{u^{-1}:u\in U\}=U$.
Let $\theta\in\Ds(G)$  and assume that $\supp(\theta)\subseteq U$.
If
$\psi\in\Ds(\Xi_{P})$ satisfies $\supp(\psi)\cap
U\cdot \Xi_{P}(B+\Gamma_{P})=\emptyset$,
then $\supp(\check{\theta}*\psi)\cap\Xi_{P}(B+\Gamma_{P}) =\emptyset$ and thus $\Rt[P](\theta*\mu)(\psi)
=\Rt[P]\mu(\check{\theta}*\psi) =0$. As this holds for all $\psi$ as above, $\supp\big(\Rt[P](\check{\theta}*\mu)\big)\subseteq U\cdot\Xi_{P}(B+\Gamma_{P})$.
(Here we used that $U$ is compact, so that $U\cdot\Xi_{P}(B+\Gamma_{P})$ is closed.)

Let $\mathscr{C}$ be the set of $P_{0}\in\Psg(\g[a]_{\g[q]})$ that are
contained in $P$. Let $P_{0}\in\mathscr{C}$.
By Lemma \ref{RadTrans lemma A_P=pi_(a_P cap q)A_KAN} we have
$$
\Acomp_{P_{0}}\Big(UK\exp(B+\Gamma_{P})(L_{P}\cap H)\Big) =\Acomp_{P_{0}}\Big(\exp(B_{U})K\Big)+\Acomp_{P_{0}}\Big(\exp(B+\Gamma_{P})(L_{P}\cap H)\Big).
$$
Again by Lemma \ref{RadTrans lemma A_P=pi_(a_P cap q)A_KAN} and by Kostant's convexity theorem (\cite[Theorem 4.1]{Kostant_OnConvexityTheWeylGroupAndTheIwasawaDecomposition}) the first term on the right-hand side is contained in $B_{U}\cap \g[a]_{\g[q]}$. By Theorem \ref{RadTransSupp Thm Convexity theorem} applied to $L_{P}/(L_{P}\cap H)$ and $P_{0}\cap L_{P}$, the second term is contained in $B+\Gamma_{P_{0}}$. Since this holds for all $P_{0}\in\mathscr{C}$ and $(B_{U}\cap \g[a]_{\g[q]})+B$ is a $W_{M_{P}\cap K\cap H}$-invariant compact convex subset of $\g[a]_{\g[q]}$, we can apply Lemma \ref{RadTransSupp lemma relation Xi_(P,B), Acomp} and thus obtain $U\cdot\Xi_{P}(B+\Gamma_{P})
\subseteq\Xi_{P}\big((B_{U}\cap \g[a]_{\g[q]})+B+\Gamma_{P}\big)$.
Proposition \ref{FuncSpaces prop Vs G-invariant, Vs_P=bigcap_(P_0) Vs_(P_0)}
implies that $\mu\in\Vs_{P_{0}}(X)$, hence $\theta*\mu\in\Es^{1}_{P_{0}}(X)$. From Corollary
\ref{RadTransSupp cor supp(Rt_Q phi)subseteq Xi_Q,B => supp(Rt_P
phi)subseteq Xi_P,B} and Proposition \ref{RadTransRel prop
Rt_P=Rt_P^Q circ Rt_Q} it follows that
$$
\supp\Big(\Rt[P_{0}](\theta*\mu)\Big)
\subseteq\Xi_{P_{0}}\Big((B_{U}\cap \g[a]_{\g[q]})+B+\Gamma_{P_{0}}\Big).
$$
Now Theorem \ref{SuppThm thm Support Theorem for functions and minimal P_0}
can be applied and thus we conclude that
$$
\supp(\theta*\mu)\subseteq X\Big(\bigcap_{w\in\WG[K\cap H]}
w\cdot\Big((B_{U}\cap \g[a]_{\g[q]})+B+\Gamma_{P_{0}}\big)\Big).
$$

For each $j \in \N$, let $B_{j}\subseteq\g[a]$ be the ball of radius $1/j$ and centered at $0$ and let $U_{j} = K\exp(B_{j})K$. Let
$(\theta_{j}\in\Ds(G))_{j\in\N}$ be a sequence such that $\supp(\theta_{j})\subseteq U_{j}$
and $\theta_{j}\to\delta$ in $\Es'(G)$ (with respect to the weak
topology) for $j\to\infty$. Since convolution is sequentially
continuous with respect to each variable separately, the sequence
$\theta_{j}*\mu$ converges to $\mu$ in $\Ds'(X)$ (with respect to
the weak topology) for $j \to \infty$. As the $B_{U_{j}}$ form a decreasing sequence of sets, we therefore have
$$
\supp(\mu)
\subseteq\bigcap_{j\in\N}X\Big(\bigcap_{w\in\WG[K\cap H]}
    w\cdot\big((B_{U_{j}}\cap \g[a]_{\g[q]})+B+\Gamma_{P_{0}}\big)\Big)
=X\Big(\bigcap_{w\in\WG[K\cap H]}w\cdot(B+\Gamma_{P_{0}})\Big).
$$
Since this is true for
each $P_{0}\in\mathscr{C}$, it follows that
$$
\supp(\mu)
\subseteq X\Big(\bigcap_{P_{0}\in\mathscr{C}}\bigcap_{w\in\WG[K\cap H]}
                                    w\cdot(B+\Gamma_{P_{0}})\Big).
$$
The theorem now follows by application of Lemma
\ref{RadTransSupp Lemma B+Gamma_P=cap(B+Gamma_P_0)}.
\end{proof}

\begin{rem}
With essentially the same proof, it is seen that the support
theorem can be generalized to distributions $\mu\in\Es_{b}'(X)$
for which there exist a sequence $(\theta_{j}\in\Ds(G))_{j\in\N}$
such that
\begin{enumerate}[(i)]
\item
    $\supp(\theta_{j})$ is contained in $B_{j}$,
\item
    $\theta_{j}\to\delta$ in $\Es'(G)$ for $j\to\infty$ (with respect to the weak topology),
\item
    $\theta_{j}*\mu\in\Es^{1}_{P}(X)$.
\end{enumerate}
It is not clear to us whether the subset
of these distributions forms a subspace of $\Es_{b}'(X)$,
nor are we certain that the set of these distributions is
actually strictly larger than $\Vs_{P}(X)$.
\end{rem}

\begin{cor}
\label{SuppThm cor supp Rt_P phi(xi)=0 for xi cap KBH=empty and all P ->supp phi subset KBH}
Let $\mu\in\Vs(X)$, let $B$ be a $\WG$-invariant
convex compact subset of $\g[a]_{\g[q]}$ and let $g\in G$. Then the
following statements are equivalent.
\begin{enumerate}[(i)]
\item
    $\supp(\Rt[P^{w}]\mu)\subseteq g\cdot \Xi_{P^{w}}(B+\Gamma_{P^{w}})$
             for every  $w\in\WGs_{P}$.
\item
    $\supp(\mu)\subseteq g\cdot X(B)$.
\end{enumerate}
\end{cor}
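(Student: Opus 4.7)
My plan is to prove the two implications separately. In both directions I first reduce to the case $g=e$: using the $G$-equivariance of each $\Rt[P^w]$ I may replace $\mu$ by $\ltpb{g^{-1}}\mu$, which remains in $\Vs(X)$ since $\Vs(X)$ is $G$-invariant, and which transforms the conditions $\supp(\Rt[P^w]\mu)\subseteq g\cdot\Xi_{P^w}(B+\Gamma_{P^w})$ and $\supp(\mu)\subseteq g\cdot X(B)$ into their $g=e$ analogues.

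For $(\textnormal{i})\Rightarrow(\textnormal{ii})$ I would apply Theorem \ref{SuppThm thm Support theorem for distributions and non-minimal P} to each parabolic $P^w$ with $w\in\WGs_P$; this is legitimate because $\mu\in\Vs(X)\subseteq\Vs_{P^w}(X)$ and because $\WG$-invariance of $B$ implies $\WG[M_{P^w}\cap K\cap H]$-invariance. Each application yields
$$\supp(\mu)\subseteq X\Big(\bigcap_{w'\in\WG[K\cap H]} w'\cdot(B+\Gamma_{P^w})\Big).$$
Intersecting over $w\in\WGs_P$ and using $\WG$-invariance of $B$ to rewrite $w'\cdot(B+\Gamma_{P^w})=B+w'\Gamma_{P^w}$, I arrive at an intersection of sets of the form $B+w'\Gamma_{P^w}$. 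The translates $w'\Gamma_{P^w}$ exhaust the entire $\WG$-orbit of $\Gamma_P$: indeed $\WGs_P\cdot\WG[K\cap H]$ represents $\WG[M_P\cap K]\backslash\WG$, and $\WG[M_P\cap K]$ stabilises $\Gamma_P$ because its elements fix $\g[a]_P\cap\g[a]_{\g[q]}$ pointwise and therefore permute the subset $\Sigma(\g,\g[a]_{\g[q]};P)$ that generates the cone. The corresponding dual cones are the closed Weyl chambers, so they cover $\g[a]_{\g[q]}^{*}=\Cone_B$ (the latter equality because $B$ is compact), and Lemma \ref{ConvexGeom Lemma S=cap(S+Gamma)} collapses the intersection to $B$, giving (ii).

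For $(\textnormal{ii})\Rightarrow(\textnormal{i})$ I would smooth $\mu$ by convolution. Fix $w\in\WGs_P$. For $\theta\in\Ds(G)$ supported in a compact symmetric neighbourhood $U=K\exp(B_U)K$ of $e$, the product $\theta*\mu$ lies in $\RDs(X)\subseteq\Es^1(X)$ by the defining property of $\Vs(X)$, and $\supp(\theta*\mu)\subseteq U\cdot X(B)\subseteq X\big((B_U\cap\g[a]_{\g[q]})+B\big)$; the second inclusion is obtained exactly as in the proof of Theorem \ref{SuppThm thm Support theorem for distributions and non-minimal P}, using Lemma \ref{RadTrans lemma A_P=pi_(a_P cap q)A_KAN} and Kostant's convexity theorem. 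The set $(B_U\cap\g[a]_{\g[q]})+B$ is $\WG$-invariant, convex and compact, so Corollary \ref{RadTransSupp cor supp(Rt_Q phi)subseteq Xi_Q,B => supp(Rt_P phi)subseteq Xi_P,B} (with $Q=G$) yields
$$\supp(\Rt[P^w](\theta*\mu))\subseteq\Xi_{P^w}\big((B_U\cap\g[a]_{\g[q]})+B+\Gamma_{P^w}\big).$$
Choosing an approximate identity $\theta_j\to\delta$ in $\Es'(G)$ with $\supp(\theta_j)$ shrinking to $\{e\}$, I would pass to the limit using sequential continuity of convolution on $\Es_b'(X)$ together with continuity of $\Rt[P^w]$; intersecting the resulting support estimates over $j$ delivers $\supp(\Rt[P^w]\mu)\subseteq\Xi_{P^w}(B+\Gamma_{P^w})$, which is (i).

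The main technical obstacle is the book-keeping in $(\textnormal{i})\Rightarrow(\textnormal{ii})$: one must verify that the doubly-indexed family of cones $\{w'\Gamma_{P^w}\}$ exhausts the full $\WG$-orbit of $\Gamma_P$. This rests on the double-coset definition of $\WGs_P$ together with the (non-obvious) stabiliser property of $\WG[M_P\cap K]$, and only after this combinatorial identification is established can Lemma \ref{ConvexGeom Lemma S=cap(S+Gamma)} be invoked to collapse the intersection down to $B$.
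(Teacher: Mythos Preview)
Your argument for $(\textnormal{i})\Rightarrow(\textnormal{ii})$ is essentially the paper's: reduce to $g=e$, apply Theorem \ref{SuppThm thm Support theorem for distributions and non-minimal P} to each $P^{w}$, use that $\WG[M_{P}\cap K]$ stabilises $\Gamma_{P}$ so that the double family $\{w'\Gamma_{P^{w}}\}$ exhausts the full $\WG$-orbit, and collapse via Lemma \ref{ConvexGeom Lemma S=cap(S+Gamma)}.

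For $(\textnormal{ii})\Rightarrow(\textnormal{i})$ you take a different, longer route. The paper simply invokes Corollary \ref{RadTransSupp cor supp(Rt_Q phi)subseteq Xi_Q,B => supp(Rt_P phi)subseteq Xi_P,B}. That corollary is stated for functions, so for a distribution $\mu$ one should read it via duality: if $\psi\in\Es_{b}(\Xi_{P^{w}},J_{P^{w}}^{-1})$ has support disjoint from $\Xi_{P^{w}}(B+\Gamma_{P^{w}})$ and $g'\cdot x_{0}\in X(B)$, then for any $h\in H$ with $\psi(g'h\cdot\xi_{P^{w}})\neq0$ one has $g'h\cdot x_{0}\in X(B)=\Xi_{G}(B+\Gamma_{G})$, whence $g'h\cdot\xi_{P^{w}}\in\Xi_{P^{w}}(B+\Gamma_{P^{w}})$ by Proposition \ref{RadTransSupp prop relation Xi_(Q,B+Gamma^+),Xi_(P,B+Gamma^+)}, a contradiction; so $\dRt[P^{w}]\psi$ vanishes on $X(B)$ and hence $\Rt[P^{w}]\mu(\psi)=\mu(\dRt[P^{w}]\psi)=0$. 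Your convolution-and-approximate-identity argument is correct and reproduces the same conclusion, but it is considerably more work than this direct dual reading of the support inclusion; its merit is that it makes the passage from functions to distributions fully explicit rather than leaving it implicit in the one-line citation.
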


\begin{proof}\ \\
{\em (i)$\Rightarrow$(ii):}
If $\supp(\Rt[P^{w}]\mu)$ is contained in $g\cdot \Xi_{P^{w}}(B+\Gamma_{P^{w}})$, then
$\supp\big(\Rt[P^{w}](\ltpb{g}\mu)\big)$ is contained in $\Xi_{P^{w}}(B+\Gamma_{P^{w}})$,
hence
$$
   \supp(\ltpb{g}\mu)\subseteq X\Big(\bigcap_{w'\in\WG[K\cap H]}
                  (B+\Gamma_{P^{ww'}})\Big)
$$
by Theorem
\ref{SuppThm thm Support theorem for distributions and non-minimal P}.
Since  this holds for all
$w\in\WGs_{P}$, it follows that
$$
\supp(\ltpb{g}\mu)
\subseteq X\Big(\bigcap_{w\in\WGs_{P}}\bigcap_{w'\in\WG[K\cap H]}(B+\Gamma_{P^{ww'}})\Big).
$$
As $P$ is stable under $\WG[M_{P}\cap K]$, it follows that the last equals $X\Big(\bigcap_{w\in\WG}(B+\Gamma_{P^{w}})\Big)$.
According to Lemma \ref{ConvexGeom Lemma S=cap(S+Gamma)}
the latter set equals $X(B)$.
We thus obtain $\supp(\mu)\subseteq g\cdot X(B)$.
\\
{\em (ii)$\Rightarrow$(i):}
    This is a consequence of Corollary
    \ref{RadTransSupp cor supp(Rt_Q phi)subseteq Xi_Q,B => supp(Rt_P phi)subseteq Xi_P,B}.
\end{proof}

If $X$ is a Riemannian symmetric space (hence $\sigma=\theta$) and
$P=P_{0}$ is a minimal parabolic subgroup, then
Theorem \ref{SuppThm thm Support theorem for distributions and non-minimal P}
reduces to the support theorem
\cite[Lemma 8.1]{Helgason_TheSurjectivityOfInvariantDifferentialOperatorsOnSymmetricSpaces} of Helgason for the horospherical transform on $X$. (See also Theorem 1.1,  Corollary 1.2 and the subsequent Remark in chapter IV of \cite{Helgason_GeometricAnalyisOnSymmetricSpaces}.) The support theorem can in this case be described in a purely geometrical setting as follows.

Suppose $X$ is a Riemannian symmetric space. A horosphere in $X$
is a closed submanifold of $X$ by Proposition \ref{RadTrans prop
N_P cdot xi_Q closed submanifold}. Therefore the Riemannian
structure on $X$ induces a Riemannian structure and thus a measure
on every horosphere. Let $\Rt$ be the transform mapping a function
$\phi\in\RDs(X)$ to the function on the set $\Hor(X)$ of
horospheres in $X$
$$
\Rt\phi:\xi\mapsto\int_{x\in\xi}\phi(x)\,dx.
$$
In this case $\Hor(X)$ is in bijection with $\Xi_{P_{0}}$ where
$P_{0}$ is a minimal parabolic subgroup of the identity component
$G$ of the isometry group of $X$. In this way $\Hor(X)$ can be
given the structure of a $G$-manifold. Let $x\in X$. The
stabilizer $G_{x}$ of $x$ in $G$ (i.e., the isotropy group of $G$
at $x$) acts transitively on the set of horospheres containing
$x$. Therefore this set carries a unique normalized
$G_{x}$-invariant measure $d\xi$. The dual transform of $\Rt$ is
the transform $\dRt$ mapping a function $\psi\in\Es(X)$ to
$$\dRt\psi:x\mapsto\int_{\xi\ni x}\psi(\xi)\,d\xi.$$
The Radon transform $\Rt$ is defined on $\Vs(X)$ to be the
transpose of $\dRt$. Let $d(\cdot,\cdot)$ be the distance-function
on $X$. For $x\in X$ and $R\geq0$ we define
$\beta_{R}(x)=\{\xi\in\Hor(X): d(\xi,x)\leq R\}$ and $B_{R}(x)=\{x'\in X:d(x',x)\leq R\}$.

\begin{cor}[{Riemannian case; \cite[Ch. IV, Corollary 1.2]{Helgason_GeometricAnalyisOnSymmetricSpaces}}]
Let $\mu\in\Vs(X)$, $x\in X$ and $R\geq 0$. Then $\supp(\Rt\mu)\subseteq \beta_{R}(x)$ if and only if $\supp(\mu)\subseteq B_{R}(x)$.
\end{cor}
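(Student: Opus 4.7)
The plan is to specialize Corollary \ref{SuppThm cor supp Rt_P phi(xi)=0 for xi cap KBH=empty and all P ->supp phi subset KBH} to the Riemannian setting, supplementing the easy implication with a regularization argument. In this case $\sigma=\theta$, $H=K$, $\g[a]_{\g[q]}=\g[a]$ and $\WG[K\cap H]=\WG$; for a minimal $\sigma\circ\theta$-stable parabolic $P_{0}$ the index set $\WGs_{P_{0}}$ is a singleton, since $\WG[M_{P_{0}}\cap K]$ is trivial and $\WG[K\cap H]$ exhausts $\WG$. Let $B=\{Y\in\g[a]:\|Y\|\leq R\}$, a $\WG$-invariant convex compact subset of $\g[a]$, and choose $g\in G$ with $g\cdot x_{0}=x$; by $G$-invariance of the Riemannian metric, $B_{R}(x)=g\cdot X(B)$ and $\beta_{R}(x)=g\cdot\beta_{R}(x_{0})$.

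The key geometric identifications are $B_{R}(x_{0})=X(B)$ and $\beta_{R}(x_{0})=\Xi_{P_{0}}(B)$. The first is immediate from the Cartan decomposition and $d(x_{0},k\exp(Y)\cdot x_{0})=\|Y\|$. For the second, the horosphere corresponding to $\xi=k\exp(Y)\cdot\xi_{P_{0}}$ is $k\exp(Y)N_{P_{0}}\cdot x_{0}$; by $K$-invariance of the metric, $d(\xi,x_{0})=d(x_{0},\exp(Y)N_{P_{0}}\cdot x_{0})$, and since $\exp(Y)$ acts isometrically and $\g[a]$ is $B$-orthogonal in $\g[p]$ to the $\g[p]$-projection of $\g[n]_{P_{0}}$, the geodesic $t\mapsto\exp(tY)\cdot x_{0}$ meets the horosphere orthogonally at $\exp(Y)\cdot x_{0}$. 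Thus $d(\xi,x_{0})=\|Y\|$ and $\beta_{R}(x_{0})=\Xi_{P_{0}}(B)$.

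Given these identifications, the implication $\supp(\Rt\mu)\subseteq\beta_{R}(x)\Rightarrow\supp(\mu)\subseteq B_{R}(x)$ follows at once from Corollary \ref{SuppThm cor supp Rt_P phi(xi)=0 for xi cap KBH=empty and all P ->supp phi subset KBH}: one has $\beta_{R}(x)=g\Xi_{P_{0}}(B)\subseteq g\Xi_{P_{0}}(B+\Gamma_{P_{0}})$, so condition (i) of that corollary is satisfied (with $P=P_{0}$ and the $\WG$-invariant ball $B$), giving $\supp(\mu)\subseteq g\cdot X(B)=B_{R}(x)$. For the converse, suppose $\supp(\mu)\subseteq B_{R}(x)$, so $\mu$ is compactly supported. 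Let $\psi\in\Ds(\Xi_{P_{0}})$ with $\supp(\psi)\cap\beta_{R}(x)=\emptyset$; by compactness we may pick $\epsilon>0$ with $d(\xi,x)\geq R+\epsilon$ for every $\xi\in\supp(\psi)$. Take a symmetric approximate identity $(\theta_{j})\subset\Ds(G)$ with supports shrinking to $\{e\}$. For $j$ large, $\supp(\theta_{j})\cdot B_{R}(x)\subseteq B_{R+\epsilon/2}(x)$, so $\theta_{j}*\mu\in\Ds(X)$ has support in $B_{R+\epsilon/2}(x)$. Proposition \ref{RadTransSupp prop E(xi) cap supp(mu) = empty <-> xi notin supp(R mu)}(i), together with $E_{P_{0}}(\xi)=\xi$ in the Riemannian case, yields $\supp(\Rt[P_{0}](\theta_{j}*\mu))\subseteq\beta_{R+\epsilon/2}(x)$, a set disjoint from $\supp(\psi)$. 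Hence $\Rt\mu(\theta_{j}*\psi)=\Rt(\theta_{j}*\mu)(\psi)=0$, and letting $\theta_{j}*\psi\to\psi$ in $\Ds(\Xi_{P_{0}})$ together with continuity of $\Rt\mu$ gives $\Rt\mu(\psi)=0$; thus $\supp(\Rt\mu)\subseteq\beta_{R}(x)$.

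The main obstacle is the converse direction: Proposition \ref{RadTransSupp prop E(xi) cap supp(mu) = empty <-> xi notin supp(R mu)}(i) is stated only for compactly supported smooth functions, so one must regularize $\mu$ via $\theta_{j}*\mu$ and verify that the slightly enlarged support $B_{R+\epsilon/2}(x)$ of the regularization still gives horospheres disjoint from $\supp(\psi)$. This uniform separation argument, together with the passage to the limit in the test-function topology, is the principal technical point.
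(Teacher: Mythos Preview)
Your proof is correct, and the hard direction (the support theorem implication) is handled exactly as in the paper, via Corollary~\ref{SuppThm cor supp Rt_P phi(xi)=0 for xi cap KBH=empty and all P ->supp phi subset KBH}. Two comments on where you diverge from the paper.

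First, the identification $\beta_{R}(x_{0})=\Xi_{P_{0}}(B)$. The paper obtains this from Lemma~\ref{RadTransSupp lemma E(xi) cap KBH neq empty <-> xi in KBexp(Gamma) xi_P}: in the Riemannian case $\Sigma_{-}=\emptyset$ and $W_{K\cap H}=W$, so that lemma reads $\{\xi:E_{P_{0}}(\xi)\cap X(B)\neq\emptyset\}=\Xi_{P_{0}}(B)$, and the left-hand side is precisely $\beta_{R}(x_{0})$ since $d(\xi,x_{0})\leq R$ means the horosphere meets $B_{R}(x_{0})$. Your direct geodesic argument is a reasonable heuristic but incomplete as written: the fact that the geodesic $t\mapsto\exp(tY)\cdot x_{0}$ meets the horosphere orthogonally at $\exp(Y)\cdot x_{0}$ gives only a critical point of the distance function, not the global minimum; you would still need a convexity argument. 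Citing Lemma~\ref{RadTransSupp lemma E(xi) cap KBH neq empty <-> xi in KBexp(Gamma) xi_P} is cleaner.

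Second, the converse direction. Note that Corollary~\ref{SuppThm cor supp Rt_P phi(xi)=0 for xi cap KBH=empty and all P ->supp phi subset KBH} by itself only yields $\supp(\Rt_{P_{0}}\mu)\subseteq g\cdot\Xi_{P_{0}}(B+\Gamma_{P_{0}})$, which is strictly weaker than the desired $g\cdot\Xi_{P_{0}}(B)=\beta_{R}(x)$. The paper's one-line proof bridges this gap implicitly through Lemma~\ref{RadTransSupp lemma E(xi) cap KBH neq empty <-> xi in KBexp(Gamma) xi_P} and duality: if $\supp(\psi)$ is separated from $\beta_{R}(x)$ by $\epsilon$, then for every $x'\in B_{R+\epsilon}(x)$ no horosphere through $x'$ lies in $\supp(\psi)$, so $\dRt\psi$ vanishes on the open set $B_{R+\epsilon}(x)\supseteq\supp(\mu)$, whence $\Rt\mu(\psi)=\mu(\dRt\psi)=0$. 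Your regularization route via Proposition~\ref{RadTransSupp prop E(xi) cap supp(mu) = empty <-> xi notin supp(R mu)}(i) is an equally valid alternative, trading this duality argument for an approximate-identity limit; it is longer but more explicit about why the distributional statement reduces to the function case.
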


\begin{proof}
Every closed ball in $X=G/K$  is of the form $g\cdot X(B)$,
where $g\in G$ and $B$ is a closed ball in $\g[a]$.
Therefore, the statement follows directly from
Lemma \ref{RadTransSupp lemma E(xi) cap KBH neq empty <-> xi in KBexp(Gamma) xi_P}
and Corollary \ref{SuppThm cor supp Rt_P phi(xi)=0 for xi cap KBH=empty and all P ->supp phi subset KBH}.
\end{proof}

%%% ----------------------------------------------------------------------
\subsection{Injectivity}
\label{subsection Injectivity}
%%% ----------------------------------------------------------------------
Theorem \ref{SuppThm thm Support theorem for distributions and non-minimal P} has the following corollary.

\begin{thm}\label{Inj thm Rt_P injective}
$\Rt[P]:\Vs_{P}(X)\to\Es_{b}'(\Xi_{P},J_{P}^{-1})$ is injective.
\end{thm}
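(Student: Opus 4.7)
The plan is to deduce injectivity directly from the Support Theorem (Theorem \ref{SuppThm thm Support theorem for distributions and non-minimal P}) by exploiting the $G$-equivariance of $\Rt[P]$ on distributions. The trivial case $P=G$ reduces to the tautology that $\Rt[G]$ is the identity, so I may assume $P\subsetneq G$. Suppose $\mu\in\Vs_{P}(X)$ satisfies $\Rt[P]\mu=0$. For every $g\in G$, the translate $g\cdot\mu$ again lies in $\Vs_{P}(X)$ (Proposition \ref{FuncSpaces prop Vs G-invariant, Vs_P=bigcap_(P_0) Vs_(P_0)}), and by equivariance $\Rt[P](g^{-1}\cdot\mu)=0$, so in particular $\supp(\Rt[P](g^{-1}\cdot\mu))\subseteq\Xi_{P}(\Gamma_{P})$. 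The set $B=\{0\}$ is trivially $\WG[M_{P}\cap K\cap H]$-invariant, convex and compact, so applying the Support Theorem to $g^{-1}\cdot\mu$ will yield
$$g^{-1}\cdot\supp(\mu)=\supp(g^{-1}\cdot\mu)\subseteq X\Big(\bigcap_{w\in\WG[K\cap H]}w\cdot\Gamma_{P}\Big)=X(\invCone{P}),$$
hence $\supp(\mu)\subseteq\bigcap_{g\in G}g\cdot X(\invCone{P})$.

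The remaining task is to show this intersection is empty. Since $P$ is proper, $\Gamma_{P}$ (and therefore its subcone $\invCone{P}$) is a proper subset of $\g[a]_{\g[q]}$. Picking any $Y_{0}\in\g[a]_{\g[q]}\setminus\invCone{P}$ and letting $y_{0}=\exp(Y_{0})\cdot x_{0}$, the uniqueness statement in the polar decomposition $X=KA_{\g[q]}\cdot x_{0}$ recalled in Section \ref{subsection sigma circ theta-stable parabolic subgroups}, combined with the $\WG[K\cap H]$-invariance of $\invCone{P}$, forces $y_{0}\notin X(\invCone{P})$: an equality $\exp(Y_{0})\cdot x_{0}=k\exp(Z)\cdot x_{0}$ with $Z\in\invCone{P}$ would give $Y_{0}=\Ad(w)Z$ for some $w\in\Nor_{K\cap H}(\g[a]_{\g[q]})$, and hence $Y_{0}\in\invCone{P}$. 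Transitivity of $G$ on $X$ will then furnish, for any given $x\in X$, a $g\in G$ with $g^{-1}\cdot x=y_{0}\notin X(\invCone{P})$, exhibiting $x$ outside $g\cdot X(\invCone{P})$. Thus the intersection is empty, $\supp(\mu)=\emptyset$, and $\mu=0$.

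The only subtle point is the verification that $y_{0}\notin X(\invCone{P})$; everything else is a direct consequence of equivariance, $G$-invariance of $\Vs_{P}(X)$, and transitivity of $G$ on $X$. Notably no translation within $\g[a]_{\g[q]}$ is needed: the support theorem applied to the single set $B=\{0\}$ already captures all the necessary information, and the freedom to translate is used only at the level of $G$-orbits in $X$.
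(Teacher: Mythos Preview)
Your proof is correct. The paper records Theorem \ref{Inj thm Rt_P injective} only as an immediate corollary of the Support Theorem and gives no details, so your argument is a legitimate way to fill in that step: apply Theorem \ref{SuppThm thm Support theorem for distributions and non-minimal P} with $B=\{0\}$ to every $G$-translate of $\mu$, then use transitivity of $G$ on $X$ together with the polar-decomposition uniqueness to force $\supp(\mu)=\emptyset$.

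A slightly more direct route, closer to what ``corollary'' may suggest, avoids the $G$-equivariance altogether by varying $B$ instead. Since $\Rt[P]\mu=0$, the hypothesis of the Support Theorem is vacuously satisfied for $B=\{Y\}$ with any $Y\in\g[a]_{P}\cap\g[a]_{\g[q]}$; these singletons are $\WG[M_{P}\cap K\cap H]$-invariant because $M_{P}$ centralizes $\g[a]_{P}$. The conclusion gives $\supp(\mu)\subseteq X\big(wY+\Gamma_{P^{w}}\big)$ for each $w\in\WG[K\cap H]$ and each such $Y$. Fixing $w=e$ and intersecting over $Y$ already yields $\emptyset$: if $Z\in\bigcap_{Y}(Y+\Gamma_{P})$ then $Z+\g[a]_{P}\cap\g[a]_{\g[q]}\subseteq\Gamma_{P}$, impossible since $\Gamma_{P}$ contains no line (its generators $H_{\alpha}$ lie in an open half-space determined by the positive system $\Sigma(\g,\g[a]_{\g[q]};P_{0})$). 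Both arguments are short; yours trades this cone observation for the transitivity of $G$ and Proposition \ref{FuncSpaces prop Vs G-invariant, Vs_P=bigcap_(P_0) Vs_(P_0)}.
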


In \cite[Theorem 5.5]{Krotz_HorosphericalTransformOnRealSymmetricVarieties:KernelAndCokernel}
it is claimed that the horospherical transform is injective on a
certain subspace of $\RDs(X)$. The proof for this theorem relies
in an essential way on the assumption that $\Hor(X)$ admits the
structure of an analytic manifold and the horospherical transforms
$\Rt[P^{w}]$ for $w\in\WGs$ together induce a transform $\Rt$ on
$\Hor(X)$ with the property that, for a real
analytic vector $\phi$ for the left-regular representation of $G$
on $L^{1}(X)$, $\Rt\phi$ is a real analytic function on $\Hor(X)$. As stated
in Remark \ref{RadTrans rem Kroetz} we believe that there are some
problems with this kind of reasoning.

\medbreak

A natural question is whether Theorem \ref{SuppThm thm Support
theorem for distributions and non-minimal P} can be generalized to
a support theorem for a larger subspace of $\Ds'(X)$. If so, the
Radon transform $\Rt[P]$ would be injective on that larger
subspace as well. We will now show that the support theorem does
not hold in general on the Harish-Chandra Schwartz spaces
$\Cs^{p}(X)$ for $0<p\leq 1$.

We will use the notations introduced in Section \ref{section
Support theorem for the horospherical transform}. Let $P_{0} \in
\Psg(\g[a]_{\g[q]})$ and let $0<p<1$.

\begin{lemma}\label{FurtherRemarks lemma int_X phi(x) j_P(x) dx abs convergent}
Let $\zeta \in \widehat{M_{P_{0}}}_{H}$, let
$1<c<\frac{2}{p}-1$ and let $\eta\in V(\zeta)$. If $\phi\in
\Cs^{p}(X)$ then for every $g\in G$ and $\lambda\in
c\rho_{P_{0}}+i\g[a]_{\g[q]}^{*}$
\begin{equation}\label{FurtherRemarks eq F_M F_A_q R_P phi}
\int_{X}\phi(x)j(P_{0}:\zeta:-\lambda)(\eta)(g\cdot x)\,dx
=\int_{M_{P_{0}}\cap K}\Ft_{A_{\g[q]}}\Big(\Ht[P_{0}]\phi(g^{-1}m)\Big)(\lambda)\zeta(m)\eta\,dm
\end{equation}
with absolutely convergent integrals.
\end{lemma}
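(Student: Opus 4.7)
The plan is to adapt the argument of Proposition \ref{FT prop FT_e=FT_A circ Rt} to the Harish-Chandra $L^{p}$-Schwartz setting. The algebraic part of the argument is unchanged: once both integrals in (\ref{FurtherRemarks eq F_M F_A_q R_P phi}) are known to be absolutely convergent, Fubini's theorem combined with the integration formula (\ref{FT eq int_(P_0 cdot x0)=sum_W int_LcapK int_A_q int_N_P}) of \'Olafsson applied on the finitely many open orbits $P_{0}wH$ ($w\in\WGs$), together with the explicit formula (\ref{FT eq def j}) for $j(P_{0}:\zeta:-\lambda)$, converts the left-hand side into the right-hand side by first executing the $N_{P_{0}}$-integration to produce $\Rt[P_{0}]\phi$ and then the $A_{\g[q]}$-integration to produce its Euclidean Fourier transform. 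The new content of the lemma, replacing the support condition in Proposition \ref{FT prop FT_e=FT_A circ Rt}, is the verification that both integrals are absolutely convergent for $\phi\in\Cs^{p}(X)$ on the shifted contour $\Re\,\lambda=c\rho_{P_{0}}$ under the hypothesis $1<c<\tfrac{2}{p}-1$.

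To establish absolute convergence of the left-hand side I would first reduce to $g=e$ using the transformation properties of $j(P_{0}:\zeta:-\lambda)$, then decompose $\eta=\sum_{w\in\WGs}\eta_{w}$ and split the integral over $X$ into a finite sum of integrals over the open $P_{0}\times H$-orbits $P_{0}wH/H$. Each summand is rewritten via (\ref{FT eq int_(P_0 cdot x0)=sum_W int_LcapK int_A_q int_N_P}) as an iterated integral over $(M_{P_{0}}\cap K)\times A_{\g[q]}\times N_{P_{0}}$ with integrand bounded in modulus by $a^{(1-c)\rho_{P_{0}}}|\phi(manw\cdot x_{0})|\,\|\eta_{w}\|$. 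Combining the defining estimate for $\phi\in\Cs^{p}(X)$ (polynomial decay against the $(2/p)$-th power of Harish-Chandra's elementary spherical function $\Xi$) with the classical asymptotics of $\Xi$ on $A_{\g[q]}$, one finds that the resulting exponential weight in $a$ is strictly negative on every Weyl chamber of $\g[a]_{\g[q]}$ precisely when $c$ lies in the open interval $(1,\tfrac{2}{p}-1)$. Integrability over $A_{\g[q]}\times N_{P_{0}}$, uniformly in $m\in M_{P_{0}}\cap K$, then follows from the residual Schwartz polynomial decay.

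The right-hand side is treated analogously. Performing the $N_{P_{0}}$-integration first produces $\Rt[P_{0}]\phi$, and the combination of the factor $a^{\rho_{P_{0}}}$ built into $\Ht[P_{0}]\phi$, the estimates for $\phi$, and the bound $|a^{-\lambda}|=a^{-c\rho_{P_{0}}}$ yields a function on $A_{\g[q]}$ whose absolute integrability is controlled by the same chamber-wise exponent considered above. Continuous dependence of the resulting Fourier integral $\Ft_{A_{\g[q]}}(\Ht[P_{0}]\phi(g^{-1}m))(\lambda)$ on $m$, combined with the compactness of $M_{P_{0}}\cap K$, handles the outer integration.

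The main obstacle is the careful derivation of the Harish-Chandra-type decay estimate for the horospherical transform $\Ht[P_{0}]\phi$ of $\phi\in\Cs^{p}(X)$, uniformly across the Weyl chambers of $\g[a]_{\g[q]}$. The behaviour of the elementary spherical function $\Xi$ is chamber-dependent, and one must verify that the exponent in the integrand on each chamber is strictly negative precisely under the two-sided bound $1<c<\tfrac{2}{p}-1$. Once this estimate is secured, the identity (\ref{FurtherRemarks eq F_M F_A_q R_P phi}) reduces, exactly as in Proposition \ref{FT prop FT_e=FT_A circ Rt}, to a Fubini interchange now legitimated by the absolute convergence just established.
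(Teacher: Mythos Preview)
Your plan and the paper's proof agree on the algebraic/Fubini part, but differ in how absolute convergence of the left-hand side is established. You propose to integrate over each open orbit $P_{0}wH/H$ in the coordinates $(m,a,n)\in(M_{P_{0}}\cap K)\times A_{\g[q]}\times N_{P_{0}}$, where $j(P_{0}:\zeta:-\lambda)$ becomes the explicit character $a^{-\lambda+\rho_{P_{0}}}\zeta(m)$, and then bound $|\phi(manw\cdot x_{0})|$ using the $\Cs^{p}$-estimate. The paper instead integrates in the polar coordinates $X=KA_{\g[q]}\cdot x_{0}$, where the $\Cs^{p}$-bound $|\phi|\leq C\,\Theta^{2/p}$ is immediately usable, and handles $j$ via Van den Ban's convexity theorem: it bounds $|j(P_{0}:\zeta:-\lambda)(\eta)(ka\cdot x_{0})|$ by $\max_{w\in W}(waw^{-1})^{(1-c)\rho_{P_{0}}}$, then combines this with the Jacobian bound $J(a)\leq C_{\epsilon}\Theta(a\cdot x_{0})^{-2-\epsilon}$ and the asymptotics $\Theta(a\cdot x_{0})\leq c_{\delta}\,a^{(\delta-1)\rho_{P_{0}}}$ (both cited from \cite{vdBan_ThePrincipalSeriesForAReductiveSymmetricSpaceII}) to reduce to a convergent integral over $A_{\g[q]}^{+}(P_{0})$.

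The gap in your approach lies in the sentence ``Combining the defining estimate for $\phi\in\Cs^{p}(X)$ \dots\ with the classical asymptotics of $\Xi$ on $A_{\g[q]}$, one finds that the resulting exponential weight in $a$ is strictly negative on every Weyl chamber.'' The point $manw\cdot x_{0}$ depends on $n\in N_{P_{0}}$, and $\Theta(manw\cdot x_{0})$ is \emph{not} a function of $a$ alone; the asymptotics of $\Xi$ on $A_{\g[q]}$ therefore do not by themselves control the integrand. What you actually need is a uniform estimate on $\int_{N_{P_{0}}}|\phi(manw\cdot x_{0})|\,dn$ (equivalently, decay of $\Rt[P_{0}]\phi$ on $\Xi_{P_{0}}$ for $\phi\in\Cs^{p}(X)$), and ``residual Schwartz polynomial decay'' does not supply it without further work: the norm $\|manw\cdot x_{0}\|$ is not comparable to $\|\log a\|+\|\log n\|$ in any simple way. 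The paper's polar-coordinate route circumvents this entirely, because the $N_{P_{0}}$-integration is absorbed into the polar Jacobian $J(a)$, for which the needed bound against $\Theta$ is a quotable lemma. Your approach can be completed, but it requires proving the $\Cs^{p}$-continuity of the horospherical transform first, which is a substantially harder input than the ones the paper invokes.
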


\begin{proof}
Since $\Cs^{p}(X)$ is $G$-invariant, it suffices to prove the claim for $g=e$.

Let $\lambda$ be as in the lemma.
If $kah=ma'nwh'$ with  $k\in K$, $a, a'\in A_{\g[q]}$, $h,h'\in H$, $m\in
M_{P_{0}}\cap K$, $w\in\WGs$ and $n\in N_{P_{0}}$, then $w\cdot\log(a')=\Acomp_{P_{0}^{w}}(ahh'^{-1})$.
The last is contained in $\ch\big(\WG[K\cap H]\cdot
\log(a)\big)+\Gamma_{P_{0}^{w}}$ by the convexity theorem
of Van den Ban (Theorem \ref{RadTransSupp Thm Convexity theorem}). Hence $\log(a')\in\ch\big(\WG\cdot\log(a)\big)+\Gamma_{P_{0}}$
and therefore
$$
|(a')^{-\lambda+\rho_{P_{0}}}| \leq \max_{w\in
\WG}(waw^{-1})^{(1-c)\rho_{P_{0}}}.
$$

We define the function
$$
J:A_{\g[q]}\to\R_{\geq0};\quad a\mapsto
\prod_{\alpha\in\Sigma(\g,\g[a]_{\g[q]};P_{0})}
	|a^{\alpha}-a^{-\alpha}|^{m_{\alpha}^{+}}(a^{\alpha}+a^{-\alpha})^{m_{\alpha}^{-}},
$$
where $m_{\alpha}^{\pm}$ is the dimension of the $\pm1$-eigenspace
for $\sigma\circ\theta$ in $\g_{\alpha}$. Since
$\lambda\in\g[a]_{\g[q]}^{*}(\overline{P}_{0},0) +\rho_{P_{0}}$,
it follows by \cite[Proposition
5.6]{vdBan_ThePrincipalSeriesForAReductiveSymmetricSpaceI} that
$j(P_{0}:\zeta:\lambda)$ is continuous. In view of \cite[p.
149]{Schlichtkrull_HyperfunctionsAndHarmonicAnalysisOnSymmetricSpaces},
 there exists a constant $c>0$ such that
\begin{eqnarray*}
\lefteqn{\int_{X}\|\phi(x) j(P_{0}:\zeta:-\lambda)(\eta)(x)\|\,dx}\\
&=&c\int_{K}\int_{A_{\g[q]}}\|\phi((ka\cdot x_{0}))
j(P_{0}:\zeta:-\lambda)(\eta)((ka\cdot x_{0}))\|J(a)\,da\,dk\\
&\leq&c\int_{K}\int_{A_{\g[q]}}|\phi(ka\cdot x_{0})| J(a)
\max_{w\in \WG}(waw^{-1})^{(1-c)\rho_{P_{0}}}\,da\,dk.
\end{eqnarray*}

Following Harish-Chandra, we use the notation $\Xi$ for the
elementary spherical function on $G$ with spectral parameter $0$,
and we put
$$
\Theta:X\to\R_{>0};\quad x\mapsto
\sqrt{\Xi\big(x\sigma(x)^{-1}\big)}.
$$
By \cite[Theorem
17.1]{vdBan_ThePrincipalSeriesForAReductiveSymmetricSpaceII} there
exists a constant $C >0$ such that $|\phi|\leq C\Theta^{\frac{2}{p}}$.
Furthermore, by \cite[Corollary
17.6]{vdBan_ThePrincipalSeriesForAReductiveSymmetricSpaceII} it
follows that for sufficiently small $\epsilon>0$ there exists a
constant $C_{\epsilon}>0$ such that $J(a)\leq C_{\epsilon}\Theta(ka\cdot x_{0})^{-2-\epsilon}$.
We infer that there exists a constant $\widetilde{C}_{\epsilon}>0$
such that
\begin{eqnarray*}
\lefteqn{\int_{X}\|\phi(x) j(P_{0}:\zeta:-\lambda)(\eta)(x)\|\,dx}\\
&\leq &\widetilde{C}_{\epsilon}\int_{K}\int_{A_{\g[q]}}
    \Theta^{\frac{2}{p}-2-\epsilon}(ka\cdot x_{0})
       \max_{w\in \WG[K\cap H]}(waw^{-1})^{-(1-c)\rho_{P_{0}}}\,da\,dk\\
&\leq &\widetilde{C}_{\epsilon}|\WG|\int_{A^{+}_{\g[q]}(P_{0})}
     \Theta^{\frac{2}{p}-2-\epsilon}(a\cdot x_{0})a^{-(1-c)\rho_{P_{0}}}\,da.
\end{eqnarray*}
By \cite[Corollary
17.6]{vdBan_ThePrincipalSeriesForAReductiveSymmetricSpaceII}, for
every $\delta>0$ there exists a constant $c_{\delta}>0$ such that $\Theta(a\cdot x_{0})\leq c_{\delta}a^{(\delta-1)\rho_{P_{0}}}$ for $a\in A^{+}_{\g[q]}(P_{0})$. Hence for $\epsilon<\frac{2}{p}-1-c$ the last integral is convergent. The claimed equality follows from equation (\ref{FT eq int_(P_0
cdot x0)=sum_W int_LcapK int_A_q int_N_P}).
\end{proof}

Let $\vartheta$ be a finite subset of $\widehat{K}$ and put
$\tau=\tau_{\vartheta}$. For $x\in X$ and
$\lambda\in\g[a]_{\g[q],\C}^{*}$, let
$E_{P_{0}}(\,\cdot:\lambda:x)$ denote the (unnormalized)
$\tau$-spherical Eisenstein integral defined in \cite[Section
2]{vdBan&Schlichtkrull_FourierTransformOnASemisimpleSymmetricSpace},
i.e., the element of
$\Hom(\leftsuperscript{\circ}{\Cs}(\tau),V_{\tau})$ given by
$$E_{P_{0}}(\psi_{f\otimes\eta}:\lambda:x)(k)
=\int_{K}\langle f(l)(k),j(P_{0}:\zeta:\overline{\lambda})(\eta)(l\cdot x)\rangle\,dl
$$
for $\zeta\in\widehat{M_{P_{0}}}_{H}$, $f\in C(K:\zeta:\tau)$,
$\eta\in \overline{V(\zeta)}$ and $x\in X$.

Using the $K$-invariance of the measure on $X$, we obtain the
following immediate corollary of Lemma \ref{FurtherRemarks lemma
int_X phi(x) j_P(x) dx abs convergent}.

\begin{cor}\label{FurtherRemarks cor Eisenstein int regular,
equality of Fts} Let $\zeta\in\widehat{M_{P_{0}}}_{H}$, let
$\eta\in \overline{V(\zeta)}$, let $f\in C(K:\zeta:\tau)$ and let $1<c<\frac{2}{p}-1$. Then the $\tau$-spherical
Eisenstein integral $E_{P_{0}}(\psi_{f\otimes\eta}:\cdot:x)$ is
regular on $c\rho_{P_{0}}+i\g[a]_{\g[q]}^{*}$. Moreover, for every
$\phi\in\Cs^{p}(X)_{\vartheta}$ and $\lambda\in c\rho_{P_{0}}+i\g[a]_{\g[q]}^{*}$
$$
\int_{X}\int_{K}
    \varsigma(\phi)(x)(k)\,
    \overline{E_{P_{0}}(\psi_{f\otimes\eta}:-\overline{\lambda}:x)}(k)\,dk\,dx
=
\int_{X}\phi(x)\langle
j(P_{0}:\zeta:-\lambda)(\eta)(x),f\rangle\,dx,
$$
where the integrals are absolutely convergent.
\end{cor}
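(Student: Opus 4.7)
My plan is to deduce this corollary directly from Lemma \ref{FurtherRemarks lemma int_X phi(x) j_P(x) dx abs convergent} together with the defining integral formula for the Eisenstein integral, essentially by unfolding, exchanging orders of integration via Fubini, and invoking $K$-equivariance of $\Cs^p(X)$.

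First I would address the regularity claim. Evaluating the defining integral
\[
E_{P_{0}}(\psi_{f\otimes\eta}:\lambda:x)(k)=\int_{K}\langle f(l)(k),j(P_{0}:\zeta:\overline{\lambda})(\eta)(l\cdot x)\rangle\,dl
\]
at $-\overline{\lambda}$ produces the kernel $j(P_{0}:\zeta:-\lambda)(\eta)$. For $\lambda\in c\rho_{P_{0}}+i\g[a]_{\g[q]}^{*}$ with $c>1$, the parameter $-\lambda$ has real part $-c\rho_{P_{0}}$ and therefore lies in $\g[a]_{\g[q]}^{*}(P_{0},0)-\rho_{P_{0}}$. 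By \cite[Proposition 5.6]{vdBan_ThePrincipalSeriesForAReductiveSymmetricSpaceI} (as already used in the proof of the preceding lemma), $j(P_{0}:\zeta:-\lambda)(\eta)$ is then given pointwise by the continuous function in formula (\ref{FT eq def j}) and depends holomorphically on $\lambda$ in this tube. As $K$ is compact, the $l$-integral therefore converges and defines a holomorphic function of $\lambda$ on $c\rho_{P_{0}}+i\g[a]_{\g[q]}^{*}$, which is the desired regularity.

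Next I would turn to the integral identity. Using $\varsigma(\phi)(x)(k)=\phi(k\cdot x)$ and the explicit formula for the Eisenstein integral, the left-hand side becomes the triple integral
\[
\int_{X}\int_{K}\int_{K}\phi(k\cdot x)\,\overline{\langle f(l)(k),j(P_{0}:\zeta:-\lambda)(\eta)(l\cdot x)\rangle}\,dl\,dk\,dx.
\]
The key step is Fubini: pushing the $k$-integration past the $X$-integration and substituting $x\mapsto k^{-1}\cdot x$, the integrand rearranges into $\phi(x)$ times an inner integral over $K\times K$ of $\overline{\langle f(l)(k),j(P_{0}:\zeta:-\lambda)(\eta)(lk^{-1}\cdot x)\rangle}$. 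Using the left $K$-equivariance of $j(P_{0}:\zeta:-\lambda)(\eta)$ (built into its defining formula) and the $\tau$-equivariance of $f$, together with the fact that the Haar measure on $K$ is bi-invariant, the $k$-integration collapses to produce exactly $\langle j(P_{0}:\zeta:-\lambda)(\eta)(x),f\rangle$ in the inner pairing. This yields the right-hand side.

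The main obstacle is to justify Fubini. For this I would bound the triple integrand pointwise by $|\phi(k\cdot x)|$ times the continuous function $\|j(P_{0}:\zeta:-\lambda)(\eta)(l\cdot x)\|\cdot\|f(l)(k)\|$; compactness of $K\times K$ reduces the question to the absolute convergence of $\int_{X}|\phi(y)|\,\|j(P_{0}:\zeta:-\lambda)(\eta)(y)\|\,dy$ uniformly after left translations by $K$. Since $\Cs^{p}(X)$ is $K$-invariant and the constants appearing in Lemma \ref{FurtherRemarks lemma int_X phi(x) j_P(x) dx abs convergent} depend only on $p$, on a seminorm of $\phi$, and on $c$, the estimates there (built from $|\phi|\leq C\Theta^{2/p}$, the bound on $J$, and the Van den Ban convexity theorem controlling $|(a')^{-\lambda+\rho_{P_{0}}}|$) apply uniformly and give the required absolute convergence, allowing the interchanges of integration and concluding the proof.
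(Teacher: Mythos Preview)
Your approach is exactly the paper's: the statement is presented there as an ``immediate corollary'' of Lemma~\ref{FurtherRemarks lemma int_X phi(x) j_P(x) dx abs convergent} ``using the $K$-invariance of the measure on $X$,'' and you carry this out in detail by unfolding the defining integral for $E_{P_0}$, applying Fubini (justified by the absolute convergence from the lemma), and using the substitution $x\mapsto k^{-1}\cdot x$. One small remark: your regularity argument actually establishes holomorphy of $\lambda\mapsto E_{P_0}(\psi_{f\otimes\eta}:-\overline{\lambda}:x)$ for $\lambda\in c\rho_{P_0}+i\g[a]_{\g[q]}^{*}$ (equivalently, regularity of $E_{P_0}(\psi_{f\otimes\eta}:\cdot:x)$ on $-c\rho_{P_0}+i\g[a]_{\g[q]}^{*}$), which is precisely what is needed for the displayed identity; this is a harmless parametrisation issue rather than a gap.
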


For $x\in X$ and $\lambda\in\g[a]_{\g[q],\C}$, let
$E^{\circ}_{\overline{P}_{0}}(\,\cdot:-\overline{\lambda}:x)$ be
the normalized $\tau$-spherical Eisenstein integral for
$\overline{P}_{0}$ defined in \cite[Section
5]{vdBan&Schlichtkrull_FourierTransformOnASemisimpleSymmetricSpace},
i.e., the element of
$\Hom(\leftsuperscript{\circ}{\Cs}(\tau),V_{\tau})$ given by
\begin{equation}\label{FurtherRemarks eq def E^0}
E^{\circ}_{\overline{P}_{0}}(\psi_{f\otimes\eta}:\lambda:x)
=E_{P_{0}}(\psi_{A(\overline{P}_{0}:P_{0}:\zeta:-\lambda)^{-1}f\otimes\eta}:\lambda:x).
\end{equation}

For $r\in\R$ we define $C_{r}(X,\tau)$ to be the space of
continuous functions $f:X\to V_{\tau}$ satisfying $f(k\cdot x)=\tau(k)f(x)$ for $k\in K$
and the estimate
$$\sup_{k\in K, a\in A_{\g[q]}}e^{-r \|\log a\|}|f(ka\cdot x_{0})|<\infty.$$
Let $R\in\R$ be such that $\rho_{P_{0}}\in
\g[a]_{\g[q]}^{*}(\overline{P}_{0},R)$ and let $\omega$ be a
connected and bounded open subset of
$\g[a]_{\g[q]}^{*}(\overline{P}_{0},R)$ containing both $0$ and
$\rho_{P_{0}}$. Let $E_{P_{0}}^{\circ}(\lambda:x)^{*}$ be the dual
of $E_{P_{0}}^{\circ}(\,\cdot:-\overline{\lambda}:x)$. Then, according to
\cite[Lemma
12.2]{vdBan&Schlichtkrull_FourierInversionOnAReductiveSymmetricSpace}
there exists an $r\in\R$ such that the
$\leftsuperscript{\circ}{\Cs}(\tau)$-valued integral
\begin{equation}\label{FurtherRemarks eq int_X E^circ* f dx}
\int_{X}E^{\circ}_{P_{0}}(-\overline{\lambda}:x)^{*}f(x)\,dx
\end{equation}
is absolutely convergent for every $f\in C_{r}(X:\tau)$ and
generic $\lambda\in\omega+i\g[a]_{\g[q]}^{*}$, and
(\ref{FurtherRemarks eq int_X E^circ* f dx}) depends
meromorphically on $\lambda$ in that region. Following
\cite[Section
12]{vdBan&Schlichtkrull_FourierInversionOnAReductiveSymmetricSpace}
we define the (normalized) $\tau$-spherical Fourier transform
$\Ft_{\overline{P}_{0},\tau}f(\lambda)$ of a function $f\in
C_{r}(X:\tau)$ for generic $\lambda\in\omega+i\g[a]_{\g[q]}^{*}$ to be
given by (\ref{FurtherRemarks eq int_X E^circ* f dx}). This
definition coincides with the definition for compactly supported
smooth functions $\phi$ given in Section \ref{subsection The
tau-spherical Fourier transform}. Recall that $\pr_{e}$ denotes the projection
$\leftsuperscript{0}{\Cs(\tau)}\to\leftsuperscript{0}{\Cs(\tau)}_{e}$.

If  $0<p<1$ is sufficiently small, then $\varsigma$ maps
$\Cs^{p}(X)_{\vartheta}$ into $C_{r}(X:\tau)$. Fix such a $p$.

\begin{prop}\label{FurtherRemarks prop pi_e F phi=0 <=> R phi=0}
Let $\phi\in\Cs^{p}(X)_{\vartheta}$. Then $\pr_{e}\Ft_{\overline{P}_{0},\tau}\varsigma(\phi)\big|_{i\g[a]_{\g[q]}^{*}}=0$ if and only if $\Rt[P_{0}]\phi=0$.
\end{prop}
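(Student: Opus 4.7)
The plan is to translate both sides of the equivalence into a single question of Euclidean Fourier analysis on $A_{\g[q]}$, by means of the identity (\ref{FurtherRemarks eq F_M F_A_q R_P phi}) of Lemma \ref{FurtherRemarks lemma int_X phi(x) j_P(x) dx abs convergent}. That identity expresses the unnormalized Fourier transform $\Ft^{\un}_{P_{0}}\phi(\zeta:\lambda)\eta$ (for $\eta\in V(\zeta,e)$) as an iterated Fourier transform: a Euclidean Fourier transform of $\Ht[P_{0}]\phi(g^{-1}m)$ in the $A_{\g[q]}$-direction, followed by integration against $\zeta(m)\eta$ over $M_{P_{0}}\cap K$. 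Combined with Corollary \ref{FurtherRemarks cor Eisenstein int regular, equality of Fts} and the definition (\ref{FurtherRemarks eq def E^0}) of the normalized Eisenstein integral, this yields a parallel iterated-Fourier description of $\pr_{e}\Ft_{\overline{P}_{0},\tau}\varsigma(\phi)$, intertwined by the operator $A(\overline{P}_{0}:P_{0}:\zeta:\overline\lambda)^{-1}$.

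For the forward implication, the argument is quick: if $\Rt[P_{0}]\phi=0$, then $\Ht[P_{0}]\phi$ vanishes identically, so the right-hand side of (\ref{FurtherRemarks eq F_M F_A_q R_P phi}) is zero, forcing $\Ft^{\un}_{P_{0}}\phi(\zeta:\lambda)\eta=0$ for every $\eta\in V(\zeta,e)$ and $\lambda\in c\rho_{P_{0}}+i\g[a]_{\g[q]}^{*}$. Via Corollary \ref{FurtherRemarks cor Eisenstein int regular, equality of Fts} and (\ref{FurtherRemarks eq def E^0}), this gives the vanishing of $\pr_{e}\Ft_{\overline{P}_{0},\tau}\varsigma(\phi)$ on the same strip, and meromorphic continuation of this Fourier transform in $\lambda$ then extends the vanishing to the full imaginary axis.

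For the converse, assume $\pr_{e}\Ft_{\overline{P}_{0},\tau}\varsigma(\phi)$ vanishes on $i\g[a]_{\g[q]}^{*}$. The bijectivity of $A(\overline{P}_{0}:P_{0}:\zeta:\overline\lambda)$ on a dense open subset of the imaginary axis allows me to reverse-engineer the vanishing of $\Ft^{\un}_{P_{0}}\phi(\zeta:\lambda)\eta(k)$ for every $k\in K$, $\zeta\in\widehat{M_{P_{0}}}_{H}(\tau)$ and $\eta\in V(\zeta,e)$ on a dense open subset, hence by continuity on all of $i\g[a]_{\g[q]}^{*}$. Evaluating (\ref{FurtherRemarks eq F_M F_A_q R_P phi}) at $g=k^{-1}$ turns this into
$$\int_{M_{P_{0}}\cap K}\Ft_{A_{\g[q]}}\bigl(\Ht[P_{0}]\phi(k^{-1}m)\bigr)(\lambda)\,\zeta(m)\eta\,dm=0$$
for all such $k$, $\zeta$, $\eta\in V(\zeta,e)=\H_{\zeta}^{M_{P_{0}}\cap H}$, and $\lambda\in i\g[a]_{\g[q]}^{*}$. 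A Peter--Weyl/Frobenius-reciprocity argument on $(M_{P_{0}}\cap K)/(M_{P_{0}}\cap K\cap H)$, combined with the $\vartheta$-finiteness of $\phi$, then forces $\Ft_{A_{\g[q]}}(\Ht[P_{0}]\phi(k^{-1}m))(\lambda)=0$ for all $k$, $m$ and $\lambda\in i\g[a]_{\g[q]}^{*}$. Injectivity of the Euclidean Fourier transform on $A_{\g[q]}$ (applicable in view of the Schwartz-type decay estimates on $\Ht[P_{0}]\phi$) finally yields $\Ht[P_{0}]\phi\equiv 0$, equivalently $\Rt[P_{0}]\phi=0$.

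The main obstacle is the Peter--Weyl/Frobenius-reciprocity step in the converse direction: one must verify that the pairs $(\zeta,\eta)$ with $\zeta\in\widehat{M_{P_{0}}}_{H}(\tau)$ and $\eta\in V(\zeta,e)$ supply a complete dual system for the space of right-$(M_{P_{0}}\cap K\cap H)$-invariant, $\tau|_{M_{P_{0}}\cap K}$-isotypic functions on $M_{P_{0}}\cap K$ --- precisely the space to which $m\mapsto\Ft_{A_{\g[q]}}(\Ht[P_{0}]\phi(k^{-1}m))(\lambda)$ belongs in view of the stabilizer identity $(L_{P_{0}}\cap H)\cap M_{P_{0}}\cap K=M_{P_{0}}\cap K\cap H$. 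This requires the standard correspondence, coming from the structure of the principal series on reductive symmetric spaces, between $M_{P_{0}}\cap H$-fixed vectors in the irreducible $\zeta$ of $M_{P_{0}}$ and $(M_{P_{0}}\cap K\cap H)$-fixed vectors in its $K$-types; a secondary technical point is to justify the meromorphic continuation and the density of bijectivity for $A(\overline{P}_{0}:P_{0}:\zeta:\overline\lambda)$ on the imaginary axis, but both are known from \cite{vdBan_ThePrincipalSeriesForAReductiveSymmetricSpaceI,vdBan_ThePrincipalSeriesForAReductiveSymmetricSpaceII}.
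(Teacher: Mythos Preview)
Your overall strategy coincides with the paper's: both directions go through the identity (\ref{FurtherRemarks eq F_M F_A_q R_P phi}), Corollary \ref{FurtherRemarks cor Eisenstein int regular, equality of Fts}, the definition (\ref{FurtherRemarks eq def E^0}), Fourier analysis on the compact space $M_{P_{0}}/(M_{P_{0}}\cap H)$, and injectivity of the Euclidean Fourier transform on $A_{\g[q]}$. The forward implication is handled exactly as in the paper.

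There is, however, a genuine technical slip in your converse argument. You attempt to work directly on $i\g[a]_{\g[q]}^{*}$: you invoke bijectivity of $A(\overline{P}_{0}:P_{0}:\zeta:\overline{\lambda})$ on a dense open subset of the imaginary axis, then evaluate (\ref{FurtherRemarks eq F_M F_A_q R_P phi}) at $\lambda\in i\g[a]_{\g[q]}^{*}$, and finally appeal to injectivity of $\Ft_{A_{\g[q]}}$ there. But Lemma \ref{FurtherRemarks lemma int_X phi(x) j_P(x) dx abs convergent} only establishes (\ref{FurtherRemarks eq F_M F_A_q R_P phi}) for $\lambda\in c\rho_{P_{0}}+i\g[a]_{\g[q]}^{*}$ with $1<c<\frac{2}{p}-1$; the absolute convergence of both sides genuinely uses $c>1$. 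In particular, for $\phi\in\Cs^{p}(X)$ there is no reason that $a\mapsto\Ht[P_{0}]\phi(k^{-1}m)(a)$ lies in $L^{1}(A_{\g[q]})$, so $\Ft_{A_{\g[q]}}\big(\Ht[P_{0}]\phi(k^{-1}m)\big)(\lambda)$ need not make sense at imaginary $\lambda$, and your ``injectivity of the Euclidean Fourier transform'' step is not available there.

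The paper avoids this by the simple device you already used in the forward direction: since $\Ft_{\overline{P}_{0},\tau}\varsigma(\phi)$ is meromorphic on $\omega+i\g[a]_{\g[q]}^{*}$ (with $\omega$ containing both $0$ and $\rho_{P_{0}}$), vanishing on $i\g[a]_{\g[q]}^{*}$ is equivalent to vanishing on $c\rho_{P_{0}}+i\g[a]_{\g[q]}^{*}$. One then chooses $c$ so that in addition $A(\overline{P}_{0}:P_{0}:\zeta:\overline{\lambda})^{-1}$ is regular and bijective on that entire shifted line, and carries out the Peter--Weyl step and the Euclidean Fourier injectivity (now the injectivity of $\psi\mapsto\Ft_{A_{\g[q]}}\psi\big|_{c\rho_{P_{0}}+i\g[a]_{\g[q]}^{*}}$) there, where all integrals converge absolutely. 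Once you make this shift, your argument goes through and matches the paper's.
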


\begin{proof}
Fix a $1<c<\frac{2}{p}-1$ such that $c\rho_{P_{0}}+i\g[a]_{\g[q]}^{*}\subseteq\omega+i\g[a]_{\g[q]}^{*}$ and $\{A(\overline{P}_{0}:P_{0}:\zeta:\overline{\lambda})^{-1}:\lambda\in c\rho_{P_{0}}+i\g[a]_{\g[q]}^{*}\}$
is a regular family of bijective operators. As
$\Ft_{\overline{P}_{0},\tau}\phi(\lambda)$ depends
meromorphically on $\lambda$, it follows that the restriction of
$\pr_{e}\Ft_{\overline{P}_{0},\tau}\phi$ to
$i\g[a]_{\g[q]}^{*}$  vanishes if and only if it
vanishes on $c\rho_{P_{0}}+i\g[a]_{\g[q]}^{*}$. From
(\ref{FurtherRemarks eq def E^0}) and Corollary
\ref{FurtherRemarks cor Eisenstein int regular, equality of Fts}
it follows that the latter is the case if and only if
the right-hand side of (\ref{FurtherRemarks eq F_M F_A_q R_P phi}) vanishes for every
$g\in G$, $\lambda\in\g[a]_{\g[q],\C}^{*}$,
$\zeta\in\widehat{M_{P_{0}}}_{H}$ and $\eta\in V(\zeta,e)$. We
will now show that the last is equivalent to $\Rt_{P_{0}}\phi=0$.
For this we observe from \cite[Lemma 3.5]{vdBan_ThePrincipalSeriesForAReductiveSymmetricSpaceI} that (\ref{FurtherRemarks eq F_M F_A_q R_P
phi}) is basically equal to the Fourier transform on the compact
homogeneous space $M_{P_{0}}/(M_{P_{0}}\cap H)$ applied to the
Euclidean Fourier transform on $A_{\g[q]}$ of the horospherical
transform of $\phi$.
For fixed $\lambda\in c\rho_{P_{0}}+i\g[a]_{\g[q]}^{*}$, the
function $(M_{P_{0}}\cap K)/(M_{P_{0}}\cap K\cap H)\to\C$;
$$
m\cdot (M_{P_{0}}\cap K\cap H)
    \mapsto\Ft_{A_{\g[q]}}\Big(\Ht[P_{0}]\phi(m)\Big)(\lambda)$$
is continuous and hence square integrable.
The Fourier transform on
$L^{2}\big(M_{P_{0}}/(M_{P_{0}}\cap H)\big)$ is injective.
Moreover, $\psi\mapsto\Ft_{A_{\g[q]}}\psi\big|_{c\rho_{P_{0}}+i\g[a]_{\g[q]}^{*}}$ is injective. Therefore, it follows
that $\Rt[P_{0}]\phi=0$ if and only if the restriction of
$\pr_{e}\Ft_{\overline{P}_{0},\tau}\phi$ to $i\g[a]_{\g[q]}^{*}$
vanishes.
\end{proof}

By the Plancherel decomposition (\cite[Th\'eor\`eme
3]{Delorme_FormuleDePlancherelPourLesEsPacesSymetriquesReductifs}
and \cite[Theorem
23.1]{vdBan&Schlichtkrull_ThePlancherelDecompositionForAReductiveSymmetricSpaceI}) the kernel of $\Ft_{\overline{P}_{0},\tau}$ is non-trivial if and only if there are discrete series or intermediate series of representations present. If this kernel has a non-trivial intersection with $\Cs^{p}(X)_{\vartheta}$, then the kernel of $\Rt[P_{0}]$ in $\Cs^{p}(X)$ is non-trivial and the support theorem cannot hold on this space. The estimates in the proof of \cite[Theorem
4.8]{Flensted_Jensen_DiscreteSeriesForSemisimpleSymmetricSpaces}
together with \cite[Theorem
7.3]{vdBan_AsymptoticBehaviourOfMatrixCoefficientsRelatedToReductiveSymmetricSpaces}
show that this is in particular the case if the rank condition
\begin{equation}\label{FurtherRemarks eq rank G/H=rank K/(K cap H)}
\textnormal{rank}(G/H)=\textnormal{rank}(K/(K\cap H))
\end{equation}
is satisfied, because in that case there exists a finite subset $\vartheta$ of $\widehat{K}$ such that the subspace of $\Cs^{p}(X)_{\vartheta}$
corresponding to the discrete series of representations is
non-trivial. We thus obtain the following proposition.

\begin{prop}
Let $0<p<1$. Assume that (\ref{FurtherRemarks eq rank G/H=rank
K/(K cap H)}) holds. Then the horospherical transform $\Rt_{P_{0}}$ is not injective on $\Cs^{p}(X)$ and the support theorem for the
horospherical transform for functions (Theorem \ref{SuppThm thm
Support Theorem for functions and minimal P_0}) is not valid with
$\Es^{1}_{P_{0}}(X)$ replaced by $\Cs^{p}(X)$.
\end{prop}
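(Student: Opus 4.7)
The plan is to exhibit, under the rank condition, a nonzero $K$-finite function $\phi \in \Cs^{p}(X)$ with $\Rt[P_{0}]\phi = 0$, and then to derive failure of the support theorem from this failure of injectivity. By Proposition \ref{FurtherRemarks prop pi_e F phi=0 <=> R phi=0}, the construction of $\phi$ reduces to finding a nonzero element of $\Cs^{p}(X)_{\vartheta}$ (for some finite $\vartheta \subseteq \widehat{K}$) whose $\tau$-spherical Fourier transform $\Ft_{\overline{P}_{0},\tau}\varsigma(\phi)$ has vanishing $\pr_{e}$-component on the unitary axis $i\g[a]_{\g[q]}^{*}$.

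Such a $\phi$ is furnished by the discrete series. Under the rank condition, Flensted-Jensen's construction \cite{Flensted_Jensen_DiscreteSeriesForSemisimpleSymmetricSpaces} produces nontrivial discrete series representations occurring in $L^{2}(X)$, and van den Ban's asymptotic estimates in \cite[Thm 7.3]{vdBan_AsymptoticBehaviourOfMatrixCoefficientsRelatedToReductiveSymmetricSpaces} show that their $K$-finite matrix coefficients decay sufficiently rapidly -- sharpening the $\Theta^{2/p}$-bound of \cite[Thm 17.1]{vdBan_ThePrincipalSeriesForAReductiveSymmetricSpaceII} -- to lie in $\Cs^{p}(X)$ for every $0 < p \leq 1$. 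Choosing $\vartheta$ to contain the $K$-spectrum of one such discrete series module yields a nonzero $\phi \in \Cs^{p}(X)_{\vartheta}$ in the discrete-series subspace of $L^{2}(X)$. The Plancherel decomposition for $X$ (Delorme \cite{Delorme_FormuleDePlancherelPourLesEsPacesSymetriquesReductifs}; van den Ban--Schlichtkrull \cite{vdBan&Schlichtkrull_ThePlancherelDecompositionForAReductiveSymmetricSpaceI}) makes the discrete spectrum orthogonal to the most-continuous series attached to $P_{0}$, whose wave-packet realisation on $i\g[a]_{\g[q]}^{*}$ is precisely the image of $\pr_{e}\Ft_{\overline{P}_{0},\tau}\varsigma$. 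Consequently $\pr_{e}\Ft_{\overline{P}_{0},\tau}\varsigma(\phi)$ vanishes on $i\g[a]_{\g[q]}^{*}$, and Proposition \ref{FurtherRemarks prop pi_e F phi=0 <=> R phi=0} yields $\Rt[P_{0}]\phi = 0$ while $\phi \neq 0$, proving non-injectivity on $\Cs^{p}(X)$.

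For the support-theorem assertion, I argue by contradiction: if Theorem \ref{SuppThm thm Support Theorem for functions and minimal P_0} held with $\Es^{1}_{P_{0}}(X)$ replaced by $\Cs^{p}(X)$, the hypothesis would be trivially satisfied by this $\phi$ for every convex compact $B \subseteq \g[a]_{\g[q]}$, forcing
$$\supp(\phi) \subseteq X\Big(\bigcap_{w \in \WG[K\cap H]} w\cdot(B + \Gamma_{P_{0}})\Big).$$
Since $\Gamma_{P_{0}}$ is a pointed cone with nonempty interior in $\g[a]_{\g[q]}$, one may fix a linear functional $\lambda \in \g[a]_{\g[q]}^{*}$ that is positive on the interior of $\Gamma_{P_{0}}$ and non-negative on $\Gamma_{P_{0}}$. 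Given any $Y_{0} \in \g[a]_{\g[q]}$, choosing $Y$ in the interior of $\Gamma_{P_{0}}$ and setting $B = \{NY\}$ with $N > \lambda(Y_{0})/\lambda(Y)$ makes $Y_{0} - NY \notin \Gamma_{P_{0}}$; taking $w = e$ then shows $Y_{0}$ is not in the above intersection, so $k\exp(Y_{0}) \cdot x_{0} \notin \supp(\phi)$ for any $k \in K$. As $Y_{0}$ and $k$ were arbitrary, this forces $\supp(\phi) = \emptyset$, contradicting $\phi \neq 0$. The main obstacle is translating orthogonality of the discrete series in $L^{2}(X)$ into the pointwise vanishing of $\pr_{e}\Ft_{\overline{P}_{0},\tau}\varsigma(\phi)$ on $i\g[a]_{\g[q]}^{*}$; this rests on carefully matching the normalised Fourier transform used here with the Plancherel realisation of the continuous spectrum in \cite{Delorme_FormuleDePlancherelPourLesEsPacesSymetriquesReductifs, vdBan&Schlichtkrull_ThePlancherelDecompositionForAReductiveSymmetricSpaceI}.
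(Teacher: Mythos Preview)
Your approach is essentially the same as the paper's (which gives the argument in the discussion immediately preceding the proposition rather than as a separate proof): produce a nonzero discrete-series $K$-finite function in $\Cs^{p}(X)$ via Flensted-Jensen and van den Ban, observe via Plancherel that its $\tau$-spherical Fourier transform vanishes, and apply Proposition~\ref{FurtherRemarks prop pi_e F phi=0 <=> R phi=0} to conclude $\Rt[P_{0}]\phi=0$. Your deduction of the failure of the support theorem from non-injectivity is also correct and more explicit than the paper's one-line remark.

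There is, however, one genuine slip. You assert that the $K$-finite matrix coefficients of a discrete series representation ``lie in $\Cs^{p}(X)$ for every $0<p\leq1$.'' Read literally, this says a single such $\phi$ belongs to $\bigcap_{p}\Cs^{p}(X)=\RDs(X)\subseteq\Es^{1}_{P_{0}}(X)$. But on $\RDs(X)$ the transform $\Rt[P_{0}]$ is injective by Theorem~\ref{Inj thm Rt_P injective}, so no nonzero $\phi\in\RDs(X)$ can satisfy $\Rt[P_{0}]\phi=0$; your claim therefore contradicts the very injectivity result proved earlier. The correct statement, which is what the paper invokes, is that for each fixed $p\in(0,1)$ one can choose a discrete series representation with sufficiently regular parameter (and a corresponding finite $\vartheta$) so that its $K$-finite matrix coefficients lie in $\Cs^{p}(X)_{\vartheta}$. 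The choice of discrete series, and hence of $\phi$ and $\vartheta$, depends on $p$. This also takes care of the requirement, stated just before Proposition~\ref{FurtherRemarks prop pi_e F phi=0 <=> R phi=0}, that $p$ be small enough for $\varsigma$ to map into $C_{r}(X:\tau)$: one simply picks $\phi$ in $\Cs^{p'}(X)_{\vartheta}$ for some $p'\leq p$ small enough that the proposition applies, and then $\phi\in\Cs^{p}(X)$ as well.

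A minor point: your argument for the failure of the support theorem invokes ``the interior of $\Gamma_{P_{0}}$'', which may be empty if $\g[a]_{\g[q]}$ has a nontrivial central part. This is harmless: replace ``$Y$ in the interior'' by any nonzero $Y\in\Gamma_{P_{0}}$ and take $\lambda\in\g[a]_{\g[q]}^{*}(\overline{P}_{0},0)\cap\g[a]_{\g[q]}^{*}$, which is strictly positive on $\Gamma_{P_{0}}\setminus\{0\}$.
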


A similar result, but with a different proof, can be found in
\cite[Theorems 4.1 and 4.2]{Krotz_HorosphericalTransformOnRealSymmetricVarieties:KernelAndCokernel}.

Finally, we note that Theorem \ref{Inj thm Rt_P injective} and
Proposition \ref{FurtherRemarks prop pi_e F phi=0 <=> R phi=0}
imply that $\pr_{e}\Ft_{\overline{P}_{0},\tau}$ is injective on
$\RDs(X)_{\vartheta}$. However, the following stronger result
already follows from the inversion formula (Proposition \ref{InvFor
prop Inversion Formula}), together with (\ref{FT eq def tau-sph
Ft_e}) and the equivariance of $\Ft^{\un}_{P_{0},e}$.

\begin{thm}
Assume that $P_{0}$ is a minimal $\sigma\circ\theta$-stable
parabolic subgroup, $\vartheta$ is a finite subset of $K$-types
and $\tau=\tau_{\vartheta}$. Then the map
$$
\Es^{1}_{P_{0}}(X)_{\vartheta}\ni\phi\mapsto\Ft_{\overline{P}_{0},\tau,e}\phi
$$
is injective.
\end{thm}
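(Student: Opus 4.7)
The plan is to reduce the injectivity claim to the injectivity of the horospherical Radon transform $\Rt[P_{0}]$ on $\Vs_{P_{0}}(X)$ established in Theorem \ref{Inj thm Rt_P injective}. Let $\phi\in\Es^{1}_{P_{0}}(X)_{\vartheta}$ satisfy $\Ft_{\overline{P}_{0},\tau,e}\varsigma(\phi)=0$; the goal is to conclude $\phi=0$. I will proceed in three steps: (i) upgrade the hypothesis to the identical vanishing of $\Ft^{\un}_{P_{0},e}\phi(\zeta:\lambda)$ on $G$, (ii) convert this via Proposition \ref{FT prop FT_e=FT_A circ Rt} into $\Rt[P_{0}]\phi=0$, and (iii) apply Theorem \ref{Inj thm Rt_P injective}.

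For step (i), the defining identity (\ref{FT eq def tau-sph Ft_e}) together with the bijectivity, for $\lambda$ outside a locally finite meromorphic subset, of the standard intertwining operator $A(\overline{P}_{0}:P_{0}:\zeta:\overline{\lambda})$ on $\Es(K:\zeta:\tau)$ shows that $\Ft^{\un}_{P_{0},e}\phi(\zeta:\lambda)\eta|_{K}\equiv 0$ for every $\zeta\in\widehat{M_{P_{0}}}_{H}(\tau)$ and $\eta\in V(\zeta,e)$. Since $\Ft^{\un}_{P_{0},e}\phi(\zeta:\lambda)\eta$ lies in $\Es(P_{0}:\zeta:\lambda)$ and therefore transforms according to (\ref{FT eq f(mang)=a^(lambda+rho)xi(m)f(g)}), the Iwasawa decomposition $G=KP_{0}$ propagates this vanishing from $K$ to all of $G$. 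This is the incarnation of the $G$-equivariance of $\Ft^{\un}_{P_{0},e}$ alluded to in the remark preceding the theorem.

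For step (ii), Corollary \ref{RadTransSupp cor supp(Rt_Q phi)subseteq Xi_Q,B => supp(Rt_P phi)subseteq Xi_P,B} ensures that the support hypothesis of Proposition \ref{FT prop FT_e=FT_A circ Rt} (namely $\supp(\Rt[P_{0}]\phi)\subseteq\Xi_{P_{0}}(B+\Gamma_{P_{0}})$ for some compact $B$) is satisfied for $\phi\in\Es^{1}_{P_{0}}(X)_{\vartheta}$, and that proposition then yields
\[
\int_{M_{P_{0}}\cap K}\Ft_{A_{\g[q]}}\bigl(\Ht[P_{0}]\phi(g^{-1}m)\bigr)(\lambda)\,\zeta(m)\eta\,dm=0
\]
for every $g\in G$, $\zeta\in\widehat{M_{P_{0}}}_{H}(\tau)$, $\eta\in V(\zeta,e)$ and $\lambda$ in the admissible tube. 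Because $\phi$ is $\vartheta$-isotypic, the inner function $m\mapsto\Ft_{A_{\g[q]}}(\Ht[P_{0}]\phi(g^{-1}m))(\lambda)$, regarded as a function on the symmetric space $M_{P_{0}}/(M_{P_{0}}\cap H)$, has $M_{P_{0}}$-spectral support contained in $\widehat{M_{P_{0}}}_{H}(\tau)$; the Peter--Weyl decomposition on that space then forces the function to vanish identically, and injectivity of the Euclidean Fourier transform on $A_{\g[q]}$ gives $\Ht[P_{0}]\phi\equiv 0$, whence $\Rt[P_{0}]\phi=0$ on $\Xi_{P_{0}}$. Step (iii) is then immediate since $\Es^{1}_{P_{0}}(X)\subseteq\Vs_{P_{0}}(X)$.

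The technically delicate point I anticipate is the Peter--Weyl reduction in (ii): one must carefully unwind $\varsigma$ and the definitions of $\Rt[P_{0}]$ and $\Ht[P_{0}]$ in order to confirm that the $\vartheta$-isotypy of $\phi$ does confine the $M_{P_{0}}$-Fourier spectrum of the auxiliary function to the finite set $\widehat{M_{P_{0}}}_{H}(\tau)$, so that the given linear conditions are genuinely exhaustive. Finally, note that a direct application of the inversion formula (Proposition \ref{InvFor prop Inversion Formula}) would yield $\phi=0$ only on the open subset $K\exp(\g[a]^{+}_{\g[q]}(\overline{P}_{0}))\cdot x_{0}$, which is not dense in $X$ in general, so the passage through $\Rt[P_{0}]$ and Theorem \ref{Inj thm Rt_P injective} appears to be the cleanest route.
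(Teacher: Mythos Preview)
Your approach is valid but takes a substantially longer route than the paper's. You reduce to $\Rt[P_{0}]\phi=0$ and then invoke Theorem~\ref{Inj thm Rt_P injective}, which is itself a consequence of the full support-theorem machinery (Theorems~\ref{SuppThm thm Support Theorem for functions and minimal P_0} and~\ref{SuppThm thm Support theorem for distributions and non-minimal P}). The paper's argument is more direct: once (\ref{FT eq def tau-sph Ft_e}) gives $\Ft^{\un}_{P_{0},e}\phi=0$ on $G$, the $G$-equivariance of $\Ft^{\un}_{P_{0},e}$ yields $\Ft^{\un}_{P_{0},e}(l_{g}^{*}\phi)=0$ for every $g\in G$; projecting $l_{g}^{*}\phi$ to an arbitrary finite $\vartheta'\subseteq\widehat{K}$ (using that $\Es^{1}_{P_{0}}(X)$ is $G$-invariant and closed under $K$-type projection) and reapplying (\ref{FT eq def tau-sph Ft_e}) gives $\Ft_{\overline{P}_{0},\tau',e}\varsigma\big((l_{g}^{*}\phi)_{\vartheta'}\big)=0$, so the inversion formula forces $(l_{g}^{*}\phi)_{\vartheta'}=0$ on $K\exp\big(\g[a]_{\g[q]}^{+}(\overline{P}_{0})\big)\cdot x_{0}$. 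Letting $\vartheta'$ exhaust $\widehat{K}$ and $g$ range over $G$ then gives $\phi=0$ everywhere. Your final remark --- that the inversion formula alone only sees the generally non-dense set $K\exp\big(\g[a]_{\g[q]}^{+}(\overline{P}_{0})\big)\cdot x_{0}$ --- is correct, but it overlooks precisely the role of equivariance, which is what the paper invokes to cover the rest of $X$.

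A small technical point in your route: Corollary~\ref{RadTransSupp cor supp(Rt_Q phi)subseteq Xi_Q,B => supp(Rt_P phi)subseteq Xi_P,B} as stated needs $\supp(\phi)\subseteq X(B)$ with $B$ compact, which fails for general $\phi\in\Es^{1}_{P_{0}}(X)$. For the summand $\Es^{1}(X;B+\invCone{P_{0}})$ one can instead use Proposition~\ref{RadTransSupp prop relation Xi_(Q,B+Gamma^+),Xi_(P,B+Gamma^+)} (valid for closed convex $B$) together with $\invCone{P_{0}}+\Gamma_{P_{0}}=\Gamma_{P_{0}}$; for the $\RDs(X)$ summand one uses Lemma~\ref{FurtherRemarks lemma int_X phi(x) j_P(x) dx abs convergent}. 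Your ``spectral support'' claim in step~(ii) is correct and amounts to the observation that $\Es(P_{0}:\zeta:\lambda)_{\vartheta}=0$ for $\zeta\notin\widehat{M_{P_{0}}}_{H}(\tau)$ by Frobenius reciprocity, so $\Ft^{\un}_{P_{0},e}\phi(\zeta:\lambda)\eta=0$ holds automatically for those $\zeta$ as well; stating it this way makes the Peter--Weyl step transparent.
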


This theorem might be a consequence of the symmetry relations
between the components of the Fourier transform. (See
\cite[Section
16]{vdBan&Schlichtkrull_TheMostContinuousPartOfThePlancherelDecompositionForAReductiveSymmetricSpace}.)
It is however not clear to us how to prove this result using only
those symmetries.

\addcontentsline{toc}{section}{\protect\numberline{}Appendix A. Normalization of Measures}
%%% ----------------------------------------------------------------------
 \section*{Appendix A. Normalization of measures}
%%% ----------------------------------------------------------------------
 \renewcommand{\thesection}{A}
 \setcounter{equation}{0}
 \setcounter{thm}{0}
%%% ----------------------------------------------------------------------

In this appendix we list the normalizations of all relevant measures that we use in the main text. Throughout this appendix, let $P$ and $Q$ be parabolic subgroups such that $A\subseteq P\subseteq Q$.

On each compact subgroup, we normalize the Haar measure such that the total volume equals $1$. If $N$ is one of the groups $N_{P}^{Q}$, $N_{P}$, $A$ or $A_{\g[q]}$, then The Haar measure on $N$ is taken to be the pushforward along $\exp$ of the Lebesgue measure on the Lie algebra $\g[n]$ of $N$. The Lebesgue measure on $\g[n]$ is normalized such that a unit cube (with respect to $-B(\cdot,\theta\cdot)$ has volume $1$.

The measure on $\Xi_{P}$ is normalized such that if $P_{0}\in\Psg(\g[a]_{\g[q]})$ with $P_{0}\subseteq P$, then
$$
\int_{\Xi_{P}}\phi(\xi)J_{P}(\xi)\,d\xi
=\sum_{w\in\WGs[M_{P}]}\int_{M_{P_{0}}\cap K}\int_{A_{\g[q]}}\int_{N_{P_{0}^{w}}^{P}}
    \phi(wman\cdot\xi_{P})\,dn\,da\,dm
$$
for all $\phi\in \Ds(\Xi_{P})$. (See (\ref{FT eq int_(P_0 cdot x0)=sum_W int_LcapK int_A_q int_N_P}).)
By the $K$-invariance of the measure on $\Xi_{P}$ we then also have
$$
\int_{\Xi_{P}}\phi(\xi)J_{P}(\xi)\,d\xi
=\sum_{w\in\WGs[M_{P}]}\int_{K}\int_{A_{\g[q]}}\int_{N_{P_{0}^{w}}^{P}}
    \phi(kan\cdot\xi_{P})\,dn\,da\,dk
$$
(See Lemma \ref{RadTransExt lemma int_Xi phi J=sum int_K int_A int_N phi}.)
Subsequently the measure on $(L_{Q}\cap H)/(L_{P}\cap H)$ is normalized such that for every $\psi\in \Ds\big(G/(L_{P}\cap H)N_{Q}\big)$
\begin{align*}
&\int_{\Xi_{Q}}\int_{(L_{Q}\cap H)/(L_{P}\cap H)}\psi\big(gl\cdot(L_{P}\cap H)N_{Q}\big)
    \,d_{(L_{P}\cap H)}l\,d_{(L_{Q}\cap H)N_{Q}}g\\
&\qquad=\int_{\Xi_{P}}\int_{N_{P}^{Q}}\psi\big(gn\cdot(L_{P}\cap H)N_{Q}\big)\,dn\,d_{(L_{P}\cap H)N_{P}}g.
\end{align*}
(See Lemma \ref{RadTransExt lemma int_(Xi_Q)int_(L_Q cap H)/(L_P cap H)|psi(gl cdot xi_P) phi(g cdot xi_Q)|leq|psi|_(infty,J_P)|phi|_(1,J_Q)}.)
Finally, the measure on $L_{P}/(L_{P}\cap H)$ is normalized such that
$$
\int_{\Xi_{P}}\chi(\xi)J_{P}(\xi)\,d\xi
=\int_{K}\int_{L_{P}/(L_{P}\cap H)}\chi(kl\cdot\xi_{P})\,d_{L_{P}\cap H}l\,dk
$$
for every $\phi\in\Ds(\Xi_{P})$.
(See Lemma \ref{FuncSpaces lemma int_Xi phi J= int_K int_L/(L cap H)phi}.)

\addcontentsline{toc}{section}{\protect\numberline{}Appendix B. Transversality}
%%% ----------------------------------------------------------------------
 \section*{Appendix B. Transversality}
%%% ----------------------------------------------------------------------
 \renewcommand{\thesection}{B}
 \setcounter{equation}{0}
 \setcounter{thm}{0}
%%% ----------------------------------------------------------------------

In this appendix we show that the subgroups
$(L_{Q}\cap H)N_{Q}$ and $(L_{P}\cap H)N_{P}$ of $G$,
that play an important role in the double fibration
(\ref{RadTrans eq double fibration}), are transversal.

Throughout this appendix we assume that $P$ and $Q$ are
$\sigma\circ\theta$-stable parabolic subgroups such that $A
\subseteq P\subseteq Q$.

\begin{lemma}\label{Transv lemma Nor(a cap q)=Nor(l)=Nor(a)}
$\Nor_{K\cap H}(\g[l]_{P}\cap\g[q])
    =\Nor_{K\cap H}(\g[l]_{P})
    =\Nor_{K\cap H}(\g[a]_{P})
    =\Nor_{K\cap H}(\g[a]_{P}\cap\g[a]_{\g[q]}).
$
\end{lemma}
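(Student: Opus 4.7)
The plan is to prove the four equalities by a circular chain of inclusions, after first establishing some elementary structural identities.

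First I would collect the following preliminary identities. Since $A\subseteq P$, we have $\g[a]_{P}\subseteq\g[a]$; combining with $\g[a]=(\g[a]\cap\g[h])\oplus\g[a]_{\g[q]}$ gives $\g[a]_{P}\cap\g[q]=\g[a]_{P}\cap\g[a]_{\g[q]}$. By part (i) of the earlier lemma on $\sigma\circ\theta$-stable parabolic subgroups, $\g[l]_{P}=\Cen_{\g}(\g[a]_{P}\cap\g[a]_{\g[q]})$. The Harish-Chandra class structure of $L_{P}$ together with the $\theta$-stable Langlands decomposition $\g[l]_{P}=\g[m]_{P}\oplus\g[a]_{P}$ identifies $\g[a]_{P}$ with the intersection of the center of $\g[l]_{P}$ with $\g[p]$. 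Lastly, invoking the consequence of the maximality of $\g[a]_{\g[q]}$ in $\g[p]\cap\g[q]$ that was used in the proof of Lemma \ref{PSubgrp lemma n_MAN^(P_0)=n cap h}, namely $\g[m]_{P}\cap\g[p]\subseteq\g[h]$, and intersecting $\g[l]_{P}\cap\g[p]=(\g[m]_{P}\cap\g[p])\oplus\g[a]_{P}$ with $\g[q]$, I would obtain the key identity
$$
\g[l]_{P}\cap\g[p]\cap\g[q]=\g[a]_{P}\cap\g[a]_{\g[q]}.
$$

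With these in hand, the proof reduces to routine tracking of inclusions. For any $k\in K\cap H$, $\Ad(k)$ commutes with both $\theta$ and $\sigma$ and hence preserves each of $\g[p]$, $\g[q]$ and $\g[h]$. Since $\g[a]_{P}$ is cut out of $\g[l]_{P}$ by taking the center and intersecting with $\g[p]$, I would conclude $\Nor_{K\cap H}(\g[l]_{P})\subseteq\Nor_{K\cap H}(\g[a]_{P})$; a further intersection with $\g[q]$ yields $\Nor_{K\cap H}(\g[a]_{P})\subseteq\Nor_{K\cap H}(\g[a]_{P}\cap\g[a]_{\g[q]})$. Since $\Ad(k)$ commutes with the formation of centralizers in $\g$, the identity $\g[l]_{P}=\Cen_{\g}(\g[a]_{P}\cap\g[a]_{\g[q]})$ closes the loop through $\Nor_{K\cap H}(\g[a]_{P}\cap\g[a]_{\g[q]})\subseteq\Nor_{K\cap H}(\g[l]_{P})$, giving equality of the last three normalizers. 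To splice in $\Nor_{K\cap H}(\g[l]_{P}\cap\g[q])$, the inclusion $\Nor_{K\cap H}(\g[l]_{P})\subseteq\Nor_{K\cap H}(\g[l]_{P}\cap\g[q])$ is direct by intersecting with $\g[q]$, and the converse $\Nor_{K\cap H}(\g[l]_{P}\cap\g[q])\subseteq\Nor_{K\cap H}(\g[a]_{P}\cap\g[a]_{\g[q]})$ follows by intersecting with $\g[p]$ and applying the key identity.

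The only substantive step is the last one, which requires recovering the abelian piece $\g[a]_{P}\cap\g[a]_{\g[q]}$ from the a priori much larger space $\g[l]_{P}\cap\g[q]$. The heart of the argument is therefore the identity $\g[l]_{P}\cap\g[p]\cap\g[q]=\g[a]_{P}\cap\g[a]_{\g[q]}$, which itself is an immediate consequence of the inclusion $\g[m]_{P}\cap\g[p]\subseteq\g[h]$ already used elsewhere in the paper.
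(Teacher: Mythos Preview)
Your circular chain is correct except for the final inclusion $\Nor_{K\cap H}(\g[l]_{P}\cap\g[q])\subseteq\Nor_{K\cap H}(\g[a]_{P}\cap\g[a]_{\g[q]})$, and there the argument breaks down because your key identity
\[
\g[l]_{P}\cap\g[p]\cap\g[q]=\g[a]_{P}\cap\g[a]_{\g[q]}
\]
is false for non-minimal $P$. The inclusion $\g[m]_{P}\cap\g[p]\subseteq\g[h]$ that you cite from the proof of Lemma~\ref{PSubgrp lemma n_MAN^(P_0)=n cap h} is established there only for a \emph{minimal} $\sigma\circ\theta$-stable parabolic $P_{0}$; for general $P$ it fails. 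For a concrete counterexample take the Riemannian case $\sigma=\theta$, so $\g[q]=\g[p]$ and $\g[h]=\g[k]$: then $\g[l]_{P}\cap\g[p]\cap\g[q]=\g[l]_{P}\cap\g[p]=(\g[m]_{P}\cap\g[p])\oplus\g[a]_{P}$, which strictly contains $\g[a]_{P}=\g[a]_{P}\cap\g[a]_{\g[q]}$ whenever $\g[m]_{P}$ has a nontrivial noncompact part, e.g.\ for any non-minimal parabolic in $\mathrm{SL}(3,\R)$.

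The paper repairs exactly this step by replacing your identity with
\[
\Cen_{\g}(\g[l]_{P}\cap\g[q])\cap\g[p]\cap\g[q]=\g[a]_{P}\cap\g[a]_{\g[q]},
\]
which does hold for all $\sigma\circ\theta$-stable $P$. The point is that while you cannot cut $\g[a]_{P}\cap\g[a]_{\g[q]}$ out of $\g[l]_{P}\cap\g[q]$ by a plain intersection with $\g[p]$, you can recover it by first passing to the centralizer: since $\g[a]_{\g[q]}\subseteq\g[l]_{P}\cap\g[q]$, the centralizer lands in $\Cen_{\g}(\g[a]_{\g[q]})$, whose $\g[p]\cap\g[q]$-part is $\g[a]_{\g[q]}$ by maximality; one then checks that an element of $\g[a]_{\g[q]}$ centralizing $\g[l]_{P}\cap\g[q]$ in fact centralizes all of $\g[l]_{P}$. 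Since $\Ad(k)$ commutes with the formation of centralizers, any $k\in K\cap H$ normalizing $\g[l]_{P}\cap\g[q]$ normalizes this centralizer and hence $\g[a]_{P}\cap\g[a]_{\g[q]}$. All your other steps are correct and coincide with the paper's argument.
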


\begin{proof}
We will first show that
\begin{equation}\label{Transv eq Cen_g(l_P) cap q ca p = a_P cap a_q}
\Cen_{\g}(\g[l]_{P}\cap\g[q])\cap\g[p]\cap\g[q]=\g[a]_{P}\cap\g[a]_{\g[q]}.
\end{equation}
Clearly the set on the right-hand side is contained in the set on
the left-hand side. Conversely, as $\g[l]_{P}\cap\g[q]$ contains $\g[a]_{\g[q]}$,
 it follows that
$$
\Cen_{\g}(\g[l]_{P}\cap\g[q])\cap\g[p]\cap\g[q]
\subseteq\Cen_{\g}(\g[a]_{\g[q]})\cap\g[p]\cap\g[q]
=\g[a]_{\g[q]}.
$$
Here we have used that $\g[a]_{\g[q]}$ is maximal abelian in
 $\g[p]\cap\g[q]$. Let $Y\in\g[a]_{\g[q]}$ and assume that
$Y$ centralizes $\g[l]_{P}\cap \g[q]$. $Y$ normalizes $\g[l]_{P}$,
hence also the $B$-orthocomplement of $\g[l]_{P}\cap\g[q]$ in
$\g[l]_{P}$, which is $\g[l]_{P} \cap \g[h]$. On the other hand,
$[\g[a]_{\g[q]}, \g[l]_{P}\cap \g[h]] \subseteq
[\g[q],\g[h]]\subseteq \g[q]$, and we see that $Y$
centralizes $\g[l]_{P}\cap\g[h]$. It follows that $Y$ centralizes
$\g[l]_{P}$, hence belongs to $\g[a]_{P}\cap\g[a]_{\g[q]}$. This
establishes (\ref{Transv eq Cen_g(l_P) cap q ca p = a_P cap a_q})

For the proof of the actual lemma, assume that $k \in K\cap H$.
Then $k$ normalizes both $\g[p]$ and $\g[q]$. Now assume that $k$
normalizes $\g[l]_{P}\cap \g[q]$. Then $k$ also normalizes
$\Cen_{\g}(\g[l]_{P} \cap \g[q])$, hence also $\Cen_{\g}(\g[l]_{P} \cap
\g[q]) \cap \g[p] \cap \g[q] = \g[a]_{P}\cap\g[a]_{\g[q]}$. If $k$
normalizes $\g[a]_{P}\cap\g[a]_{\g[q]}$, then it also normalizes
the centralizer of $\g[a]_{P}\cap \g[q]$, which is $\g[l]_{P}$. If
$k$ normalizes $\g[l]_{P}$, then it also normalizes the center of
$\g[l]_{P}$, hence also the $\g[p]$-part of the center of
$\g[l]_{P}$, which is $\g[a]_{P}$. If $k$ normalizes $\g[a]_{P}$
then also $\g[a]_{P}\cap\g[q] =\g[a]_{P}\cap\g[a]_{\g[q]}$.
Finally, if $k$ normalizes $\g[l]_{P}$ then also $\g[l]_{P}\cap
\g[q]$. This proves the lemma.
\end{proof}

\begin{prop}\label{Transv prop horosphere stabilizer}
The stabilizer in $G$ of $N_{P}^{Q}\cdot \xi_{Q}$ equals
$(L_{P}\cap H)N_{P}$.
\end{prop}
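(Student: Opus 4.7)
The forward inclusion $(L_P \cap H) N_P \subseteq \mathrm{Stab}(N_P^Q \cdot \xi_Q)$ is by direct verification. Since $N_P = N_P^Q \ltimes N_Q$ and $N_Q \subseteq \mathrm{Stab}(\xi_Q)$, the orbit equals $N_P \cdot \xi_Q$ and is preserved by $N_P$. Any $l \in L_P \cap H \subseteq L_Q \cap H$ fixes $\xi_Q$, and $L_P$ normalizes $N_P \cap L_Q = N_P^Q$, so $l \cdot (N_P^Q \cdot \xi_Q) = (l N_P^Q l^{-1}) \cdot \xi_Q = N_P^Q \cdot \xi_Q$.

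For the reverse inclusion, suppose $g$ stabilizes $N_P^Q \cdot \xi_Q$. From $g \cdot \xi_Q \in N_P^Q \cdot \xi_Q$ I obtain $g \in N_P^Q (L_Q \cap H) N_Q = N_P (L_Q \cap H)$, using that $L_Q \cap H$ normalizes $N_Q$. Multiplying on the left by an element of $N_P$ (which lies in the stabilizer), I may assume $g = l \in L_Q \cap H$. Then $l$ acts entirely within the submanifold $L_Q \cdot \xi_Q \cong L_Q / (L_Q \cap H)$ of $\Xi_Q$ (the embedding valid because $L_Q \cap (L_Q \cap H) N_Q = L_Q \cap H$, since $L_Q \cap N_Q = \{e\}$), and stabilizes $N_P^Q \cdot [e]$ there. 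This reduces the claim, with the roles of $G, H, P, Q$ replaced by $L_Q$, $L_Q \cap H$, $P \cap L_Q$ and $L_Q$, to the case $Q = G$: if $h \in H$ stabilizes the horosphere $N_P \cdot x_0 \subseteq X$, then $h \in L_P \cap H$.

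To handle this reduced case, I linearize: since $h$ fixes $x_0$ and preserves $N_P \cdot x_0$, $\Ad(h)$ preserves the tangent space $(\g[n]_P + \g[h])/\g[h]$, hence the subspace $\g[n]_P + \g[h] \subseteq \g$. Taking $B$-orthogonal complements (which $\Ad(h)$ respects since $B$ is $\Ad(G)$-invariant), and using the standard identities $(\g[n]_P)^{\perp_B} = \g[l]_P + \g[n]_P$ and $(\g[h])^{\perp_B} = \g[q]$, one computes $(\g[n]_P + \g[h])^{\perp_B} = (\g[l]_P + \g[n]_P) \cap \g[q] = \g[l]_P \cap \g[q]$; the last equality uses that $\sigma\g[n]_P = \theta\g[n]_P$ is transverse to $\g[n]_P$, forcing any $\sigma$-antifixed element of $\g[l]_P + \g[n]_P$ to lie in $\g[l]_P$. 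Hence $\Ad(h)$ preserves $\g[l]_P \cap \g[q]$. Now write $h = k \exp(Y)$ by the Cartan decomposition of $H$, with $k \in K \cap H$ and $Y \in \g[p] \cap \g[h]$. For the infinitesimal part, differentiating $\Ad(\exp tY)(\g[l]_P \cap \g[q]) \subseteq \g[l]_P \cap \g[q]$ and examining the $\g[a]$-root decomposition of $Y$ shows that $[Y, \g[a]_P \cap \g[q]] \subseteq \g[l]_P$ forces all root components $Y_\alpha$ with $\alpha|_{\g[a]_P \cap \g[q]} \neq 0$ to vanish, so $Y \in \g[l]_P \cap \g[p] \cap \g[h]$ and $\exp Y \in L_P \cap H$. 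For the compact part, an iterative application reduces to $k \in K \cap H$ preserving $\g[l]_P \cap \g[q]$; Lemma \ref{Transv lemma Nor(a cap q)=Nor(l)=Nor(a)} then identifies this condition with $k$ normalizing $\g[a]_P \cap \g[a]_{\g[q]}$, and using $L_P = \Cen_G(\g[a]_P \cap \g[q])$ together with the stabilization condition (which is stronger than mere normalization of the tangent space) forces $k$ into $L_P \cap H$. The main obstacle is this last step: converting normalization of $\g[a]_P \cap \g[q]$ into centralization, which requires exploiting the global stabilization property of $h$ on the horosphere, beyond the tangent-space argument at $x_0$.
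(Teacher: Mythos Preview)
Your overall strategy matches the paper's: reduce to $h\in H$ stabilizing $N_P\cdot x_0$, linearize to see that $\Ad(h)$ preserves $\g[n]_P\oplus\g[h]$ and hence its $B$-orthocomplement $\g[l]_P\cap\g[q]$, then split $h=k\exp Y$ and treat the factors separately. One small point you gloss over: to pass from $\Ad(\exp Y)$ preserving $\g[l]_P\cap\g[q]$ to $\ad(Y)$ doing so, use that $\g[l]_P\cap\g[q]$ is $\theta$-stable, so its normalizer in $G$ is a $\theta$-stable closed subgroup with its own Cartan decomposition; uniqueness then puts both $k$ and $\exp Y$ (hence $\exp(tY)$ for all $t$) in the normalizer.

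The step you flag as ``the main obstacle'' is a genuine gap in your write-up: you assert that the global stabilization forces $k\in\Nor_{K\cap H}(\g[a]_P\cap\g[q])$ to centralize, but give no mechanism. Here is how the paper closes it. From $kN_P\cdot x_0=N_P\cdot x_0$ and $k\in H$ one gets $kN_PH=N_PH=N_PHk^{-1}$, so $N_PH$ is left-invariant under both $N_P$ and $kN_Pk^{-1}$. Since $N_PH$ is a closed submanifold containing $e$, the Lie subalgebra generated by $\g[n]_P$ and $\Ad(k)\g[n]_P$ must lie in $T_e(N_PH)=\g[n]_P\oplus\g[h]$. Now if $k\notin\Cen_{K\cap H}(\g[a]_P)$, then $kPk^{-1}\neq P$ are parabolics with the same split component, so there is a root $\alpha$ positive for $P$ with $\g_{-\alpha}\subseteq\Ad(k)\g[n]_P$. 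For $0\neq Y_\alpha\in\g_\alpha$ one then has $\theta Y_\alpha\in\Ad(k)\g[n]_P$, hence $[Y_\alpha,\theta Y_\alpha]\in\g[n]_P\oplus\g[h]$. But $[Y_\alpha,\theta Y_\alpha]=-\|Y_\alpha\|^2 H_\alpha$ is a nonzero element of $\g[a]$ whose $\g[a]_{\g[q]}$-component is nonzero (since $\alpha|_{\g[a]_P\cap\g[q]}\neq 0$), so it has nonzero projection to $\g[l]_P\cap\g[q]=(\g[n]_P\oplus\g[h])^{\perp_B}$, a contradiction. Thus $k\in\Cen_{K\cap H}(\g[a]_P)=L_P\cap K\cap H$.
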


\begin{proof}
It is clear that $(L_{P}\cap H)N_{P}$ stabilizes $N_{P}^{Q}\cdot
\xi_{Q}$, hence it remains to prove that the stabilizer is contained
in $(L_{P}\cap H)N_{P}$.

Assume that $gN_{P}^{Q}\cdot \xi_{Q}=N_{P}^{Q}\cdot\xi_{Q}$.
Then $gN_{P}(L_{Q}\cap H)=N_{P}(L_{Q}\cap H)$,
hence $gN_{P}H=N_{P}H$. This implies that $g=nh$, with $n\in N_{P}$ and $h\in
H$ satisfying $hN_{P}H=N_{P}H$.
We will finish the proof by showing that $h \in L_{P}\cap H$.

We first note that
$N_{P}H$ is a submanifold
of $G$ containing $e$ (see Corollary \ref{RadTrans cor N_P(L_Q cap H) closed}) and that $hN_{P} h^{-1}\subseteq N_{P}H$.
Differentiating at $e$ and using that $\Ad(h)$ is a
linear isomorphism mapping $\g[h]$ onto itself, we see that
$\Ad(h)(\g[n]_{P}\oplus\g[h])=\g[n]_{P}\oplus\g[h]$.
Note that $\g[g]$ decomposes as
$\g[g]=(\g[h]\oplus\g[n]_{P})\oplus(\g[l]_{P}\cap\g[q])$. In fact,
$\g[l]_{P}\cap\g[q]$ is the orthocomplement of
$\g[h]\oplus\g[n]_{P}$ with respect to the non-degenerate bilinear form $B$. Therefore
$h$ normalizes $\g[l]_{P}\cap\g[q]$.

Write $h = k \exp Y$ with $k \in K$ and $Y \in \g[p]$.  As $H$ is
$\theta$-stable, it follows that $k \in K \cap H$ and $Y \in \g[p]
\cap \g[h]$. Moreover, since $\g[l]_{P} \cap \g[q]$ is
$\theta$-stable, so is the normalizer of this space in $G$ and it
follows that both $k$ and $Y$ normalize $\g[l]_{P} \cap \g[q]$. We
will finish the proof by showing that both $\exp Y$ and $k$ belong
to $L_{P}\cap H$.

We may write $Y = Y_{0} + \big(U + \sigma(U)\big)$, with $Y_{0}
\in \g[l]_{P}\cap\g[h]$ and $U \in \g[n]_{P}$. Since $Y_{0}$
normalizes $\g[l]_{P}\cap\g[q]$, we see that $U + \sigma(U)$
normalizes $\g[l]_{P} \cap \g[q]$. Let $Z\in\g[l]_{P}\cap\g[q]$.
Then $[U, Z] \in \g[n]_{P}$ and $[\sigma(U), Z] \in
\overline{\g[n]}_{P}$, so $[U + \sigma(U), Z] \in \g[l]_{P} \cap
(\g[n]_{P}\oplus\overline{\g[n]}_{P}) = \{0\}$. Thus, we see that
$U + \sigma(U)$ centralizes $\g[l]_{P} \cap \g[q]$. In particular,
$U + \sigma(U)$ centralizes $\g[a]_{P}$, which in turn implies
that $U = 0$. We now see that $Y \in \g[l]_{P} \cap \g[h] \cap
\g[p]$.
In particular, it follows that $\exp Y \in L_{P} \cap H$
so that it remains to prove the same statement for $k$.

Since both $h$ and $\exp Y$ stabilize $N_{P}\cdot x_{0}$
it follows $k=h(\exp Y)^{-1}$ stabilizes
$N_{P}\cdot x_{0}$ as well. We thus obtain
$$N_{P}H=kN_{P}H=kN_{P}k^{-1}H = k N_{P} H k^{-1}.$$
Hence, the closed submanifold $N_{P} H$ of $G$ is stable under
conjugation by $k$. It follows that its tangent space $\g[n]_{P}
\oplus \g[h]$ at $e$ is stable under $\Ad(k)$ and therefore that
the $B$-orthocomplement of this tangent space, $\g[l]_{P} \cap
\g[q]$, is stable under $\Ad(k)$ as well.
As $k\in\Nor_{K\cap H}(\g[l]_{P}\cap\g[q])$, it follows from Lemma
\ref{Transv lemma Nor(a cap q)=Nor(l)=Nor(a)} that
$k\in\Nor_{K\cap H}(\g[a]_{P})$. We will show that, in fact,
$k\in\Cen_{K\cap H}(\g[a]_{P})$. Aiming at a contradiction, assume
this not to be the case. The group $kPk^{-1}$ is a
$\sigma\circ\theta$-stable parabolic subgroup of $G$ with split
component $\g[a]_{P}$. By \cite[Proposition 7.86]{Knapp_LieGroups}
there exists an $\alpha\in\Sigma(\g,\g[a];P)$ such that
$-\alpha\in\Sigma(\g,\g[a];kPk^{-1})$. Fix
$Y_{\alpha}\in\g_{\alpha}\setminus\{0\}$. Then $\theta Y_\alpha
\in \g_{-\alpha} \subset \Ad(k) \g[n]_{P}$.
Since the set $N_{P}H$ is invariant under the left-actions of both $N_{P}$ and $kN_{P}k^{-1}$, it follows that $[Y_{\alpha}, \theta Y_{\alpha}] \subset \g[n]_{P} \oplus \g[h]$.
This implies that the orthogonal projection of $[Y_{\alpha},
\theta Y_{\alpha}]$ onto $\g[a]_{\g[q]}$ is zero. On the other
hand, the commutator $[Y_{\alpha},\theta Y_{\alpha}]$ is an
element of $\g[a]$ and therefore for every $X\in\g[a]$
$$
B([Y_{\alpha},\theta Y_{\alpha}],X)
=B(\theta Y_{\alpha},[X,Y_{\alpha}]) =-\|Y_{\alpha}\|^{2}\alpha(X)
=-\|Y_{\alpha}\|^{2} B(H_{\alpha},X).
$$
Here $H_{\alpha}$ is the element of $\g[a]_{\g[q]}$ given by
(\ref{PSubgrp eq alpha(Y)=B(H_alpha,Y)}). This implies $[Y_{\alpha},\theta Y_{\alpha}]=-\|Y_{\alpha}\|^{2}H_{\alpha}\in\g[a]_{\g[q]}\setminus\{0\}$, which gives a contradiction. We conclude that $k\in\Cen_{K\cap
H}(\g[a]_{P})=L_{P}\cap K\cap H$.
\end{proof}

\begin{prop}\label{Transv prop (L_Q cap H) cdot xi_P stabilizer}
The stabilizer of $(L_{Q}\cap H)\cdot\xi_{P}$ equals $(L_{Q}\cap H)N_{Q}$.
\end{prop}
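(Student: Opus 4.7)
The plan is to establish the reverse inclusion (the forward inclusion, that $(L_Q \cap H) N_Q$ stabilizes $(L_Q \cap H) \cdot \xi_P$, being immediate: $L_Q \cap H$ preserves its own orbit, and $N_Q \subseteq N_P$ stabilizes $\xi_P$ while being normalized by $L_Q \cap H \subseteq L_Q$). The structure of the argument is modelled on the proof of Proposition \ref{Transv prop horosphere stabilizer}: reduce to an element $n \in N_P$ satisfying a closed-set stabilizer condition, differentiate at $e$ to obtain $\Ad(n)$-invariance of a natural subspace, and then use a Lie-algebra computation to force $n \in N_Q$.

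Given $g$ in the stabilizer, I would use $g \cdot \xi_P \in (L_Q \cap H) \cdot \xi_P$ together with $L_P \cap H \subseteq L_Q \cap H$ to write $g = l\, n$ with $l \in L_Q \cap H$ and $n \in N_P$. The stabilizer condition $g (L_Q \cap H) N_P = (L_Q \cap H) N_P$ is then equivalent to $n (L_Q \cap H) n^{-1} \subseteq (L_Q \cap H) N_P$, and the goal reduces to showing $n \in N_Q$. By Corollary \ref{RadTrans cor N_P(L_Q cap H) closed} the set $(L_Q \cap H) N_P$ is a closed submanifold of $G$, and its tangent space at $e$ is $(\g[l]_Q \cap \g[h]) + \g[n]_P$ because $(L_Q \cap H) \cap N_P = N_P^Q \cap H = \{e\}$; the last equality uses $\sigma(N_P^Q) = \theta(N_P^Q)$, which was established earlier in the paper. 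Differentiating $n\,e^{tY}\,n^{-1} = e^{t\,\Ad(n) Y} \in (L_Q \cap H) N_P$ at $t = 0$ for $Y \in \g[l]_Q \cap \g[h]$ yields $\Ad(n)(\g[l]_Q \cap \g[h]) \subseteq (\g[l]_Q \cap \g[h]) + \g[n]_P$, and combined with $\Ad(n)\g[n]_P = \g[n]_P$ together with a dimension count, $\Ad(n)$ preserves $W := (\g[l]_Q \cap \g[h]) + \g[n]_P$, hence also (by $\Ad$-invariance of $B$) the $B$-orthogonal complement $W^\perp$.

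A routine calculation using $\g[h] \perp_{B} \g[q]$ together with the Levi decomposition $\g = \g[l]_Q \oplus \g[n]_Q \oplus \overline{\g[n]_Q}$ identifies $W^\perp = (\g[l]_P \cap \g[q]) + \g[n]_Q$. Writing $n = \exp(N)$ with $N = N_1 + N_2 \in \g[n]_P^Q \oplus \g[n]_Q$, and exploiting the containments $[\g[n]_P^Q, \g[l]_P] \subseteq \g[n]_P^Q$, $[\g[n]_Q, \g[l]_Q] \subseteq \g[n]_Q$ and $[\g[n]_P^Q, \g[n]_Q] \subseteq \g[n]_Q$, I would show by induction on $k$ that for $v_1 \in \g[l]_P \cap \g[q]$ the $\g[n]_P^Q$-component of $(\ad N)^k v_1$ equals $(\ad N_1)^k v_1$; consequently the $\g[n]_P^Q$-component of $\Ad(n) v_1$ is $(\exp(\ad N_1) - I)(v_1)$. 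Since $W^\perp \cap \g[n]_P^Q = 0$ (using $\g[n]_P^Q \cap \g[l]_P = 0$ by the Levi decomposition within $L_Q$ and $\g[n]_P^Q \cap \g[n]_Q = 0$), the inclusion $\Ad(n) v_1 \in W^\perp$ forces $\exp(\ad N_1) v_1 = v_1$.

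Because $\ad N_1$ is nilpotent on $\g[l]_Q$, one has $\ker(\exp(\ad N_1) - I) = \ker(\ad N_1)$, so $[N_1, v_1] = 0$ for every $v_1 \in \g[l]_P \cap \g[q]$. Specialising to $H \in \g[a]_{\g[q]} \subseteq \g[l]_P \cap \g[q]$ and decomposing $N_1 = \sum_\alpha N_\alpha$ along the $\g[a]_{\g[q]}$-root spaces in $\g[n]_P^Q$, the relations $\sum_\alpha \alpha(H) N_\alpha = 0$ for all $H \in \g[a]_{\g[q]}$ force each $N_\alpha = 0$, since no nontrivial root restricts trivially to $\g[a]_{\g[q]}$. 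Hence $N_1 = 0$ and $n = \exp(N_2) \in N_Q$, so $g = l\, n \in (L_Q \cap H) N_Q$. I expect the main technical obstacle to be the bookkeeping needed to isolate the $\g[n]_P^Q$-component of $\Ad(n) v_1$ as $(\exp(\ad N_1) - I)(v_1)$, together with the verification that $W^\perp$ admits the stated description.
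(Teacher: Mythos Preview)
Your proof is correct and shares the initial setup with the paper's argument: both reduce to showing that an element $n\in N_P$ satisfying $n(L_Q\cap H)N_P=(L_Q\cap H)N_P$ must lie in $N_Q$, both differentiate to obtain $\Ad(n)$-invariance of $W=(\g[l]_Q\cap\g[h])+\g[n]_P$, and both identify $W^\perp=(\g[l]_P\cap\g[q])+\g[n]_Q$. The divergence comes after this point. The paper specializes to $\g[a]_P\cap\g[a]_{\g[q]}\subseteq\g[l]_P\cap\g[q]$, intersects with the obvious inclusion $\Ad(n)(\g[a]_P\cap\g[a]_{\g[q]})\subseteq(\g[a]_P\cap\g[a]_{\g[q]})+\g[n]_P$ to obtain $\Ad(n)(\g[a]_P\cap\g[a]_{\g[q]})\subseteq(\g[a]_P\cap\g[a]_{\g[q]})+\g[n]_Q$, and then introduces a carefully chosen lexicographic ordering on $\Sigma(\g,\g[a]_{\g[q]})$ so that the lowest nonzero root component of $\log n$ is forced into $\g[n]_Q$, whence all higher components are as well. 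Your route is more algebraic: you split $\log n=N_1+N_2$ along $\g[n]_P^Q\oplus\g[n]_Q$, isolate the $\g[n]_P^Q$-component of $\Ad(n)v_1$ as $(\exp(\ad N_1)-I)v_1$ via an inductive bracket computation, and then use nilpotence of $\ad N_1$ to pass from $\exp(\ad N_1)v_1=v_1$ to $[N_1,v_1]=0$, finishing by testing on $\g[a]_{\g[q]}$. Your argument is arguably cleaner in that it avoids the ordering device entirely; the paper's argument has the virtue of only needing $\Ad(n)$-invariance on the smaller space $\g[a]_P\cap\g[a]_{\g[q]}$. One minor quibble: your phrase ``no nontrivial root restricts trivially to $\g[a]_{\g[q]}$'' is slightly awkward, since the $\alpha$ in your decomposition are already $\g[a]_{\g[q]}$-roots; what you mean (and use) is simply that each such $\alpha$ is a nonzero functional on $\g[a]_{\g[q]}$.
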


\begin{proof}
Since $N_{Q}\subseteq N_{P}$ it is clear that $(L_{Q}\cap H)N_{Q}$
normalizes $(L_{Q}\cap H)\cdot\xi_{P}$. It remains to prove that
the stabilizer is contained in $(L_{Q}\cap H)N_{Q}$.

Let $g \in G$ and assume $g(L_{Q}\cap H)\cdot\xi_{P}=(L_{Q}\cap H)\cdot\xi_{P}$.
Since $L_{P}\subseteq L_{Q}$, this implies $g(L_{Q}\cap H)N_{P}=(L_{Q}\cap H)N_{P}$.
Hence there exist $l\in L_{Q}\cap H$ and $n\in N_{P}$ such that
$g=ln$ and $n(L_{Q}\cap H)N_{P}=(L_{Q}\cap H)N_{P}$.
We will finish the proof by showing that $n \in N_{Q}$.

By Corollary \ref{RadTrans cor N_P(L_Q cap H) closed}, the set $(L_{Q}\cap
H)N_{P}$ is a submanifold of $G$. Using that
$n(L_{Q}\cap H)n^{-1}$ is contained in $(L_{Q}\cap
H)N_{P}$, we find by differentiating at $e$ that $\Ad(n)\big((\g[l]_{Q}\cap \g[h])\oplus\g[n]_{P}\big) =(\g[l]_{Q}\cap \g[h])\oplus\g[n]_{P}$.
Here we used that $\Ad(n)$
is a linear isomorphism of $\g$ mapping $\g[n]_{P}$ onto itself.
The orthocomplement of $(\g[l]_{Q}\cap \g[h])\oplus\g[n]_{P}$ with
respect to $B$ equals $(\g[l]_{P}\cap \g[q])\oplus\g[n]_{Q}$.
Therefore $n$ normalizes $(\g[l]_{P}\cap \g[q])\oplus\g[n]_{Q}$.
In particular $\Ad(n)(\g[a]_{P}\cap\g[a]_{\g[q]})\subseteq (\g[l]_{P}\cap
\g[q])\oplus\g[n]_{Q}$.
Since $n\in N_{P}$, we also have $\Ad(n)(\g[a]_{P}\cap\g[a]_{\g[q]})\subseteq (\g[a]_{P}\cap
\g[q])\oplus\g[n]_{P}$, hence
\begin{equation}\label{Transv eq Ad(n)(a_q)subseteq (a_P cap a_q) oplus n_Q}
\Ad(n)(\g[a]_{P}\cap\g[a]_{\g[q]})\subseteq \big((\g[l]_{P}\cap
\g[q])\oplus\g[n]_{Q}\big)\cap\big((\g[a]_{P}\cap
\g[q])\oplus\g[n]_{P}\big)
=(\g[a]_{P}\cap\g[a]_{\g[q]})\oplus\g[n]_{Q}.
\end{equation}
Fix a minimal $\sigma\circ\theta$-stable parabolic subgroup
$P_{0}\in\Psg(\g[a]_{\g[q]})$ contained in $P$.
Let $S$ be the collection of simple roots
for the positive system $\Sigma^{+}=\Sigma(\g, \g[a]_{\g[q]};
P_{0})$. Let $S_{0}$ denote the set of roots $\alpha \in S$ which
vanish on $\g[a]_{Q}\cap \g[q]$ and put $S_{1} = S\setminus
S_{0}$. Let $S_{-1}$ be a finite subset of $\g[a]_{\g[q]}^{*}\setminus S$
such that $S_{-1} \cup S$ is
 a basis for $\g[a]_{\g[q]}^{*}$. Equip this basis with a total ordering
$<$ such that $S_{-1} < S_{0} < S_{1}$, and equip $\g[a]_{\g[q]}^{*}$
 with the associated lexicographic ordering, also denoted by $<$.
Since $Q$ is a $\sigma\circ\theta$-stable parabolic subgroup
containing $P_{0}$, a root $\alpha \in \Sigma^{+}$ vanishes on
$\g[a]_{Q}\cap\g[a]_{\g[q]}$ if and only if it is a sum of simple
roots from $S_{0}$.
Thus if $\Sigma_{0}^{+}$ is the set of roots in $\Sigma^{+}$
vanishing on $\g[a]_{Q}\cap\g[a]_{\g[q]}$ and $\Sigma_{1}^{+}$ its
complement, then $\Sigma_{0}^{+} < \Sigma_{1}^{+}$.
Let $\log(n)=\sum_{\alpha\in\Sigma^{+}} Y_{\alpha}$,
where $Y_{\alpha}\in\g_{\alpha}$. Then $Y_{\alpha} \in \g[n]_{P}$
for all $\alpha$. Indeed, if $\alpha|_{\g[a]_{P}\cap\g[a]_{\g[q]}}
= 0$, then $Y_{\alpha} = 0$. Let $\alpha_{0}$ be the smallest root
in $\Sigma^{+}$ such that $Y_{\alpha_{0}}\neq 0$. Then for every
$Y\in\g[a]_{P}\cap \g[q]$
$$
Y-\Ad(n)Y
=\sum_{\alpha\in\Sigma^{+}}\alpha(Y)Y_{\alpha}-\sum_{k=2}^{\infty}
\frac{\ad(\log n)^{k}Y}{k!}.
$$
The sum on the right-hand side decomposes as a sum of terms
$Z_\beta(Y) \in \g_{\beta}$ with $\beta \in \Sigma^{+}$,
$\beta\geq \alpha_{0}$. If $Y$ is a $P$-regular element, then the
lowest order part of the sum equals $Z_{\alpha_{0}}(Y) =
\alpha_{0}(Y) Y_{\alpha_{0}}$ and is different from zero. From
$(\ref{Transv eq Ad(n)(a_q)subseteq (a_P cap a_q) oplus n_Q})$ it
now follows that $Y_{\alpha_{0}}\in\g[n]_{Q}$, hence $\alpha_{0}$
does not vanish on $\g[a]_{Q}\cap\g[a]_{\g[q]}$, so $\alpha_{0}
\in \Sigma_{1}^{+}$. This implies that any root $\alpha \in
\Sigma^{+}$ with $\alpha \geq \alpha_{0}$ belongs to
$\Sigma_{1}^{+}$. Thus, if $Y_{\alpha} \neq 0$, then $\alpha \in
\Sigma_{1}^{+}$ and hence $Y_{\alpha}\in \g[n]_{Q}$. We conclude
that $n\in N_{Q}$.
\end{proof}

\begin{cor}\label{Transv cor (L_Q cap H)N_Q and (L_P cap H)N_P transversal}
$(L_{Q}\cap H)N_{Q}$ and $(L_{P}\cap H)N_{P}$ are transversal.
\end{cor}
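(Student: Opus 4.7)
The plan is to unwind the definition of transversality given in Section~\ref{subsection Introduction to double fibrations} and then deduce the two required injectivity statements as immediate consequences of the stabilizer computations in Propositions~\ref{Transv prop horosphere stabilizer} and~\ref{Transv prop (L_Q cap H) cdot xi_P stabilizer}. By definition, transversality of $(L_{Q}\cap H)N_{Q}$ and $(L_{P}\cap H)N_{P}$ amounts to the injectivity of the two set-valued incidence maps $\Xi_{P}\ni\xi\mapsto\Pi_{\Xi_{Q}}\bigl(\Pi_{\Xi_{P}}^{-1}(\xi)\bigr)$ and $\Xi_{Q}\ni\zeta\mapsto\Pi_{\Xi_{P}}\bigl(\Pi_{\Xi_{Q}}^{-1}(\zeta)\bigr)$ associated with the double fibration (\ref{RadTrans eq double fibration}).

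First I would translate these maps into concrete group-theoretic form. For $\xi=g\cdot\xi_{P}\in\Xi_{P}$, the incidence fiber in $\Xi_{Q}$ is $E_{P}^{Q}(g\cdot\xi_{P})=gN_{P}^{Q}\cdot\xi_{Q}=gN_{P}\cdot\xi_{Q}$ (the last equality because $N_{Q}\subseteq N_{P}$ stabilizes $\xi_{Q}$). Dually, for $\zeta=g\cdot\xi_{Q}\in\Xi_{Q}$, the incidence fiber in $\Xi_{P}$ is $g(L_{Q}\cap H)\cdot\xi_{P}$. So the two injectivity statements to be proved are:
\begin{itemize}
\item[(a)] $g_{1}N_{P}\cdot\xi_{Q}=g_{2}N_{P}\cdot\xi_{Q}$ in $\Xi_{Q}$ implies $g_{1}\cdot\xi_{P}=g_{2}\cdot\xi_{P}$ in $\Xi_{P}$;
\item[(b)] $g_{1}(L_{Q}\cap H)\cdot\xi_{P}=g_{2}(L_{Q}\cap H)\cdot\xi_{P}$ in $\Xi_{P}$ implies $g_{1}\cdot\xi_{Q}=g_{2}\cdot\xi_{Q}$ in $\Xi_{Q}$.
\end{itemize}

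For (a), the hypothesis says $g_{2}^{-1}g_{1}$ stabilizes $N_{P}\cdot\xi_{Q}=N_{P}^{Q}\cdot\xi_{Q}$. By Proposition~\ref{Transv prop horosphere stabilizer} this stabilizer is exactly $(L_{P}\cap H)N_{P}$, which is also the stabilizer of $\xi_{P}\in\Xi_{P}$; hence $g_{1}\cdot\xi_{P}=g_{2}\cdot\xi_{P}$. For (b), the hypothesis gives $g_{2}^{-1}g_{1}$ in the stabilizer of $(L_{Q}\cap H)\cdot\xi_{P}$, which by Proposition~\ref{Transv prop (L_Q cap H) cdot xi_P stabilizer} equals $(L_{Q}\cap H)N_{Q}$, the stabilizer of $\xi_{Q}\in\Xi_{Q}$; hence $g_{1}\cdot\xi_{Q}=g_{2}\cdot\xi_{Q}$.

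There is no real obstacle at this stage; the genuine work was done in the two preceding propositions, where the normalizer argument involving $\Nor_{K\cap H}(\g[l]_{P}\cap\g[q])$ (Lemma~\ref{Transv lemma Nor(a cap q)=Nor(l)=Nor(a)}) and the lexicographic comparison of positive roots in the proof of Proposition~\ref{Transv prop (L_Q cap H) cdot xi_P stabilizer} pinned down the stabilizers precisely. Given those, the corollary follows by a short formal argument of at most a few lines.
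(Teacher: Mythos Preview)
Your proof is correct and follows precisely the approach intended by the paper: the corollary is stated without proof because it is an immediate consequence of Propositions~\ref{Transv prop horosphere stabilizer} and~\ref{Transv prop (L_Q cap H) cdot xi_P stabilizer}, and your unwinding of the two injectivity conditions via these stabilizer computations is exactly the expected argument.
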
 
%%%-----------------------------------------------------------------------
     \addcontentsline{toc}{section}{\protect\numberline{}References}
     \bibliographystyle{alpha}
\bibliography{Article}

\begin{thebibliography}{BSD05}

\bibitem[Ban86]{vdBan_ConvexityTheoremForSemisimpleSymmetricSpaces}
E.~P. van~den Ban.
\newblock A convexity theorem for semisimple symmetric spaces.
\newblock {\em Pacific J. Math.}, 124(1):21--55, 1986.

\bibitem[Ban87]{vdBan_AsymptoticBehaviourOfMatrixCoefficientsRelatedToReductiveSymmetricSpaces}
E.~P. van~den Ban.
\newblock Asymptotic behaviour of matrix coefficients related to reductive
  symmetric spaces.
\newblock {\em Nederl. Akad. Wetensch. Indag. Math.}, 49(3):225--249, 1987.

\bibitem[Ban88]{vdBan_ThePrincipalSeriesForAReductiveSymmetricSpaceI}
E.~P. van~den Ban.
\newblock The principal series for a reductive symmetric space. {I}.
  {$H$}-fixed distribution vectors.
\newblock {\em Ann. Sci. \'Ecole Norm. Sup. (4)}, 21(3):359--412, 1988.

\bibitem[Ban92]{vdBan_ThePrincipalSeriesForAReductiveSymmetricSpaceII}
E.~P. van~den Ban.
\newblock The principal series for a reductive symmetric space. {II}.
  {E}isenstein integrals.
\newblock {\em J. Funct. Anal.}, 109(2):331--441, 1992.

\bibitem[BS97a]{vdBan&Schlichtkrull_ExpansionsForEisensteinIntegralsOnSemisimpleSymmetricSpaces}
E.~P. van~den Ban and H.~Schlichtkrull.
\newblock Expansions for {E}isenstein integrals on semisimple symmetric spaces.
\newblock {\em Ark. Mat.}, 35(1):59--86, 1997.

\bibitem[BS97b]{vdBan&Schlichtkrull_FourierTransformOnASemisimpleSymmetricSpace}
E.~P. van~den Ban and H.~Schlichtkrull.
\newblock Fourier transform on a semisimple symmetric space.
\newblock {\em Invent. Math.}, 130(3):517--574, 1997.

\bibitem[BS97c]{vdBan&Schlichtkrull_TheMostContinuousPartOfThePlancherelDecompositionForAReductiveSymmetricSpace}
E.~P. van~den Ban and H.~Schlichtkrull.
\newblock The most continuous part of the {P}lancherel decomposition for a
  reductive symmetric space.
\newblock {\em Ann. of Math. (2)}, 145(2):267--364, 1997.

\bibitem[BS99]{vdBan&Schlichtkrull_FourierInversionOnAReductiveSymmetricSpace}
E.~P. van~den Ban and H.~Schlichtkrull.
\newblock Fourier inversion on a reductive symmetric space.
\newblock {\em Acta Math.}, 182(1):25--85, 1999.

\bibitem[BS05]{vdBan&Schlichtkrull_ThePlancherelDecompositionForAReductiveSymmetricSpaceI}
E.~P. van~den Ban and H.~Schlichtkrull.
\newblock The {P}lancherel decomposition for a reductive symmetric space. {I}.
  {S}pherical functions.
\newblock {\em Invent. Math.}, 161(3):453--566, 2005.

\bibitem[BSD05]{vdBanSchlichtkrullDelorme_LieTheory}
E.~P. van~den Ban, H.~Schlichtkrull, and P.~Delorme.
\newblock {\em Lie theory harmonic analyis on symmetric spaces -- general
  {P}lancherel theorem}.
\newblock Progress in Mathematics. Springer-Verlag, Boston, MA, 2005.

\bibitem[Del98]{Delorme_FormuleDePlancherelPourLesEsPacesSymetriquesReductifs}
P.~Delorme.
\newblock Formule de {P}lancherel pour les espaces sym\'etriques r\'eductifs.
\newblock {\em Ann. of Math. (2)}, 147(2):417--452, 1998.

\bibitem[DM78]{Dixmier&Malliavin_FactorisationsDeFunctionsEtDeVecteursIndefinimentDifferentiables}
J.~Dixmier and P.~Malliavin.
\newblock Factorisations de fonctions et de vecteurs ind\'efiniment
  diff\'erentiables.
\newblock {\em Bull. Sci. Math. (2)}, 102(4):307--330, 1978.

\bibitem[FJ80]{Flensted_Jensen_DiscreteSeriesForSemisimpleSymmetricSpaces}
M.~Flensted-Jensen.
\newblock Discrete series for semisimple symmetric spaces.
\newblock {\em Ann. of Math. (2)}, 111(2):253--311, 1980.

\bibitem[Fri82]{Friedman_FoundationsOfModernAnalysis}
A.~Friedman.
\newblock {\em Foundations of modern analysis}.
\newblock Dover Publications Inc., New York, 1982.
\newblock Reprint of the 1970 original.

\bibitem[Hel66]{Helgason_ADualityInIntegralGeometryOnSymmetricSpaces}
S.~Helgason.
\newblock A duality in integral geometry on symmetric spaces.
\newblock In {\em Proc. {U}.{S}.-{J}apan {S}eminar in {D}ifferential {G}eometry
  ({K}yoto, 1965)}, pages 37--56. Nippon Hyoronsha, Tokyo, 1966.

\bibitem[Hel73]{Helgason_TheSurjectivityOfInvariantDifferentialOperatorsOnSymmetricSpaces}
S.~Helgason.
\newblock The surjectivity of invariant differential operators on symmetric
  spaces. {I}.
\newblock {\em Ann. of Math. (2)}, 98:451--479, 1973.

\bibitem[Hel94]{Helgason_GeometricAnalyisOnSymmetricSpaces}
S.~Helgason.
\newblock {\em Geometric analysis on symmetric spaces}, volume~39 of {\em
  Mathematical Surveys and Monographs}.
\newblock American Mathematical Society, Providence, RI, 1994.

\bibitem[Hel11]{Helgason_IntegralGeometryAndRadonTransforms}
Sigurdur Helgason.
\newblock {\em Integral geometry and {R}adon transforms}.
\newblock Springer, New York, 2011.

\bibitem[H{\"o}r03]{Hormander_TheAnalyisOfLinearPartialDifferentialOperators}
L.~H{\"o}rmander.
\newblock {\em The analysis of linear partial differential operators. {I}}.
\newblock Classics in Mathematics. Springer-Verlag, Berlin, 2003.
\newblock Reprint of the second (1990) edition.

\bibitem[Kna02]{Knapp_LieGroups}
A.~W. Knapp.
\newblock {\em Lie groups beyond an introduction}, volume 140 of {\em Progress
  in Mathematics}.
\newblock Birkh\"auser Boston Inc., Boston, MA, second edition, 2002.

\bibitem[Kos73]{Kostant_OnConvexityTheWeylGroupAndTheIwasawaDecomposition}
B.~Kostant.
\newblock On convexity, the {W}eyl group and the {I}wasawa decomposition.
\newblock {\em Ann. Sci. \'Ecole Norm. Sup. (4)}, 6:413--455 (1974), 1973.

\bibitem[Kr{\"o}09]{Krotz_HorosphericalTransformOnRealSymmetricVarieties:KernelAndCokernel}
B.~Kr{\"o}tz.
\newblock The horospherical transform on real symmetric spaces: kernel and
  cokernel.
\newblock {\em Funktsional. Anal. i Prilozhen.}, 43(1):37--54, 2009.

\bibitem[KS12]{Krotz&Schlichtkrull_OnFunctionSpacesOnSymmetricSpaces}
Bernhard Kr{\"o}tz and Henrik Schlichtkrull.
\newblock On function spaces on symmetric spaces.
\newblock In {\em Representation theory, complex analysis, and integral
  geometry}, pages 1--8. Birkh\"auser/Springer, New York, 2012.

\bibitem[Mat79]{Matsuki_TheOrbitsOfAffineSymmetricSpacesUnderTheActionOfMinimalParabolicSubgroups}
T.~Matsuki.
\newblock The orbits of affine symmetric spaces under the action of minimal
  parabolic subgroups.
\newblock {\em J. Math. Soc. Japan}, 31(2):331--357, 1979.

\bibitem[{\'O}la87]{Olafsson_FourierAndPoissonTransformationAssociatedToASemisimpleSymmetricSpace}
G.~{\'O}lafsson.
\newblock Fourier and {P}oisson transformation associated to a semisimple
  symmetric space.
\newblock {\em Invent. Math.}, 90(3):605--629, 1987.

\bibitem[Roc70]{Rockafellar_ConvexAnalysis}
R.~T. Rockafellar.
\newblock {\em Convex analysis}.
\newblock Princeton Mathematical Series, No. 28. Princeton University Press,
  Princeton, N.J., 1970.

\bibitem[Ros79]{Rossmann_StructureOfSemisimpleSymmetricSpaces}
W.~Rossmann.
\newblock The structure of semisimple symmetric spaces.
\newblock {\em Canad. J. Math.}, 31(1):157--180, 1979.

\bibitem[Rud73]{Rudin_FunctionalAnalyis}
W.~Rudin.
\newblock {\em Functional analysis}.
\newblock McGraw-Hill Book Co., New York, 1973.
\newblock McGraw-Hill Series in Higher Mathematics.

\bibitem[Sch84]{Schlichtkrull_HyperfunctionsAndHarmonicAnalysisOnSymmetricSpaces}
H.~Schlichtkrull.
\newblock {\em Hyperfunctions and harmonic analysis on symmetric spaces},
  volume~49 of {\em Progress in Mathematics}.
\newblock Birkh\"auser Boston Inc., Boston, MA, 1984.

\bibitem[Var77]{Varadarajan_HarmonicAnalysisOnRealReductiveGroups}
V.~S. Varadarajan.
\newblock {\em Harmonic analysis on real reductive groups}.
\newblock Lecture Notes in Mathematics, Vol. 576. Springer-Verlag, Berlin,
  1977.

\bibitem[VW90]{VoganWallach_IntertwiningOperatorsForRealReductiveGroups}
D.~A. Vogan, Jr. and N.~R. Wallach.
\newblock Intertwining operators for real reductive groups.
\newblock {\em Adv. Math.}, 82(2):203--243, 1990.

\bibitem[War72]{Warner_HarmonicAnalysisOnSemiSimpleLieGroups}
G.~Warner.
\newblock {\em Harmonic analysis on semi-simple {L}ie groups. {I}}.
\newblock Springer-Verlag, New York, 1972.
\newblock Die Grundlehren der mathematischen Wissenschaften, Band 188.

\end{thebibliography}
%%%-----------------------------------------------------------------------
\def\adritem#1{\hbox{\small #1}}
\def\distance{\hbox{\hspace{3.5cm}}}
\def\apetail{@}
\def\addKuit{\vbox{
    \adritem{J.~J.~Kuit}
    \adritem{Department of Mathematical Sciences}
    \adritem{University of Copenhagen}
    \adritem{Universitetsparken 5}
    \adritem{DK-2100 Copenhagen \O}
    \adritem{Denmark}
    \adritem{E-mail: j.j.kuit{\apetail}gmail.nl}
    }}
\mbox{\ }
\vfill
\hbox{\vbox{\addKuit}}
%%%-----------------------------------------------------------------------
 \end{document}